\newtheorem{theorem}{Theorem}[section]
\newtheorem{lemma}[theorem]{Lemma}
\newtheorem{proposition}[theorem]{Proposition}
\newtheorem{remark}[theorem]{Remark}
\newtheorem{definition}[theorem]{Definition}
\def\R{{\mathbb{R}}}
\def\CC{{\rm \kern.24em \vrule width.02em height1.4ex depth-.05ex
		\kern-.26emC}}
\def\TagOnRight
\def\AA{{it I} \hskip-3pt{\tt A}}
\def\QQ{\rlap {\raise 0.4ex \hbox{$\scriptscriptstyle |$}} {\hskip -0.1em Q}}
\def\theequation{\@arabic{\c@section}.\@arabic{\c@equation}}
\renewcommand{\div}{{\mathrm{div}}} % divergence
\newcommand{\DT}{\mathbb{D}} % strain tensor
\newcommand{\HC}[2]{\mathcal{C}^{{#1},{#2}}} % Holder spaces
\newcommand{\vn}{\bm{n}} % unit normal vector
\newcommand{\vt}{\bm{\tau}} % unit tangent vector
\newcommand{\vu}{\bm{u}} % velocity vector field
\renewcommand{\Re}{\operatorname{Re}}
\begin{document}
\title{$L^p$-Strong solution to fluid-rigid body interaction system with Navier slip boundary condition}
\date{}

\author[1]{Hind Al Baba \thanks{hind.albaba@ul.edu.lb}}
\affil[1]{Laboratoire de Math\'{e}matiques et Applications, Lebanese University, Beirut, Lebanon.}
\author[2,3]{Amrita Ghosh \thanks{guddi.ghosh9@gmail.com}}
\affil[2]{Institute of Mathematics of the Academy of Sciences of the Czech Republic,\v Zitn\' a 25, 115 67 Praha,
Czech Republic.}
\affil[3] {Institute for Applied Mathematics, University of Bonn, Endenicher Allee 60, 53115 Bonn, Germany.}
\author[4]{Boris Muha \thanks{borism@math.hr}}
\affil[4]{Department of Mathematics, Faculty of Science, University of Zagreb, Croatia.}
\author[2]{\v{S}{\' a}rka Ne{\v c}asov{\' a} \thanks{matus@math.cas.cz}}

%\author{
 %Hind Al Baba $^1$ \and  Amrita Ghosh$^{2,3}$ \and  B. Muha %$^4$
%\and {\v S}{\' a}rka Ne{\v c}asov{\' a} $^{2,5}$
%}

\maketitle

%\bigskip

%\centerline {$^1$ Laboratoire de Math\'{e}matiques et Applications, Lebanese University}
%\centerline{Beirut, Lebanon}
%\centerline{hind.albaba@ul.edu.lb}

%\bigskip

%\centerline{$^2$Institute of Mathematics of the Academy of Sciences of
%the Czech Republic} \centerline{\v Zitn\' a 25, 115 67 Praha
%Czech Republic}
%\centerline{$^3$ guddi.ghosh9@gmail.com}
%\smallskip

%\centerline{$^5$ matus@math.cas.cz}
%\bigskip

%\centerline{$^4$   Department of Mathematics, Faculty of Science}
%\centerline{University of Zagreb, Croatia}
%\centerline{borism@math.hr}

\begin{abstract}
We study a fluid-structure interaction problem describing movement of a rigid body inside a bounded domain filled by a viscous fluid. The fluid is modelled by the generalized incompressible Naiver-Stokes equations which include cases of Newtonian and non-Newtonian fluids. The fluid and the rigid body are coupled via the Navier slip boundary conditions and balance of forces at the fluid-rigid body interface. Our analysis also includes the case of the nonlinear slip condition.
The main results assert the existence of strong
solutions, in an $L^p-L^q$ setting, globally in time, for small data in the Newtonian case, while existence of strong solutions in $L^p$-spaces, locally in time, is obtained for non-Newtonian case. The proof for the Newtonian fluid essentially uses the maximal regularity property of the associated linear system which is obtained by proving the $\mathcal{R}$-sectoriality of the corresponding operator. The existence and regularity result for the general non-Newtonian fluid-solid system then relies upon the previous case.
Moreover, we also prove the exponential stability of the system in the Newtonian case.
\end{abstract}

%\begin{document}

\maketitle

\section{Introduction}
\setcounter{equation}{0}

We consider a fluid-rigid body interaction problem in $\R^3$ and our main interest is to develop an $L^p$-theory for strong solutions of the coupled system, for both Newtonian and non-Newtonian fluids with the moving rigid body.

Let us begin with a description of the model. We study a system of equations modelling the interaction between a fluid flow satisfying the incompressible generalized Navier-Stokes equations and a rigid body satisfying  the conservation of linear and angular momentum. The rigid body moves inside the fluid and at time $t\ge 0$, occupies a bounded domain $\Omega_S(t)$, while the fluid fills a bounded domain $\Omega_F(t)$. The common boundary of $\Omega_F(t)$ and $\Omega_S(t)$ is denoted by $\partial \Omega_S(t)$. Note that
$
\Omega_S(0)\cup \Omega_F(0) \cup \partial \Omega_S(0) = \Omega_S(t)\cup \Omega_F(t) \cup \partial \Omega_S(t)=: \Omega \subsetneq \R^3; \quad t\ge 0
$
and
$
\partial \Omega_F(t) = \partial \Omega \cup \partial \Omega_S(t).
$
\begin{figure}
\begin{center}
	\begin{tikzpicture}
	\draw[thick,red,rotate around={45:(9,-.6)}] plot [smooth cycle] coordinates {(8,0) (9,1) (11,1) (9,0) (10,-1)};
	\draw [thick] plot [smooth cycle] coordinates {(7,0) (8,2) (9,2.5) (9.75, 1.75) (11,2) (12,1) (12,-1) (11,-2) (8,-2) (7.5,-1.5) (7,-1.75) (6.75,-1.25)};
	\draw [thick,red] plot [smooth cycle] coordinates {(-0.5,-.7) (.5,.3) (2.5,.3) (.5,-.7) (1.5,-1.7)};
	\draw [thick] plot [smooth cycle] coordinates {(-1,0) (0,2) (1,2.5) (1.75, 1.75) (3,2) (4,1) (4,-1) (3,-2) (0,-2) (-0.5,-1.5) (-1,-1.75) (-1.25,-1.25)};
	\node at (3,-1) {$\Omega_F(0)$};
	\node at (0.8,0.65) {$\partial\Omega_S(0)$};
	\node at (3.2,2.2) {$\partial\Omega$};
	\node at (.1,-0.6) {$\Omega_S(0)$};
	\node at (11,-0.3) {$\Omega_F(t)$};
	\node at (8.25,0.2) {$\Omega_S(t)$};
	\node at (11.2,2.2) {$\partial\Omega$};
	\node at (8.9,-1) {$\partial\Omega_S(t)$};
	\draw[thick,->] (5,0) -- (6,0);
	\end{tikzpicture}
\end{center}
\caption{Domain}
\label{fig:M1}
\end{figure}
%\begin{figure}
%	\centering
%	\includegraphics[width=14cm]{domain_crop}
%	\caption{Domain}
%	\label{fig:domain}
%\end{figure}

For the sake of simplicity, we assume the fluid has a constant density equals $1$. By choosing a frame of coordinates whose origin initially coincides with the centre of mass of the rigid body, the domain $\Omega_S(t)$ at any instant $t$ can be given by
$$
\Omega_S(t) = \{\bm{h}(t) + Q(t)\bm{y}: \bm{y}\in \Omega_S(0)\}
$$
where $\bm{h}(t)$ is the centre of mass of the rigid body at time $t$ and $Q(t)$ is a rotation matrix associated to the angular velocity $\bm{\omega}(t)$ of the rigid body. The matrix $Q(t)$ is the solution of the initial value problem
\begin{equation}
\label{Q}
\begin{aligned}
\dot{Q}(t)Q^T(t) \bm{y} & = \bm{\omega}(t)\times \bm{y} \quad \forall \bm{y}\in\R^3 \\
Q(0) & = I_3 .
\end{aligned}
\end{equation}
Here $A^T$ denotes the transpose matrix of $A$ and $I_3$ is the $3\times 3$ identity matrix.
The system of equations modelling the motion of the fluid and the rigid body is given by:

%We now write the fluid-rigid body interaction problem with generalized Newtonian fluid:
\begin{equation}
\label{GFSI}
\left\{
\begin{aligned}
\partial_t \vu + (\vu \cdot \nabla_x) \vu &= \div \ \mathbb{T} (\vu,\pi) \ &&\text{ in } \ \Omega_F(t)\times (0,T),\\
\div \ \vu &= 0 \ &&\text{ in } \ \Omega_F(t)\times (0,T),\\
\vu &=\bm{0} \ &&\text{ on } \ \partial \Omega \times (0,T),\\
\vu\cdot \vn &=\vu_{S}\cdot \vn \ &&\text{ on } \ \partial \Omega_S(t) \times (0,T),\\
\left[\mathbb{T} (\vu,\pi)\vn\right]_{\vt}+\alpha\vu_{\vt}&=\alpha \vu_{S\vt} \ &&\text{ on } \ \partial \Omega_S(t) \times (0,T),\\
m \bm{l}^{'}(t) &= - \int\displaylimits_{\partial \Omega_S(t)}{\mathbb{T} (\vu,\pi)\vn}, \ && \ t\in(0,T),\\
(J \bm{\omega})'(t) &= - \int\displaylimits_{\partial \Omega_S(t)}{(\bm{x}-\bm{h}(t))\times \mathbb{T} (\vu,\pi)\vn},  && \ t\in(0,T),\\
\vu(0) &= \vu_0 \ &&\text{ in } \ \Omega_F(0),\\
\bm{l}( 0) = \bm{l}_0, &\quad \bm{\omega}(0) = \bm{\omega}_0
\end{aligned}
\right.
\end{equation}
where $\vu $ and $\pi$ denote the velocity field and pressure of the fluid respectively,
%\begin{equation*}
$\mathbb{T} (\vu,\pi) := \mu(|\DT\vu|^2)\DT \vu - \pi I_3$
%\end{equation*}
{ is the Cauchy stress tensor} with the viscosity function $\mu\in C^{1,1}(\R^+;\R)$ satisfying the following assumptions
\begin{equation}
\label{viscosity}
\mu(s)>0 \quad \text{ and } \quad \mu(s)+ 2s\mu'(s)>0 \quad \text{ for all } \ s\ge 0
\end{equation}
and $\DT\vu := \frac{1}{2} \left( \nabla \vu + \nabla \vu^T\right)$ \textit{i.e.}, $(\DT\vu)_{ij}: = \DT_{ij}\vu := \frac{1}{2}\left(\partial_i u_j + \partial_j u_i \right)  $ denotes the symmetrical stress tensor with $|\DT\vu|^2 = \sum_{i,j=1}^3 (\DT_{ij}\vu)^2$ is the Hilbert-Schmidt norm. The friction coefficient $\alpha(\bm{x}) \ge 0 $ is a given function and $\vn(\bm{x},t)$ denotes the unit outward normal vector with respect to the domain $\Omega_F(t)$. The subscript $(\cdot)_{\vt}$ denotes the tangential component of a vector \textit{i.e.} $\bm{v}_{\vt} = \bm{v} - (\bm{v}\cdot \vn) \vn$. The constant $m>0$ is the mass of the rigid body and $J(t)$ is its inertia tensor, given by
$$
J(t)a\cdot b= \int\displaylimits_{ \Omega_S(0)}{\rho_S (a\times (\bm{x}-\bm{h}(t)))\cdot (b\times (\bm{x}-\bm{h}(t)))} \qquad \forall\, a,b\in\R^3
$$
where $\rho_S>0$ is the density of the body. Lastly, $\bm{l}(t):= \bm{h}'(t)$ denotes the translational velocity such that $\vu_S(\bm{x},t) := \bm{l}(t) + (\bm{x}- \bm{h}(t))\times \bm{\omega}(t)$ is the velocity of the rigid body.

The assumptions on the stress tensor $T$ allow a wide flexibility of stress law coming from various experimentally verified physical models. In particular, it includes the power-law type fluids, namely,
\begin{equation*}
\mu(|\DT\vu|^2) = \mu_0\left( 1+ |\DT\vu|^2\right)^{\frac{d-2}{2}}, \quad \mu(|\DT\vu|^2) = \mu_0|\DT\vu|^{d-2} \quad \text{ for }
\ \mu_0\in (0,\infty), \ d\in (1,\infty).
\end{equation*}

\begin{remark}
%    \begin{itemize}
%\item
(1) The case $d=2$ corresponds to the classical Newtonian fluids \textit{i.e.} the case of constant viscosity.
 In that case, we denote the stress tensor by $\sigma(\vu,\pi)$ (just to distinguish) which is simply given by $\sigma(\bm{\vu},\pi) :=  \mu_0 \DT\vu - \pi I$.\\
 \\
% \item
(2) Of particular importance within this class are, the {\it shear-thinning fluids} \textit{i.e.}, the case $d\in (1,2)$ which include many important materials of interest (e.g. can be applied for modelling of  blood) and also {\it shear-thickening fluids}, \textit{i.e.}, the case $d\in (2,\infty)$. For further discussions on the related non-Newtonian fluids, we refer e.g. \cite {BB, BBN,Necas}.
%\end{itemize}
\end{remark}

Concerning the study of the fluid-rigid body interaction system involving non-Newtonian fluid, there are not many works available. The authors in \cite{GGH} provide the $L^p$-theory for local, strong solutions in the case of the Dirichlet coupling condition (no-slip) at the fluid-solid interface and for $\Omega = \mathbb{R}^3$. Furthermore in \cite{FHN}, the authors have considered a similar system with only power-law type fluid and with the Dirichlet boundary condition ($\alpha=\infty$) and establish the existence of a global in time, weak solution.
{ For the existence of a weak solution of the Bingham type fluid with motion of a rigid body, see \cite{OT}. }
{
The global existence of weak solution in the case of the motion of several rigid bodies in an incompressible non-Newtonian fluid with growth conditions, in Orlicz spaces, has been shown in \cite{Aneta}, for Dirichlet boundary condition.}

Another novelty of our paper is that we consider here a non-standard boundary condition at the fluid-solid interface. We investigate here the so-called slip boundary conditions.\footnote{The slip boundary conditions were introduced by Navier \cite{Navier} (the linear version), later proposed independently by Maxwell \cite{max}.} These conditions state that, apart from being the physical domain impermeable, it allows the fluid to slip over the solid boundary, rather than sticking to it. More precisely, the tangential component of the fluid velocity at the interface need not be equal to the tangential component of the velocity of the solid boundary, and this difference is proportional to the stress exerted by the fluid on the boundary, and the proportionality constant is called the \textit{friction coefficient} (the inverse of the \textit{slip length}). Observe that the friction coefficient $\alpha =\infty$ gives the no-slip Dirichlet boundary condition, formally, while $\alpha =0$ corresponds to the full slip conditions. Although the no-slip boundary condition is the one widely studied and accepted in the context of fluid dynamics, there are many problems at the macro scale where the no-slip condition is not applicable, examples include the moving contact line problem \cite{dussan} and the corner flows \cite{KB}. These and many other paradoxes may possibly appear because of the no-slip boundary conditions. Moreover, in the context of fluid-rigid body interaction, the no-slip condition gives rise to the well-known \textit{no-contact paradox} which says that collision between the rigid body and the boundary will not occur in finite time \cite{Hillairet07, Starovoitov04}. Therefore it is necessary to study more deeply the slip boundary condition experimentally, as well as mathematically.
% In the past years, mainly Dirichlet or periodic boundary conditions have been employed in the the fluid flow and the fluid-solid interaction problems, so far.
There are very few works done on the fluid-solid interaction system where slip boundary conditions are treated. Existence of a weak solution for the Newtonian fluid was proved in \cite{ChemetovSarka,GVHill}, while the existence and uniqueness of local-in-time strong solutions were studied in \cite{HindSarkaBoris,Wang}. In \cite{Bravin} the author proved uniqueness of weak solution in the $2D$ case. Weak-strong uniqueness in $3D$ was studied in \cite{ChemetpvSarkaBoris,muha2019uniqueness}. Finally, we mention the existence result for weak solution in the case when the elastic structure is part of the fluid boundary \cite{SunnyBoris} which is a $3D-2D$ interaction problem.
{ In the present work, we focus on the case with the slip condition on the boundary of the rigid body $\partial \Omega _S$ and the Dirichlet boundary condition on the outer boundary $\partial \Omega$. Such type of problem is called as {\it mixed case} and firstly was investigated in \cite{VHW}. Further several results for the {\it mixed case} can be found in e.g. \cite{ChemetovSarka,HindSarkaBoris,ChemetpvSarkaBoris}}.
All the above-mentioned results are in $L^2$-setting, for the Newtonian fluids and the friction coefficient $\alpha$ is assumed either $0$ or a constant.

Further, based on molecular dynamics simulations, Thompson and Trojan proposed a slip boundary condition which is highly non-linear \cite{TT1}, even though the fluid is still considered to be Newtonian (see also \cite{MH}). This universal condition may determine the degree of slip at a fluid-solid interface as the interfacial parameters and the shear rate are varied. But this non-linear boundary condition seems out of reach with our present mathematical technique. However, we may treat the non-linearity discussed by Lewandowski et al. \cite{Lewandowski} in the context of turbulence model
\begin{equation}
\label{nonlin_slip}
[\sigma(\vu,\pi)\vn]_{\vt} + \alpha |\vu| \vu_{\vt} =\bm{0},
\end{equation}
which is not much different from its linear counterpart, at least concerning the qualitative analysis.

Our main goal in this work is to develop an $L^p$-theory for strong solutions of the fluid-rigid body interaction system with slip boundary condition at the interface, for both Newtonian and non-Newtonian fluid.
The main novelty of this work is to provide a unified result considering the linear as well as some non-linear slip condition at the fluid-solid interface, where the non-constant slip coefficient depends on the space variable.

We start with studying the Newtonian case. Since the domain $\Omega_F(t)\times (0,t)$ depends on the motion of the rigid body, this is a moving boundary problem where the domain is also an unknown a priori. Hence it is natural to transform the system (\ref{GFSI}) to a fixed domain and solve the problem there.
Among several possibilities for this transformation, the usual one is a global, linear transformation which says the whole space is rigidly rotated and shifted back to its original position at each time $t>0$ (cf. \cite{galdi}). This refers the equations of motion of the fluid-rigid body system in a frame attached to the rigid body, with origin in the center of mass of the latter and coinciding with an inertial frame at time $t=0$. One conceptual difference is that in \cite{galdi}, the fluid occupies an exterior domain where it is reasonable to perform such a transformation. But in our case where the fluid and solid contains a bounded domain,
it is not suitable to choose such transformation. Also technically, this transformation generates an extra drift term of the form $[(\bm{\omega}\times \bm{y})\cdot \nabla]\vu$ which has unbounded coefficients. This produces a fundamental difficulty as the transformed problem is no longer parabolic. To overcome this difficulty, Tucsnak, Cumsille and Takahashi (cf. \cite{takeo, TT, cumsille}) used another non-linear, local change of variables which only act in a bounded neighbourhood of the body \textit{i.e.} coincides with $Q(t)\bm{y} + \bm{h}(t)$ in a neighbourhood of the rigid body and is equal to the identity far from the rigid body. This transformation preserves the solenoidal condition of the fluid velocity and do not change the regularity of the solutions, although the rigid body equations change and become non-linear.

We follow this second approach with a different point of view. We use the rotation matrix $Q$ instead of the Jacobian matrix $J_Y$ in the change of variables (cf. \ref{13}). In that sense it lies somewhat in between the above two methods. Although this transformation does not preserve the divergence free condition as compared to the second approach involving $J_Y$ used in \cite{takeo}, it makes the computation and corresponding estimates on the non-linear terms, appeared from the change of variables, much easier, since all the derivatives of $Q$ with respect to $x$ can be avoided. After the change of variables, our strategy is based on the maximal regularity property of the linearised system. We extend the maximal regularity result for the Stokes problem to the coupled fluid-rigid body system. At this step, we write the full system in terms of $\mathbb{P}\vu, \bm{l}, \bm{\omega}$ only, where $\mathbb{P}$ is the Helmholtz projection. This helps us to achieve further the exponential stability of the system in the Newtonian case. We finally rewrite the full non-linear transformed problem as a fixed point problem and deduce several estimates on the coordinate transform and on the extra terms appearing from the transformed system which make the fixed point mapping contractive, provided the given data is small. This gives the existence of a unique strong solution in $L^p$ spaces of the fluid-rigid body system.

{ Also note that the approach in \cite{GGH} is different from ours for the Newtonian case. The crucial step is both approaches the maximal regularity result for the linearised problem. In \cite{GGH}, first the solution operator for the fluid problem is constructed (for the velocity and the pressure). In order to construct the solution operator, boundary condition are homogenized via Bogovski\u{i} operator. Then the problem is reformulated in terms of the rigid body variables, solved locally in time and iterated to obtain the global in time solution. Inspired by \cite{MT}, we directly study the resolvent problem for the linearised coupled fluid-structure problem. After eliminating the pressure, we use perturbation argument to show that the corresponding fluid-structure operator is $\mathcal{R}$-bounded. This approach is reminiscent to the monolithic approach in numerical analysis of the FSI problems, where the fluid and the structure problem are solved simultaneously. In both of these approaches, $\mathcal{R}$-boundedness of the Stokes operator is used. An advantage of this approach is, it allows us to directly obtain maximal $L^p-L^q$ regularity for the linear coupled problem for $t>0$. As a consequence, one further obtains global existence and uniqueness result for the full non-linear system under smallness assumption of given data together with the exponential decay of the solution in the suitable norms.

}

\iffalse
{ Also note that the approach in \cite{GGH} is different from ours. In \cite{GGH}, the fluid and solid equations are treated separately and maximal $L^p-L^q$ property for the linear fluid-structure system is obtained using some fixed point argument. In our approach, we solve the fluid and structure equations simultaneously. In the study of fluid-structure interactions, this method is known as monolithic approach. The main advantage of such approach is that by studying the resolvent of the linear fluid-structure operator, we can conclude that this operator generates a continuous semigroup of negative type. This allows us to obtain maximal $L^p-L^q$ regularity for the linear coupled problem for $t>0$. As a consequence, one further obtains global existence and uniqueness result for the full non-linear system under some smallness assumption of given data.}
\fi

Next we discuss the non-Newtonian case. Using the same transformation as before, we reduce the system on fixed domain and then linearize the corresponding operator by fixing the coefficients. To prove the maximal regularity of the linearized system, the same approach as in the Newtonian case can not be followed. Because of the complicated structure of the generalized operator, writing the full system in terms of only $\mathbb{P}\vu, \bm{l}, \bm{\omega}$ seems not possible. Thus we follow here the approach used in \cite{GGH} which relies on classical maximal regularity estimates for the fluid problem and suitable pressure estimates near the boundary. In the last subsection, we show that the same analysis can be done as well for the non-linear slip condition (\ref{nonlin_slip}).

Note that we can also consider the Navier slip condition $\vu\cdot \vn=\bm{0}, [\mathbb{T}(\vu, \pi)\vn]_{\vt} + \alpha \vu_{\vt}=0$ on the outer boundary $\partial\Omega$ instead of $(\ref{GFSI})_3$ and study the system. This makes no big difference in the analysis. The change of variables we used here (cf. \ref{13}), do not preserve the Navier condition on $\partial\Omega$, although it conserves the slip condition at the fluid-solid interface. Thus the case with slip condition at the outer fluid boundary can be treated with the usual change of variables (cf. \cite{GGH}).

\begin{remark}
Let us mention that recently the $L^p - L^q$ regularity of the non-Newtonian compressible fluid was investigated in the whole space see \cite{KMN} and maybe it  can give us a way to get the $L^p - L^q$ regularity of the non-Newtonian incompressible fluid.
\end{remark}

\section{Main results}
\setcounter{equation}{0}

We assume that the rigid body at the initial position does not touch the wall of the fluid domain, i.e.
\begin{equation*}
\text{dist}\left( \Omega_S(0), \partial \Omega\right) \ge \beta >0.
\end{equation*}
For reference purpose, we rewrite the generalized system (\ref{GFSI}) in case of Newtonian fluid:

\begin{equation}
\label{1}
\left\{
\begin{aligned}
\partial_t \vu + (\vu \cdot \nabla_x) \vu &= \div \ \sigma (\vu,\pi) \ &&\text{ in } \ \Omega_F(t)\times (0,T),\\
\div \ \vu &= 0 \ &&\text{ in } \ \Omega_F(t)\times (0,T),\\
\vu&=\bm{0} \ &&\text{ on } \ \partial \Omega \times (0,T),\\
\vu\cdot \vn &=\vu_{S}\cdot \vn \ &&\text{ on } \ \partial \Omega_S(t) \times (0,T),\\ 2\left[(\DT\vu)\vn\right]_{\vt}+\alpha\vu_{\vt}&=\alpha \vu_{S\vt} \ &&\text{ on } \ \partial \Omega_S(t) \times (0,T),\\
m \bm{l}^{'}(t) &= - \int\displaylimits_{\partial \Omega_S(t)}{\sigma (\vu,\pi)\vn}, \ && \ t\in(0,T),\\
(J \bm{\omega})'(t) &= - \int\displaylimits_{\partial \Omega_S(t)}{(\bm{x}-\bm{h}(t))\times \sigma (\vu,\pi)\vn}, \ && \ t\in(0,T),\\
\vu(0) &= \vu_0 \ &&\text{ in } \ \Omega_F(0),\\
\bm{l}( 0) = \bm{l}_0, &\quad \bm{\omega}(0) = \bm{\omega}_0,
\end{aligned}
\right.
\end{equation}
where $\sigma(\bm{\vu},\pi) :=  \mu \DT\vu - \pi I$ with $\mu>0$ being a constant is the stress tensor.

We use the following function spaces. For a domain $D\in \R^n$, the Sobolev spaces are denoted by $W^{m,q}(D)$ for $m>0$ an integer and $q\in [1,\infty]$. The fractional Sobolev space $W^{s,q}(D)$ for non-integer $s$ is defined as, for $q\in [1,\infty), s\in (0,1)$,
\begin{equation*}
W^{s,q}(D) := \left\lbrace v\in L^p(D): \frac{|v(x) - v(y)|}{|x-y|^{s+ n/q}}\in L^p(D\times D) \right\rbrace,
\end{equation*}
with the standard norm. For $s=m+\sigma, m\ge0$ an integer and $\sigma\in (0,1)$, it is then defined as (cf. \cite{valdi}),
\begin{equation*}
W^{s,q}(D) :=\left\lbrace v\in W^{m,q}(D): D^\alpha v\in W^{\sigma,q}(D) \ \text{ for all } |\alpha| = m\right\rbrace .
\end{equation*}

Let $ q\in(1,\infty)$, $m_1, m_2\in \mathbb{N}\cup \{0\}$ with $m_1\ne m_2$. Then for $p \in [1,\infty], \theta \in (0,1)$ and $s= m_1(1-\theta) +m_2 \theta$,
%$0<s<m$, $m\in\mathbb{N}$ and $, 1\le p\le \infty$, 
we denote the Besov spaces by $B^s_{q,p}(D)$ which can be defined (equivalently) by real interpolation of Sobolev spaces (cf. \cite[Section 1.6.4, page 39]{triebel})
\begin{equation}
\label{besov}
B^s_{q,p}(D): = \left( W^{m_1,q}(D), W^{m_2,q}(D) \right)_{\theta,p}.
\end{equation}
The Besov spaces in the case $q=p$ coincide with the classical Sobolev spaces $W^{s,q}(D)$.
%and we denote the Sobolev-Slobodecki\v{i} spaces $W^{s,q}(D)$ by
%\begin{equation*}
%W^{s,q}(D) :=
%\begin{cases}
%W^{s,q}(D) \quad &\text{ if } s\in\mathbb{N},\\
%B^s_{q,q}(D) \quad &\text{ if } s\notin \mathbb{N}.
%\end{cases}
%\end{equation*}

We also introduce the notations: the subscript $\sigma$ denotes the divergence free condition in the domain and the subscript $\vt$ over a space denotes the zero normal component on the boundary. For example, we write:
$$
\bm{L}^q_{\sigma,\vt}(D):=\{\bm{v}\in\bm{L}^q(D): \div \ \bm{v}=0 \text{ in } D, \bm{v}\cdot \vn=0 \text{ on } \partial D\}.
$$
Let the Stokes operator with Navier boundary conditions on $\bm{L}^q_{\sigma,\vt}(\Omega_F(0))$ be defined as,
\begin{equation}
\label{SO}
\begin{cases}
\begin{aligned}
&\mathcal{D}(A_q) := \{\vu\in \bm{W}^{2,q}_{\sigma,\vt}(\Omega_F(0)): \vu = \bm{0} \text{ on } \partial\Omega, 2[(\DT\vu)\vn]_{\vt}+\alpha \vu_{\vt}=\bm{0} \text{ on } \partial \Omega_S(0)\},\\
&A_q \vu= \mathbb{P}\Delta \vu \quad \text{ for all } \vu\in \mathcal{D}(A_q)
\end{aligned}
\end{cases}
\end{equation}
where $\alpha$ is such that
\begin{equation}
\label{alpha}
\alpha\in\
\begin{cases}
W^{1-\frac{1}{\frac{3}{2}+\varepsilon}, \frac{3}{2}+\varepsilon}(\partial\Omega_F(0)) &\text{ if } \ 1<p\le \frac{3}{2}\\
W^{1-\frac{1}{p}, p}(\partial\Omega_F(0)) &\text{ if } \ p>\frac{3}{2}
\end{cases}
\end{equation}
with $\varepsilon>0$ arbitrarily small and $\mathbb{P}$ is the Helmholtz projection
$$
\mathbb{P}:\mathbf{L}^q(\Omega_F(0))\to \mathbf{L}^q_{\sigma,\vt}(\Omega_F(0))
%:= \{\bm{\varphi}\in \mathbf{L}^q(\Omega_F(0)): \div \ \bm{\varphi} = 0 \text{ in } \Omega_F(0), \bm{\varphi}\cdot \vn = 0 \text{ on } \partial \Omega_F(0)\};
$$
\textit{i.e.} for $\bm{\varphi} \in \mathbf{L}^q(\Omega_F(0))$, $\mathbb{P}\bm{\varphi} = \bm{\varphi} - \nabla p$ for some $p\in W^{1,q}(\Omega_F(0))$ which satisfies
\begin{equation*}
\begin{cases}
\div (\nabla p - \bm{\varphi}) = 0 & \text{ in } \Omega_F(0)\\
(\nabla p - \bm{\varphi})\cdot \vn = 0 & \text{ on } \partial\Omega_F(0).
\end{cases}
\end{equation*}
Also we say that $\vu_0\in B^{2-2/p}_{q,p}(\Omega_F(0))$ satisfies the compatibility condition if
$$
\vu_0 - \bm{v}_0\in \left( \bm{L}^q_{\sigma,\vt}(\Omega_F(0)), \mathcal{D}(A_q)\right) _{1-\frac{1}{p},p} \quad \text{ for some } \bm{v}_0\in C^2(\Omega_F(0)) \text{ satisfying }
$$
\begin{equation}
\label{CN}
\begin{aligned}
& \qquad \qquad \qquad \qquad \div \ \bm{v}_0 = 0 \ \text{ in } \Omega_F(0),\\
& \bm{v}_0\cdot \vn = \left( \bm{l}_0 + (\bm{\omega}_0\times y)\right) \cdot \vn \ \text{ on } \ \partial \Omega_S(0), \quad \bm{v}_0 = \bm{0} \ \text{ on } \ \partial \Omega,\\
\text{and} \text{ if } p>3, \qquad
	&2[(\DT\bm{v}_0)\vn]_{\vt} + \alpha \bm{v}_{0\vt} = \alpha (\bm{l}_0 + \bm{\omega}_0\times \bm{y})_{\vt} \ \text{ on } \partial\Omega_S(0).
\end{aligned}
\end{equation}
Lastly we introduce the following notation for the time-dependent spaces which will be used later,
$$
W^{2,1}_{q,p}(Q^\infty_F) := L^p(0,\infty;\bm{W}^{2,q}(\Omega_F(0))) \cap W^{1,p}(0,\infty;\bm{L}^q(\Omega_F(0)))
$$
with the graph norm
$$
\|\bm{v}\|_{W^{2,1}_{q,p}(Q^\infty_F)} := \|\bm{v}\|_{L^p(0,\infty;\bm{W}^{2,q}(\Omega_F(0)))} + \|\bm{v}\|_{W^{1,p}(0,\infty;\bm{L}^q(\Omega_F(0)))}.
$$

We can now state our main results on the existence of a unique, global in time, strong solution for the Newtonian system (\ref{1}), under the smallness assumption on initial data and a unique, local in time, strong solution for the generalized Newtonian system (\ref{GFSI}) for any initial data.

\begin{theorem}
	\label{T_N}
	Let $\Omega_F(0)$ be a bounded domain of class $\HC{2}{1}$, $p,q\in (1,\infty)$ satisfy the condition $\frac{1}{p}+ \frac{3}{2q}\le \frac{3}{2}$ and $\alpha\ge 0$ be as in (\ref{alpha}). Let $\eta\in (0,\eta_0)$ where $\eta_0$ is some constant (specified in Theorem \ref{T1}). Then there exist two constants $\delta_0>0$ and $C>0$, depending only on $p,q,\eta$ and $\Omega_F(0)$ such that for all $\delta\in (0,\delta_0)$ and for all $(\vu_0,\bm{l}_0,\bm{\omega}_0)\in B^{2(1-1/p)}_{q,p}(\Omega_F(0))\times \R^3 \times \R^3$ satisfying the compatibility conditions (\ref{CN})
	%$$
	%\div \ \vu_0 = 0 \text{ in } \Omega_F(0),
	%$$
	%and
	%\begin{equation*}
	%\label{34}
	%\vu_0\cdot \vn =\left( \bm{l}_0 + (\bm{\omega}_0\times y)\right) \cdot \vn \ \text{ on } \ \partial \Omega_S(0),\quad \vu_0\cdot \vn = 0 \ \text{ on } \ \partial \Omega,
	%\end{equation*}
	and
	$$
	\|\vu_0\|_{B^{2(1-1/p)}_{q,p}(\Omega_F(0))}+\|\bm{l}_0\|_{\R^3}+\|\bm{\omega}_0\|_{\R^3}\le \delta,
	$$
	the system (\ref{1}) possesses a unique global strong solution $(\vu,\pi,\bm{l},\bm{\omega})$ in the class of functions satisfying
	\begin{equation}
	\label{30}
	\begin{aligned}
	&\quad \|e^{\eta(\cdot)}\vu\|_{L^p(0,\infty;\bm{W}^{2,q}(\Omega_F(\cdot)))} + \|e^{\eta(\cdot)}\vu\|_{W^{1,p}(0,\infty;\bm{L}^q(\Omega_F(\cdot)))} + \|e^{\eta(\cdot)}\vu\|_{L^\infty(0,\infty;B^{2(1-1/p)}_{q,p}(\Omega_F(\cdot)))} \\
	&+ \|e^{\eta(\cdot)}\pi\|_{L^p(0,\infty;W^{1,q}(\Omega_F(\cdot)))} + \|e^{\eta(\cdot)}\bm{l}\|_{W^{1,p}(0,\infty;\R^3)}+ \|e^{\eta(\cdot)}\bm{\omega}\|_{W^{1,p}(0,\infty;\R^3)} \le C\delta.
	\end{aligned}
	\end{equation}
	Moreover, $\mathrm{dist}(\Omega_S(t),\partial\Omega)\ge \beta/2$ for all $t\in [0,\infty)$.
	
	In particular, we have,
	$$
	\|\vu(\cdot, t)\|_{B_{q,p}^{2(1-1/p)}(\Omega_F(t))} + \|\bm{l}(t)\|_{\R^3} + \|\bm{\omega}(t)\|_{\R^3} \le C \delta e^{-\eta t} .
	$$
\end{theorem}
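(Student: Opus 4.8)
The plan is to reduce the moving-domain problem (\ref{1}) to a nonlinear problem on the fixed reference domain $\Omega_F(0)$, and then solve that problem by a fixed-point argument built on the maximal $L^p$--$L^q$ regularity of the associated linear coupled system. First I would introduce the change of variables alluded to in the introduction (cf. (\ref{13})): a local diffeomorphism $X(\cdot,t)$ that coincides with $\bm{y}\mapsto Q(t)\bm{y}+\bm{h}(t)$ in a neighbourhood of $\Omega_S(0)$ and equals the identity near $\partial\Omega$, using the rotation matrix $Q$ rather than a flow map. Setting $\vu(x,t) = Q(t)\bm{v}(X^{-1}(x,t),t)$ (and correspondingly transforming $\pi$, $\bm{l}$, $\bm{\omega}$) turns (\ref{1}) into a system on $\Omega_F(0)\times(0,\infty)$ of the form $\partial_t\bm{v} - \mathbb{P}\Delta\bm{v} = F(\bm{v},\pi,\bm{l},\bm{\omega})$ coupled with the transformed ODEs for $\bm{l},\bm{\omega}$, where $F$ collects the convective term, the extra terms generated by the transformation (including a term that fails to be divergence-free, which must be corrected), and the commutators coming from the $\bm{x}$-dependence of $Q$ and $X$. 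The boundary conditions are preserved at $\partial\Omega_S(0)$ and $\partial\Omega$ by construction of the transformation.

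Next I would invoke the linear theory: the operator governing the linearized coupled fluid--rigid body system—obtained after projecting onto $\mathbb{P}\vu$ and eliminating the pressure, and shown to be $\mathcal{R}$-sectorial in the earlier part of the paper (Theorem \ref{T1})—generates an analytic semigroup with maximal $L^p$--$L^q$ regularity, and moreover an exponential decay rate: for $\eta\in(0,\eta_0)$ the weighted estimate holds with $e^{\eta t}$. Concretely, I would state the linear estimate in the space on the left-hand side of (\ref{30}) with the right-hand data $(F,\vu_0,\bm{l}_0,\bm{\omega}_0)$, using that $\vu_0\in B^{2(1-1/p)}_{q,p}$ with the compatibility condition (\ref{CN}) is exactly the trace space for initial data. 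This reduces the problem to showing the map $\Phi:(\bm{v},\pi,\bm{l},\bm{\omega})\mapsto(\text{solution of the linear system with right-hand side } F(\bm{v},\pi,\bm{l},\bm{\omega}))$ is a contraction on a small ball $B_R$ of the weighted maximal-regularity space, for $R$ and the data size $\delta$ small.

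The core estimates are then: (i) the coordinate transform $X$, $Q$, $\dot Q$, $\nabla X$, $(\nabla X)^{-1}$, $\det\nabla X$ stay close to their identity/initial values on bounded time intervals and—crucially—\emph{uniformly in time}, provided $\|\bm{l}\|_{W^{1,p}(0,\infty)}+\|\bm{\omega}\|_{W^{1,p}(0,\infty)}$ is small (this is where the weight $e^{\eta t}$ and the global-in-time maximal regularity pay off, and where one checks $\mathrm{dist}(\Omega_S(t),\partial\Omega)\ge\beta/2$); and (ii) $F$ is at least quadratic in the unknowns, so $\|e^{\eta\cdot}F(z)\|\le C(R)(\|z\|^2 + \delta\|z\|)$ and $\|e^{\eta\cdot}(F(z_1)-F(z_2))\|\le C(R)(R+\delta)\|z_1-z_2\|$ on $B_R$. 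The quadratic nature of the convective term is standard via Hölder together with the mixed-norm embeddings of the maximal regularity space (this is exactly where the hypothesis $\frac1p+\frac{3}{2q}\le\frac32$ enters, guaranteeing $\bm{v}\cdot\nabla\bm{v}\in L^p L^q$); the transformation-generated terms are handled by the smallness of $X-\mathrm{id}$. Choosing first $R$ small, then $\delta\le\delta_0(R)$, the Banach fixed-point theorem gives a unique solution with norm $\le C\delta$, yielding (\ref{30}); transforming back gives the solution of (\ref{1}), and reading off the $L^\infty$-in-time part of (\ref{30}) together with the embedding $W^{1,p}(0,\infty)\hookrightarrow L^\infty$ with decay gives the pointwise bound $\|\vu(\cdot,t)\|_{B^{2(1-1/p)}_{q,p}} + \|\bm{l}(t)\| + \|\bm{\omega}(t)\|\le C\delta e^{-\eta t}$.

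I expect the main obstacle to be step (i) together with the bookkeeping of $F$: proving that the nonlinear terms arising from the $Q$-based (non-solenoidal) change of variables can be estimated in the \emph{exponentially weighted} global maximal-regularity norm, with constants independent of $T$, and that the divergence-correction (a Bogovski\u{\i}-type operator applied to $\div$ of the transformed velocity) does not destroy the regularity class or the decay. Subsidiary care is needed at low regularity $p\le 3/2$, where the compatibility condition (\ref{CN}) omits the tangential boundary relation and the assumption (\ref{alpha}) on $\alpha$ is adjusted accordingly; the trace and product estimates for $\alpha\vu_\vt$ on $\partial\Omega_S(0)$ must be revisited in that range.
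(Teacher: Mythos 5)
Your proposal follows the paper's own proof route: the $Q$-based local change of variables to $\Omega_F(0)$ (Appendix), exponentially weighted maximal $L^p$--$L^q$ regularity for the linearized coupled system derived from $\mathcal{R}$-sectoriality and exponential stability of the fluid-structure operator $\mathcal{A}_{FS}$ (Theorems \ref{T2}, \ref{T1}, \ref{T4}), quadratic estimates on the transformation-generated nonlinearities under the hypothesis $\frac{1}{p}+\frac{3}{2q}\le\frac{3}{2}$ (Proposition \ref{P1}), a Banach fixed point on the weighted ball $S_\gamma$ (Theorem \ref{T9}), and transforming back. One small correction: the paper does not use a Bogovski\u{i}-type lift for the inhomogeneous divergence condition --- that is the route of \cite{GGH}, which the introduction explicitly contrasts with; Theorem \ref{T4} instead subtracts $\bm{H}=(I_3-J_Y Q)\tilde{\vu}$ directly from $\tilde{\vu}$, which is legitimate here because $\bm{H}$ already belongs to $W^{2,1}_{q,p}(Q^\infty_F)$ and vanishes on $\partial\Omega_F(0)$.
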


\begin{remark}
	If we consider the slip condition on $\partial\Omega\times (0,T)$ in $(\ref{1})_3$ instead of the Dirichlet condition, then we obtain the same result as above, provided the compatibility condition for the initial data needs to be replaced with
	$$
	\vu_0 - \bm{v}_0\in \left( \bm{L}^q_{\sigma,\vt}(\Omega_F(0)), \mathcal{D}(A^{NS}_q)\right) _{1-\frac{1}{p},p} \quad \text{ for some } \bm{v}_0\in C^2(\Omega_F(0)) \text{ satisfying }
	$$
	\begin{equation*}
	\label{}
	\begin{aligned}
	& \qquad \qquad \qquad \qquad \div \ \bm{v}_0 = 0 \ \text{ in } \Omega_F(0),\\
	& \bm{v}_0\cdot \vn = \left( \bm{l}_0 + (\bm{\omega}_0\times y)\right) \cdot \vn \ \text{ on } \ \partial \Omega_S(0), \quad \bm{v}_0\cdot \vn = 0 \ \text{ on } \ \partial \Omega,\\
\text{and if } p>3, \quad &
\begin{cases}
2[(\DT\bm{v}_0)\vn]_{\vt} + \alpha \bm{v}_{0\vt} &= \bm{0} \ \text{ on } \partial\Omega,\\
2[(\DT\bm{v}_0)\vn]_{\vt} + \alpha \bm{v}_{0\vt} &= \alpha (\bm{l}_0 + \bm{\omega}_0\times \bm{y})_{\vt} \ \text{ on } \partial\Omega_S(0).
\end{cases}
	\end{aligned}
	\end{equation*}
	Here $A^{NS}_q: \bm{L}^q_{\sigma,\vt}(\Omega_F(0)) \to \bm{L}^q_{\sigma,\vt}(\Omega_F(0))$ denotes the following Stokes operator with Navier slip boundary condition on the  complete fluid boundary:
	\begin{equation*}
	\begin{cases}
	\begin{aligned}
	&\mathcal{D}(A^D_q) := \{\vu\in \bm{W}^{2,q}_{\sigma,\vt}(\Omega_F(0)); 2[(\DT\vu)\vn]_{\vt}+\alpha \vu_{\vt}=\bm{0} \text{ on } \partial \Omega_F(0)\},\\
	&A_q \vu= \mathbb{P}\Delta \vu \quad \text{ for all } \vu\in \mathcal{D}(A^{NS}_q).
	\end{aligned}
	\end{cases}
	\end{equation*}
%	$$
%	\vu_0 =\bm{l}_0 + (\bm{\omega}_0\times y) \ \text{ on } \ \partial \Omega_S(0),\quad \vu_0= \bm{0} \ \text{ on } \ \partial \Omega \quad \text{ if } \ \frac{1}{p}+\frac{1}{2q}<1,
%	$$
%	and
%	$$
%	\vu_0\cdot \vn =\left( \bm{l}_0 + (\bm{\omega}_0\times y)\right) \cdot \vn \ \text{ on } \ \partial \Omega_S(0),\quad \vu_0\cdot \vn = 0 \ \text{ on } \ \partial \Omega  \quad \text{ if } \ \frac{1}{p}+\frac{1}{2q}>1.
%	$$
\end{remark}

\begin{theorem}
	\label{T_G}
	Let $p>5$, $\Omega_F(0)$ be a bounded domain of class $\HC{2}{1}$ and $\alpha\ge 0$ satisfies (\ref{alpha}). Then 
	%there exists a constant $\delta_0>0$ depending only on $p$ and $\Omega_F(0)$ such that for all $\delta\in(0,\delta_0)$ and 
	for all $(\vu_0, \bm{l}_0, \bm{\omega}_0)\in W^{2-2/p,p}(\Omega_F(0))\times \R^3\times \R^3$ satisfying the compatibility conditions (\ref{CN}),
	%$\div \ \vu_0 =0$ in $\Omega_F(0)$, $\vu_0\cdot \vn = 0$ on $\partial\Omega$, $\vu_0\cdot \vn = (\bm{l}_0 + \bm{\omega}_0\times y)\cdot \vn$ on $\partial\Omega_S(0)$
	%and
	%$$
	%\|\vu_0\|_{W^{2-2/p,p}(\Omega_F(0))} + \|\bm{l}_0\|_{\R^3} + \|\bm{\omega}_0\|_{\R^3} \le \delta,
	%$$
	the problem (\ref{GFSI}) admits a local in time unique strong solution
	\begin{equation*}
	\begin{aligned}
	& \vu \in L^p(0,T;\bm{W}^{2,p}(\Omega_F(\cdot))) \cap W^{1,p}(0,T;\bm{L}^p(\Omega_F(\cdot))),\\
	& \pi \in L^p(0,T;W^{1,p}(\Omega_F(\cdot))), \bm{l} \in W^{1,p}(0,T;\R^3), \bm{\omega} \in W^{1,p}(0,T;\R^3).
	\end{aligned}
	\end{equation*}
\end{theorem}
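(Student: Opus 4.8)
The plan is to follow the same strategy already announced in the introduction, namely the one from \cite{GGH}, adapted to the slip boundary condition. First I would transform the moving-domain problem (\ref{GFSI}) onto the fixed reference domain $\Omega_F(0)$ using the change of variables built from the rotation matrix $Q$ (cf.\ \ref{13}), the same one used for the Newtonian case. Under this map the fluid unknowns $(\vu,\pi)$ become $(\bm{v},q)$ on $\Omega_F(0)\times(0,T)$, the body unknowns $(\bm{l},\bm{\omega})$ are carried along, and the transformed system has the schematic form $\partial_t\bm{v}-\mathrm{div}\,\mathbb{T}(\bm{v},q)=\bm{F}(\bm{v},\bm{l},\bm{\omega})$ in $\Omega_F(0)$, $\mathrm{div}\,\bm{v}=\bm{G}(\bm{v})$, together with the transported slip conditions on $\partial\Omega_S(0)$, the Dirichlet condition on $\partial\Omega$, the Newton–Euler equations with transported right-hand sides, where $\bm{F},\bm{G}$ collect all the extra terms generated by the transformation (drift-type terms, the difference between $Q$-transport and a volume-preserving map, the convective term $(\bm{v}\cdot\nabla)\bm{v}$, and the lower-order part of $\mathrm{div}\,\mathbb{T}$). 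Since the $Q$-based change of variables is not solenoidal, $\bm{G}\neq0$, but (as in the Newtonian case) $\bm{G}$ is of lower order and small on a short time interval because $Q(t)\to I_3$ as $t\to0$.

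Next I would \emph{linearize and freeze coefficients}: fix the coefficients of the generalized Stokes operator $\mathrm{div}\,\mathbb{T}$ at the initial state (i.e.\ at $\DT\vu_0$), obtaining a linear, constant-in-the-principal-part, generalized Stokes system coupled to the rigid body. The key analytic input is maximal $L^p$ regularity for this linearized coupled problem on a finite time interval $(0,T)$: one proves $L^p(0,T;\bm W^{2,p})\cap W^{1,p}(0,T;\bm L^p)$ regularity for $\bm v$, $L^p(0,T;W^{1,p})$ for the pressure, and $W^{1,p}(0,T;\R^3)$ for $\bm l,\bm\omega$, for data in the natural trace space $W^{2-2/p,p}(\Omega_F(0))\times\R^3\times\R^3$ satisfying the compatibility conditions (\ref{CN}). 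Here I cannot repeat the Newtonian trick of projecting onto $\mathbb{P}\vu,\bm l,\bm\omega$ because the generalized operator does not commute nicely with $\mathbb{P}$; instead, as stated, I would build the solution operator for the fluid subproblem using classical maximal regularity for the generalized Stokes system (with the boundary data homogenized by a Bogovski\u\i{}-type lifting) together with careful pressure estimates near the boundary — exactly the route of \cite{GGH} — and then close the coupling with the two ODEs, using that the coupling terms (surface integrals of $\mathbb{T}\vn$) are lower order and controlled by trace theorems. The assumption $p>5$ enters precisely to make the embedding $W^{2-2/p,p}\hookrightarrow C^1$ (and $W^{1,p}(0,T;\R^3)\hookrightarrow C([0,T];\R^3)$ with good modulus of continuity) available, so that frozen coefficients stay close to the true ones and products of solution-dependent coefficients stay in $L^p$ in time.

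Finally I would set up the \emph{fixed point}. Define a map $\Phi$ on the ball of radius $R$ in the maximal regularity space over $(0,T)$ that sends $(\tilde{\bm v},\tilde{\bm l},\tilde{\bm\omega})$ to the solution of the linearized coupled system with right-hand sides $\bm F(\tilde{\bm v},\tilde{\bm l},\tilde{\bm\omega})$, $\bm G(\tilde{\bm v})$, the transported Newton–Euler forces, and the fixed initial data. Using maximal regularity to bound $\Phi$, and estimating $\bm F,\bm G$ and the perturbation caused by unfreezing the coefficients by powers of $T$ (together with $\|Q(t)-I_3\|$, $\|\bm h(t)-\bm h(0)\|$ small), one shows $\Phi$ maps the ball into itself and is a contraction, for $T$ small enough — note that here, unlike the Newtonian theorem, \emph{no} smallness of the data is needed, only smallness of the time. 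The unique fixed point is the strong solution on $(0,T)$; undoing the change of variables gives the solution of (\ref{GFSI}) in the stated class, and one checks a posteriori that $\Omega_S(t)$ stays away from $\partial\Omega$ on $[0,T]$ by shrinking $T$ if necessary. The main obstacle I anticipate is the linearized maximal regularity step for the \emph{coupled} generalized Stokes/rigid-body system with slip conditions: one must control the pressure up to the boundary for the variable-coefficient generalized Stokes operator and show the surface coupling terms do not destroy maximal regularity — this is where the bulk of the technical work (and the need for the pressure estimates near $\partial\Omega_S(0)$ in the spirit of \cite{GGH}) lies; the nonlinear estimates on $\bm F,\bm G$ from the $Q$-transformation and the fixed-point contraction are then comparatively routine, as is the extension to the nonlinear slip law (\ref{nonlin_slip}), which only adds a locally Lipschitz lower-order boundary term.
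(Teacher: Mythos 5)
Your proposal captures the paper's overall strategy very well: transform to the fixed domain $\Omega_F(0)$ via the $Q$-based change of variables, replace the quasi-linear operator by a frozen-coefficient generalized Stokes operator, prove maximal $L^p$-regularity for the resulting linear coupled system, and close with a contraction in $T$ (no smallness of data needed), with $p>5$ giving $W^{2-2/p,p}\hookrightarrow C^1$. The one place where the paper chooses a slightly different device is the freezing point: rather than freezing the coefficients $a^{kl}_{ij}$ at the initial data $\DT\vu_0$ (time-independent coefficients), the paper sets $A_*\tilde\vu:=\mathcal{A}(\vu^*)\tilde\vu$, where $\vu^*$ is the solution of the \emph{Newtonian} fluid--rigid-body problem (\ref{39}) with the same initial data. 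This makes the remainder $Q(\vu^*,\hat\vu)=A_*\hat\vu-\mathcal{A}(\vu^*+\hat\vu)(\vu^*+\hat\vu)$ genuinely small as $T\to 0$, since $\|\vu^*\|_{W^{2,1}_{p,p}(Q_F^T)}\to 0$ and $\hat\vu$ has zero initial data, so the contraction constant is read off directly from the maximal regularity estimate (\ref{44}) whose constant vanishes as $T\to 0$. Your version with $\DT\vu_0$ would also work, but you would need to invoke the time-continuity of $\DT\tilde\vu$ (via the embedding into $C([0,T];C^1)$) to make the coefficient drift $a^{kl}_{ij}(\DT\tilde\vu)-a^{kl}_{ij}(\DT\vu_0)$ small; the paper's reference-solution linearization packages that smallness more cleanly and matches the treatment of the nonlinear terms $\bm G_1,\bm G_2,\bm H_1$ appearing in the Newton--Euler equations. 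Also note that the paper does not use a Bogovski\u{i} operator here: it subtracts the generalized Stokes solution $(\mathring\vu,\mathring\pi)$ of (\ref{42}), obtained from Bothe--Pr\"uss maximal regularity (Proposition \ref{P_MR}, adapted to the mixed slip/Dirichlet boundary), to homogenize the divergence and boundary data, and then solves for $(\hat{\bm l},\hat{\bm\omega})$ via the Volterra-type operator $\mathcal R$ whose norm is $<1$ for small $T$. These are technical rather than conceptual differences, and your outline would lead to the same theorem.
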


\begin{remark}
Since the generalized stress tensor $\mathbb{T}$ includes the Newtonian stress tensor $\sigma$ as the special case, Theorem \ref{T_G} generalizes Theorem \ref{T_N}. On the other hand, less restrictive assumptions on $p,q$ are needed in Theorem \ref{T_N}, compared to Theorem \ref{T_G}.
\end{remark}

Our last result concerns the nonlinear slip condition (\ref{nonlin_slip}).

\begin{theorem}
\label{T_nonlin_slip}
Let $p>5$, $\Omega_F(0)$ be a bounded domain of class $\HC{2}{1}$ and $\alpha\ge 0$ satisfies (\ref{alpha}). Then %there exists a constant $\delta_0>0$ depending only on $p$ and $\Omega_F(0)$ such that for all $\delta\in(0,\delta_0)$ and 
for all $(\vu_0, \bm{l}_0, \bm{\omega}_0)\in W^{2-2/p,p}(\Omega_F(0))\times \R^3\times \R^3$ satisfying the compatibility conditions (\ref{CN}),
	%$\div \ \vu_0 =0$ in $\Omega_F(0)$, $\vu_0\cdot \vn = 0$ on $\partial\Omega$, $\vu_0\cdot \vn = (\bm{l}_0 + \bm{\omega}_0\times y)\cdot \vn$ on $\partial\Omega_S(0)$
%	and
%	$$
%	\|\vu_0\|_{W^{2-2/p,p}(\Omega_F(0))} + \|\bm{l}_0\|_{\R^3} + \|\bm{\omega}_0\|_{\R^3} \le \delta,
%	$$
the problem (\ref{GFSI}) with the boundary conditions $(\ref{GFSI})_5$ replaced by the nonlinear slip condition (\ref{nonlin_slip}) admits a local in time unique strong solution
	\begin{equation*}
	\begin{aligned}
	& \vu \in L^p(0,T;\bm{W}^{2,p}(\Omega_F(\cdot))) \cap W^{1,p}(0,T;\bm{L}^p(\Omega_F(\cdot))),\\
	& \pi \in L^p(0,T;W^{1,p}(\Omega_F(\cdot))), \ \bm{l} \in W^{1,p}(0,T;\R^3), \ \bm{\omega} \in W^{1,p}(0,T;\R^3).
	\end{aligned}
	\end{equation*}
\end{theorem}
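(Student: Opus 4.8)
The plan is to reduce Theorem \ref{T_nonlin_slip} to Theorem \ref{T_G} by treating the nonlinear slip term $\alpha|\vu|\vu_{\vt}$ as a perturbation. First I would perform the same change of variables (cf. \ref{13}) as in the non-Newtonian case to transform the problem onto the fixed domain $\Omega_F(0)$; since this transformation conserves the slip condition at the fluid-solid interface and the zero normal component, the only new feature compared to Theorem \ref{T_G} is the replacement of the linear boundary term $\alpha\vu_{\vt}$ by the quadratic term $\alpha|\vu|\vu_{\vt}$. After the transformation the system has the form already analyzed for the generalized Newtonian fluid, with an additional nonlinear boundary contribution; the task is to show that this extra term can be absorbed into the fixed-point argument.

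The key point is to set up the fixed-point map exactly as in the proof of Theorem \ref{T_G}, now adding to the right-hand side the boundary forcing $\bm{g}(\vu) := -\alpha(|\vu|-|\vu^*|)\vu_{\vt} - \alpha|\vu^*|(\vu-\vu^*)_{\vt}$ type differences, and to estimate it in the trace space appearing in the maximal regularity framework, namely $L^p(0,T;W^{1-1/p,p}(\partial\Omega_S(0)))\cap W^{1/2-1/(2p),p}(0,T;L^p(\partial\Omega_S(0)))$. Here the hypothesis $p>5$ is essential: it gives $W^{2-2/p,p}(\Omega_F(0))\hookrightarrow C^1(\overline{\Omega_F(0)})$ and more importantly, via the trace and Sobolev embeddings in space–time, control of $\vu$ in $L^\infty(0,T;C(\partial\Omega_S(0)))$ and of $\nabla\vu$ on the boundary, so that products like $|\vu|\vu_{\vt}$ and their time derivatives are estimated by the $W^{2,1}_{p,p}$-norm with a constant that is small when $T$ is small (or when the radius of the ball in the fixed-point argument is small). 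The Lipschitz estimate
$$
\big\| |\vu|\vu_{\vt} - |\vu^*|\vu^*_{\vt}\big\| \le C\big(\|\vu\|+\|\vu^*\|\big)\|\vu-\vu^*\|
$$
in the relevant norms then yields contractivity on a small time interval, and one concludes exactly as before, using also that $\bm{g}(\vu_0)$ lies in the correct initial-trace space thanks to the compatibility condition (\ref{CN}) (note the $p>3$ clause in (\ref{CN}) supplies the needed compatibility at $t=0$, since for the nonlinear condition $|\vu_0|(\vu_0)_{\vt}$ is as regular as $(\vu_0)_{\vt}$ when $\vu_0\in C^1$).

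I would organize the steps as: (i) recall the fixed-domain reformulation and the maximal-regularity estimate for the linearized operator from the proof of Theorem \ref{T_G}; (ii) isolate the nonlinear slip term and rewrite the problem as $\mathcal{L}(\vu,\pi,\bm{l},\bm{\omega}) = \mathcal{N}(\vu,\pi,\bm{l},\bm{\omega}) + \mathcal{G}(\vu)$ where $\mathcal{G}$ collects the boundary nonlinearity; (iii) prove the boundedness and Lipschitz estimates for $\mathcal{G}$ in the appropriate space–time trace norms, with constants tending to $0$ as $T\to 0$; (iv) combine with the estimates for $\mathcal{N}$ already established and invoke the Banach fixed-point theorem on a small ball in $W^{2,1}_{p,p}(Q^T_F)\times\cdots$ for $T$ small; (v) transform back to the moving domain. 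The main obstacle I expect is step (iii): controlling the \emph{time regularity} of $|\vu|\vu_{\vt}$ in the fractional Sobolev norm $W^{1/2-1/(2p),p}(0,T;L^p(\partial\Omega_S(0)))$, since $s\mapsto|s|$ is only Lipschitz, not $C^1$, so one cannot naively differentiate in time; this must be handled by a fractional-seminorm estimate of the form $\big| |a|a - |b|b\big| \le 3\max(|a|,|b|)|a-b|$ applied pointwise in time together with the embedding $W^{2,1}_{p,p}\hookrightarrow C^{0,\gamma}([0,T];C(\partial\Omega_S(0)))$ valid for $p>5$, which is exactly where the restriction on $p$ is consumed.
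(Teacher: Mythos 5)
Your proposal takes essentially the same route as the paper: both move the nonlinear boundary term to the right-hand side and view the linear-slip problem of Theorems \ref{T_G} and \ref{T10} as the base case for a Banach fixed point, closing the contraction via the same algebraic decomposition of $|\tilde{\vu}^1|\tilde{\vu}^1_{\vt}-|\tilde{\vu}^2|\tilde{\vu}^2_{\vt}$. You spell out what the paper leaves terse (the precise trace spaces, where $p>5$ enters via $W^{2-2/p,p}\hookrightarrow C^1$, and the fractional-in-time estimate needed since $s\mapsto |s|$ is only Lipschitz), but this is elaboration of the same method, not a different one.
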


\section{Preliminaries and notations}
\setcounter{equation}{0}

In this section, we introduce the notation used throughout this paper, in particular some results concerning maximal regularity and $\mathcal{R}$-boundedness in Banach spaces.

The concept of the maximal regularity is classical and the main achievement for abstract theory goes back to  the  work of Ladyzhenskaya, Uraltzeva, Solonnikov \cite{LUS}, Da-Prato, Grisvard \cite{DG}, Amann \cite{Amann1} and Pr{\" u}ss \cite{P}.  The description of maximal $L^p$-regularity  by $\cal R$-boundedness made the possibility to prove the maximal regularity for a large class of parabolic boundary valued problems. The notion of  $\cal R$-sectoriality was defined  by Clement and Pr\"uss \cite{CP}. The fundamental result by Weis \cite{weis} about equivalency   of description for the maximal regularity in terms of vector-valued Fourier multipliers and ${\cal R}$-sectoriality was a breakthrough for the applications.

%The description of maximal $L^p$ regularity  by $\cal R$ boundedness made the possibility to prove the maximal regularity for a large class of parabolic boundary valued problems.
%The Weis multiplier theorem allows to deduce the $L^p$-maximal regularity to problems connected with the fluid dynamics.
%We refer to \cite{DeHiPr} and \cite{KuWe} for the comprehensible decription of the method.
Due to the work of Denk, Hieber, Pr\"uss  \cite{DHP}, the $\cal R$-sectoriality is nowadays one of the basic tool in this direction, together with the work by Kunstmann, Weis \cite{KW}.
The maximal $L^p$-regularity result for incompressible fluids can be found in work by Shibata, Shimizu \cite{SHS}, for the  non-Newtonian incompressible situation, it was established in \cite{BP} by Bothe, Pr\"uss.  %Maximal regularity for the compressible case was shown by Shibata, Enomoto \cite{EnSh}.

For Banach spaces $X$ and $Y$, we denote the space of all bounded, linear operators from $X$ to $Y$ by $\mathcal{L}(X,Y)$. The resolvent set of a linear operator $A$ is denoted by $\rho(A)$ and the domain of an operator $A$ is denoted by $\mathcal{D}(A)$. Whenever we consider $\mathcal{D}(A)$ as a Banach space, it is assumed to be equipped with the graph norm of $A$.

%Let $X$ be a complex Banach space and $A$ be a closed (unbounded) operator with domain $\mathcal{D}(A)$ in $X$. Let $f:[0,\infty) \to X$ be a measurable function.
We consider the following problem:
\begin{equation}
\label{cauchy}
\left\{
\begin{aligned}
%\begin{cases}
u'(t) + Au(t) &= f(t), \ t\ge 0\\
u(0) &= u_0 .
\end{aligned}
%\end{cases}
\right.
\end{equation}

\begin{definition}
	Let $p\in (1,\infty)$. We say that $A$ has the maximal $L^p$-regularity property on the interval $I = [0,T)$ with  $T \le \infty$ if there exists a constant $C>0$ such that for all $f\in L^p(I;X)$, there is a unique $u\in L^p(I;\mathcal{D}(A))$ with $u'\in L^p(I;X)$ satisfying (\ref{cauchy}) with $u_0= 0$ for almost every $t\in I$ and
	$$
	%\|u\|_{L^p(I;X)} + 
	\|u'\|_{L^p(I;X)} + \|Au\|_{L^p(I;X)} \le C \|f\|_{{L^p(I;X)}} .
	$$
\end{definition}
Note that in the above definition (cf. \cite{kunstmann_weis}), we do not have that $u\in L^p(I;X)$ which is slightly weaker than the usual one (e.g. in \cite{dore}, \cite{dore_venni}).

We also need to define the notion of UMD-space (unconditional difference martingale property). Actually we give here a property of UMD-spaces which is equivalent to the original definition (for more on this subject, see \cite{burkholder}, \cite{bourgain}). %The Hilbert transform $\mathcal{H}f$ of a measurable function $f$ is, whenever it exists, the limit as $\varepsilon\to 0^+$ and $T\to+\infty$ of
%$$
%\mathcal{H}_{\varepsilon,T}f(t) = \frac{1}{\pi}\int\displaylimits_{\varepsilon\le |s|\le T}{\frac{f(t-s)}{s} \mathrm{d}s}, \quad t\in\R .
%$$

\begin{definition}
A complex Banach space is said to be of class UMD if the Hilbert transform $\mathcal{H}$ is bounded in $L^p(\R;X)$ for all (or equivalently, for one) $p\in (1,\infty)$.
\end{definition}

%These spaces are also called of class $\mathcal{HT}$.
\begin{remark}
    \begin{itemize}

\item Any Hilbert space is in the class UMD.
\item If $X$ is a Banach space in the UMD-class, then $L^p(\Omega;X)$ for $\Omega\subset\R^n$ and $p\in (1,\infty)$ is also in the UMD-class.
\item Let us mention here that maximal regularity for non-UMD spaces (for example, non-reflexive Banach spaces $L^1$ and $L^{\infty}$) requires different argument and we may refer to works e.g. Ogawa, Shimizu \cite{OG}.
\end{itemize}
\end{remark}

Now we want to state an equivalent property to maximal regularity in terms of $\mathcal{R}$-boundedness of the resolvent of the operator. For further details on $\mathcal{R}$-boundedness, cf.\cite{weis}.

\begin{definition}
	A set $\mathcal{T}\subset\mathcal{L}(X,Y)$ is called $\mathcal{R}$-bounded if there is a constant $C>0$ such that for all $n\in\mathbb{N}$, $T_1, ... , T_n\in\mathcal{T}$ and $x_1, ..., x_n\in X$,
	$$
	\int\displaylimits_0^1{\Big\Vert\sum_{j=1}^n r_j(s)T_jx_j\Big\Vert_Y\mathrm{d}s}\le C \int\displaylimits_0^1{\big\Vert\sum_{j=1}^n r_j(s)x_j \big\Vert_X\mathrm{d}s}
	$$
	where $\{r_j\}_{j=1,...,n}$ is a sequence of independent $\{-1,1\}$-valued random variables on $[0,1]$.
	
	The smallest such $C$ is called $\mathcal{R}$-bound of $\mathcal{T}$, we denote it by $\mathcal{R}(\mathcal{T})$.
\end{definition}
%\begin{comment}
We also collect some useful properties of $\mathcal{R}$-boundedness which will be used later. For proof, see \cite[Remark 4.1.3, Proposition 4.1.6]{pruss}.

\begin{proposition}
	\label{P2}
	\item
	\begin{enumerate}
		\item If $\mathcal{T}\subset \mathcal{L}(X,Y)$ is $\mathcal{R}$-bounded, then it is uniformly bounded with
		$$
		\sup\{|T|:T\in \mathcal{T}\} \le \mathcal{R}(\mathcal{T}).
		$$
		\item If $X$ and $Y$ are Hilbert spaces, a set $\mathcal{T}\subset \mathcal{L}(X,Y)$ is $\mathcal{R}$-bounded if and only if it is bounded.
		\item Let $X, Y$ be Banach spaces and $\mathcal{T}, \mathcal{S}\subset \mathcal{L}(X,Y)$ be $\mathcal{R}$-bounded. Then $\mathcal{T}+\mathcal{S}$ is $\mathcal{R}$-bounded as well and
		$$
		\mathcal{R}\left( \mathcal{T}+\mathcal{S}\right) \le \mathcal{R}(\mathcal{T}) + \mathcal{R}(\mathcal{S}).
		$$
		\item Let $X, Y, Z$ be Banach spaces and $\mathcal{T}\subset\mathcal{L}(X,Y)$ and $\mathcal{S}\subset\mathcal{L}(Y,Z)$ be $\mathcal{R}$-bounded. Then $\mathcal{S}\mathcal{T}$ is also $\mathcal{R}$-bounded and
		$$
		\mathcal{R}(\mathcal{S}\mathcal{T}) \le \mathcal{R}(\mathcal{S})\mathcal{R}(\mathcal{T}).
		$$
	\end{enumerate}
\end{proposition}
%\end{comment}
To this end, let us introduce another notion for the sake of being in line with the references. Let us denote the sector in the complex plane
$$
\Sigma_\theta:=\{\lambda\in\mathbb{C}\backslash \{0\}: |\arg \lambda|< \theta\}, \quad \theta\in(0,\pi).
$$

\begin{definition}${\cite[page \ 417]{weis1}}.$
	Let $X$ be a complex Banach space and $A:\mathcal{D}(A)\subseteq X \to X$ be a densely defined, closed, linear operator. $A$ is said to be sectorial if $(0,\infty)\subset \rho(A)$, has dense range and there exists some $\theta>0$ such that
	$$
	|\lambda(\lambda I - A)^{-1}|\le C, \quad \lambda\in \Sigma_{\theta}.
	$$
for some constant $C<\infty$. Moreover, $A$ is called $\mathcal{R}$-sectorial if $\{\lambda(\lambda I - A)^{-1}: \lambda\in \Sigma_{\theta} \}$ is $\mathcal{R}$-bounded.
	
	The $\mathcal{R}$-angle of $A$ is defined by
	$$
	\theta_r(A):= \inf\{\theta\in (0,\pi): \mathcal{R}(\{\lambda(\lambda I - A)^{-1}: \lambda\in \Sigma_{\pi-\theta} \})<\infty\}.
	$$
\end{definition}

The next characterization which is due to Weis \cite[Theorem 4.2]{weis}, is the key tool to prove the existence of a strong solution of (\ref{1}).

\begin{theorem}
	\label{T5}
	Let $X$ be an UMD-space and $A$ be a generator of a bounded analytic semigroup. Then %$A$ has maximal $L^p$-regularity if and only if there exists a $\theta>0$ such that
%	$$
%	\mathcal{R}\left( \{\lambda(\lambda I-A)^{-1} : \lambda\in \Sigma_{\frac{\pi}{2} + \theta}\}\right) <\infty.
%	$$
%	In other words,
$A$ has maximal $L^p$-regularity if and only if $A$ is $\mathcal{R}$-sectorial of angle $\theta_r(A)>\pi/2$.
\end{theorem}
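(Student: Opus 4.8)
\medskip
\noindent\textbf{Proof proposal.} This is the Weis characterisation, and the plan is to recast maximal $L^p$-regularity as an operator-valued Fourier multiplier problem on the real line and then, for the sufficiency direction, to invoke the operator-valued Mikhlin--H\"ormander theorem on UMD spaces, while for the necessity direction to probe the maximal-regularity operator with randomised modulated bump functions. As a preliminary reduction I would pass to the problem $u'+Au=f$ posed on all of $\R$: on a finite interval the statement is insensitive to a shift $A\rightsquigarrow A+\omega$, and for $I=[0,\infty)$ one reduces to the case $0\in\rho(A)$; extending $f\in L^p(I;X)$ by zero, maximal $L^p$-regularity on $I$ is then equivalent to boundedness on $L^p(\R;X)$ of the solution operator $f\mapsto Au$. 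Taking the Fourier transform in $t$ the equation becomes $(i\xi I+A)\widehat u(\xi)=\widehat f(\xi)$; since a generator of a bounded analytic semigroup is sectorial of angle strictly larger than $\pi/2$ (in the sense of the Definition above), the punctured imaginary axis lies in $\rho(A)$, so $Au$ corresponds to the symbol
\[
M(\xi)=A(i\xi I+A)^{-1}=I-\mu(\mu I-A)^{-1}\big|_{\mu=-i\xi},\qquad \xi\in\R\setminus\{0\},
\]
which is real-analytic on $\R\setminus\{0\}$. Thus $A$ has maximal $L^p$-regularity if and only if $M$ is a bounded Fourier multiplier on $L^p(\R;X)$ for that $p$ (equivalently for all $p\in(1,\infty)$).

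For sufficiency I would invoke the operator-valued multiplier theorem of Weis \cite{weis} (see also \cite{DHP}): if $X$ is UMD and $M\in C^1(\R\setminus\{0\};\mathcal L(X))$ is such that both $\{M(\xi):\xi\ne0\}$ and $\{\xi M'(\xi):\xi\ne0\}$ are $\mathcal R$-bounded, then $M$ is an $L^p(\R;X)$-multiplier for every $p\in(1,\infty)$. It then suffices to verify the two $\mathcal R$-bounds for our $M$. Writing $\mu=-i\xi$ one has $M(\xi)=I-\mu(\mu I-A)^{-1}$, and differentiating the resolvent gives
\[
\xi M'(\xi)=-\mu(\mu I-A)^{-1}+\big(\mu(\mu I-A)^{-1}\big)^{2}.
\]
Because $A$ is $\mathcal R$-sectorial of the required angle, the family $\{\mu(\mu I-A)^{-1}:\mu\in\Sigma_\theta\}$ with $\theta>\pi/2$ is $\mathcal R$-bounded and contains the restriction to the punctured imaginary axis; invoking Proposition~\ref{P2} (stability of $\mathcal R$-boundedness under sums, products and passage to sub-families, together with the trivial $\mathcal R$-boundedness of $\{I\}$) we conclude that $\{M(\xi)\}$ and $\{\xi M'(\xi)\}$ are $\mathcal R$-bounded, hence $M$ is a multiplier and $A$ has maximal $L^p$-regularity. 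The contribution of a nonzero initial value, handled via the analytic semigroup and the real-interpolation description of the trace space, is not needed for the definition of maximal regularity used here, where $u_0=0$.

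For necessity, suppose $A$ has maximal $L^p$-regularity; the analyticity of the semigroup is a standing hypothesis (and in any case is classical, de Simon), so it only remains to produce the $\mathcal R$-bound. I would use that the maximal-regularity operator $S$ (which is the above Fourier multiplier) commutes with translations, and probe it with test functions $t\mapsto \phi(t)\sum_{j=1}^{n}\varepsilon_j e^{i\nu_j t}x_j$, where $\phi$ is a fixed bump, $\nu_j\in\R\setminus\{0\}$, $x_j\in X$, and $\varepsilon_j\in\{-1,1\}$; applying $S$ reproduces, up to a harmless localisation error, the function $\phi(t)\sum_j \varepsilon_j e^{i\nu_j t}M(\nu_j)x_j$. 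Boundedness of $S$ on $L^p(\R;X)$ together with Kahane's contraction inequality (to average over the signs $\varepsilon_j$) then yields $\mathcal R$-boundedness of $\{M(\nu):\nu\in\R\setminus\{0\}\}$, equivalently of $\{\mu(\mu I-A)^{-1}:\mu\in i\R\setminus\{0\}\}$. Finally, a Neumann-series perturbation --- expanding $(\mu' I-A)^{-1}=\sum_{k\ge0}(\mu-\mu')^k(\mu I-A)^{-(k+1)}$ for $|\mu-\mu'|\le c|\mu|$ and applying Proposition~\ref{P2} once more --- upgrades $\mathcal R$-boundedness on the imaginary axis to $\mathcal R$-boundedness on a full sector $\Sigma_\theta$ with $\theta>\pi/2$, i.e. $A$ is $\mathcal R$-sectorial with the required angle.

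The step I expect to be the real obstacle is the operator-valued Mikhlin--H\"ormander theorem used in the sufficiency direction: its proof draws essentially on the UMD hypothesis (boundedness of the vector-valued Hilbert transform, hence of the Littlewood--Paley and Riesz projections), a dyadic decomposition of the symbol, and a randomisation argument converting the $\mathcal R$-bound of $\{M(\xi),\xi M'(\xi)\}$ into a genuine $L^p(\R;X)$ estimate for the reassembled pieces. The secondary difficulty is making the probing in the necessity step rigorous, i.e. controlling the localisation errors and justifying the passage that turns the $L^p$ bound into an $\mathcal R$-bound. Everything else --- the symbol computation and the permanence arguments --- is routine once Weis's multiplier theorem and Proposition~\ref{P2} are in hand.
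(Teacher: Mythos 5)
The paper does not actually prove this statement: it is quoted as a known result of Weis \cite[Theorem 4.2]{weis} (see also \cite{DHP,KW}) and used as a black box, so there is no internal proof to compare against. Your sketch follows Weis's original argument for exactly this theorem — reduction of maximal regularity (with $u_0=0$, after the shift/invertibility normalisation) to boundedness of the operator-valued multiplier $A(i\xi I+A)^{-1}$, the operator-valued Mikhlin theorem on UMD spaces combined with the $\mathcal{R}$-bounds on $M(\xi)$ and $\xi M'(\xi)$ for sufficiency, and randomised probing plus a Neumann-series extension of the $\mathcal{R}$-bound from the imaginary axis to a larger sector for necessity — so it is the same route as the cited source, with the two steps you flag as the genuine work (the multiplier theorem and the rigorous extraction of the $\mathcal{R}$-bound) being precisely the content of that reference.
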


\begin{remark}
\label{re1}
One needs either the operator $A$ is invertible, i.e. $0\in \rho(A)$ (or, equivalently $A$ generates a negative exponential type semigroup) or, $T< \infty$ in order to have $u\in L^p(I;X)$. Otherwise $u\in L^p_{\text{loc}}(I;X)$ only.
\end{remark}

\begin{remark}
    \begin{itemize}

\item Recall that $A$ generates a bounded analytic semigroup in $X$ if and only if $\{\lambda(\lambda I-A)^{-1} : \lambda\in \Sigma_{\frac{\pi}{2} + \theta}\}$ is bounded for some $\theta>0$, i.e. $A$ is sectorial (cf. \cite[Theorem 4.6, Section II, page 101]{engel}).

\item If $X$ is an UMD-space and the operator $A$ has bounded imaginary powers, then $A$ has maximal $L^p$-regularity, by the well-known Dore-Venni result \cite[Theorem 3.2]{dore_venni}.
\end{itemize}
\end{remark}

The above characterization in Theorem \ref{T5} provides a convenient tool to check maximal regularity for concrete operators, as we will show in the next section. 
%\begin{comment}
We will also need some perturbation result which is due to Weis and Kunstmann (although we did not find 
%the proof and 
its exact reference; A slight variation has been proved in \cite[Corollary 2]{weis1}).
%there is a slight variation, mentioned in \cite[Corollary, page 207]{weis2} which could be more suitable in our context, but its exact reference and proof we did not find).

\begin{theorem}${\cite[Proposition 1.18]{MT17}}.$
\label{T6}
Let $A$ be $\mathcal{R}$-sectorial in a Banach space $X$ and
$
\Sigma_{\varepsilon_0, \gamma_0} \subset \rho(A) 
$ for some $\varepsilon_0 \in (0, \pi/2), \gamma_0 \ge 0$ and
$$
\mathcal{R}\left( \left\lbrace \lambda (\lambda I - A)^{-1}: \lambda \in \Sigma_{\varepsilon_0, \gamma_0} \right\rbrace \right) \le a < \infty,
$$
where the sector is defined by,
$$
\Sigma_{\theta, \gamma} = \{\lambda \in \mathbb{C} \backslash \{0\}: |\arg\lambda | \le \pi - \theta, \; |\lambda| > \gamma\}.
$$
%generates a bounded analytic semigroup on an UMD-space $X$ and 
Let $B$ be a linear operator satisfying $\mathcal{D}(B) \supset \mathcal{D}(A)$ and for $x\in \mathcal{D}(A)$,
$$
\|Bx\|\le \delta_1\|Ax\|+ \delta_2\|x\|, \quad \text{ with } \delta_1, \delta_2  \ge 0.
$$
If $\delta_1 < 1/a$, then there exists $\gamma_1\ge \gamma_0$ such that
$$
\mathcal{R}\left( \left\lbrace A (\lambda I -\left( A+B\right) )^{-1}: \lambda \in \Sigma_{\varepsilon_0, \gamma_1} \right\rbrace \right) < \infty.
$$
%If $A$ has maximal $L^p$-regularity and $a$ is small enough, e.g. $a<(1+C)^{-2}$ where $C$ is the $\mathcal{R}$-bound of $\{A(\lambda I-A)^{-1}:\lambda\in\Sigma_{\pi-\theta}\}$, then $A+B$ has maximal $L^p$-regularity on $[0,T]$ for all $T<\infty$.
\end{theorem}
\begin{comment}
\begin{theorem}${\cite[Corollary \ 2]{weis1}}.$
	Let $A$ be an $\mathcal{R}$-sectorial operator in $X$ and $\theta>\theta_r(A)$. Let $B$ be a linear operator satisfying $\mathcal{D}(B) \supset \mathcal{D}(A)$ and
	$$
	\|Bx\|\le a\|Ax\|+b\|x\|, \quad x\in \mathcal{D}(A)
	$$
	for some $a,b\ge 0$. If $a<\left( \tilde{M}_A(\theta)\tilde{R}_A(\theta)\right) ^{-1}$, then $A+B+\lambda$ is also $\mathcal{R}$-sectorial for any $\lambda> b M_A(\theta)\tilde{R}_A(\theta)/(1-a\tilde{M}_A(\theta)\tilde{R}_A(\theta))$ where
	$$
	M_A(\theta) :=\sup\{\|\lambda(\lambda I-A)^{-1}\|:\lambda\in\Sigma_{\pi-\theta}\}, \quad \tilde{M}_A(\theta):= \sup\{\|A(\lambda I-A)^{-1}\|:\lambda\in\Sigma_{\pi-\theta}\}
	$$
	and
	$$
	\tilde{R}_A(\theta):=\mathcal{R}\left( \{A(\lambda I-A)^{-1}:\lambda\in\Sigma_{\pi-\theta}\}\right) .
	$$
\end{theorem}
\end{comment}
We conclude this section by stating the above theorem in terms of the Cauchy problem (\ref{cauchy}) (see \cite[Therem 2.4]{dore}).
% which deals with the maximal $L^p$-regularity of the Cauchy problem (\ref{cauchy}).

\begin{proposition}
\label{P_cauchy}
Suppose $X$ be a Banach space of class UMD, $p\in(1,\infty)$ and let $A$ be a $\mathcal{R}$-sectorial operator with $\theta_r(A)>\pi/2$. Moreover, the semigroup generated by $A$ has negative exponential type. Then (\ref{cauchy}) has a unique solution $u\in W^{1,p}(0,\infty;X)\cap L^p(0,\infty;\mathcal{D}(A))$ if and only if $f\in L^p(0,\infty;X)$ and $u_0\in \left(X ,\mathcal{D}(A)\right) _{1-\frac{1}{p},p}$.
\end{proposition}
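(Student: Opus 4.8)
The statement to prove is Proposition~\ref{P_cauchy}, which is a restatement of the maximal $L^p$-regularity characterization (Theorem~\ref{T5}) plus the identification of the initial data space as a real interpolation space, under the extra hypothesis that the semigroup has negative exponential type. Here is how I would organize the argument.

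\textbf{Plan.} The proof splits into the ``if'' direction (existence and regularity of the solution given $f\in L^p(0,\infty;X)$ and $u_0\in(X,\mathcal{D}(A))_{1-1/p,p}$) and the ``only if'' direction (necessity of these conditions). For the ``if'' direction I would decompose $u = v + w$, where $v$ solves the inhomogeneous problem with zero initial data and $w$ solves the homogeneous problem $w' + Aw = 0$, $w(0) = u_0$. For $v$: since $X$ is UMD and $A$ is $\mathcal{R}$-sectorial with $\theta_r(A) > \pi/2$, Theorem~\ref{T5} gives that $A$ has maximal $L^p$-regularity on $[0,\infty)$, so $v \in L^p(0,\infty;\mathcal{D}(A))$ with $v'\in L^p(0,\infty;X)$ and the a priori estimate holds; moreover, because the semigroup has negative exponential type (equivalently $0\in\rho(A)$), Remark~\ref{re1} ensures additionally $v\in L^p(0,\infty;X)$, so indeed $v\in W^{1,p}(0,\infty;X)\cap L^p(0,\infty;\mathcal{D}(A))$. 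For $w$: the negative exponential type of the analytic semigroup $(e^{-tA})_{t\ge0}$ gives the standard characterization that $t\mapsto e^{-tA}u_0$ lies in $W^{1,p}(0,\infty;X)\cap L^p(0,\infty;\mathcal{D}(A))$ if and only if $u_0$ belongs to the real interpolation space $(X,\mathcal{D}(A))_{1-1/p,p}$ — this is the classical trace theorem for the maximal regularity class (the trace at $t=0$ of functions in that intersection space is exactly $(X,\mathcal{D}(A))_{1-1/p,p}$), which I would cite from \cite{dore} as indicated. Adding $v$ and $w$ gives the desired solution, and uniqueness follows from uniqueness for the homogeneous problem (the only solution of $u'+Au=0$, $u(0)=0$ in the maximal regularity class is $0$, by a Laplace transform / semigroup argument).

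\textbf{Only if direction.} Conversely, suppose $u\in W^{1,p}(0,\infty;X)\cap L^p(0,\infty;\mathcal{D}(A))$ solves \eqref{cauchy}. Then $f = u' + Au \in L^p(0,\infty;X)$ is immediate. For the initial data: $u$ lies in the maximal regularity class on $[0,\infty)$, and the trace theorem for this class (again \cite[Theorem 2.4]{dore} or the standard reference on interpolation, e.g. the mixed derivative / trace result) identifies the space of admissible traces $u(0)$ with $(X,\mathcal{D}(A))_{1-1/p,p}$; hence $u_0\in(X,\mathcal{D}(A))_{1-1/p,p}$. This direction is essentially a citation of the abstract trace theorem, so little is needed beyond invoking it correctly.

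\textbf{Main obstacle.} The only genuinely delicate point is the precise bookkeeping on the half-line $I=[0,\infty)$ rather than a bounded interval: maximal regularity on $[0,\infty)$ in the sense of Definition~1 of the excerpt only guarantees $u'\in L^p$ and $Au\in L^p$, not $u\in L^p$, so one must use the hypothesis that the semigroup has negative exponential type (equivalently $0\in\rho(A)$, cf. Remark~\ref{re1}) to upgrade $u$ itself to $L^p(0,\infty;X)$ and to make the decay of $e^{-tA}u_0$ strong enough that the homogeneous part also lies in the full space. Everything else — the application of Theorem~\ref{T5}, the splitting into homogeneous and inhomogeneous parts, and the trace characterization — is standard, and I would keep the write-up short, citing \cite{dore}, \cite{dore_venni}, and the interpolation/trace theory for the remaining technical facts.
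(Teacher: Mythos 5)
The paper gives no proof for Proposition~\ref{P_cauchy}; it is stated as a corollary of the preceding theory and supported only by the citation \cite[Theorem 2.4]{dore}. Your proposal supplies the standard argument behind that citation, and it is correct: split $u$ into the inhomogeneous part with zero initial data, handled by Theorem~\ref{T5}, and the homogeneous part $w(t)=e^{-tA}u_0$, handled by the classical characterization that $u_0\in(X,\mathcal{D}(A))_{1-1/p,p}$ if and only if $t\mapsto Ae^{-tA}u_0\in L^p(0,\infty;X)$; then use the negative exponential type hypothesis (equivalently $0\in\rho(A)$, cf.~Remark~\ref{re1}) to upgrade both pieces to full membership in $W^{1,p}(0,\infty;X)\cap L^p(0,\infty;\mathcal{D}(A))$ on the half-line. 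The ``only if'' direction via the trace theorem for the maximal regularity class is also right, as is the uniqueness argument. One minor citation issue: the trace characterization of $(X,\mathcal{D}(A))_{1-1/p,p}$ as the trace space of $W^{1,p}(0,\infty;X)\cap L^p(0,\infty;\mathcal{D}(A))$ is more properly attributed to Lions--Peetre, Da Prato--Grisvard, or standard interpolation references such as Amann or Lunardi, rather than to \cite{dore} (which is primarily the half-line transference result under exponential decay). That is a bookkeeping detail, not a gap. Since the paper leaves this proof implicit, your write-up is an appropriate reconstruction rather than a genuinely different route.
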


\begin{comment}
\begin{theorem}
	\label{T7}
$\cite[Theorem 2.4]{dore}$	Let $X$ be a Banach space and $A$ be a linear, closed, densely defined operator in $X$. If there is maximal $L^p$-regularity on the interval $[0,T]$ for (\ref{cauchy}) and the semigroup generated by $A$ has negative exponential type, then there is maximal $L^p$-regularity on $[0,\infty)$.
\end{theorem}

\begin{remark}
	Once we know that $A$ generates a semigroup $(T(t))_{t\ge 0}$ on the Banach space $X$, we can consider the following initial value Cauchy problem:
	\begin{equation*}
	\begin{cases}
	u'(t) + Au(t) &= 0, \ t\ge 0\\
	u(0) &= u_0.
	\end{cases}
	\end{equation*}
	It is known that the solution $u$ is given by $u(t) = T(t)u_0, t\ge 0$. This solution $u$ belongs to $L^p(0,\infty;X)$ if and only if the initial value $u_0$ belongs to the real interpolation space $(X,\mathcal{D}(A))_{1-\frac{1}{p},p}$.
\end{remark}
\end{comment}

%\section{Change of variables}
%\label{S4}
%\setcounter{equation}{0}

\section{Linear problem}
\label{S5}
\setcounter{equation}{0}

After changing the full non-linear system (\ref{1}) on a fixed domain (see Appendix), we first consider the corresponding linearized problem. Assuming all the right hand side terms in (\ref{fixed_FSI}) are given fixed data, it reduces to the following system (for notational convenience, in this section, we omit the tilda on the variables):

\begin{equation}
\label{2}
\begin{cases}
\begin{aligned}
\partial_t \vu - \div \ \sigma (\vu,\pi) &= \bm{f} \ &&\text{ in } \ \Omega_F(0)\times (0,T),\\
\div \ \vu &= \div \ \bm{h} \ &&\text{ in } \ \Omega_F(0)\times (0,T),\\
\vu &=\bm{0} \ &&\text{ on } \ \partial \Omega \times (0,T),\\
\vu\cdot \vn&=(\bm{l} + \bm{\omega} \times \bm{y})\cdot \vn \ &&\text{ on } \ \partial \Omega_S(0) \times (0,T),\\ 2\left[(\DT\vu)\vn\right]_{\vt}+\alpha\vu_{\vt}&=\alpha(\bm{l} + \bm{\omega} \times \bm{y})_{\vt} \ &&\text{ on } \ \partial \Omega_S(0) \times (0,T),\\
m \bm{l}' &= - \int\displaylimits_{\partial \Omega_S(0)}{\sigma (\vu,\pi)\vn} + \bm{g}_1, \ && \ t\in(0,T),\\
J(0) \bm{\omega}' &= - \int\displaylimits_{\partial \Omega_S(0)}{\bm{y}\times \sigma (\vu,\pi)\vn}+\bm{g}_2, \ && \ t\in (0,T),\\
\vu(0) &= \vu_0 \ &&\text{ in } \ \Omega_F(0),\\
\bm{l}( 0) = \bm{l}_0, &\quad \bm{\omega}(0) = \bm{\omega}_0 .
\end{aligned}
\end{cases}
\end{equation}

We want to re-formulate the system (\ref{2}) in the form:
$$
z'(t)=Az(t)+ f(t), \quad z(0)=z_0
$$
or equivalently, to approach via semigroup theory, we consider the corresponding resolvent problem. First we treat the system with divergence-free condition and then return to the full inhomogeneous divergence condition.

As in the classical approach, we need to eliminate the pressure from both the fluid and the structure equations. The standard way to eliminate pressure from the fluid equations is to invoke the Helmholtz projection (cf. \cite{kato}).
But we also decompose the velocity field into $\mathbb{P}\vu$ and $(I_3-\mathbb{P})\vu$ which is crucial since the pressure which is eliminated from the fluid equations using the projector $\mathbb{P}$, also appears in the structure equations.

\subsection{Resolvent problem}
Given $\lambda\in\mathbb{C}, \bm{f}\in \mathbf{L}^q(\Omega_F(0))$ and $(\mathbf{g}_1,\mathbf{g}_2)\in \mathbb{C}^3\times \mathbb{C}^3$, consider the system
\begin{equation}
\label{R}
\begin{cases}
\begin{aligned}
\lambda\vu - \Delta \vu+\nabla \pi &= \bm{f} \ &&\text{ in } \ \Omega_F(0),\\
\div \ \vu &= 0 \ &&\text{ in } \ \Omega_F(0),\\
\vu &=\bm{0} \ &&\text{ on } \ \partial \Omega, \\
\vu\cdot \vn&=(\bm{l} + \bm{\omega} \times \bm{y})\cdot \vn \ &&\text{ on } \ \partial \Omega_S(0),\\ 2\left[(\DT\vu)\vn\right]_{\vt}+\alpha\vu_{\vt}&=\alpha(\bm{l} + \bm{\omega} \times \bm{y})_{\vt} \ &&\text{ on } \ \partial \Omega_S(0),\\
\lambda m \bm{l} &= - \int\displaylimits_{\partial \Omega_S(0)}{\sigma (\vu,\pi)\vn} + \bm{g}_1,\\
\lambda J(0) \bm{\omega} &= - \int\displaylimits_{\partial \Omega_S(0)}{\bm{y}\times \sigma (\vu,\pi)\vn}+\bm{g}_2.
\end{aligned}
\end{cases}
\end{equation}

The following existence result governing the steady fluid equations is required, in order to reformulate the fluid part in the above system.

\begin{proposition}
Let $q\in (1,\infty)$ and $\alpha \ge 0$ be as in (\ref{alpha}).	Given $(\bm{l}, \bm{\omega})\in \mathbb{C}^3\times \mathbb{C}^3$, there exists a unique solution $(\bm{v},\psi)\in \bm{W}^{2,q}(\Omega_F(0))\times W^{1,q}(\Omega_F(0))$ of the following Stokes problem
	\begin{equation}
	\label{3}
	\begin{cases}
	\begin{aligned}
	- \Delta \bm{v}+\nabla \psi &= \bm{0} \ &&\text{ in } \ \Omega_F(0),\\
	\div \ \bm{v} &= 0 \ &&\text{ in } \ \Omega_F(0),\\
	\bm{v}&=\bm{0} \ &&\text{ on } \ \partial \Omega,\\
	\bm{v}\cdot \vn&=(\bm{l} + \bm{\omega} \times \bm{y})\cdot \vn \ &&\text{ on } \ \partial \Omega_S(0),\\ 2\left[(\DT\bm{v})\vn\right]_{\vt}+\alpha\bm{v}_{\vt}&=\alpha(\bm{l} + \bm{\omega} \times \bm{y})_{\vt} \ &&\text{ on } \ \partial \Omega_S(0) .
	\end{aligned}
	\end{cases}
	\end{equation}
\end{proposition}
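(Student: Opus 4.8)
The plan is to subtract off a suitable lifting of the rigid boundary velocity so as to reduce (\ref{3}) to a stationary Stokes system with homogeneous Dirichlet condition on $\partial\Omega$, homogeneous impermeability and an inhomogeneous tangential Robin condition on $\partial\Omega_S(0)$, and an $\bm{L}^q$ right-hand side, and then to appeal to the $L^q$-solvability theory for the stationary Stokes operator with Navier-type boundary conditions (the $\lambda=0$ case of the resolvent problem for the operator $A_q$ from (\ref{SO})). To construct the lifting, observe that $\bm{b}(\bm{y}):=\bm{l}+\bm{\omega}\times\bm{y}$ is affine with $\div\bm{b}=0$; choosing $\chi\in C^\infty_c(\R^3)$ with $\chi\equiv 1$ near $\overline{\Omega_S(0)}$ and $\operatorname{supp}\chi\cap\partial\Omega=\emptyset$, one has $\int_{\Omega_F(0)}\nabla\chi\cdot\bm{b}\,\mathrm dx=\int_{\partial\Omega_S(0)}\bm{b}\cdot\vn\,\mathrm ds=\int_{\Omega_S(0)}\div\bm{b}\,\mathrm dx=0$, so the Bogovski\u{i} operator yields $\bm{z}\in\bm{W}^{2,q}(\Omega_F(0))$ with $\div\bm{z}=\nabla\chi\cdot\bm{b}$ and $\bm{z}=\bm{0}$ near $\partial\Omega$. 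Then $\bm{V}:=\chi\bm{b}-\bm{z}$ is divergence free, lies in $\bm{W}^{2,q}(\Omega_F(0))$, vanishes near $\partial\Omega$, satisfies $\bm{V}\cdot\vn=\bm{b}\cdot\vn$ on $\partial\Omega_S(0)$, and $\|\bm{V}\|_{\bm{W}^{2,q}}\le C(|\bm{l}|+|\bm{\omega}|)$.

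Writing $\bm{w}:=\bm{v}-\bm{V}$, problem (\ref{3}) becomes
\begin{equation*}
\begin{cases}
-\Delta\bm{w}+\nabla\psi=\Delta\bm{V}=:\bm{F} & \text{in }\Omega_F(0),\\
\div\bm{w}=0 & \text{in }\Omega_F(0),\\
\bm{w}=\bm{0} & \text{on }\partial\Omega,\\
\bm{w}\cdot\vn=0 & \text{on }\partial\Omega_S(0),\\
2[(\DT\bm{w})\vn]_{\vt}+\alpha\bm{w}_{\vt}=\alpha\bm{b}_{\vt}-2[(\DT\bm{V})\vn]_{\vt}-\alpha\bm{V}_{\vt}=:\bm{G} & \text{on }\partial\Omega_S(0),
\end{cases}
\end{equation*}
with $\bm{F}\in\bm{L}^q(\Omega_F(0))$ and, by the trace theorem together with the regularity assumption (\ref{alpha}) on $\alpha$, $\bm{G}\in\bm{W}^{1-1/q,q}(\partial\Omega_S(0))$. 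For $q=2$ I would solve this variationally: on $\mathcal V:=\{\bm{\varphi}\in\bm{W}^{1,2}(\Omega_F(0)):\div\bm{\varphi}=0,\ \bm{\varphi}=\bm{0}\text{ on }\partial\Omega,\ \bm{\varphi}\cdot\vn=0\text{ on }\partial\Omega_S(0)\}$ the symmetric form $a(\bm{w},\bm{\varphi}):=2\int_{\Omega_F(0)}\DT\bm{w}:\DT\bm{\varphi}\,\mathrm dx+\int_{\partial\Omega_S(0)}\alpha\,\bm{w}_{\vt}\cdot\bm{\varphi}_{\vt}\,\mathrm ds$ is continuous and, since every $\bm{\varphi}\in\mathcal V$ vanishes on the nonempty relatively open set $\partial\Omega\subset\partial\Omega_F(0)$, coercive by Korn's inequality; Lax--Milgram gives a unique $\bm{w}\in\mathcal V$ and the pressure $\psi\in L^2(\Omega_F(0))$ is recovered via de Rham's theorem and the Ne\v cas inequality. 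Interior and boundary elliptic regularity for the Stokes system — in the interior, near the Dirichlet part $\partial\Omega$ and near the Robin part $\partial\Omega_S(0)$, these two pieces being disjoint because $\mathrm{dist}(\Omega_S(0),\partial\Omega)\ge\beta>0$ — then upgrades the solution to $(\bm{w},\psi)\in\bm{W}^{2,2}\times W^{1,2}$. For general $q\in(1,\infty)$, either one bootstraps the $q=2$ solution by duality and difference quotients, or one invokes the known $L^q$-theory for the stationary Stokes system with mixed Dirichlet/Navier boundary conditions ($0$ lying in the resolvent set of $A_q$ thanks to the Dirichlet component); this gives $(\bm{w},\psi)\in\bm{W}^{2,q}(\Omega_F(0))\times W^{1,q}(\Omega_F(0))$ with $\|\bm{w}\|_{\bm{W}^{2,q}}+\|\psi\|_{W^{1,q}}\le C(\|\bm{F}\|_{\bm{L}^q}+\|\bm{G}\|_{\bm{W}^{1-1/q,q}})$. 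Undoing $\bm{v}=\bm{w}+\bm{V}$ yields the asserted solution with $\|\bm{v}\|_{\bm{W}^{2,q}}+\|\psi\|_{W^{1,q}}\le C(|\bm{l}|+|\bm{\omega}|)$.

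Uniqueness follows from the energy identity: if $(\bm{v},\psi)$ solves (\ref{3}) with $\bm{l}=\bm{\omega}=\bm{0}$, then $\bm{v}\in\mathcal V$ and, testing with $\bm{v}$ and integrating by parts (using $\div\bm{v}=0$ and $\bm{v}\cdot\vn=0$ on $\partial\Omega_F(0)$), one obtains $2\|\DT\bm{v}\|_{L^2(\Omega_F(0))}^2+\int_{\partial\Omega_S(0)}\alpha|\bm{v}_{\vt}|^2\,\mathrm ds=0$; since $\alpha\ge0$, $\DT\bm{v}\equiv\bm{0}$, so $\bm{v}$ is an infinitesimal rigid motion, and $\bm{v}=\bm{0}$ on $\partial\Omega$ forces $\bm{v}\equiv\bm{0}$ and hence $\nabla\psi\equiv\bm{0}$ — directly for $q\ge6/5$ (where $\bm{W}^{2,q}\hookrightarrow\bm{W}^{1,2}$) and by a duality argument using the $L^{q'}$-solvability just established for the remaining range. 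I expect the main obstacle to be precisely the $L^q$-regularity step for $q\ne2$: one has to check that the low-regularity hypothesis (\ref{alpha}) on the variable friction coefficient $\alpha$ is exactly what is needed for $\alpha\,\bm{W}^{1-1/q,q}\subset\bm{W}^{1-1/q,q}$ (so that $\bm{G}$ has the right trace regularity) and for the localization of the Stokes estimates near $\partial\Omega_S(0)$ to go through, while the mixed Dirichlet/Navier structure produces no compatibility obstruction at the junction of the two boundary components since they are positively separated.
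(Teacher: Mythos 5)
Your proposal is correct, and it supplies the details that the paper itself omits: for this proposition the paper gives no argument but simply cites \cite[Theorem~2.1]{AACG} (where the Navier slip condition is imposed on the \emph{whole} boundary) and asserts that ``the same technique can be adapted'' to the mixed Dirichlet/Navier case. Your construction — a divergence-free Bogovski\u{\i}-type lifting of $\bm{l}+\bm{\omega}\times\bm{y}$ supported away from $\partial\Omega$, reduction to a Stokes problem with homogeneous impermeability and inhomogeneous tangential Robin data on $\partial\Omega_S(0)$, Lax--Milgram plus Korn for $q=2$, then $L^q$-regularity — is exactly the adaptation the authors have in mind, and it correctly identifies the two points where the mixed structure helps: the Dirichlet part removes the rigid-motion kernel (giving coercivity and uniqueness without normalization), and the positive separation $\mathrm{dist}(\Omega_S(0),\partial\Omega)\ge\beta>0$ means the two boundary components can be treated by disjoint localizations, so no corner/compatibility issue arises. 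You also handle the small technicalities carefully (compatibility of the normal flux, the threshold $q\ge 6/5$ for the direct energy argument and the duality fallback below it, and the role of (\ref{alpha}) as a multiplier condition on $W^{1-1/q,q}(\partial\Omega_S(0))$).

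One thing worth flagging so the step is not circular: the $W^{2,q}\times W^{1,q}$ regularity of the reduced problem should be taken from a pre-existing stationary Stokes theory with Navier conditions (this is essentially what \cite{AACG} and \cite{AEG} provide after localization), rather than from the operator $A_q$ of (\ref{SO}) being invertible — since the present proposition is itself an ingredient in establishing properties of $\mathcal{A}_{FS}$ later on. Your formulation already allows for this (``bootstraps\,\dots\,or invokes the known $L^q$-theory''), so no change is needed, but the citation route is the one the paper intends.
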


%\begin{proof}
For proof, see \cite[Theorem 2.1]{AACG} where only slip condition has been considered on the whole boundary. The same technique can be adapted for the present case for no-slip condition on some part of the boundary.
%\hfill
%\end{proof}

Let us use the following notation, $S(\bm{l},\bm{\omega}):= \bm{v}$, $S_{pr}(\bm{l},\bm{\omega}):= \psi$ where for given $(\bm{l}, \bm{\omega})$, $(\bm{v}, \psi)$ solves (\ref{3}).
Also denote the Neumann operator
\begin{align*}
N:W^{1-1/q,q}(\partial\Omega_F(0)) &\to W^{2,q}(\Omega_F(0))\\
h &\mapsto \bm{\varphi}
\end{align*}
where $\bm{\varphi}$ solves $\Delta \bm{\varphi} = 0$ in $\Omega_F(0)$, $\frac{\partial \bm{\varphi}}{\partial \vn} = h$ on $\partial \Omega_F(0)$.
Set $N_S(h) := N(\mathbbm{1}_{\partial\Omega_S(0)}h)$ for any $h\in W^{1-1/q,q}(\partial\Omega_S(0))$.

By extrapolation, we extend the Stokes operator $A_q$ defined in (\ref{SO}) to an unbounded operator $\tilde{A_q}$ with domain $\mathcal{D}(\tilde{A}_q) := \bm{L}^q_{\sigma,\vt}(\Omega_F(0))$ on $\mathcal{D}((A_q)^*)' =\mathcal{D}(A_q)'$, so that $ (\tilde{A}_q, \mathcal{D}(A_q)')$ be the infinitesimal generator of a strongly continuous semigroup on $ \mathcal{D}(A_q)'$, satisfying
$$
\tilde{A}_q \bm{\varphi} = A_q \bm{\varphi} \quad \forall \ \bm{\varphi} \in \mathcal{D}(A_q) .
$$
Here $A^*$ denotes the adjoint operator of $A$ and $X'$ denotes the dual space of $X$.

Next, we write an equivalent formulation of the fluid part of the resolvent problem (\ref{R}), decomposing the fluid velocity into $\mathbb{P}\vu$ and $(I_3-\mathbb{P})\vu$. %which has been introduced for Stokes problem in \cite{raymond}.
This decoupling enables us to write the pressure in terms of $\mathbb{P}\vu, \bm{l}, \bm{\omega}$ which is useful to eliminate the pressure from the structure equation.

\begin{proposition}
	\label{P3}
	Let $q\in(1,\infty)$, $\alpha\ge 0$ be as in (\ref{alpha}) and $(\bm{f}, \bm{l}, \mathbf{\omega})\in \bm{L}^q_{\sigma,\vt}(\Omega_F(0))\times \mathbb{C}^3\times \mathbb{C}^3$. Then $(\vu,\pi)\in \bm{W}^{2,q}(\Omega_F(0))\times W^{1,q}(\Omega_F(0))$ satisfies the system
	\begin{equation}
	\label{4}
	\begin{cases}
	\begin{aligned}
	\lambda \vu - \Delta \vu+\nabla \pi = \bm{f},\quad
	\div \ \vu &= 0 \ &&\text{ in } \ \Omega_F(0),\\
	\vu&=\bm{0} \ &&\text{ on } \ \partial \Omega,\\
	\vu\cdot \vn=(\bm{l} + \bm{\omega} \times \bm{x})\cdot \vn, \quad 2\left[(\DT\vu)\vn\right]_{\vt}+\alpha\vu_{\vt}&=\alpha(\bm{l} + \bm{\omega} \times \bm{x})_{\vt} \ &&\text{ on } \ \partial \Omega_S(0)
	\end{aligned}
	\end{cases}
	\end{equation}
	iff
	\begin{equation}
	\label{5}
	\begin{cases}
	\begin{aligned}
	&\lambda \mathbb{P}\vu - \tilde{A}_q\mathbb{P}\vu + \tilde{A}_q \mathbb{P}S(\bm{l},\bm{\omega}) = \mathbb{P}\bm{f}\\
	&(I_3-\mathbb{P})\vu = (I_3-\mathbb{P})S(\bm{l},\bm{\omega})\\
	&\pi = N(\Delta \mathbb{P}\vu\cdot \vn) - \lambda N_S((\bm{l}+ \bm{\omega}\times \bm{x})\cdot \vn).
	\end{aligned}
	\end{cases}
	\end{equation}
\end{proposition}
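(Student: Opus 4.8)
The plan is to establish the equivalence by direct computation, treating the "only if" and "if" directions separately, with the Helmholtz decomposition $\vu = \mathbb{P}\vu + (I_3 - \mathbb{P})\vu$ as the organizing principle. First I would observe that since the Stokes problem (\ref{3}) with data $(\bm{l},\bm{\omega})$ has a unique solution $(\bm{v},\psi) = (S(\bm{l},\bm{\omega}), S_{pr}(\bm{l},\bm{\omega}))$, one can write $\vu = \mathbb{P}\vu + (I_3-\mathbb{P})S(\bm{l},\bm{\omega})$ provided one shows $(I_3-\mathbb{P})\vu = (I_3-\mathbb{P})S(\bm{l},\bm{\omega})$; the point is that $\vu$ and $S(\bm{l},\bm{\omega})$ satisfy the same boundary conditions (the impermeability $\vu\cdot\vn = (\bm{l}+\bm{\omega}\times\bm{y})\cdot\vn$ on $\partial\Omega_S(0)$ and $\vu = \bm{0}$ on $\partial\Omega$) and are both divergence-free, so their difference $\vu - S(\bm{l},\bm{\omega})$ lies in $\bm{L}^q_{\sigma,\vt}(\Omega_F(0))$, hence is annihilated by $I_3 - \mathbb{P}$. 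This gives the second line of (\ref{5}).

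For the first line, I would apply $\mathbb{P}$ to the momentum equation $\lambda\vu - \Delta\vu + \nabla\pi = \bm{f}$. Since $\mathbb{P}\nabla\pi = 0$ and $\mathbb{P}\bm{f} = \bm{f}$ (as $\bm{f}\in\bm{L}^q_{\sigma,\vt}$), this yields $\lambda\mathbb{P}\vu - \mathbb{P}\Delta\vu = \bm{f}$. The term $\mathbb{P}\Delta\vu$ must be rewritten using the extended Stokes operator: writing $\vu = \mathbb{P}\vu + (I_3-\mathbb{P})S(\bm{l},\bm{\omega})$, one needs $\mathbb{P}\Delta\vu = \tilde{A}_q\mathbb{P}\vu - \tilde{A}_q\mathbb{P}S(\bm{l},\bm{\omega})$. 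Here I would argue that $\mathbb{P}\Delta\vu = \mathbb{P}\Delta(\vu - S(\bm{l},\bm{\omega})) + \mathbb{P}\Delta S(\bm{l},\bm{\omega})$, that $\vu - S(\bm{l},\bm{\omega})\in\mathcal{D}(A_q)$ because it satisfies the homogeneous Navier and Dirichlet boundary conditions (subtracting (\ref{3}) from (\ref{4})), so $\mathbb{P}\Delta(\vu-S) = A_q(\vu-S) = \tilde{A}_q(\vu - S)$, while $\mathbb{P}\Delta S(\bm{l},\bm{\omega}) = \nabla\psi$ projected away... more carefully, since $-\Delta S + \nabla\psi = 0$ we get $\mathbb{P}\Delta S = \mathbb{P}\nabla\psi = 0$, so $\mathbb{P}\Delta\vu = \tilde{A}_q(\vu - S(\bm{l},\bm{\omega})) = \tilde{A}_q\mathbb{P}\vu - \tilde{A}_q\mathbb{P}S(\bm{l},\bm{\omega})$ using that $(I_3-\mathbb{P})(\vu - S) = 0$ and $\tilde A_q$ acts on $\mathbb{P}$-projected fields via extrapolation. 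Combining gives the first equation of (\ref{5}). For the pressure formula, I would take the divergence-free momentum equation, isolate $\nabla\pi = \bm{f} - \lambda\vu + \Delta\vu$, and compute the normal trace on $\partial\Omega_F(0)$: on $\partial\Omega$ the velocity vanishes, on $\partial\Omega_S(0)$ the normal component equals $(\bm{l}+\bm{\omega}\times\bm{x})\cdot\vn$, and the harmonic extension characterization of $N$ and $N_S$ produces $\pi = N(\Delta\mathbb{P}\vu\cdot\vn) - \lambda N_S((\bm{l}+\bm{\omega}\times\bm{x})\cdot\vn)$ after using that $\pi$ is (up to constants) harmonic modulo lower order and that $(I_3-\mathbb{P})$ contributions reduce to the Stokes pressure $\psi$.

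The converse direction is then essentially a reversal: given $(\mathbb{P}\vu, (I_3-\mathbb{P})\vu, \pi)$ solving (\ref{5}), one reconstructs $\vu$ and $\pi$, verifies the boundary conditions hold by construction (the impermeability and Navier conditions are inherited from $S(\bm{l},\bm{\omega})$ through the $(I_3-\mathbb{P})$ component and from the domain of $A_q$ through the $\mathbb{P}$ component), and checks that $\lambda\vu - \Delta\vu + \nabla\pi = \bm{f}$ by adding back the gradient part that $\mathbb{P}$ discarded, where the pressure formula in the third line of (\ref{5}) is exactly what makes the normal components match.

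The main obstacle I anticipate is the careful justification of the identity $\mathbb{P}\Delta\vu = \tilde{A}_q\mathbb{P}\vu - \tilde{A}_q\mathbb{P}S(\bm{l},\bm{\omega})$ at the level of the extrapolated operator $\tilde{A}_q$: one must check that $\mathbb{P}\vu$ and $\mathbb{P}S(\bm{l},\bm{\omega})$ — which need not individually lie in $\mathcal{D}(A_q)$ since $\mathbb{P}$ does not preserve the boundary conditions — are nonetheless in the domain $\mathcal{D}(A_q)' $ of $\tilde A_q$ (which is all of $\bm{L}^q_{\sigma,\vt}$ by construction) and that the extrapolated action is consistent, i.e. that the combination $\tilde A_q(\mathbb{P}\vu - \mathbb{P}S)$ coincides with the genuine $A_q$ applied to $\vu - S \in \mathcal{D}(A_q)$. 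This hinges on $\vu - S(\bm{l},\bm{\omega})$ belonging to $\mathcal{D}(A_q)$, which requires verifying the homogeneous Navier boundary condition $2[(\DT(\vu - S))\vn]_{\vt} + \alpha(\vu - S)_{\vt} = \bm{0}$ on $\partial\Omega_S(0)$ — this follows by subtracting the fifth line of (\ref{3}) from the corresponding condition in (\ref{4}) — together with the Dirichlet condition on $\partial\Omega$ and the regularity $\vu - S \in \bm{W}^{2,q}$. The pressure identity requires comparable care with trace theorems and the solvability of the Neumann problems defining $N$ and $N_S$.
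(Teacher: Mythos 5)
Your plan takes the same route as the paper: subtract the auxiliary Stokes lift $S(\bm{l},\bm{\omega})$, set $\tilde\vu := \vu - S(\bm{l},\bm{\omega})$, observe that in the forward direction $\tilde\vu$ satisfies the homogeneous Navier and Dirichlet conditions and hence lies in $\mathcal D(A_q)$, and then rewrite $A_q\tilde\vu = \tilde A_q\mathbb P\vu - \tilde A_q\mathbb P S(\bm{l},\bm{\omega})$ at the extrapolated level. Your handling of the forward direction is sound, including the identification $\mathbb P\Delta S = \mathbb P\nabla S_{pr} = 0$ and the pressure identity obtained from $\Delta\pi = 0$, $\Delta(I_3-\mathbb P)\vu = 0$, and $\bm f\cdot\vn = 0$ (since $\bm f\in\bm L^q_{\sigma,\vt}$) when taking the normal trace of $\nabla\pi$.

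The converse direction, however, has a genuine gap. You claim the boundary conditions in \eqref{4} ``hold by construction \ldots\ from the domain of $A_q$ through the $\mathbb P$ component,'' but $\mathbb P\vu$ is not in $\mathcal D(A_q)$ and carries no boundary data; the object that must be shown to lie in $\mathcal D(A_q)$ is $\tilde\vu = \vu - S(\bm{l},\bm{\omega})$, and this is precisely what you do \emph{not} yet know in the converse. From $(\ref{5})_1$--$(\ref{5})_2$ one only gets, a priori, that $\tilde\vu\in\bm L^q_{\sigma,\vt}(\Omega_F(0))$ with $\tilde A_q\tilde\vu = \mathbb P(\lambda\vu - \bm f)\in\bm L^q_{\sigma,\vt}(\Omega_F(0))$, an equation in the extrapolated sense only; the $\bm W^{2,q}$ regularity of $\vu$ does not by itself give the Dirichlet condition on $\partial\Omega$ or the homogeneous Navier condition on $\partial\Omega_S(0)$ for $\tilde\vu$. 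The missing step is a bootstrap lemma exploiting maximal dissipativity: if $A$ is maximal dissipative on $X$ with extrapolation $A_{-1}$ to $X_{-1}$, and $x\in X$ satisfies $A_{-1}x\in X$, then $x\in\mathcal D(A)$ with $A_{-1}x = Ax$. Applied to $A_q$ and $\tilde\vu$, this is exactly what upgrades $\tilde\vu$ from $\bm L^q_{\sigma,\vt}$ to $\mathcal D(A_q)$, and only then are the boundary conditions in \eqref{4} recovered (the impermeability condition is indeed automatic from $(\ref{5})_2$ as you say, but the tangential slip condition and the Dirichlet condition on $\partial\Omega$ are not). As written, your converse asserts the conclusion of this lemma without proving it, and that is the step that actually does the work.
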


\begin{proof}
	Let $(\vu,\pi)$ satisfies (\ref{4}) and Denote $(\tilde{\vu},\tilde{\pi}):= (\vu - S(\bm{l},\bm{\omega}), \pi - S_{pr}(\bm{l},\bm{\omega}))$. Then $(\tilde{\vu},\tilde{\pi})$ satisfies
	\begin{equation*}
	\begin{cases}
	\begin{aligned}
	\lambda \tilde{\vu}- \Delta \tilde{\vu}+\nabla \tilde{\pi} = \bm{f} - \lambda S(\bm{l},\bm{\omega}), \quad \div \ \tilde{\vu} &= 0 \ &&\text{ in } \ \Omega_F(0),\\
	\tilde{\vu}&= \bm{0} \ &&\text{ on } \ \partial \Omega,\\
	\tilde{\vu}\cdot\vn=0, \quad 2\left[(\DT\tilde{\vu})\vn\right]_{\vt}+\alpha\tilde{\vu}_{\vt}&=\bm{0} \ &&\text{ on } \ \partial \Omega_S(0).
	\end{aligned}
	\end{cases}
	\end{equation*}
	This shows $\tilde{\vu} \in \mathcal{D}(A_q)$ and $\mathbb{P}\tilde{\vu} = \tilde{\vu}$. Therefore, applying the projection $\mathbb{P}$ on the first equation of the above system, we get
	\begin{equation}
	\label{6}
	\lambda \mathbb{P}(\tilde{\vu} + S(\bm{l},\bm{\omega})) - A_q \tilde{\vu}+ \mathbb{P}\nabla \tilde{\pi} = \mathbb{P}\bm{f}.
	\end{equation}
	But, since
	\begin{align*}
	- A_q \tilde{\vu}+ \mathbb{P}\nabla \tilde{\pi} = \mathbb{P}(-\Delta \tilde{\vu} + \nabla \tilde{\pi})& = \mathbb{P}\mathbb{P}(-\Delta \tilde{\vu})\\
	& = \mathbb{P}(-\Delta \tilde{\vu}) = -A_q\tilde{\vu} = -A_q\mathbb{P}\tilde{\vu} = -A_q\mathbb{P}(\vu - S(\bm{l},\bm{\omega})),
	\end{align*}
we obtain from (\ref{6}), $\lambda \mathbb{P}\vu - A_q\mathbb{P}\vu + A_q\mathbb{P}S(\bm{l},\bm{\omega}) = \mathbb{P}\bm{f}$ in $\Omega_F(0)$.
	
	Also, as $(I_3-\mathbb{P})\tilde{\vu} = \bm{0}$, we deduce $(I_3-\mathbb{P})\vu = (I_3-\mathbb{P})S(\bm{l},\bm{\omega})$.
	
	Furthermore, from (\ref{4}), taking divergence in the first equation yields, $\Delta \pi =0 \text{ in } \Omega_F(0)$. And since $\Delta (I_3-\mathbb{P})\vu = \bm{0} \text{ in } \Omega_F(0)$ (follows from the properties of Helmholtz projection),
	\begin{align*}
	\frac{\partial \pi}{\partial \vn}\Big\arrowvert_{\partial \Omega_F(0)} = \Delta \mathbb{P}\vu \cdot \vn - \lambda \vu \cdot \vn =
	\begin{cases}
	\Delta \mathbb{P}\vu \cdot \vn &\text{ on } \partial \Omega\\
	\Delta \mathbb{P}\vu \cdot \vn - \lambda (\bm{l}+ \bm{\omega}\times \bm{x})\cdot \vn &\text{ on } \partial \Omega_S(0).
	\end{cases}
	\end{align*}
	Therefore, the expression of $\pi$ in (\ref{5}) follows from the definition of the operators $N$ and $N_S$.
	
	Conversely, let $\vu\in \bm{W}^{2,q}(\Omega_F(0))$ satisfies the system (\ref{5}). Because of the relation $(I_3-\mathbb{P})\vu = (I_3-\mathbb{P})S(\bm{l},\bm{\omega})$, defining $\tilde{\vu}:= \vu - S(\bm{l},\bm{\omega})$ we get, $\tilde{\vu}\in \bm{L}^q_{\sigma,\vt}(\Omega_F(0))$ and $\mathbb{P}\tilde{\vu} = \tilde{\vu}$. Thus the first equation of (\ref{5}) can be written as
	$$
	\tilde{A}_q \tilde{\vu} = \mathbb{P}(\lambda \vu - \bm{f})=: \bm{h}.
	$$
	But since, $\bm{h}\in \bm{L}^q_{\sigma,\vt}(\Omega)$ and $\tilde{A}_q$ is the generator of a strongly continuous semigroup (in fact, the maximal monotone property of the operator $A_q$ and $\tilde{A}_q$ is sufficient), then $\tilde{\vu} \in \mathcal{D}(A_q)$ and hence, the boundary conditions in (\ref{4}) is satisfied by $\vu$.
	
	[The proof of the above statement is very simple and holds for general unbounded operators. For completeness, we mention it here:
	Let $A$ be a maximal dissipative operator in a Banach space $X$ with dense domain $D(A)$ and $A_{-1}$ be its extension by extrapolation in $X_{-1}$ with domain $X$. If $x\in X$ is such that $A_{-1}x \in X$, then $x\in D(A)$ and $A_{-1}x = Ax$.
	
	Proof: Define, $f = x+ A_{-1}x \in X$. Since $A$ is maximum dissipative, there exists $y\in D(A)$ such that $ y+A y = f$. Hence, $y + A_{-1}y = f$. But as $A_{-1}$ is dissipative, it follows $x=y\in D(A)$.]
	
	Now we write once again the first equation of (\ref{5}) in terms of $\tilde{\vu}$ as,
	$$
	\lambda \tilde{\vu} - A_q\tilde{\vu} = \mathbb{P}(\bm{f} - \lambda S(\bm{l},\bm{\omega})).
	$$
	Therefore, from the characterization of $(I-\mathbb{P})$, there exists $\tilde{\pi}\in W^{1,q}(\Omega_F(0))$ such that
	$$
	\lambda \tilde{\vu} - \Delta \tilde{\vu}+ \nabla \tilde{\pi} = \bm{f} - \lambda S(\bm{l},\bm{\omega}).
	$$
	Then $(\vu, \pi)$ with $\pi = \tilde{\pi} + S_{pr}(\bm{l},\bm{\omega})$ satisfies (\ref{4}).
	\hfill
\end{proof}

Now using the expression of the pressure $(\ref{5})_3$ obtained above, we can re-write the two equations in (\ref{R}) satisfied by $\bm{l}$ and $\bm{\omega}$.
\begin{equation}
\label{7}
\begin{aligned}
\lambda m \bm{l} & = -2\int\displaylimits_{\partial \Omega_S(0)}{(\DT\vu)\vn} + \int\displaylimits_{\partial \Omega_S(0)}{\pi \vn} + \bm{g}_1\\
& = -2 \left[ \int\displaylimits_{\partial \Omega_S(0)}{(\DT(\mathbb{P}\vu))\vn} + \int\displaylimits_{\partial \Omega_S(0)}{\DT((I_3-\mathbb{P})S(\bm{l},\bm{\omega}))\vn}\right] \\
& + \int\displaylimits_{\partial \Omega_S(0)}{N(\Delta\mathbb{P}\vu\cdot \vn)\vn} - \lambda \int\displaylimits_{\partial \Omega_S(0)}{N_S((\bm{l}+ \bm{\omega}\times \bm{y})\cdot \vn)\vn} + \bm{g}_1
\end{aligned}
\end{equation}
and
\begin{equation}
\label{8}
\begin{aligned}
\lambda J\bm{\omega}&  = -2\int\displaylimits_{\partial \Omega_S(0)}{\bm{y}\times (\DT\vu)\vn} + \int\displaylimits_{\partial \Omega_S(0)}{\bm{y}\times \pi\vn} + \bm{g}_2\\
& = -2 \left[ \int\displaylimits_{\partial \Omega_S(0)}{\bm{y}\times (\DT(\mathbb{P}\vu))\vn} + \int\displaylimits_{\partial \Omega_S(0)}{\bm{y}\times \DT((I_3-\mathbb{P})S(\bm{l},\bm{\omega}))\vn}\right] \\
& + \int\displaylimits_{\partial \Omega_S(0)}{\bm{y}\times N(\Delta \mathbb{P}\vu \cdot \vn)\vn} - \lambda \int\displaylimits_{\partial \Omega_S(0)}{\bm{y}\times N_s((\bm{l}+\bm{\omega}\times \bm{y})\cdot \vn)\vn} + \bm{g}_2.
\end{aligned}
\end{equation}
So, (\ref{7}) and (\ref{8}) can be written combindedly in the following form:
\begin{equation*}
\lambda K
\begin{pmatrix}
\bm{l}\\
\bm{\omega}
\end{pmatrix}
= C_1 \mathbb{P}\vu + C_2
\begin{pmatrix}
\bm{l}\\
\bm{\omega}
\end{pmatrix}
+
\begin{pmatrix}
\bm{g}_1\\
\bm{g}_2
\end{pmatrix}
\end{equation*}
where
\begin{equation*}
K = \mathbb{I} +M
\end{equation*}
with
\begin{equation*}
\mathbb{I}= \begin{pmatrix}
mI_3 & 0\\
0 & J
\end{pmatrix}_{6\times 6}
\end{equation*}
be the constant momentum matrix,
\begin{equation*}
M \begin{pmatrix}
\bm{l}\\
\bm{\omega}
\end{pmatrix}
= \begin{pmatrix}
\int\displaylimits_{\partial \Omega_S(0)}{N_S((\bm{l}+\bm{\omega}\times \bm{y})\cdot \vn)\vn}\\
\int\displaylimits_{\partial \Omega_S(0)}{\bm{y}\times N_S((\bm{l}+\bm{\omega}\times \bm{y})\cdot \vn)\vn}
\end{pmatrix}_{6\times 1}
\end{equation*}
be the added mass matrix,
\begin{equation*}
C_1\mathbb{P}\vu =
\begin{pmatrix}
-2 \int\displaylimits_{\partial \Omega_S(0)}{(\DT(\mathbb{P}\vu))\vn}+ \int\displaylimits_{\partial \Omega_S(0)}{N(\Delta\mathbb{P}\vu\cdot \vn)\vn}\\
-2 \int\displaylimits_{\partial \Omega_S(0)}{\bm{y}\times (\DT(\mathbb{P}\vu))\vn} + \int\displaylimits_{\partial \Omega_S(0)}{\bm{y}\times N(\Delta \mathbb{P}\vu \cdot \vn)\vn}
\end{pmatrix}_{6\times 1}
\end{equation*}
and
\begin{equation*}
C_2 \begin{pmatrix}
\bm{l}\\
\bm{\omega}
\end{pmatrix} =
\begin{pmatrix}
\int\displaylimits_{\partial \Omega_S(0)}{\DT((I_3-\mathbb{P})S(\bm{l},\bm{\omega}))\vn} \\
\int\displaylimits_{\partial \Omega_S(0)}{\bm{y}\times \DT((I_3-\mathbb{P})S(\bm{l},\bm{\omega}))\vn}
\end{pmatrix}_{6 \times 1}.
\end{equation*}

\begin{lemma}
	The matrix  $K$ defined above is an invertible matrix.
\end{lemma}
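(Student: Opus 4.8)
The plan is to prove that $K=\mathbb{I}+M$ is a symmetric positive definite $6\times 6$ matrix; positive definiteness forces $\det K\neq 0$, hence invertibility. Identifying the column $\begin{pmatrix}\bm l\\\bm\omega\end{pmatrix}$ with $\bm\xi=(\bm l,\bm\omega)\in\R^3\times\R^3$, the block $\mathbb{I}$ is already symmetric positive definite, since $\bm\xi\cdot\mathbb{I}\bm\xi=m|\bm l|^2+(J(0)\bm\omega)\cdot\bm\omega$ with $m>0$ and with $J(0)$ the inertia tensor of a genuine three–dimensional body of density $\rho_S>0$, which is positive definite. So everything reduces to showing that the added mass matrix $M$ is symmetric and positive semi-definite; then $\bm\xi\cdot K\bm\xi\ge\bm\xi\cdot\mathbb{I}\bm\xi>0$ for $\bm\xi\neq 0$.

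To identify $M$ with a quadratic form I would introduce, for $\bm\xi=(\bm l,\bm\omega)$, the rigid velocity field $\bm e_{\bm\xi}(\bm y):=\bm l+\bm\omega\times\bm y$ and the scalar potential $\Phi_{\bm\xi}:=N_S(\bm e_{\bm\xi}\cdot\vn)$, so that $\Phi_{\bm\xi}$ is harmonic in $\Omega_F(0)$, with $\partial_{\vn}\Phi_{\bm\xi}=\bm e_{\bm\xi}\cdot\vn$ on $\partial\Omega_S(0)$ and $\partial_{\vn}\Phi_{\bm\xi}=0$ on $\partial\Omega$; the map $\bm\xi\mapsto\Phi_{\bm\xi}$ is linear. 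For a second vector $\bm\eta=(\bm l',\bm\omega')$, using the scalar triple–product identity $\bm\omega'\cdot(\bm y\times\vn)=(\bm\omega'\times\bm y)\cdot\vn$ one computes directly from the definition of $M$ that
$$
\bm\eta\cdot M\bm\xi=\int_{\partial\Omega_S(0)}\Phi_{\bm\xi}\,(\bm l'+\bm\omega'\times\bm y)\cdot\vn\,\mathrm{d}s=\int_{\partial\Omega_S(0)}\Phi_{\bm\xi}\,\frac{\partial\Phi_{\bm\eta}}{\partial\vn}\,\mathrm{d}s=\int_{\partial\Omega_F(0)}\Phi_{\bm\xi}\,\frac{\partial\Phi_{\bm\eta}}{\partial\vn}\,\mathrm{d}s,
$$
the last step using $\partial_{\vn}\Phi_{\bm\eta}=0$ on $\partial\Omega$. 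Green's formula together with $\Delta\Phi_{\bm\eta}=0$ then gives
$$
\bm\eta\cdot M\bm\xi=\int_{\Omega_F(0)}\nabla\Phi_{\bm\xi}\cdot\nabla\Phi_{\bm\eta}\,\mathrm{d}x,
$$
which is manifestly symmetric in $(\bm\xi,\bm\eta)$ and satisfies $\bm\xi\cdot M\bm\xi=\|\nabla\Phi_{\bm\xi}\|_{L^2(\Omega_F(0))}^2\ge 0$. Hence $M$ is symmetric positive semi-definite and the conclusion follows.

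I do not expect a genuine obstacle here; only two routine points need care. First, Green's formula requires $\Phi_{\bm\xi}\in H^1(\Omega_F(0))$: since $\Omega_F(0)$ is bounded of class $\HC{2}{1}$ and $\bm e_{\bm\xi}\cdot\vn$ is smooth, the variational Neumann problem has a solution in $H^1$, unique up to an additive constant, which is exactly the function delivered by $N_S$ (consistent with $\Phi_{\bm\xi}\in W^{2,q}(\Omega_F(0))$ from the definition of $N$). Second, since $N$, hence $N_S$, is defined only modulo constants, one checks that this ambiguity does not affect $M$: replacing $\Phi_{\bm\xi}$ by $\Phi_{\bm\xi}+c$ changes $\bm\eta\cdot M\bm\xi$ by $c\int_{\partial\Omega_S(0)}\bm e_{\bm\eta}\cdot\vn\,\mathrm{d}s=c\int_{\partial\Omega_F(0)}\partial_{\vn}\Phi_{\bm\eta}\,\mathrm{d}s=c\int_{\Omega_F(0)}\Delta\Phi_{\bm\eta}\,\mathrm{d}x=0$. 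The only mild subtlety is careful bookkeeping of the cross products and of the orientation of $\vn$ (outward with respect to $\Omega_F(0)$) when splitting $\partial\Omega_F(0)=\partial\Omega\cup\partial\Omega_S(0)$ and integrating by parts.
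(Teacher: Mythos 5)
Your proof is correct and follows essentially the same route as the paper's (and the reference it delegates to, \cite[Lemma 4.3]{GGH}): show the added-mass matrix $M$ is symmetric positive semi-definite via the identity $\bm\eta\cdot M\bm\xi=\int_{\Omega_F(0)}\nabla\Phi_{\bm\xi}\cdot\nabla\Phi_{\bm\eta}$, then conclude $K=\mathbb{I}+M$ is positive definite since $\mathbb{I}$ is. Your treatment of the additive-constant ambiguity in $N_S$ is a small extra check the paper leaves implicit, but it does not change the argument.
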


%\begin{proof}
	The main point to prove the above lemma is that $M$ is a positive semi-definite, symmetric matrix. Then $K$ being the sum of an invertible matrix and a semi-definite matrix, is itself invertible. For details, see \cite[Lemma 4.3]{GGH}.

Let us now define the fluid-structure operator $\mathcal{A}_{FS}: \mathcal{D}(\mathcal{A}_{FS})\subset X \to X$ with
$$
X:= \bm{L}^q_{\sigma,\vt}(\Omega_F(0))\times \mathbb{C}^3 \times \mathbb{C}^3,
$$
and
\begin{equation*}
\begin{cases}
\begin{aligned}
&\mathcal{D}(\mathcal{A}_{FS}):=\{(\mathbb{P}\vu, \bm{l},\bm{\omega})\in X:\mathbb{P}\vu - \mathbb{P}S(\bm{l},\bm{\omega})\in\mathcal{D}(A_q) \},\\
&\qquad \qquad \mathcal{A}_{FS} =
\begin{pmatrix}
A_q & -A_q\mathbb{P}S\\
K^{-1}C_1 & K^{-1}C_2
\end{pmatrix}_{9\times 9}.
\end{aligned}
\end{cases}
\end{equation*}

Combining the above results, we obtain below an equivalent formulation of the resolvent problem (\ref{R}).

\begin{proposition}
	\label{P0}
	Let $q\in (1,\infty)$, $\alpha\ge 0$ be as in (\ref{alpha}) and $(\bm{f},\bm{g}_1, \bm{g}_2)\in X$. Then $(\vu, \pi, \bm{l}, \bm{\omega})\in W^{2,q}(\Omega_F(0))\times W^{1,q}(\Omega_F(0))\times \mathbb{C}^3 \times \mathbb{C}^3$ satisfies the resolvent problem (\ref{R}) iff
	\begin{equation}
	\label{9}
	\begin{aligned}
	&(\lambda I-\mathcal{A}_{FS})
	\begin{pmatrix}
	\mathbb{P}\vu\\
	\bm{l}\\
	\bm{\omega}
	\end{pmatrix}
	=
	\begin{pmatrix}
	\mathbb{P}\bm{f}\\
	\tilde{\bm{g}_1}\\
	\tilde{\bm{g}_2}
	\end{pmatrix}
	\\
	& (I_3-\mathbb{P})\vu = (I_3-\mathbb{P})S(\bm{l},\bm{\omega})\\
	& \pi = N(\Delta \mathbb{P}\vu\cdot \vn) - \lambda N_S((\bm{l}+ \bm{\omega}\times \bm{x})\cdot \vn)
	\end{aligned}
	\end{equation}
	where $(\tilde{\bm{g}_1}, \tilde{\bm{g}_2})^T = K^{-1}(\bm{g}_1, \bm{g}_2)^T$.
\end{proposition}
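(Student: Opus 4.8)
The statement is a straightforward synthesis of the machinery already assembled: Proposition \ref{P3} for the fluid block, the reduction $(\ref{7})$--$(\ref{8})$ of the two rigid-body balances once the pressure has been substituted, and the Lemma asserting the invertibility of $K$. The plan is to chain these three ingredients for the ``only if'' direction and then reverse every arrow.

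\emph{Forward implication.} First I would observe that the first five lines of $(\ref{R})$ are exactly the steady Stokes system $(\ref{4})$ with data $(\bm{f},\bm{l},\bm{\omega})\in X$ (note $\mathbb{P}\bm{f}=\bm{f}$ since $\bm{f}\in\bm{L}^q_{\sigma,\vt}(\Omega_F(0))$), so Proposition \ref{P3} applies and immediately yields $(\ref{9})_2$, $(\ref{9})_3$, the membership $\mathbb{P}\vu-\mathbb{P}S(\bm{l},\bm{\omega})=\mathbb{P}\tilde{\vu}=\tilde{\vu}\in\mathcal{D}(A_q)$ (which is precisely the constraint defining $\mathcal{D}(\mathcal{A}_{FS})$), and the identity $\lambda\mathbb{P}\vu-\tilde{A}_q\mathbb{P}\vu+\tilde{A}_q\mathbb{P}S(\bm{l},\bm{\omega})=\mathbb{P}\bm{f}$. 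Next I would insert the pressure formula $(\ref{9})_3$ together with the splitting $\vu=\mathbb{P}\vu+(I_3-\mathbb{P})S(\bm{l},\bm{\omega})$ into $(\ref{R})_6$--$(\ref{R})_7$; expanding $\sigma(\vu,\pi)\vn=2(\DT\vu)\vn-\pi\vn$ and collecting terms exactly as in the derivation of $(\ref{7})$--$(\ref{8})$ rewrites the two balances as $\lambda K(\bm{l},\bm{\omega})^T=C_1\mathbb{P}\vu+C_2(\bm{l},\bm{\omega})^T+(\bm{g}_1,\bm{g}_2)^T$. Invoking the Lemma, $K^{-1}$ exists, and multiplying through gives $\lambda(\bm{l},\bm{\omega})^T=K^{-1}C_1\mathbb{P}\vu+K^{-1}C_2(\bm{l},\bm{\omega})^T+(\tilde{\bm{g}_1},\tilde{\bm{g}_2})^T$. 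Stacking this with the fluid identity and reading off the block form of $\mathcal{A}_{FS}$ produces $(\ref{9})_1$.

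\emph{Converse.} For the other direction I would start from a solution $(\mathbb{P}\vu,\bm{l},\bm{\omega})\in\mathcal{D}(\mathcal{A}_{FS})$ of $(\ref{9})$, set $\vu:=\mathbb{P}\vu+(I_3-\mathbb{P})S(\bm{l},\bm{\omega})$ and let $\pi$ be given by $(\ref{9})_3$, unpack the block structure of $\mathcal{A}_{FS}$ to recover the two scalar identities above, apply the ``if'' part of Proposition \ref{P3} to conclude that $(\vu,\pi)$ solves $(\ref{4})$ (hence all the interface conditions of $(\ref{R})$), and finally multiply $\lambda(\bm{l},\bm{\omega})^T=\dots$ by $K$, expand the definitions of $M$, $C_1$, $C_2$, and use $(\ref{9})_3$ once more to reassemble $\int_{\partial\Omega_S(0)}\sigma(\vu,\pi)\vn$ and its first moment, thereby recovering $(\ref{R})_6$--$(\ref{R})_7$.

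\emph{Main obstacle.} Almost everything is bookkeeping; the one point requiring care is the interplay between $A_q$ and its extrapolation $\tilde{A}_q$. Concretely, one has to justify that any $\mathbb{P}\vu$ satisfying $(\ref{9})_1$ automatically inherits the no-slip and Navier interface conditions after subtracting the lift $S(\bm{l},\bm{\omega})$, i.e. that $\mathbb{P}\vu-\mathbb{P}S(\bm{l},\bm{\omega})$ actually lies in $\mathcal{D}(A_q)$ and not merely in the larger extrapolation space. This is exactly the elementary maximal-dissipativity argument already recorded inside the proof of Proposition \ref{P3}, so the proof of the present proposition can simply quote it. A minor additional point is to note that the pressure is fixed only through the normalization built into $N$ and $N_S$, and that this normalization is the same on both sides of the equivalence, so no ambiguity arises.
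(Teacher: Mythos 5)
Your proposal is correct and is essentially the argument the paper leaves implicit: the paper does not give an explicit proof of Proposition~\ref{P0}, stating it only as the result of ``combining the above results,'' namely Proposition~\ref{P3} for the fluid block, the derivation of \eqref{7}--\eqref{8} which packages the rigid-body balances as $\lambda K(\bm{l},\bm{\omega})^T = C_1\mathbb{P}\vu + C_2(\bm{l},\bm{\omega})^T + (\bm{g}_1,\bm{g}_2)^T$, and the lemma on the invertibility of $K$. Your forward/converse chaining of these three ingredients, and your observation that the delicate $\tilde{A}_q$-versus-$A_q$ point is already disposed of inside Proposition~\ref{P3} and need only be quoted, matches the paper's intended reasoning.
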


The following lemma gives an equivalent norm on the domain of the operator $\mathcal{A}_{FS}$.

\begin{lemma}
	\label{L1}
	The map
	$$
	(\mathbb{P}\vu, \bm{l}, \bm{\omega}) \mapsto \|\mathbb{P}\vu\|_{\bm{W}^{2,q}(\Omega_F(0))} + \|\bm{l}\|_{\mathbb{C}^3}+ \|\bm{\omega}\|_{\mathbb{C}^3}
	$$
	is a norm on $\mathcal{D}(\mathcal{A}_{FS})$, equivalent to the graph norm.
\end{lemma}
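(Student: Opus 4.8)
The plan is to show that the graph norm $\|(\mathbb{P}\vu,\bm{l},\bm{\omega})\|_X + \|\mathcal{A}_{FS}(\mathbb{P}\vu,\bm{l},\bm{\omega})\|_X$ is equivalent to the proposed norm $\|\mathbb{P}\vu\|_{\bm{W}^{2,q}} + \|\bm{l}\|_{\mathbb{C}^3} + \|\bm{\omega}\|_{\mathbb{C}^3}$. One direction is essentially trivial: since $\mathbb{P}$ is bounded on $\bm{L}^q(\Omega_F(0))$ and $S(\bm{l},\bm{\omega}) \in \bm{W}^{2,q}(\Omega_F(0))$ depends linearly and continuously on $(\bm{l},\bm{\omega})$ (by the Stokes estimate of the Proposition preceding this lemma), the operators $A_q = \mathbb{P}\Delta$, $A_q\mathbb{P}S$, $K^{-1}C_1$ and $K^{-1}C_2$ are all bounded from the space equipped with the $\bm{W}^{2,q}\times\mathbb{C}^3\times\mathbb{C}^3$ norm into $X$; hence $\|\mathcal{A}_{FS}(\mathbb{P}\vu,\bm{l},\bm{\omega})\|_X \lesssim \|\mathbb{P}\vu\|_{\bm{W}^{2,q}} + \|\bm{l}\| + \|\bm{\omega}\|$, and together with the obvious $\|(\mathbb{P}\vu,\bm{l},\bm{\omega})\|_X \le \|\mathbb{P}\vu\|_{\bm{W}^{2,q}} + \|\bm{l}\| + \|\bm{\omega}\|$ this gives the graph norm $\lesssim$ proposed norm.

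For the reverse inequality, the point is to recover the full $\bm{W}^{2,q}$-norm of $\mathbb{P}\vu$ from control of $\mathbb{P}\vu$ in $\bm{L}^q$ and of $A_q\mathbb{P}\vu - A_q\mathbb{P}S(\bm{l},\bm{\omega})$ in $\bm{L}^q$. First I would use the $\mathbb{C}^3$-components: from the definition of $\mathcal{D}(\mathcal{A}_{FS})$ and the graph norm we already control $\|\bm{l}\|$, $\|\bm{\omega}\|$ directly (they are part of the $X$-norm). Next, write $\tilde{\vu} := \mathbb{P}\vu - \mathbb{P}S(\bm{l},\bm{\omega})$, which lies in $\mathcal{D}(A_q)$. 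The key input is that $A_q$, being (up to sign) the Stokes operator with Navier boundary conditions, has domain $\mathcal{D}(A_q)$ continuously embedded in $\bm{W}^{2,q}(\Omega_F(0))$ with the a priori estimate $\|\tilde{\vu}\|_{\bm{W}^{2,q}} \lesssim \|A_q\tilde{\vu}\|_{\bm{L}^q} + \|\tilde{\vu}\|_{\bm{L}^q}$; this is the standard elliptic regularity/closedness statement for the Stokes–Navier operator, which underlies the very definition of $A_q$ in \eqref{SO}. Then $\|\mathbb{P}\vu\|_{\bm{W}^{2,q}} \le \|\tilde{\vu}\|_{\bm{W}^{2,q}} + \|\mathbb{P}S(\bm{l},\bm{\omega})\|_{\bm{W}^{2,q}} \lesssim \|A_q\tilde{\vu}\|_{\bm{L}^q} + \|\tilde{\vu}\|_{\bm{L}^q} + \|\bm{l}\| + \|\bm{\omega}\|$, and $\|A_q\tilde{\vu}\|_{\bm{L}^q}$ is exactly the $\bm{L}^q$-norm of the first component of $\mathcal{A}_{FS}(\mathbb{P}\vu,\bm{l},\bm{\omega})$, while $\|\tilde{\vu}\|_{\bm{L}^q} \lesssim \|\mathbb{P}\vu\|_{\bm{L}^q} + \|\bm{l}\| + \|\bm{\omega}\|$. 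Collecting terms yields proposed norm $\lesssim$ graph norm.

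The main obstacle — really the only nontrivial point — is justifying the a priori estimate $\|\tilde{\vu}\|_{\bm{W}^{2,q}} \lesssim \|A_q\tilde{\vu}\|_{\bm{L}^q} + \|\tilde{\vu}\|_{\bm{L}^q}$ for the Stokes operator with the mixed (no-slip on $\partial\Omega$, Navier slip on $\partial\Omega_S(0)$) boundary conditions, including the recovery of the associated pressure gradient in $\bm{L}^q$. I would invoke this as known: it follows from the $L^q$-theory of the generalized Stokes resolvent problem with Navier boundary conditions (the same source cited for \eqref{3}, namely \cite{AACG}, together with the standard localization argument accommodating the Dirichlet part of the boundary), which simultaneously gives that $A_q$ is closed with $\mathcal{D}(A_q) \hookrightarrow \bm{W}^{2,q}_{\sigma,\vt}(\Omega_F(0))$ and that this embedding is a norm-equivalence onto its image when the domain carries the graph norm. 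Once that regularity estimate is in hand, the equivalence of norms on $\mathcal{D}(\mathcal{A}_{FS})$ is a purely algebraic bookkeeping of the bounded maps $S$, $K^{-1}$, $C_1$, $C_2$, and no further analysis is required.
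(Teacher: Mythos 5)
Your proof is correct and follows essentially the same route as the paper: decompose $\mathbb{P}\vu$ via $\tilde{\vu}:=\mathbb{P}\vu-\mathbb{P}S(\bm{l},\bm{\omega})\in\mathcal{D}(A_q)$, use elliptic $L^q$-regularity of the mixed Dirichlet/Navier Stokes operator together with continuity of $S$, $K^{-1}C_1$, $K^{-1}C_2$, and unwind. The only (cosmetic) difference is that the paper invokes $A_q$ directly as an isomorphism $\mathcal{D}(A_q)\to\bm{L}^q_{\sigma,\vt}$, whereas you use the a priori estimate $\|\tilde{\vu}\|_{\bm{W}^{2,q}}\lesssim\|A_q\tilde{\vu}\|_{\bm{L}^q}+\|\tilde{\vu}\|_{\bm{L}^q}$ and then absorb the lower-order term into the $X$-part of the graph norm — both are legitimate and effectively equivalent here.
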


%\begin{proof}
The proof is similar to the one in \cite[Proposition 3.3]{raymond1} for Dirichlet condition, since the Stokes operator with Navier condition $A_q$ is an isomorphism from $\mathcal{D}(A_q)$ to $ \bm{L}^q_{\sigma,\vt}(\Omega_F(0))$.

Next we show the $\mathcal{R}$-boundedness of the resolvent operator $\mathcal{A}_{FS}$ which is a cornerstone in order to prove the maximal $L^p-L^q$-regularity of the associated linear problem (\ref{2}) with $\bm{h} = \bm{0}$. %The proof follows the similar argument as in \cite[Theorem 3.11]{MT} since our Stokes operator $A_q$ is $\mathcal{R}$-sectorial in $\bm{L}^q(\Omega_F(0))$ (cf. \cite{AEG} where it is proved for the Stokes operator with slip condition on the whole boundary; The same can be adapted straight forward for this mixed case as well).

\begin{theorem}
	\label{T2}
	Let $q\in (1,\infty)$ and $\alpha\ge 0$ be as in (\ref{alpha}). There exist $\gamma_1>0$ and $\theta>0$ such that $\gamma_1 + \Sigma_{\pi/2+\theta}\subset \rho(\mathcal{A}_{FS})$ and
%	\begin{equation}
%	\mathcal{R}\{\lambda (\lambda I_9 - \mathcal{A}_{FS})^{-1}: \lambda \in \Sigma_{\pi/2+\theta}\}<\infty.
%	\end{equation}
%	In other words,
$\mathcal{A}_{FS}$ is $\mathcal{R}$-sectorial.
\end{theorem}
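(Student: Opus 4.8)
The plan is to realize $\mathcal{A}_{FS}$ as a relatively small perturbation of an operator whose $\mathcal{R}$-sectoriality is inherited from the Stokes operator with Navier slip conditions, and then to invoke the perturbation result of Theorem~\ref{T6}. Concretely, with $X$ and the notation of Proposition~\ref{P0}, I would first decompose
\[
\mathcal{A}_{FS}=\mathcal{L}+\mathcal{B},\qquad \mathcal{L}=\begin{pmatrix}A_q & -A_q\mathbb{P}S\\ 0 & 0\end{pmatrix},\qquad \mathcal{B}=\begin{pmatrix}0 & 0\\ K^{-1}C_1 & K^{-1}C_2\end{pmatrix},
\]
so that $\mathcal{D}(\mathcal{L})=\mathcal{D}(\mathcal{A}_{FS})$ and $\mathcal{D}(\mathcal{B})\supset\mathcal{D}(\mathcal{L})$. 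Introducing the bounded isomorphism $T(\mathbb{P}\vu,\bm{l},\bm{\omega})=(\mathbb{P}\vu-\mathbb{P}S(\bm{l},\bm{\omega}),\bm{l},\bm{\omega})$, which is boundedly invertible since $S\in\mathcal{L}(\mathbb{C}^6;\bm{W}^{2,q}(\Omega_F(0)))$, a direct computation gives $\mathcal{L}=T^{-1}\,\mathrm{diag}(A_q,0_{\mathbb{C}^6})\,T$; this takes care of the slightly awkward domain of $\mathcal{A}_{FS}$.

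Next I would check that $\mathcal{L}$ is $\mathcal{R}$-sectorial of angle less than $\pi/2$. The only genuine external input here is that the Stokes operator $A_q$ with Navier slip conditions is $\mathcal{R}$-sectorial on $\bm{L}^q_{\sigma,\vt}(\Omega_F(0))$ with $\mathcal{R}$-angle $<\pi/2$ (the domain being of class $\HC{2}{1}$ and $A_q$ being invertible thanks to the Dirichlet part of the boundary). Since the null operator on the finite-dimensional space $\mathbb{C}^6$ is trivially $\mathcal{R}$-sectorial of any angle $<\pi$, the permanence properties of $\mathcal{R}$-boundedness (Proposition~\ref{P2}) give that $\mathrm{diag}(A_q,0_{\mathbb{C}^6})$, and hence its conjugate $\mathcal{L}$, is $\mathcal{R}$-sectorial on $X$ of $\mathcal{R}$-angle $<\pi/2$; in particular $\mathcal{R}(\{\lambda(\lambda I-\mathcal{L})^{-1}:\lambda\in\Sigma_{\varepsilon_0,0}\})\le a<\infty$ for some $\varepsilon_0\in(0,\pi/2)$.

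The key step is to show that $\mathcal{B}$ is a relatively bounded perturbation of $\mathcal{L}$ with arbitrarily small relative bound, that is, $\|\mathcal{B}z\|_X\le\varepsilon\|\mathcal{L}z\|_X+C_\varepsilon\|z\|_X$ for every $\varepsilon>0$ and $z\in\mathcal{D}(\mathcal{L})$. Since $K^{-1}$, $C_2$, $\mathbb{P}S$ are bounded and, by Lemma~\ref{L1}, $\|\mathbb{P}\vu\|_{\bm{W}^{2,q}(\Omega_F(0))}\lesssim\|\mathcal{L}z\|_X+\|z\|_X$, everything reduces to the claim that $C_1$ is of order strictly less than $2$ relative to $A_q$: writing $C_1\mathbb{P}\vu=C_1(\mathbb{P}\vu-\mathbb{P}S(\bm{l},\bm{\omega}))+C_1\mathbb{P}S(\bm{l},\bm{\omega})$, the second summand is bounded by $\|z\|_X$, while for $w\in\mathcal{D}(A_q)$ one wants $\|C_1 w\|_{\mathbb{C}^6}\lesssim\|A_q^\theta w\|$ for some $\theta\in(0,1)$. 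Using $\mathcal{D}(A_q^\theta)\hookrightarrow\bm{W}^{2\theta,q}(\Omega_F(0))$, the viscous part $\int_{\partial\Omega_S(0)}(\DT w)\vn$ is a boundary trace of $\nabla w$ and is therefore controlled in $\bm{W}^{2\theta,q}$ as soon as $2\theta>1+1/q$; the pressure part $\int_{\partial\Omega_S(0)}N(\Delta w\cdot\vn)\vn$ is, despite appearances, of first order only, because $\pi_w:=N(\Delta w\cdot\vn)$ satisfies $\nabla\pi_w=(I_3-\mathbb{P})\Delta w$, so $w\mapsto\pi_w$ loses exactly one derivative (by boundedness of the Helmholtz projection and of the operator reconstructing a function from its gradient on the relevant Sobolev scale) and its trace on $\partial\Omega_S(0)$ is again controlled once $2\theta>1+1/q$. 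As $(1+1/q)/2<1$, such $\theta$ exists, and the moment inequality $\|A_q^\theta w\|\le\varepsilon\|A_q w\|+C_\varepsilon\|w\|$ (valid since $A_q$ is invertible and sectorial) completes the estimate. I expect this step — and within it the lower-order bound on the boundary pressure integral, which is also where the regularity $\HC{2}{1}$ of $\Omega_F(0)$ and the hypotheses (\ref{alpha}) on $\alpha$ enter — to be the main obstacle.

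Finally, choosing $\varepsilon<1/a$ and applying Theorem~\ref{T6} with $A=\mathcal{L}$ and $B=\mathcal{B}$ produces $\gamma_1>0$ and $\theta>0$ such that $\gamma_1+\Sigma_{\pi/2+\theta}\subset\rho(\mathcal{L}+\mathcal{B})=\rho(\mathcal{A}_{FS})$ and $\{\mathcal{L}(\lambda I-\mathcal{A}_{FS})^{-1}\}$ is $\mathcal{R}$-bounded on that sector. On $\gamma_1+\Sigma_{\pi/2+\theta}$ the family $\{(\lambda I-\mathcal{A}_{FS})^{-1}\}$ is $\mathcal{R}$-bounded into $X$, hence also into $\mathcal{D}(\mathcal{L})$ with its graph norm, so composing with the bounded map $\mathcal{B}\colon\mathcal{D}(\mathcal{L})\to X$ and using Proposition~\ref{P2} gives that $\{\mathcal{B}(\lambda I-\mathcal{A}_{FS})^{-1}\}$ is $\mathcal{R}$-bounded; then the identity $\lambda(\lambda I-\mathcal{A}_{FS})^{-1}=I+\mathcal{L}(\lambda I-\mathcal{A}_{FS})^{-1}+\mathcal{B}(\lambda I-\mathcal{A}_{FS})^{-1}$ shows $\{\lambda(\lambda I-\mathcal{A}_{FS})^{-1}\}$ is $\mathcal{R}$-bounded there. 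Since $\mathcal{A}_{FS}-\gamma_1$ is in addition boundedly invertible for $\gamma_1$ large (hence densely ranged), this yields the stated $\mathcal{R}$-sectoriality of $\mathcal{A}_{FS}$ together with the spectral localization.
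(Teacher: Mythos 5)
Your proposal follows the same overall strategy as the paper: the same block decomposition $\mathcal{A}_{FS}=\mathcal{L}+\mathcal{B}$ (the paper writes $\tilde{\mathcal{A}}_{FS}+B_{FS}$), $\mathcal{R}$-sectoriality of the upper-triangular block from the Stokes operator $A_q$, and the perturbation result Theorem~\ref{T6}. The substantive difference is in how the relative smallness of $\mathcal{B}$ is obtained, and here the paper has a shortcut that you miss and that makes the proof considerably cleaner.

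You argue via fractional powers: you want $\|C_1 w\|\lesssim\|A_q^\theta w\|$ for some $\theta<1$, and then the moment inequality gives relative bound $0$. For the viscous part this is fine, but your pressure estimate is the weak point. You use $\nabla\pi_w=(I_3-\mathbb{P})\Delta w$ to say $w\mapsto\pi_w$ ``loses exactly one derivative,'' but to make this work below the integer level you need boundedness of the Helmholtz projection on $W^{2\theta-2,q}$ with $2\theta-2<0$, i.e.\ on negative-order Sobolev spaces, which is not something you can take off the shelf and which you would have to set up carefully (e.g.\ by duality against the adjoint Neumann problem). The paper sidesteps all of this: it notices that $\mathcal{B}$ has range in the finite-dimensional subspace $\{0\}\times\mathbb{C}^3\times\mathbb{C}^3\subset X$, hence is finite rank, hence compact; once one checks the easy fact that $\mathcal{B}\colon(\mathcal{D}(\mathcal{A}_{FS}),\|\cdot\|_{\mathrm{graph}})\to X$ is bounded (which only needs the full $W^{2,q}$ regularity of $\mathbb{P}\vu$, the trace theorem for $(\DT\mathbb{P}\vu)\vn$, and the elementary observation that $\Delta\mathbb{P}\vu\in\bm{L}^q$ with $\div\,\Delta\mathbb{P}\vu=0$ gives $\Delta\mathbb{P}\vu\cdot\vn\in W^{-1/q,q}(\partial\Omega_S(0))$ and hence $N(\Delta\mathbb{P}\vu\cdot\vn)\in W^{1,q}(\Omega_F(0))$), this says $\mathcal{B}$ is $\mathcal{L}$-compact, and \cite[Ch.~III, Lemma~2.16]{engel} then yields $\mathcal{L}$-bound equal to $0$ for free, with no fractional-order analysis at all. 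You should keep this observation in mind: when the perturbation maps into a finite-dimensional subspace, the interpolation/fractional-power machinery is unnecessary.

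Two smaller remarks. First, your conjugation $\mathcal{L}=T^{-1}\mathrm{diag}(A_q,0_{\mathbb{C}^6})T$ is a nice way to organize the resolvent computation, but be careful with the wording: $0_{\mathbb{C}^6}$ is not sectorial in the sense of the paper's definition (its range is not dense), so what you really extract is just the $\mathcal{R}$-bound of the resolvent family on the relevant sector, which is all that Theorem~\ref{T6} needs. The paper instead writes out $\lambda(\lambda I-\tilde{\mathcal{A}}_{FS})^{-1}$ explicitly using $-(\lambda I-A_q)^{-1}A_q\mathbb{P}S=-\lambda(\lambda I-A_q)^{-1}\mathbb{P}S+\mathbb{P}S$; the two computations are equivalent. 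Second, your final passage from the $\mathcal{R}$-boundedness of $\{\mathcal{L}(\lambda I-\mathcal{A}_{FS})^{-1}\}$ delivered by Theorem~\ref{T6} to that of $\{\lambda(\lambda I-\mathcal{A}_{FS})^{-1}\}$ via the identity $\lambda(\lambda I-\mathcal{A}_{FS})^{-1}=I+\mathcal{L}(\lambda I-\mathcal{A}_{FS})^{-1}+\mathcal{B}(\lambda I-\mathcal{A}_{FS})^{-1}$ is in fact spelled out more carefully than in the paper itself, which is a good instinct.
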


\begin{proof}
	The proof follows the similar argument as in \cite[Theorem 3.11]{MT}.
	%\begin{comment}
	Therefore we mention only the differences.
	
	Since the matrices $K, C_1, C_2$ are exactly same as in \cite{MT}, we only need to take care of the operators $A_q$ and $S$.
Writing $\mathcal{A}_{FS} = \tilde{\mathcal{A}}_{FS} + B_{FS}$ where
	\begin{align*}
	\tilde{\mathcal{A}}_{FS} =
	\begin{pmatrix}
	A_q & -A_q\mathbb{P}S\\
	0 & 0
	\end{pmatrix}, \quad
	B_{FS} =
	\begin{pmatrix}
	0 & 0\\
	K^{-1}C_1 & K^{-1}C_2
	\end{pmatrix},
	\end{align*}
we first prove that $\tilde{\mathcal{A}}_{FS}$ with $\mathcal{D}(\tilde{\mathcal{A}}_{FS}) = \mathcal{D}(\mathcal{A}_{FS})$ is $\mathcal{R}$-sectorial on $X$. Using the identity $-(\lambda I_3 -A_q)^{-1}A_q\mathbb{P}S =-\lambda(\lambda I_3 - A_q)^{-1}\mathbb{P}S+\mathbb{P}S$, we can write,
	\begin{align*}
	\lambda (\lambda I_9 - \tilde{\mathcal{A}}_{FS})^{-1} =
	\begin{pmatrix}
	\lambda (\lambda I_3 - A_q)^{-1} & -\lambda(\lambda I_3 - A_q)^{-1}\mathbb{P}S+\mathbb{P}S\\
	0 & I
	\end{pmatrix}.
	\end{align*}
	Since the Stokes operator $A_q$ is $\mathcal{R}$-sectorial in $\bm{L}^q(\Omega_F(0))$ (see \cite{AEG} where it is proved for the Stokes operator with slip condition on the whole boundary; The same can be adapted straight forward for this mixed case as well) and using the properties 3. and 4. of Proposition \ref{P2}, we get
	$$
	\mathcal{R}\{-\lambda(\lambda I_3 - A_q)^{-1}\mathbb{P}S+\mathbb{P}S\}\le \mathcal{R}\{-\lambda(\lambda I_3 - A_q)^{-1}\} \mathcal{R}(\mathbb{P}S)+ \mathcal{R}(\mathbb{P}S)<\infty.
	$$
	Observe that the $\mathcal{R}$-boundedness of $\mathbb{P}S$ follows from the definition easily due to the continuity of $\mathbb{P}S$.
	Therefore the desired result follows.
	
Next we show that $B_{FS}$ is a small perturbation of $\tilde{A}_{FS}$ which yields the $\mathcal{R}$-sectoriality of $B_{FS}$. This concludes the proof.
	
	To do so, first let us show that $B_{FS}\in \mathcal{L}(\mathcal{D}(\mathcal{A}_{FS}),\mathbb{C}^9) $. By Lemma \ref{L1}, for any $(\mathbb{P}\vu, \bm{l}, \bm{\omega}) \in \mathcal{D}(\mathcal{A}_{FS})$, we have $(\mathbb{P}\vu, \bm{l}, \bm{\omega}) \in\bm{W}^{2,q}(\Omega_F(0))\times \mathbb{C}^3 \times \mathbb{C}^3 $. Therefore, by trace theorem, $(\mathbb{D}(\mathbb{P}\vu))\vn\in \bm{W}^{1-1/q,q}(\partial \Omega_S(0))$ and $\int\displaylimits_{\partial \Omega_S(0)}{(\DT(\mathbb{P}\vu))\vn\, \mathrm{d}s}\in\mathbb{C}^3$. On the other hand, $\Delta \mathbb{P}\vu\in\bm{L}^q(\Omega_F(0))$ and $\div \ \Delta \mathbb{P}\vu =0$ which implies $\Delta \mathbb{P}\vu\cdot \vn \in \bm{W}^{-1/q,q}(\partial\Omega_S(0))$. Also the following condition is satisfied (due to the divergence free condition)
	$$
	\left\langle \Delta \mathbb{P}\vu\cdot \vn,1\right\rangle _{\bm{W}^{-1/q,q} \times \bm{W}^{1/q,q'}}=0 .
	$$
	Thus $N(\Delta \mathbb{P}\vu\cdot \vn)\in \bm{W}^{1,q}(\Omega_F(0))$ and $\int\displaylimits_{\partial \Omega_S(0)}{N(\Delta \mathbb{P}\vu\cdot \vn)\vn \ \mathrm{d}s}\in\mathbb{C}^3$.
	Other terms of $C_1$ can be checked in the same way.
	Similarly, for the matrix $C_2$, notice that $S(\bm{l},\bm{\omega})\in \bm{W}^{2,q}(\Omega_F(0))$ and $\mathbb{P}S(\bm{l},\bm{\omega})\in \bm{W}^{2,q}(\Omega_F(0))$ since $\mathbb{P}\vu\in \bm{W}^{2,q}(\Omega_F(0))$ and hence, $(I_3-\mathbb{P})S(\bm{l},\bm{\omega})\in \bm{W}^{2,q}(\Omega_F(0))$. Therefore, $\DT ((I_3-\mathbb{P})S(\bm{l},\bm{\omega}))\in \bm{W}^{1-1/q,q}(\partial\Omega_S(0))$ and $\int\displaylimits_{\partial \Omega_S(0)}{\DT ((I_3-\mathbb{P})S(\bm{l},\bm{\omega}))\vn \ \mathrm{d}s}\in\mathbb{C}^3$. The other term of $C_2$ can be treated in the same manner. Thus, we deduce that
	$$
	B_{FS} :\mathcal{D}(B_{FS}) = \mathcal{D}(\mathcal{A}_{FS})\to \mathbb{C}^9
	$$
	is a bounded linear operator, due to the continuity of the trace operator and the elliptic regularity results. This concludes that $B_{FS}$ is a finite rank operator and hence compact. Therefore  we can say $(B_{FS},\mathcal{D}(B_{FS}))$ is $\tilde{\mathcal{A}}_{FS}$-compact by the definition \cite[Chapter III, Definition 2.15]{engel}. Then, by \cite[Chapter III, Lemma 2.16]{engel}, we get that $B_{FS}$ is $\tilde{\mathcal{A}}_{FS}$-bounded with $\tilde{\mathcal{A}}_{FS}$-bound being $0$, that is, for all $\delta>0$, there exists $C(\delta)>0$ such that
	$$
	\|B_{FS}\,z\|\le \delta\|\tilde{\mathcal{A}}_{FS}\, z\| + C(\delta)\|z\| \qquad \forall z\in \mathcal{D}(\tilde{\mathcal{A}}_{FS}).
	$$
	Finally, applying the perturbation Theorem \ref{T6} with $\gamma_0 =0$, we obtain the existence of $\gamma_1>0$ such that $\tilde{\mathcal{A}}_{FS} + B_{FS}$ is $\mathcal{R}$-sectorial on the shifted sector $\gamma_1 + \Sigma_{\pi/2+\theta}$.
	%for some $\mu$,
	%$$
	%\mathcal{R}\left( \{\lambda (\lambda - (\mu))}\right)
	%$$
%\end{comment}
\hfill
\end{proof}

Note that from the above result, it does not follow that the entire right half plane is contained in the resolvent i.e. $0\in \rho(\mathcal{A}_FS)$ and hence we do not have yet the global in time existence result (cf. Remark \ref{re1}).

\subsection{Exponential stability}

Next we show that the operator $\mathcal{A}_{FS}$ generates an exponentially stable semigroup.

\begin{theorem}
	\label{T1}
	Let $p,q\in (1,\infty)$ and $\alpha\ge 0$ be as in (\ref{alpha}). The operator $\mathcal{A}_{FS}$ generates an exponentially stable semigroup $(e^{t\mathcal{A}_{FS}})_{t\ge 0}$ on $X$. In other words, there exist constants $\eta_0>0$ and $C>0$ such that
	\begin{equation*}
	\|e^{t\mathcal{A}_{FS}}(\vu_0,\bm{l}_0,\bm{\omega}_0)^T\|_{X}\le C e^{-\eta_0 t}\|(\vu_0,\bm{l}_0,\bm{\omega}_0)^T\|_{X}.
	\end{equation*}
\end{theorem}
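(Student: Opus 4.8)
The plan is to upgrade the $\mathcal{R}$-sectoriality from Theorem \ref{T2} to exponential stability by a spectral argument. Since $\mathcal{A}_{FS}$ is sectorial it generates an analytic $C_0$-semigroup, and for analytic semigroups the exponential growth bound coincides with the spectral bound $s(\mathcal{A}_{FS}):=\sup\{\Re\lambda:\lambda\in\sigma(\mathcal{A}_{FS})\}$; hence it suffices to prove $s(\mathcal{A}_{FS})<0$. To that end I would first observe that $\mathcal{A}_{FS}$ has compact resolvent: by Lemma \ref{L1} the graph norm on $\mathcal{D}(\mathcal{A}_{FS})$ is equivalent to $\|\mathbb{P}\vu\|_{\bm{W}^{2,q}(\Omega_F(0))}+\|\bm{l}\|_{\mathbb{C}^3}+\|\bm{\omega}\|_{\mathbb{C}^3}$, and $\bm{W}^{2,q}(\Omega_F(0))\hookrightarrow\hookrightarrow\bm{L}^q(\Omega_F(0))$ by Rellich--Kondrachov, so $\mathcal{D}(\mathcal{A}_{FS})\hookrightarrow\hookrightarrow X$. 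Consequently $\sigma(\mathcal{A}_{FS})$ consists only of isolated eigenvalues of finite multiplicity with no finite accumulation point, and since Theorem \ref{T2} places $\sigma(\mathcal{A}_{FS})$ outside the shifted sector $\gamma_1+\Sigma_{\pi/2+\theta}$, the set $\sigma(\mathcal{A}_{FS})\cap\{\Re\lambda\ge -\delta\}$ is bounded, hence finite, for $\delta>0$ small.

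It then remains to show $\sigma(\mathcal{A}_{FS})\cap\{\Re\lambda\ge 0\}=\emptyset$; the uniform spectral gap follows, because the finitely many remaining eigenvalues then all lie in $\{\Re\lambda<0\}$. Let $\lambda$ be an eigenvalue with eigenvector $(\mathbb{P}\vu,\bm{l},\bm{\omega})\ne 0$. Using Propositions \ref{P3} and \ref{P0} I would reconstruct $(\vu,\pi)$ so that $(\vu,\pi,\bm{l},\bm{\omega})$ is a nontrivial solution of the homogeneous resolvent problem (\ref{R}) with $\bm{f}=\bm{0}$, $\bm{g}_1=\bm{g}_2=\bm{0}$, where $\vu\in\bm{W}^{2,q}(\Omega_F(0))\hookrightarrow\bm{L}^2(\Omega_F(0))$ since $\Omega_F(0)$ is bounded. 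Testing the momentum equation with $\overline{\vu}$, integrating by parts, and using $\div\vu=0$, the Dirichlet condition on $\partial\Omega$, the Navier slip condition on $\partial\Omega_S(0)$, and the two rigid-body equations to rewrite $\int_{\partial\Omega_S(0)}(\sigma(\vu,\pi)\vn)\cdot\overline{\vu}$ (splitting $\overline{\vu}$ into its normal part, which on $\partial\Omega_S(0)$ equals that of $\bm{l}+\bm{\omega}\times\bm{y}$, and its tangential part), one arrives at
\begin{equation*}
(\Re\lambda)\Bigl(\|\vu\|_{\bm{L}^2(\Omega_F(0))}^2 + m\,|\bm{l}|^2 + J(0)\bm{\omega}\cdot\overline{\bm{\omega}}\Bigr) + 2\,\|\DT\vu\|_{\bm{L}^2(\Omega_F(0))}^2 + \int_{\partial\Omega_S(0)}\alpha\,\bigl|\vu_{\vt} - (\bm{l}+\bm{\omega}\times\bm{y})_{\vt}\bigr|^2\,\mathrm{d}s = 0 .
\end{equation*}
If $\Re\lambda\ge 0$ each term is nonnegative and therefore vanishes: $\DT\vu=0$ forces $\vu$ to be a rigid field, which must be $\bm{0}$ as it vanishes on $\partial\Omega$. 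If moreover $\Re\lambda>0$ then $\bm{l}=\bm{0}$ and $\bm{\omega}=\bm{0}$ (since $m>0$ and $J(0)$ is positive definite), contradicting $(\mathbb{P}\vu,\bm{l},\bm{\omega})\ne 0$. If $\Re\lambda=0$, then $\vu\equiv\bm{0}$ forces $\pi$ constant, the rigid-body equations collapse to $\lambda m\bm{l}=\bm{0}$ and $\lambda J(0)\bm{\omega}=\bm{0}$ (the boundary integrals of $\pi\vn$ and $\bm{y}\times\pi\vn$ over the closed surface $\partial\Omega_S(0)$ vanish), so $\bm{l}=\bm{\omega}=\bm{0}$ unless $\lambda=0$; and $\lambda=0$ is excluded because the vanishing of the slip term together with $\vu\cdot\vn=(\bm{l}+\bm{\omega}\times\bm{y})\cdot\vn=0$ on $\partial\Omega_S(0)$ forces the rigid velocity $\bm{l}+\bm{\omega}\times\bm{y}$ to vanish. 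Hence $\sigma(\mathcal{A}_{FS})\subset\{\Re\lambda\le -\eta_0\}$ for some $\eta_0>0$, and analyticity of the semigroup yields $\|e^{t\mathcal{A}_{FS}}\|_{\mathcal{L}(X)}\le Ce^{-\eta_0 t}$, which is the assertion.

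I expect the main obstacle to be the energy identity itself: performing the integration by parts so that the Navier slip boundary term on $\partial\Omega_S(0)$ and the coupling to the pressure-eliminated rigid-body equations combine into the clean dissipative form displayed above, and — more subtly — ruling out the eigenvalue $\lambda=0$, which genuinely relies on the interplay between the slip condition on $\partial\Omega_S(0)$ and the Dirichlet condition on $\partial\Omega$ (the argument must use that $\alpha$ is not identically zero, or a non-degeneracy of $\partial\Omega_S(0)$, since a freely rotating ball in a full-slip fluid would not be exponentially stable). A minor technical point is justifying that it suffices to run the estimate in the Hilbert space $\bm{L}^2$ rather than in $\bm{L}^q$, which follows from the boundedness of $\Omega_F(0)$ and the elliptic regularity of eigenfunctions.
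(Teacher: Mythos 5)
Your proposal is correct and follows essentially the same route as the paper. The paper's proof of Theorem \ref{T1} defers the real work to Theorem \ref{T3}, which shows $\{\Re\lambda\ge 0\}\subset\rho(\mathcal{A}_{FS})$ by a Fredholm-alternative argument (compact resolvent, exactly the Rellich--Kondrachov observation you make via Lemma \ref{L1}) reduced to uniqueness of the homogeneous problem, and that uniqueness is established by the same energy identity you derive; the spectral gap then follows from openness of the resolvent set plus the sectoriality of Theorem \ref{T2}, and analyticity upgrades the gap to exponential decay. Your eigenvalue framing is an equivalent packaging of the same Fredholm step. Two small points in your favour: you handle $\Re\lambda=0$ more scrupulously than the paper does -- the energy identity with $\Re\lambda=0$ yields only $\DT\vu=\bm{0}$ and the vanishing of the slip term, not $\bm{l}=\bm{0}$, and you correctly supplement it by observing $\pi$ is constant, that $\int_{\partial\Omega_S(0)}\pi\vn=\int_{\partial\Omega_S(0)}\bm{y}\times\pi\vn=\bm{0}$, so the rigid-body equations force $\lambda\bm{l}=\bm{0}$ and $\lambda J(0)\bm{\omega}=\bm{0}$, dispatching $\lambda$ purely imaginary nonzero and leaving only $\lambda=0$ to the slip/normal conditions. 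You also correctly flag that the argument needs $\alpha\not\equiv 0$ (or a geometric nondegeneracy of $\partial\Omega_S(0)$): the paper invokes $\alpha\not\equiv 0$ tacitly in the proof of Theorem \ref{T3} even though the stated hypothesis is only $\alpha\ge 0$, and your ball-with-full-slip counterexample is exactly the obstruction.
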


\begin{proof}
	First note that the entire right half plane $\{\lambda\in\mathbb{C}: \Re \lambda \ge 0\}$ is contained in the resolvent set of $\mathcal{A}_{FS}$. Indeed, from Theorem \ref{T2}, we have that
	$$
	\{\lambda\in\mathbb{C}: \Re \lambda \ge 0\}	\setminus\{0\} \in \rho(\mathcal{A}_{FS})
	$$
	and $0\in \rho(\mathcal{A}_{FS})$ is shown in the next theorem.
	%From Theorem \ref{T3}, we get that
	%$$
	%\{\lambda \in \mathbb{C}:\Re \lambda \ge 0\}\subset \rho(\mathcal{A}_{FS}).
	%$$
	%Also, Theorem \ref{T2} gives the existence of a constant $C>0$ such that for any $\lambda\in \mu_0 +\Sigma_\theta$ with $\theta\in (\pi/2, \pi)$,
	%$$
	%\|(\lambda I - \mathcal{A}_{FS})^{-1}\|_{\mathcal{L}(X)}\le C.
	%$$
	%	But as $\{\lambda \in \mathbb{C}:\Re \lambda \ge 0\} \setminus [\mu_0 +\Sigma_\theta]$ is a compact set, we can have in fact, for any $\lambda\in \mathbb{C}$ with $\Re \lambda\ge 0$,
	%	$$
	%	\|(\lambda I - \mathcal{A}_{FS})^{-1}\|_{\mathcal{L}(X)}\le C.
	%	$$
	Therefore, since the resolvent set is an open set, we get in fact, for some $\eta>0$,
	$$
	\{\lambda \in \mathbb{C}:\Re \lambda \ge -\eta\}\subset \rho(\mathcal{A}_{FS}).
	$$
	%	Note that, $\lambda =0$ belongs to the resolvent set and this is important to conclude the above statement (since, otherwise we may have $\Sigma_\theta\subset \rho(\mathcal{A}_{FS})$ for some $\theta\in (\pi/2,\pi)$ which is not sufficient to prove the desired result).	
	Now as $\mathcal{A}_{FS}$ generates an analytic semigroup, Proposition 2.9 in \cite[Part II, Chapter, pp 120]{ben} tells that the corresponding semigroup is of negative type. Then Corollary 2.2 (i) in \cite[pp 93]{ben} provides the exponential stability.
	\hfill
\end{proof}

\begin{theorem}
	\label{T3}
	Let $p,q\in (1,\infty)$, $\alpha\ge 0$ be as in (\ref{alpha}) and $\lambda \in \mathbb{C}$ with $\Re \lambda \ge 0$. Then, for any $(\bm{f}, \bm{g}_1,\bm{g}_2)\in X$, the resolvent system (\ref{R}) admits a unique solution satisfying the following estimate
	\begin{equation}
	\label{10}
	\|\vu\|_{\bm{W}^{2,q}(\Omega_F(0))} + \|\pi\|_{W^{1,q}(\Omega_F(0))}+ \|\bm{l}\|_{\mathbb{C}^3} + \|\bm{\omega}\|_{\mathbb{C}^3} \le C \|(\bm{f}, \bm{g}_1,\bm{g}_2)\|_{X}.
	\end{equation}
\end{theorem}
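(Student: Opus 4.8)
The plan is to pass to the abstract reformulation of Proposition~\ref{P0} and reduce the statement to showing that $\{\lambda\in\mathbb{C}:\Re\lambda\ge 0\}\subset\rho(\mathcal{A}_{FS})$ together with a resolvent bound that is uniform on this half-plane in the graph norm of $\mathcal{D}(\mathcal{A}_{FS})$. Granting this, existence and uniqueness of $(\vu,\pi,\bm{l},\bm{\omega})$ follow from the ``iff'' in Proposition~\ref{P0}: one solves $(\lambda I-\mathcal{A}_{FS})(\mathbb{P}\vu,\bm{l},\bm{\omega})^T=(\mathbb{P}\bm{f},\tilde{\bm{g}}_1,\tilde{\bm{g}}_2)^T$ and then recovers $(I_3-\mathbb{P})\vu=(I_3-\mathbb{P})S(\bm{l},\bm{\omega})$ and $\pi=N(\Delta\mathbb{P}\vu\cdot\vn)-\lambda N_S((\bm{l}+\bm{\omega}\times\bm{x})\cdot\vn)$ from $(\ref{9})$. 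For the estimate $(\ref{10})$ I would bound $\|\mathbb{P}\vu\|_{\bm{W}^{2,q}(\Omega_F(0))}+\|\bm{l}\|_{\mathbb{C}^3}+\|\bm{\omega}\|_{\mathbb{C}^3}$ by the graph norm via Lemma~\ref{L1}, hence by the uniform resolvent bound; the remaining contributions are controlled since $S$, $(I_3-\mathbb{P})S$ and $N_S$ are bounded linear maps out of the finite-dimensional space $\mathbb{C}^3\times\mathbb{C}^3$ and since $\|N(\Delta\mathbb{P}\vu\cdot\vn)\|_{W^{1,q}(\Omega_F(0))}\le C\|\mathbb{P}\vu\|_{\bm{W}^{2,q}(\Omega_F(0))}$, as already observed in the proof of Theorem~\ref{T2}; the only $\lambda$-dependent term, $\lambda N_S((\bm{l}+\bm{\omega}\times\bm{x})\cdot\vn)$, is handled because the resolvent bound also gives $|\lambda|\,(\|\bm{l}\|_{\mathbb{C}^3}+\|\bm{\omega}\|_{\mathbb{C}^3})\le C\|(\bm{f},\bm{g}_1,\bm{g}_2)\|_X$ (from the sectorial estimate when $|\lambda|$ is large, trivially when $|\lambda|$ is bounded).

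For $|\lambda|\ge R$ with $\Re\lambda\ge 0$ and $R$ large, everything is already contained in Theorem~\ref{T2}: such $\lambda$ belong to the shifted sector $\gamma_1+\Sigma_{\pi/2+\theta}$, $\mathcal{R}$-sectoriality gives $\|(\lambda I-\mathcal{A}_{FS})^{-1}\|_{\mathcal{L}(X)}\le C/|\lambda|$, hence $\|\mathcal{A}_{FS}(\lambda I-\mathcal{A}_{FS})^{-1}\|_{\mathcal{L}(X)}\le C$, and Lemma~\ref{L1} converts this into the graph-norm bound. The work is therefore confined to the \emph{bounded} set $\Lambda_0:=\{\Re\lambda\ge 0\}\setminus(\gamma_1+\Sigma_{\pi/2+\theta})$. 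There I would first observe that $\mathcal{A}_{FS}$ has compact resolvent: by Lemma~\ref{L1} the graph norm is equivalent to $\|\mathbb{P}\vu\|_{\bm{W}^{2,q}(\Omega_F(0))}+\|\bm{l}\|_{\mathbb{C}^3}+\|\bm{\omega}\|_{\mathbb{C}^3}$, and $\bm{W}^{2,q}(\Omega_F(0))\hookrightarrow\hookrightarrow\bm{L}^q(\Omega_F(0))$ by Rellich--Kondrachov (together with finite dimensionality of the rigid variables), so $\mathcal{D}(\mathcal{A}_{FS})\hookrightarrow\hookrightarrow X$; since $\rho(\mathcal{A}_{FS})\neq\emptyset$ by Theorem~\ref{T2}, the spectrum of $\mathcal{A}_{FS}$ is a discrete set of eigenvalues of finite multiplicity. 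Hence $\Lambda_0\subset\rho(\mathcal{A}_{FS})$ will follow as soon as $\mathcal{A}_{FS}$ has no eigenvalue with $\Re\lambda\ge 0$, and then $\lambda\mapsto(\lambda I-\mathcal{A}_{FS})^{-1}$, being analytic on the resolvent set, is bounded on the compact set $\{\Re\lambda\ge 0,\ |\lambda|\le R\}$, which supplies the uniform bound there.

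The core of the argument is thus the spectral exclusion: if $(\mathbb{P}\vu,\bm{l},\bm{\omega})\in\mathcal{D}(\mathcal{A}_{FS})$ satisfies $(\lambda I-\mathcal{A}_{FS})(\mathbb{P}\vu,\bm{l},\bm{\omega})=0$ with $\Re\lambda\ge 0$, then it vanishes. Through Proposition~\ref{P0} this corresponds to a solution of the homogeneous version of $(\ref{R})$ (if necessary one reduces to $q=2$ using $q$-independence of the spectrum, or upgrades the eigenfunction's regularity by elliptic/Stokes regularity). I would test $(\ref{R})_1$ against $\bar{\vu}$ in $L^2(\Omega_F(0))$, integrate by parts writing $-\Delta\vu+\nabla\pi=-\div\sigma(\vu,\pi)$ with $\sigma(\vu,\pi)=2\DT\vu-\pi I_3$ (using $\div\vu=0$), and combine with $(\ref{R})_6$, $(\ref{R})_7$ tested against $\bar{\bm{l}}$, $\bar{\bm{\omega}}$: the Dirichlet condition annihilates the $\partial\Omega$ part of the boundary term, while on $\partial\Omega_S(0)$ the impermeability condition $(\ref{R})_4$ makes $\overline{\vu-\vu_S}$ purely tangential and the Navier condition $(\ref{R})_5$ turns $[\sigma\vn]_{\vt}$ into $-\alpha(\vu_{\vt}-\vu_{S\vt})$. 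This yields
\begin{equation*}
\lambda\Big(\|\vu\|_{L^2(\Omega_F(0))}^2+m|\bm{l}|^2+(J\bm{\omega})\cdot\bar{\bm{\omega}}\Big)+2\|\DT\vu\|_{L^2(\Omega_F(0))}^2+\int_{\partial\Omega_S(0)}\alpha\,|\vu_{\vt}-\vu_{S\vt}|^2\,\mathrm{d}s=0 .
\end{equation*}

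Taking real parts and using $\Re\lambda\ge 0$, $m>0$, $J$ positive definite and $\alpha\ge 0$ forces $\DT\vu=0$ in $\Omega_F(0)$, so $\vu$ is a rigid velocity field; vanishing on $\partial\Omega$, it vanishes identically, whence $\nabla\pi=0$. If $\Re\lambda>0$ the quadratic form above is itself zero, so $\bm{l}=\bm{\omega}=0$; if $\lambda\in i\mathbb{R}\setminus\{0\}$, inserting $\vu=0$ and $\pi\equiv\mathrm{const}$ into $(\ref{R})_6$, $(\ref{R})_7$ and using $\int_{\partial\Omega_S(0)}\vn\,\mathrm{d}s=\int_{\partial\Omega_S(0)}\bm{y}\times\vn\,\mathrm{d}s=0$ gives $\lambda m\bm{l}=0$ and $\lambda J\bm{\omega}=0$, hence $\bm{l}=\bm{\omega}=0$; the remaining case $\lambda=0$ is closed by invoking the impermeability condition $(\bm{l}+\bm{\omega}\times\bm{y})\cdot\vn=0$ together with the tangential condition $\alpha(\bm{l}+\bm{\omega}\times\bm{y})_{\vt}=0$ on $\partial\Omega_S(0)$, as in \cite[Lemma~4.3]{GGH} and \cite{MT}. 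I expect this spectral exclusion --- in particular the degenerate case $\lambda=0$, and making the $L^2$ energy identity rigorous within the $L^q$ framework --- to be the main obstacle, whereas the reduction to it, the compactness/continuity argument for the uniform bound, and the reconstruction of $(\vu,\pi)$ are routine.
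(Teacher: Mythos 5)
Your proposal follows essentially the same route as the paper: reduce to the operator $\mathcal{A}_{FS}$ via Proposition~\ref{P0}, use the compactness of the resolvent (the paper invokes the Fredholm alternative, you phrase it as discreteness of the spectrum — these are the same thing here) to reduce existence to uniqueness, and prove uniqueness through the same $L^2$ energy identity obtained by testing against $\bar{\vu}$, $\bar{\bm{l}}$, $\bar{\bm{\omega}}$. Your treatment of the imaginary-axis and $\lambda=0$ cases is somewhat more carefully articulated than the paper's, and you spell out the $\lambda$-uniformity of the bound (large $|\lambda|$ via $\mathcal{R}$-sectoriality, compact set via continuity) which the paper compresses into ``the estimate follows easily,'' but the underlying argument is the same.
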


\begin{proof}
	From Theorem \ref{T2}, we know that there exists $\tilde{\lambda}>0$ such that $(\tilde{\lambda} I - \mathcal{A}_{FS})$ is invertible. Thus (\ref{R}) can be written as (since it is equivalent to (\ref{9}) by Proposition \ref{P0}),
	\begin{equation}
	\label{11}
	\begin{aligned}
	& \begin{pmatrix}
	\mathbb{P}\vu\\
	\bm{l}\\
	\bm{\omega}
	\end{pmatrix}
	= \left[ I + (\lambda - \tilde{\lambda})(\tilde{\lambda} I-\mathcal{A}_{FS})^{-1}\right]^{-1} (\tilde{\lambda} I-\mathcal{A}_{FS})^{-1}
	\begin{pmatrix}
	\mathbb{P}\bm{f}\\
	\tilde{\bm{g}_1}\\
	\tilde{\bm{g}_2}
	\end{pmatrix}
	\\
	& (I-\mathbb{P})\vu = (I-\mathbb{P})S(\bm{l},\bm{\omega})\\
	& \pi = N(\Delta \mathbb{P}\vu\cdot \vn) - \lambda N_S((\bm{l}+ \bm{\omega}\times \bm{y})\cdot \vn).
	\end{aligned}
	\end{equation}
	In fact $(\tilde{\lambda} I-\mathcal{A}_{FS})$ has a continuous inverse, therefore $(\tilde{\lambda} I-\mathcal{A}_{FS})^{-1}$ is a compact operator. Hence by Fredholm alternative theorem, the existence and uniqueness of solution of the above system is equivalent. It is enough to show the uniqueness of (\ref{11}) only. Then the estimate (\ref{10}) follows easily.
	
	Let $(\vu,\pi,\bm{l},\bm{\omega})\in \bm{W}^{2,q}(\Omega_F(0))\times W^{1,q}(\Omega_F(0))\times \mathbb{C}^3\times \mathbb{C}^3$ satisfies the homogeneous system
	\begin{equation}
	\label{homR}
	\begin{cases}
	\begin{aligned}
	\lambda\vu - \Delta \vu+\nabla \pi = \bm{0}, \quad
	\div \ \vu &= 0 \ &&\text{ in } \ \Omega_F(0),\\
	\vu&=\bm{0} \ &&\text{ on } \ \partial \Omega ,\\
	\vu\cdot \vn&=(\bm{l} + \bm{\omega} \times \bm{y})\cdot \vn \ &&\text{ on } \ \partial \Omega_S(0),\\ 2\left[(\DT\vu)\vn\right]_{\vt}+\alpha\vu_{\vt}&=\alpha(\bm{l} + \bm{\omega} \times \bm{y})_{\vt} \ &&\text{ on } \ \partial \Omega_S(0),\\
	\lambda m \bm{l} &= - \int\displaylimits_{\partial \Omega_S(0)}{\sigma (\vu,\pi)\vn},\\
	\lambda J(0) \bm{\omega} &= - \int\displaylimits_{\partial \Omega_S(0)}{\bm{y}\times \sigma (\vu,\pi)\vn}.
	\end{aligned}
	\end{cases}
	\end{equation}
	We first show that $(\vu,\pi)\in \bm{W}^{2,2}(\Omega_F(0))\times W^{1,2}(\Omega_F(0))$. If $q\ge 2$, it is obvious. If $ q\in (1,2)$, we rewrite (\ref{homR}) as
	\begin{equation}
	\label{}
	\begin{aligned}
	&(\tilde{\lambda} I-\mathcal{A}_{FS})
	\begin{pmatrix}
	\mathbb{P}\vu\\
	\bm{l}\\
	\bm{\omega}
	\end{pmatrix}
	= (\lambda - \tilde{\lambda})
	\begin{pmatrix}
	\mathbb{P}\bm{u}\\
	\bm{l}\\
	\bm{\omega}
	\end{pmatrix}
	\\
	& (I-\mathbb{P})\vu = (I-\mathbb{P})S(\bm{l},\bm{\omega})\\
	& \pi = N(\Delta \mathbb{P}\vu\cdot \vn) - \lambda N_S((\bm{l}+ \bm{\omega}\times \bm{y})\cdot \vn).
	\end{aligned}
	\end{equation}
	But as $\bm{W}^{2,q}(\Omega_F(0)) \subset \bm{L}^2(\Omega_F(0))$ and $(\tilde{\lambda}I - \mathcal{A}_{FS})$ is invertible, we obtain $(\vu,\pi)\in \bm{W}^{2,2}(\Omega_F(0))\times W^{1,2}(\Omega_F(0))$.
	
	Now multiplying the first equation of (\ref{homR}) by $\overline{\vu}$ and integrating by parts, we obtain
	\begin{equation*}
	\begin{aligned}
	0 & = \lambda\int\displaylimits_{\Omega_F(0)}{|\vu|^2} + 2\int\displaylimits_{\Omega_F(0)}{|\DT\vu|^2} - 2\int\displaylimits_{\partial \Omega_F(0)}{(\DT \vu)\vn\cdot \overline{\vu}} + \int\displaylimits_{\partial \Omega_F(0)}{\pi \ \overline{\vu}\cdot \vn}\\
%	& = \lambda\int\displaylimits_{\Omega_F(0)}{|\vu|^2} + 2\int\displaylimits_{\Omega_F(0)}{|\DT\vu|^2} - \int\displaylimits_{\partial \Omega_S(0)}{\left( 2(\DT \vu)\vn - \pi \vn\right) \cdot \overline{\vu}} \\
	& = \lambda\int\displaylimits_{ \Omega_F(0)}{|\vu|^2} + 2\int\displaylimits_{\Omega_F(0)}{|\DT\vu|^2} -\int\displaylimits_{\partial \Omega_S(0)}{\sigma(\vu,\pi)\vn \cdot \overline{\vu}} .
	\end{aligned}
	\end{equation*}
	Note that
	\begin{equation*}
	\begin{aligned}
	& \int\displaylimits_{\partial \Omega_S(0)}{\sigma(\vu,\pi)\vn \cdot \overline{\vu}}\\
	= & \int\displaylimits_{\partial \Omega_S(0)}{\sigma(\vu,\pi)\vn \cdot \left( \overline{\vu} - (\overline{\bm{l}} + \overline{\bm{\omega}} \times \bm{y})\right) } + \int\displaylimits_{\partial \Omega_S(0)}{\sigma(\vu,\pi)\vn \cdot (\overline{\bm{l}} + \overline{\bm{\omega}} \times \bm{y}) }\\
	= & \int\displaylimits_{\partial \Omega_S(0)}{[\sigma(\vu,\pi)\vn]_{\vt} \cdot \left( \overline{\vu} - (\overline{\bm{l}} + \overline{\bm{\omega}} \times \bm{y})\right)_{\vt}} + \overline{\bm{l}} \cdot \int\displaylimits_{\partial \Omega_S(0)}{\sigma(\vu,\pi)\vn} + \overline{\bm{\omega}}\cdot \int\displaylimits_{\partial \Omega_S(0)}{\sigma(\vu,\pi)\vn \times \bm{y}}\\
	= & -\int\displaylimits_{\partial \Omega_S(0)}{\alpha |\vu_{\vt} - (\overline{\bm{l}}+ \overline{\bm{\omega}}\times\bm{y})_{\vt}|^2} - \lambda m |\bm{l}|^2 - \lambda J(0)\bm{\omega}\cdot\overline{\bm{\omega}}
	\end{aligned}
	\end{equation*}
	where we have used the 4th and 5th equations of (\ref{homR}), multiplied by $\overline{\bm{l}}, \overline{\bm{\omega}}$ respectively. Thus we obtain the energy equality
	\begin{equation*}
	\lambda\int\displaylimits_{\Omega_F(0)}{|\vu|^2} + 2\int\displaylimits_{\Omega_F(0)}{|\DT\vu|^2}+ \int\displaylimits_{\partial \Omega_S(0)}{\alpha |\vu_{\vt} - (\overline{\bm{l}}+ \overline{\bm{\omega}}\times\bm{y})_{\vt}|^2} + \lambda m |\bm{l}|^2 + \lambda J(0)\bm{\omega}\cdot\overline{\bm{\omega}}=0.
	\end{equation*}
	Taking the real part of the above equation yields,
	\begin{equation*}
	\begin{aligned}
	& \Re \lambda\int\displaylimits_{ \Omega_F(0)}{|\vu|^2} + 2\int\displaylimits_{\Omega_F(0)}{|\DT\vu|^2}+ \int\displaylimits_{\partial \Omega_S(0)}{\alpha |\vu_{\vt} - (\overline{\bm{l}}+ \overline{\bm{\omega}}\times\bm{y})_{\vt}|^2} + m\Re \lambda |\bm{l}|^2 + \Re\lambda \ J(0)\bm{\omega}\cdot\overline{\bm{\omega}} =0.
	\end{aligned}
	\end{equation*}
	But as $\Re \lambda \ge 0$ and $\alpha \ge 0$ (and also, $J(0)a \cdot \overline{a}>0 \ \forall a\in \R^3$), we obtain $\bm{l} = \bm{0}$ and
	\begin{equation*}
	\int\displaylimits_{\Omega_F(0)}{|\DT\vu|^2}=0.
	\end{equation*}
	This implies, along with the fact that $\vu\cdot \vn = 0$ on $\partial\Omega$ and $\alpha \not\equiv 0$, that $\vu =\bm{0}$ in $\Omega_F(0)$ (cf. \cite[Proposition 3.7]{AACG}). Finally, from the boundary condition $2\left[(\DT\vu)\vn\right]_{\vt}+\alpha\vu_{\vt}=\alpha(\bm{l} + \bm{\omega} \times \bm{y})_{\vt}$ on $\partial \Omega_S(0)$, we deduce $\bm{\omega} = \bm{0}$.
	Remember, $\left( \bm{\omega}\times\bm{y} \right) \cdot \vn =0$ holds always.
	\hfill
\end{proof}

The above result together with Proposition \ref{P_cauchy} yields the following maximal regularity for the linearised system (\ref{2}).

\begin{theorem}
	\label{T11}
	Let $\Omega_F(0)$ be a bounded domain of class $\HC{2}{1}$, $p,q\in(1,\infty)$ and $\alpha \ge 0$ be as in (\ref{alpha}). Also assume that $(\vu_0,\bm{l}_0,\bm{\omega}_0)\in B^{2-2/p}_{q,p}(\Omega_F(0))\times \R^3\times \R^3$ satisfies the compatibility condition (\ref{CN}). Then for any $\bm{f}\in L^p(0,\infty;\bm{L}^q(\Omega_F(0)))$, $\bm{g}_1\in L^p(0,\infty;\R^3)$ and $\bm{g}_2\in L^p(0,\infty;\R^3)$, problem (\ref{2}) with $\bm{h} = \bm{0}$ admits a unique solution
	\begin{equation*}
		\vu\in W^{2,1}_{q,p}(Q_F^\infty), \pi\in L^p(0,\infty;W^{1,q}(\Omega_F(0))), (\bm{l}, \bm{\omega})\in W^{1,p}(0,\infty;\R^6)	
	\end{equation*}
	which satisfies the estimate
	\begin{equation*}
		\begin{aligned}
			& \quad \ \|\vu\|_{W^{2,1}_{q,p}(Q_F^\infty)} + \|\pi\|_{L^p(0,\infty;W^{1,q}(\Omega_F(0)))} + \| \bm{l}\|_{W^{1,p}(0,\infty;\R^3)} + \|\bm{\omega}\|_{W^{1,p}(0,\infty;\R^3)}\\
			& \le C \left(\|\bm{f}\|_{L^p(0,\infty;\bm{L}^q(\Omega_F(0)))} + \|\left( \bm{g}_1,\bm{g}_2 \right) \|_{L^p(0,\infty;\R^6)}+ \|\vu_0\|_{B^{2-2/p}_{q,p}(\Omega_F(0))} + \|\left( \bm{l}_0,\bm{\omega}_0\right) \|_{\R^6} \right)
		\end{aligned}
	\end{equation*}
	where the constant $C>0$ depends only on $\alpha, p,q$ and $\Omega_S(0)$.
\end{theorem}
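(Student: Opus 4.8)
The plan is to recast the linear system (\ref{2}) with $\bm{h}=\bm{0}$ as the abstract Cauchy problem driven by the fluid--structure operator $\mathcal{A}_{FS}$, and then to apply the abstract maximal regularity result of Proposition \ref{P_cauchy}, whose hypotheses are precisely what Theorems \ref{T2}, \ref{T1} and \ref{T3} provide.

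First I would carry out, now at the level of the evolution problem, the same reduction performed for the resolvent problem in Propositions \ref{P3} and \ref{P0}: applying the (bounded, time-independent) Helmholtz projection $\mathbb{P}$ to the momentum equation and splitting $\vu=\mathbb{P}\vu+(I_3-\mathbb{P})\vu$, with $\partial_t$ in place of $\lambda$, one checks that $(\vu,\pi,\bm{l},\bm{\omega})$ solves (\ref{2}) with $\bm{h}=\bm{0}$ if and only if $z:=(\mathbb{P}\vu,\bm{l},\bm{\omega})$ solves
\begin{equation*}
z'(t)=\mathcal{A}_{FS}\,z(t)+\bigl(\mathbb{P}\bm{f}(t),\tilde{\bm{g}}_1(t),\tilde{\bm{g}}_2(t)\bigr),\qquad z(0)=(\mathbb{P}\vu_0,\bm{l}_0,\bm{\omega}_0),
\end{equation*}
with $(\tilde{\bm{g}}_1,\tilde{\bm{g}}_2)^T=K^{-1}(\bm{g}_1,\bm{g}_2)^T$, together with the algebraic relations $(I_3-\mathbb{P})\vu=(I_3-\mathbb{P})S(\bm{l},\bm{\omega})$ and $\pi=N(\Delta\mathbb{P}\vu\cdot\vn)-N_S\bigl((\bm{l}'+\bm{\omega}'\times\bm{y})\cdot\vn\bigr)$. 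The space $X=\bm{L}^q_{\sigma,\vt}(\Omega_F(0))\times\mathbb{C}^3\times\mathbb{C}^3$ is UMD, being a closed subspace of the UMD space $\bm{L}^q(\Omega_F(0))$ times two finite-dimensional Hilbert spaces; by Theorems \ref{T2}, \ref{T1} and \ref{T3}, $\mathcal{A}_{FS}$ is $\mathcal{R}$-sectorial of $\mathcal{R}$-angle larger than $\pi/2$, $0\in\rho(\mathcal{A}_{FS})$, and $(e^{t\mathcal{A}_{FS}})_{t\ge 0}$ has negative exponential type. Since $\mathbb{P}$ and $K^{-1}$ are bounded, $(\mathbb{P}\bm{f},\tilde{\bm{g}}_1,\tilde{\bm{g}}_2)\in L^p(0,\infty;X)$, so Proposition \ref{P_cauchy} provides a unique $z\in W^{1,p}(0,\infty;X)\cap L^p(0,\infty;\mathcal{D}(\mathcal{A}_{FS}))$, with its accompanying a priori estimate, as soon as $z(0)\in(X,\mathcal{D}(\mathcal{A}_{FS}))_{1-1/p,p}$.

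The next step is to match this abstract trace condition with the Besov regularity and compatibility condition (\ref{CN}) assumed in the statement. Because $S\in\mathcal{L}(\mathbb{C}^6,\bm{W}^{2,q}(\Omega_F(0)))$, the shift map $(\mathbb{P}\vu,\bm{l},\bm{\omega})\mapsto(\mathbb{P}\vu-\mathbb{P}S(\bm{l},\bm{\omega}),\bm{l},\bm{\omega})$ is an isomorphism of $X$ onto itself and, by Lemma \ref{L1}, of $\mathcal{D}(\mathcal{A}_{FS})$ onto $\mathcal{D}(A_q)\times\mathbb{C}^3\times\mathbb{C}^3$; by interpolation it identifies $(X,\mathcal{D}(\mathcal{A}_{FS}))_{1-1/p,p}$ with $(\bm{L}^q_{\sigma,\vt}(\Omega_F(0)),\mathcal{D}(A_q))_{1-1/p,p}\times\mathbb{C}^3\times\mathbb{C}^3$. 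Hence $z(0)$ lies in the trace space iff $\mathbb{P}\vu_0-\mathbb{P}S(\bm{l}_0,\bm{\omega}_0)\in(\bm{L}^q_{\sigma,\vt}(\Omega_F(0)),\mathcal{D}(A_q))_{1-1/p,p}$. Writing $\mathbb{P}\vu_0-\mathbb{P}S(\bm{l}_0,\bm{\omega}_0)=\mathbb{P}(\vu_0-\bm{v}_0)+(\bm{v}_0-S(\bm{l}_0,\bm{\omega}_0))$ with $\bm{v}_0$ as in (\ref{CN}), the first summand equals $\vu_0-\bm{v}_0$ (on which $\mathbb{P}$ acts as the identity) and lies in $(\bm{L}^q_{\sigma,\vt},\mathcal{D}(A_q))_{1-1/p,p}$ by compatibility, while the second summand lies in $\bm{W}^{2,q}\cap\bm{L}^q_{\sigma,\vt}$, vanishes on $\partial\Omega$, has zero normal trace on $\partial\Omega_S(0)$, and, when $p>3$, satisfies there the homogeneous Navier condition (since $\bm{v}_0$ and $S(\bm{l}_0,\bm{\omega}_0)$ carry the same boundary data), so it belongs to $\mathcal{D}(A_q)$; for $p\le 3$ the lower-order traces already place it in the interpolation space. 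Taking the lifting $\bm{v}_0$ to depend linearly and boundedly on $(\bm{l}_0,\bm{\omega}_0)$ also yields $\|z(0)\|_{(X,\mathcal{D}(\mathcal{A}_{FS}))_{1-1/p,p}}\le C\bigl(\|\vu_0\|_{B^{2-2/p}_{q,p}(\Omega_F(0))}+|\bm{l}_0|+|\bm{\omega}_0|\bigr)$.

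Finally I would reconstruct the full solution and assemble the estimate. Putting $\vu:=\mathbb{P}\vu+(I_3-\mathbb{P})S(\bm{l},\bm{\omega})$: since $(\bm{l},\bm{\omega})\in W^{1,p}(0,\infty;\R^6)$ and $S$ maps boundedly into $\bm{W}^{2,q}(\Omega_F(0))$, the corrector lies in $W^{1,p}(0,\infty;\bm{W}^{2,q})$, while $\mathbb{P}\vu-\mathbb{P}S(\bm{l},\bm{\omega})\in L^p(0,\infty;\mathcal{D}(A_q))$ and $z\in W^{1,p}(0,\infty;X)$ give $\mathbb{P}\vu\in W^{2,1}_{q,p}(Q_F^\infty)$, whence $\vu\in W^{2,1}_{q,p}(Q_F^\infty)$; the pressure is read off from the formula above (equivalently, $\nabla\pi=\bm{f}-\partial_t\vu+\Delta\vu\in L^p(0,\infty;\bm{L}^q)$), so $\pi\in L^p(0,\infty;W^{1,q})$, and reversing the equivalence shows that $(\vu,\pi,\bm{l},\bm{\omega})$ solves (\ref{2}) with $\bm{h}=\bm{0}$, uniqueness descending from that of $z$. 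The quantitative bound then follows by estimating $\|\vu\|_{W^{2,1}_{q,p}(Q_F^\infty)}$, $\|\pi\|_{L^p(0,\infty;W^{1,q})}$ and $\|(\bm{l},\bm{\omega})\|_{W^{1,p}(0,\infty;\R^6)}$ through the norm of $z$ in $W^{1,p}(0,\infty;X)\cap L^p(0,\infty;\mathcal{D}(\mathcal{A}_{FS}))$ (using boundedness of $\mathbb{P}$, $S$, $N$, $N_S$ and Lemma \ref{L1}), and invoking the maximal regularity estimate of Proposition \ref{P_cauchy} together with the trace bound above. I expect the main difficulty to be bookkeeping rather than new analysis: the decisive inputs, namely the $\mathcal{R}$-sectoriality and the $\Re\lambda\ge 0$ resolvent estimate for $\mathcal{A}_{FS}$, are already secured, and the one genuinely delicate point is the faithful translation between the abstract space $(X,\mathcal{D}(\mathcal{A}_{FS}))_{1-1/p,p}$ and the concrete class of admissible data encoded by the Besov norm and the compatibility condition (\ref{CN}), including the $p\le 3$ versus $p>3$ dichotomy for which boundary traces are meaningful.
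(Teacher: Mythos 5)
Your proposal is correct and follows the paper's intended line: the paper itself proves Theorem~\ref{T11} by the one-line remark that Theorem~\ref{T3} (hence $0\in\rho(\mathcal{A}_{FS})$) combined with Proposition~\ref{P_cauchy} (enabled by the $\mathcal{R}$-sectoriality of Theorem~\ref{T2} and the negative exponential type of Theorem~\ref{T1}) gives maximal regularity, and you have simply spelled out what that means — passing to the abstract Cauchy problem for $z=(\mathbb{P}\vu,\bm{l},\bm{\omega})$ via the decomposition of Propositions~\ref{P3}--\ref{P0}, identifying the trace space $(X,\mathcal{D}(\mathcal{A}_{FS}))_{1-1/p,p}$ through the shift isomorphism and matching it with the compatibility condition~(\ref{CN}), and reconstructing $\vu$ and $\pi$. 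The details you supply (UMD property of $X$, the interpolation identification, and the recovery of $\vu$ and $\pi$ from $z$) are exactly the bookkeeping the paper leaves implicit.
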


%\subsection{Non-vanishing divergence}

We are now in the position to prove the maximal $L^p-L^q$ regularity for the system (\ref{2}) with non-vanishing divergence condition which is essential to treat the full non-linear problem as described in the beginning of Section \ref{S5}. %Let us consider the following system
%\begin{equation}
%\label{div_prob}
%\begin{cases}
%\begin{aligned}
%&\partial_t \vu - \div \ \sigma (\vu,\pi) = \bm{f} &&\ \text{ in } \ \Omega_F(0)\times (0,T),\\
%&\div \ \vu = \div \ \bm{h} &&\ \text{ in } \ \Omega_F(0)\times (0,T),\\
%&\vu\cdot\vn=0, \quad 2\left[(\DT\vu)\vn\right]_{\vt}+\alpha\vu_{\vt}=\bm{0} &&\ \text{ on } \ \partial \Omega \times (0,T),\\
%&\vu\cdot \vn=(\bm{l} + \bm{\omega} \times \bm{y})\cdot \vn &&\ \text{ on } \ \partial \Omega_S(0) \times (0,T),\\
%&2\left[(\DT\vu)\vn\right]_{\vt}+\alpha\vu_{\vt}=\alpha(\bm{l} + \bm{\omega} \times \bm{y})_{\vt} &&\ \text{ on } \ \partial \Omega_S(0) \times (0,T),\\
%&m \bm{l}' = - \int\displaylimits_{\partial \Omega_S(0)}{\sigma (\vu,\pi)\vn} + \bm{g}_1, &&\ \ t\in(0,T),\\
%&J(0) \bm{\omega}' = - \int\displaylimits_{\partial \Omega_S(0)}{\bm{y}\times \sigma (\vu,\pi)\vn}+\bm{g}_2, &&\ \ t\in(0,T),\\
%&\vu(0) = \vu_0 &&\ \text{ in } \ \Omega_F(0),\\
%&\bm{l}( 0) = \bm{l}_0, \quad \bm{\omega}(0) = \bm{\omega}_0 .
%\end{aligned}
%\end{cases}
%\end{equation}
We prove the following theorem.
\begin{theorem}
	\label{T4}
	Let $p,q\in (1,\infty)$ and $\alpha\ge 0$ be as in (\ref{alpha}). Let $\eta\in (0,\eta_0)$ where $\eta_0$ is the constant introduced in Theorem \ref{T1} and $(\bm{l}_0, \bm{\omega}_0, \vu_0) \in \R^3 \times \R^3 \times B_{q,p}^{2(1-1/p)}(\Omega_F(0))$ satisfying the compatibility conditions (\ref{CN}).
%	\begin{equation*}
%	\begin{aligned}
%\vu_0\cdot \vn=0 \ \text{ on } \partial\Omega, \quad \vu_0\cdot \vn = (\bm{l}_0 + \bm{\omega}_0\times \bm{y})\cdot \vn \ \text{ on } \partial\Omega_S(0),
%	\end{aligned}
%	\end{equation*}
%	and if $p>3$,
%	\begin{equation*}
%	\quad 2[(\DT\vu_0)\vn]_{\vt} + \alpha \vu_{0\vt} = \bm{0} \ \text{ on } \partial\Omega, \quad 2[(\DT\vu_0)\vn]_{\vt} + \alpha \vu_{0\vt} = \alpha (\bm{l}_0 + \bm{\omega}_0\times \bm{y})_{\vt} \ \text{ on } \partial\Omega_S(0).
%	\end{equation*}
	Then for any $e^{\eta t} \bm{f}\in L^p(0,\infty;\bm{L}^q(\Omega_F(0))), e^{\eta t}\bm{h}\in W^{2,1}_{q,p}(Q^\infty_F), e^{\eta t}\bm{g}_1 \in L^p(0,\infty;\R^3)$ and $e^{\eta t}\bm{g}_2 \in L^p(0,\infty,\R^3)$ satisfying
	$$
\bm{h}\arrowvert_{t=0} = 0 \quad \text{ on } \ \Omega_F(0) \quad \text{ and } \quad \bm{h}\cdot \vn\rvert_{\partial\Omega_F(0)} = \bm{0},
	$$
	the system (\ref{2}) admits a unique strong solution
	\begin{equation*}
	\begin{aligned}
	&e^{\eta t}\vu \in W^{2,1}_{q,p}(Q^\infty_F),\quad e^{\eta t}\pi\in L^p(0,\infty;W^{1,q}(\Omega_F(0)))\\
	&e^{\eta t}\bm{l}\in W^{1,p}(0,\infty;\R^3), \quad e^{\eta t}\bm{\omega}\in W^{1,p}(0,\infty;\R^3).
	\end{aligned}
	\end{equation*}
	Moreover, there exists a constant $C_L>0$, depending only on $\alpha, p, q$ and $\Omega_S(0)$ such that
	\begin{equation}
	\label{28}
	\begin{aligned}
	&\|e^{\eta t}\vu\|_{W^{2,1}_{q,p}(Q^\infty_F)} + \|e^{\eta t}\pi\|_{L^p(0,\infty;W^{1,q}(\Omega_F(0)))} + \|e^{\eta t}\bm{l}\|_{L^p(0,\infty;\R^3)} + \|e^{\eta t}\bm{\omega}\|_{L^p(0,\infty;\R^3)} \\
	\le & \ C_L \left( \|\vu_0\|_{B_{q,p}^{2(1-1/p)}(\Omega_F(0))} + \|\bm{l}_0\|_{\R^3} + \|\bm{\omega}_0\|_{\R^3} + \|e^{\eta t}\bm{f}\|_{L^p(0,\infty;\bm{L}^q(\Omega_F(0)))}\right.\\
	&\qquad \qquad \quad \left. + \|e^{\eta t}\bm{h}\|_{W^{2,1}_{q,p}(Q^\infty_F)} + \|e^{\eta t}\bm{g}_1\|_{L^p(0,\infty;\R^3)} +\|e^{\eta t}\bm{g}_2\|_{L^p(0,\infty;\R^3)}\right) .
	\end{aligned}
	\end{equation}
\end{theorem}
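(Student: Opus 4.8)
The plan is to peel off, one at a time, the two features of (\ref{2}) that obstruct a direct appeal to Theorem \ref{T11}: the inhomogeneous divergence datum $\bm{h}$ and the exponential weight $e^{\eta t}$. I would begin with the divergence. For a.e. $t\in(0,\infty)$, let $(\bm{w}(t),p_{\bm{w}}(t))$ solve the steady Stokes problem (\ref{3}) with zero rigid velocity but with the prescribed (nonzero) divergence:
\begin{equation*}
\begin{cases}
\begin{aligned}
-\div\,\sigma(\bm{w},p_{\bm{w}}) = \bm{0}, \quad \div\,\bm{w} &= \div\,\bm{h} &&\text{ in } \Omega_F(0),\\
\bm{w} &= \bm{0} &&\text{ on } \partial\Omega,\\
\bm{w}\cdot\vn = 0, \quad 2[(\DT\bm{w})\vn]_{\vt} + \alpha\bm{w}_{\vt} &= \bm{0} &&\text{ on } \partial\Omega_S(0).
\end{aligned}
\end{cases}
\end{equation*}
The hypothesis $\bm{h}\cdot\vn\rvert_{\partial\Omega_F(0)}=\bm{0}$ guarantees $\int_{\Omega_F(0)}\div\,\bm{h}\,dx=0$ (and likewise $\int_{\Omega_F(0)}\div\,\partial_t\bm{h}\,dx=0$), which is the solvability condition for this elliptic system; unique solvability together with the estimate $\|\bm{w}(t)\|_{\bm{W}^{2,q}(\Omega_F(0))}+\|p_{\bm{w}}(t)\|_{W^{1,q}(\Omega_F(0))}\le C\|\div\,\bm{h}(t)\|_{W^{1,q}(\Omega_F(0))}$ follows from the steady Stokes theory of \cite{AACG} (adapted to an inhomogeneous divergence), while the corresponding very-weak estimate gives $\|\partial_t\bm{w}(t)\|_{\bm{L}^q(\Omega_F(0))}\le C\|\partial_t\bm{h}(t)\|_{\bm{L}^q(\Omega_F(0))}$. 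Since $\bm{w}$ depends linearly and time-independently on $\div\,\bm{h}$, multiplying by $e^{\eta t}$ and integrating in time yields
\begin{equation*}
\|e^{\eta t}\bm{w}\|_{W^{2,1}_{q,p}(Q^\infty_F)} + \|e^{\eta t}p_{\bm{w}}\|_{L^p(0,\infty;W^{1,q}(\Omega_F(0)))} \le C\,\|e^{\eta t}\bm{h}\|_{W^{2,1}_{q,p}(Q^\infty_F)},
\end{equation*}
and, because $\bm{h}\rvert_{t=0}=\bm{0}$, also $\bm{w}\rvert_{t=0}=\bm{0}$.

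Next, set $\bm{v}:=\vu-\bm{w}$ and $\pi_{\bm{v}}:=\pi-p_{\bm{w}}$. Since $\bm{w}$ has zero normal trace on $\partial\Omega_F(0)$, vanishes on $\partial\Omega$, satisfies the homogeneous Navier condition on $\partial\Omega_S(0)$, and $\sigma(\vu,\pi)=\sigma(\bm{v},\pi_{\bm{v}})+\sigma(\bm{w},p_{\bm{w}})$, a direct computation shows that $(\bm{v},\pi_{\bm{v}},\bm{l},\bm{\omega})$ solves system (\ref{2}) with $\bm{h}=\bm{0}$, with initial data $(\vu_0,\bm{l}_0,\bm{\omega}_0)$ (which still satisfies (\ref{CN}) since $\bm{v}\rvert_{t=0}=\vu_0$), with fluid forcing $\bm{f}-\partial_t\bm{w}$, and with $\bm{g}_1,\bm{g}_2$ replaced by $\widetilde{\bm{g}}_1:=\bm{g}_1-\int_{\partial\Omega_S(0)}\sigma(\bm{w},p_{\bm{w}})\vn$ and $\widetilde{\bm{g}}_2:=\bm{g}_2-\int_{\partial\Omega_S(0)}\bm{y}\times\sigma(\bm{w},p_{\bm{w}})\vn$. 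By the trace theorem and Step~1, the new data belong to the same weighted spaces as the original ones, with norms additionally controlled by $\|e^{\eta t}\bm{h}\|_{W^{2,1}_{q,p}(Q^\infty_F)}$.

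It remains to establish the weighted analogue of Theorem \ref{T11}, i.e.\ for system (\ref{2}) with $\bm{h}=\bm{0}$. Written in the abstract form $z'-\mathcal{A}_{FS}z=F$, $z(0)=z_0$ of Proposition \ref{P0} (with $z=(\mathbb{P}\vu,\bm{l},\bm{\omega})$, and $(I_3-\mathbb{P})\vu$ and $\pi$ recovered as there), the substitution $\widehat{z}:=e^{\eta t}z$ turns this into $\widehat{z}'-(\mathcal{A}_{FS}+\eta I)\widehat{z}=e^{\eta t}F$, $\widehat{z}(0)=z_0$. Since $\eta<\eta_0$, Theorems \ref{T1} and \ref{T2} imply that $\mathcal{A}_{FS}+\eta I$ is still $\mathcal{R}$-sectorial of angle larger than $\pi/2$ and still generates a semigroup of negative exponential type; hence Proposition \ref{P_cauchy} applies to it, and, exactly as in the proof of Theorem \ref{T11}, one recovers and estimates the pressure $\pi_{\bm{v}}$ in $L^p(0,\infty;W^{1,q}(\Omega_F(0)))$. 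Adding back $(\bm{w},p_{\bm{w}})$ and invoking the estimate of Step~1 produces (\ref{28}); uniqueness follows by reversing the reduction.

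The genuinely new point, compared with Theorem \ref{T11}, is Step~1: one must realize $\div\,\bm{h}$ by a corrector that keeps \emph{all} boundary conditions of (\ref{2}) homogeneous --- so that the remainder is literally (\ref{2}) with $\bm{h}=\bm{0}$, and not a problem with inhomogeneous boundary data for which no linear result has been proved --- while retaining the $e^{\eta t}$-weighted $W^{2,1}_{q,p}$ regularity and the zero initial trace; the steady Stokes choice above achieves all of this using only elliptic estimates. By contrast, the shift $\mathcal{A}_{FS}\mapsto\mathcal{A}_{FS}+\eta I$ is harmless precisely because of the strict inequality $\eta<\eta_0$ coming from Theorem \ref{T1}, and the data bookkeeping of Step~2 is routine.
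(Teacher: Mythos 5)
Your proof is correct in spirit, but it takes a genuinely different route from the paper's at the crucial divergence-lifting step. The paper simply sets $\bm{v}:=\vu-\bm{h}$ and declares that $(\bm{v},\pi,\bm{l},\bm{\omega})$ solves the $\bm{h}=\bm{0}$ version of \eqref{2} with modified right-hand sides $\bm{F}=\bm{f}-\partial_t\bm{h}+\Delta\bm{h}$, $\bm{G}_1=\bm{g}_1-\int_{\partial\Omega_S(0)}(\DT\bm{h})\vn$, $\bm{G}_2=\bm{g}_2-\int_{\partial\Omega_S(0)}\bm{y}\times(\DT\bm{h})\vn$, and then applies Theorem~\ref{T11}. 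This direct subtraction only preserves the boundary conditions in \eqref{25} if, in addition to $\bm{h}\cdot\vn=0$, one knows $\bm{h}=\bm{0}$ on $\partial\Omega$ and $2[(\DT\bm{h})\vn]_{\vt}+\alpha\bm{h}_{\vt}=\bm{0}$ on $\partial\Omega_S(0)$ --- hypotheses not stated in Theorem~\ref{T4}, though they do hold for the specific $\bm{h}=\bm{H}(\tilde\vu,\tilde{\bm{l}},\tilde{\bm\omega})$ used downstream (since $J_YQ=I_3$ on $\partial\Omega_S(0)$ and $\tilde\vu=\bm{0}$ on $\partial\Omega$). You instead subtract a time-parametrized steady Stokes corrector $\bm{w}(t)$ with $\div\,\bm{w}=\div\,\bm{h}$ and the \emph{homogeneous} boundary conditions of the problem; this matches the stated hypotheses exactly and closes the gap, but at the price of an additional ingredient: a very-weak $L^q$ estimate for the steady Stokes problem with Navier slip (needed to bound $\partial_t\bm{w}$ in $L^q$ by $\partial_t\bm{h}$ in $L^q$ rather than by $W^{1,q}$-data). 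You cite this estimate without proof; it is plausible and available in the literature on very weak solutions for Navier boundary conditions, but it goes beyond what \cite{AACG} (Theorem 2.1) literally provides, so a precise reference or a short duality argument should be supplied. The handling of the exponential shift via $\mathcal{A}_{FS}\mapsto\mathcal{A}_{FS}+\eta I$ and the bookkeeping of the modified rigid-body data coincide with the paper's argument. In short: same skeleton, different corrector --- yours is the more careful of the two, and the Stokes corrector is arguably what the theorem as stated actually requires, but you owe the reader the very-weak estimate it rests on.
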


\begin{proof}
	We first consider the case $\eta =0$. Let us set $\bm{v} := \vu - \bm{h}$. Then $(\bm{v}, \pi, \bm{l}, \bm{\omega})$ satisfies the following system
	\begin{equation}
	\label{25}
	\begin{cases}
	\begin{aligned}
	&\bm{v}_t - \div \ \sigma (\bm{v},\pi) = \bm{F} &&\ \text{ in } \ \Omega_F(0)\times (0,T),\\
	&\div \ \bm{v} = 0 &&\ \text{ in } \ \Omega_F(0)\times (0,T),\\
	&\bm{v}=\bm{0} &&\ \text{ on } \ \partial \Omega \times (0,T),\\
	&\bm{v}\cdot \vn=(\bm{l} + \bm{\omega} \times \bm{y})\cdot \vn &&\ \text{ on } \ \partial \Omega_S(0) \times (0,T),\\
	&2\left[(\DT\bm{v})\vn\right]_{\vt}+\alpha\bm{v}_{\vt}=\alpha(\bm{l} + \bm{\omega} \times \bm{y})_{\vt} &&\ \text{ on } \ \partial \Omega_S(0) \times (0,T),\\
	&m \bm{l}' = - \int\displaylimits_{\partial \Omega_S(0)}{\sigma (\bm{v},\pi)\vn} + \bm{G}_1, &&\ \ t\in (0,T),\\
	&J(0) \bm{\omega}' = - \int\displaylimits_{\partial \Omega_S(0)}{\bm{y}\times \sigma (\bm{v},\pi)\vn}+\bm{G}_2, &&\ \ t\in (0,T),\\
	&\bm{v}(0) = \vu_0  &&\ \text{ in } \ \Omega_F(0),\\
	&\bm{l}( 0) = \bm{l}_0, \quad \bm{\omega}(0) = \bm{\omega}_0
	\end{aligned}
	\end{cases}
	\end{equation}
	where
	$$
	\bm{F} = \bm{f} -\partial_t \bm{h} + \Delta \bm{h}, \quad \bm{G}_1 = \bm{g}_1 - \int\displaylimits_{\partial \Omega_S(0)}{(\DT\bm{h})\vn}, \quad \bm{G}_2 = \bm{g}_2 - \int\displaylimits_{\partial \Omega_S(0)}{\bm{y}\times (\DT\bm{h})\vn}.
	$$
\begin{comment}
	Now proceeding as in Proposition \ref{P0}, we can deduce that the above system is equivalent to
	\begin{equation}
	\label{}
	\begin{aligned}
	&\frac{\mathrm{d}}{\mathrm{d}t}
	\begin{pmatrix}
	\mathbb{P}\bm{v}\\
	\bm{l}\\
	\bm{\omega}
	\end{pmatrix}
	= \mathcal{A}_{FS}
	\begin{pmatrix}
	\mathbb{P}\bm{v}\\
	\bm{l}\\
	\bm{\omega}
	\end{pmatrix}
	+
	\begin{pmatrix}
	\mathbb{P}\bm{F}\\
	\tilde{\bm{G}_1}\\
	\tilde{\bm{G}_2}
	\end{pmatrix}, \quad \begin{pmatrix}
	\mathbb{P}\bm{v}(0)\\
	\bm{l}(0)\\
	\bm{\omega}(0)
	\end{pmatrix}
	=
	\begin{pmatrix}
	\mathbb{P}\vu_0\\
	\bm{l}_0\\
	\bm{\omega}_0
	\end{pmatrix}
	\\
	& (I-\mathbb{P})\bm{v} = (I-\mathbb{P})S(\bm{l},\bm{\omega})\\
	& \pi = N(\Delta \mathbb{P}\bm{v}\cdot \vn) - \frac{\mathrm{d}}{\mathrm{d}t} N_S((\bm{l}+ \bm{\omega}\times \bm{x})\cdot \vn).
	\end{aligned}
	\end{equation}
	Now as the operator $\mathcal{A}_{FS}$ is $\mathcal{R}$-sectorial on $X$ by Theorem \ref{T2}, it has maximal $L^p$-regularity property. In order to apply Theorem \ref{T7}, the given data should belong to the right spaces.
	\end{comment}
	Under the hypothesis of the theorem, we have that $(\bm{F},\bm{G}_1, \bm{G}_2)\in L^p(0,\infty; X)$ and
	\begin{equation}
	\label{26}
	\begin{aligned}
	\|(\bm{F},\bm{G}_1, \bm{G}_2)\|_{L^p(0,\infty; X)} \le C &\left( \|\bm{f}\|_{L^p(0,\infty;\bm{L}^q(\Omega_F(0)))} + \|\bm{h}\|_{W^{2,1}_{q,p}(Q^\infty_F)}\right.\\
	& \qquad \qquad \qquad \qquad \left.+ \|\bm{g}_1\|_{L^p(0,\infty;\R^3)} + \|\bm{g}_2\|_{L^p(0,\infty;\R^3)} \right).
	\end{aligned}
	\end{equation}
	Also, $(\vu_0, \bm{l}_0, \bm{\omega}_0)\in \left( X, \mathcal{D}(\mathcal{A}_{FS}) \right)_{1-1/p,p}$ from the assumptions of the theorem. Hence, the system (\ref{26}) admits a unique solution $(\bm{v}, \bm{l}, \bm{\omega})\in L^p(0,\infty;\mathcal{D}(\mathcal{A}_{FS}))\cap W^{1,p}(0,\infty;X)$ by Theorem \ref{T11} which also satisfies
	\begin{equation}
	\label{27}
	\begin{aligned}
	& \|(\bm{v}, \bm{l}, \bm{\omega})\|_{L^p(0,\infty;\mathcal{D}(\mathcal{A}_{FS}))} + \|(\bm{v}, \bm{l}, \bm{\omega})\|_{W^{1,p}(0,\infty;X)}\\ \le & \ C \left(\|(\vu_0, \bm{l}_0, \bm{\omega}_0)\|_{\left( X, \mathcal{D}(\mathcal{A}_{FS}) \right)_{1-1/p,p}} + \|(\bm{F},\bm{G}_1, \bm{G}_2)\|_{L^p(0,\infty; X)} \right).
	\end{aligned}
	\end{equation}
%	Also the expression of $(I-\mathbb{P}\bm{v})$ and the Lemma yields that $\bm{v}\in W^{2,1}_{q,p}(Q^\infty_F)$ and
	Thus, $\vu = \bm{v}+ \bm{h}, \pi, \bm{l}, \bm{\omega}$ is the unique solution of (\ref{2}). The estimate (\ref{28}) follows combining (\ref{27}) and (\ref{26}).
	
	Finally the result for $\eta >0$ can be deduced from the previous case, simply multiplying all the functions by $e^{\eta t}$ and noting that $\mathcal{A}_{FS} + \eta I$ also generates an $C^0$-semigroup of negative type for all $\eta\in (0,\eta_0)$.
	\hfill
\end{proof}

\section{Non-linear problem}
\setcounter{equation}{0}

In order to handle the full non-linear coupled system, it is usual to make the time dependent fluid domain (and the moving interface) fixed. Therefore we use the change of variable, as used in \cite[section 2]{cumsille}, which coincide with $Q(t)\bm{y} + \bm{h}(t)$ in a neighbourhood of the rigid body and is equal to the identity far from the rigid body, to rewrite the coupled system in a fixed spatial domain. For the convenience of the reader, we summarized the construction and basic properties of the change of variable in the Appendix.
%From here onwards, we may follow the same steps as in \cite[section 6]{MT} where the Dirichlet boundary condition considered there does not play any role; the crucial information used is the analyticity of the operator $\mathcal{A}_{FS}$, defined above and the maximal $L^p-L^q$ regularity of the linear coupled system which we proved above as well in our case. Finally a fixed point argument can be used to deduce the result:

\subsection{Estimates on the non-linear terms}
\label{S6.2}

%Now we use a fixed point argument to prove the existence of a unique global strong solution of (\ref{1}). For that we need to estimate the non-linear terms.
In the first part of this subsection, we show estimates on the transforms $X$ and $Y$ in terms of $\tilde{\bm{l}}, \tilde{\bm{\omega}}$. Then we can study the Lipschitz properties of $\bm{F}_0, \bm{H}, \bm{F}_1, \bm{F}_2$ (see \eqref{36}-\eqref{24} for definitions of these functions). For $p\in(1,\infty)$, let $p'$ denote the conjugate of $p$, that is, $\frac{1}{p}+ \frac{1}{p'}=1$.

%In the follow, we assume $p,q\in (1,\infty)$ satisfying $\frac{1}{p}+ \frac{1}{2q}\le 1$ and $\frac{1}{p} + \frac{3}{2q} \le \frac{3}{2}$.
 %Let us fix $\eta \in (0,\eta_0)$ where $\eta_0$ is the constant in Theorem \ref{T1}.
We also introduce the set
\begin{equation}
\label{S_gamma1}
S_\gamma := \{(\tilde{\vu},\tilde{\pi},\tilde{\bm{l}},\tilde{\bm{\omega}}): \|(\tilde{\vu},\tilde{\pi},\tilde{\bm{l}},\tilde{\bm{\omega}})\|_S\le \gamma \}
\end{equation}
where
\begin{equation*}
\begin{aligned}
\|(\tilde{\vu},\tilde{\pi},\tilde{\bm{l}},\tilde{\bm{\omega}})\|_S:= &\|e^{\eta(\cdot)}\tilde{\vu}\|_{L^p(0,\infty;\bm{W}^{2,q}(\Omega_F(0)))}+ \|e^{\eta(\cdot)}\tilde{\vu}\|_{W^{1,p}(0,\infty;\bm{L}^q(\Omega_F(0)))}\\
&+ \|e^{\eta(\cdot)}\tilde{\pi}\|_{L^p(0,\infty;W^{1,	q}(\Omega_F(0)))}+ \|e^{\eta(\cdot)}\tilde{\bm{l}}\|_{W^{1,p}(0,\infty;\R^3)}+ \|e^{\eta(\cdot)}\tilde{\bm{\omega}}\|_{W^{1,p}(0,\infty;\R^3)} .
\end{aligned}
\end{equation*}

\begin{proposition}
	Let $p,q\in (1,\infty)$. There exists constants $\gamma_0\in (0,1)$ and $C>0$, depending only on $p,q, \eta$ and $\Omega_F(0)$ such that for every $\gamma\in (0,\gamma_0)$ and every $(\tilde{\bm{l}}, \tilde{\bm{\omega}}) $ with $\|e^{\eta(\cdot)}\tilde{\bm{l}}\|_{W^{1,p}(0,\infty;\R^3)}+ \|e^{\eta(\cdot)}\tilde{\bm{\omega}}\|_{W^{1,p}(0,\infty;\R^3)}\le \gamma$,
	%\begin{equation}
	\begin{align}
	\|Q - I_3\|_{L^\infty(0,\infty;\R^{3\times 3})} &\le C \ \gamma; \label{18}\\
	\|J_X - I_3\|_{L^\infty(0,\infty;C^2(\overline{\Omega})} &\le C \ \gamma; \label{17}\\
	\|J_Y - I_3\|_{L^\infty(0,\infty;C^2(\overline{\Omega})} &\le C \ \gamma; \label{21}\\
	\|\frac{\partial^2 Y}{\partial x_j \partial x_k} \|_{L^\infty(0,\infty;C^2(\overline{\Omega})} &\le C \ \gamma; \label{22}\\
	\|\partial_t X \|_{L^\infty(0,\infty;\Omega_F(0))} &\le C \ \gamma \label{20}\\
	\|J_Y Q - I_3\|_{L^\infty(0,\infty; C^2(\overline{\Omega}))} &\le C \ \gamma, \label{47}
	\end{align}
	%\end{equation}
where $X, Y, J_X, J_Y$ are defined in Section \ref{appendix}.
\end{proposition}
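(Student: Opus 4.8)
By the construction recalled in Section~\ref{appendix}, the map $X(t,\cdot)$ and its inverse $Y(t,\cdot)$ depend on time only through the rotation $Q(t)$ and the translation $\bm h(t)=\int_0^t\tilde{\bm l}(s)\,ds$, and at $t=0$ one has $Q(0)=I_3$, $\bm h(0)=\bm 0$, hence $X(0,\cdot)=Y(0,\cdot)=\mathrm{id}$ and $J_X(0,\cdot)=J_Y(0,\cdot)=I_3$, $\partial^2_{x_jx_k}Y(0,\cdot)=0$. Thus every quantity appearing in \eqref{18}--\eqref{47} equals the identity matrix or $0$ when $(Q,\bm h)=(I_3,\bm 0)$, and the whole proposition reduces to controlling $Q(t)-I_3$ and $\bm h(t)$ (and their time derivatives) uniformly in $t\ge 0$ by $C\gamma$. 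For \eqref{18}: from \eqref{Q} (up to the norm-preserving orthogonal conjugation relating $\bm\omega$ and $\tilde{\bm\omega}$) we have $\dot Q(t)=\mathbb{A}(\tilde{\bm\omega}(t))\,Q(t)$, where $\mathbb{A}(\bm w)$ is the skew-symmetric matrix with $\mathbb{A}(\bm w)\bm y=\bm w\times\bm y$; since $Q(t)$ is orthogonal $|Q(t)|_{\mathrm{op}}=1$ and $|\mathbb{A}(\tilde{\bm\omega}(t))|_{\mathrm{op}}\le C|\tilde{\bm\omega}(t)|$, so integrating and using Hölder's inequality with the exponential weight,
\[
\|Q-I_3\|_{L^\infty(0,\infty;\R^{3\times3})}\le C\int_0^\infty|\tilde{\bm\omega}(s)|\,ds\le C\,\|e^{-\eta(\cdot)}\|_{L^{p'}(0,\infty)}\,\|e^{\eta(\cdot)}\tilde{\bm\omega}\|_{L^p(0,\infty;\R^3)}\le C\gamma ,
\]
because $\|e^{-\eta(\cdot)}\|_{L^{p'}(0,\infty)}=(\eta p')^{-1/p'}<\infty$ and $\|e^{\eta(\cdot)}\tilde{\bm\omega}\|_{L^p}\le\|e^{\eta(\cdot)}\tilde{\bm\omega}\|_{W^{1,p}}\le\gamma$. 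The same estimate gives $\sup_{t}|\bm h(t)|\le C\gamma$, and the embedding $W^{1,p}(0,\infty)\hookrightarrow L^\infty(0,\infty)$ gives $\|\tilde{\bm l}\|_{L^\infty(0,\infty)}+\|\tilde{\bm\omega}\|_{L^\infty(0,\infty)}\le C\gamma$.

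\textbf{Spatial estimates \eqref{17}, \eqref{21}, \eqref{22}, \eqref{20}.} By the construction of Section~\ref{appendix}, $X(t,y)-y$ is a $C^\infty$, compactly supported (in $y$) field depending smoothly on the parameters $(Q(t)-I_3,\bm h(t))$ and vanishing when these are $(0,\bm 0)$; for $|Q(t)-I_3|+|\bm h(t)|$ bounded (which holds once $\gamma_0$ is small, by the previous step) the Taylor expansion in these parameters yields $\|X(t,\cdot)-\mathrm{id}\|_{C^3(\overline\Omega)}\le C\big(|Q(t)-I_3|+|\bm h(t)|\big)$ with a uniform constant. Taking the supremum over $t$ and applying the first step gives $\|J_X-I_3\|_{L^\infty(0,\infty;C^2(\overline\Omega))}=\|\nabla(X-\mathrm{id})\|_{L^\infty(0,\infty;C^2(\overline\Omega))}\le C\gamma$, i.e. \eqref{17}. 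For \eqref{20}, differentiating $X-\mathrm{id}$ in time brings down $\dot Q=\mathbb{A}(\tilde{\bm\omega})Q$ and $\dot{\bm h}=\tilde{\bm l}$, whence $\|\partial_t X(t,\cdot)\|_{L^\infty(\Omega_F(0))}\le C\big(|\tilde{\bm\omega}(t)|+|\tilde{\bm l}(t)|\big)$, and the $L^\infty_t$ bound follows from $\|\tilde{\bm l}\|_{L^\infty}+\|\tilde{\bm\omega}\|_{L^\infty}\le C\gamma$. Finally, since $X(t,\cdot)=\mathrm{id}+O_{C^3(\overline\Omega)}(\gamma)$ with $\gamma_0$ small, the quantitative inverse function theorem gives $Y(t,\cdot)=\mathrm{id}+O_{C^3(\overline\Omega)}(\gamma)$; in particular $J_Y(t,x)=\big(J_X(t,Y(t,x))\big)^{-1}$, and a Neumann-series expansion of this matrix inverse together with $\|J_X-I_3\|_{C^2}\le C\gamma$ gives $\|J_Y-I_3\|_{L^\infty(0,\infty;C^2(\overline\Omega))}\le C\gamma$, i.e. \eqref{21}; differentiating $J_Y=\nabla_xY$ once more and using the same bounds gives \eqref{22}.

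\textbf{Estimate \eqref{47}.} Write $J_YQ-I_3=(J_Y-I_3)Q+(Q-I_3)$. Because $Q$ does not depend on the spatial variable, $\|(J_Y-I_3)Q\|_{C^2(\overline\Omega)}\le\|J_Y-I_3\|_{C^2(\overline\Omega)}\,|Q|_{\mathrm{op}}=\|J_Y-I_3\|_{C^2(\overline\Omega)}$ and $\|Q-I_3\|_{C^2(\overline\Omega)}=|Q-I_3|$, so $\|J_YQ-I_3\|_{L^\infty(0,\infty;C^2(\overline\Omega))}\le\|J_Y-I_3\|_{L^\infty(0,\infty;C^2(\overline\Omega))}+\|Q-I_3\|_{L^\infty(0,\infty;\R^{3\times3})}\le C\gamma$ by \eqref{21} and \eqref{18}.

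\textbf{Main obstacle.} The delicate point is the spatial step: one must read off from the Appendix the precise (smooth) dependence of $X-\mathrm{id}$, and hence of $J_X$ and, after inversion, of $J_Y$ and $\nabla^2Y$, on the parameters $(Q-I_3,\bm h)$, and verify that the constants in the $C^k$-smoothness estimates are uniform in $t$; this uses crucially that $|\bm h(t)|$ stays bounded for all $t>0$, i.e. that the exponential weight makes $\tilde{\bm l}$ integrable on $(0,\infty)$. If instead $X$ were defined as the flow of a vector field $b(t,\cdot)$ associated with the rigid motion, the spatial step would be replaced by a Gronwall argument: $\partial_tJ_X=(\nabla b)(t,X)\,J_X$ with $\|b(t,\cdot)\|_{C^3(\overline\Omega)}\le C\big(|\tilde{\bm l}(t)|+|\tilde{\bm\omega}(t)|\big)$ (again using $\sup_t|\bm h(t)|\le C\gamma$ to control the support and the lever arm), so that $\|J_X(t)-I_3\|_{C^2}\le C\big(\exp\!\int_0^t\|b(s)\|_{C^3}\,ds-1\big)\le C\gamma e^{C\gamma}\le C\gamma$ for $\gamma_0$ small, and the remaining bounds follow analogously.
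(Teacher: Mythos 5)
Your estimate for $\|Q-I_3\|_{L^\infty}$ is correct and matches the paper: integrate $\dot Q=Q(\tilde{\bm\omega}\times\cdot)$, use $|Q|_{\mathrm{op}}=1$, insert $e^{-\eta s}e^{\eta s}$ and apply H\"older with the finite $L^{p'}$-norm of $e^{-\eta(\cdot)}$ (one small slip: $\bm h(t)=\int_0^tQ(s)\tilde{\bm l}(s)\,ds$, not $\int_0^t\tilde{\bm l}(s)\,ds$, but since $|Q|=1$ this does not affect the bound). The final decomposition $J_YQ-I_3=(J_Y-I_3)Q+(Q-I_3)$ also works, though the paper instead writes $J_YQ-I_3 = (J_Y-Q^T)Q$ and bounds $\|J_Y-Q^T\|\le\|J_Y-I_3\|+\|Q-I_3\|$; both are fine.

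However, the central ``Spatial estimates'' step is built on a false premise: you assert that $X(\cdot,t)$ depends on time only through the instantaneous pair $(Q(t)-I_3,\bm h(t))$ and then Taylor-expand in these parameters. This is not how $X$ is defined in the Appendix. The map $X$ is the flow of the vector field $\Lambda(\cdot,t)$ (which interpolates between the rigid velocity $\tilde{\bm l}+\tilde{\bm\omega}\times\cdot$ near the body and $0$ near $\partial\Omega$ via a cutoff), so $X(\cdot,t)$ depends on the entire history $\{\Lambda(\cdot,s):0\le s\le t\}$, not on $(Q(t),\bm h(t))$ alone. Only on $\overline{\Omega_S(0)}$ (where $X(y,t)=Q(t)y+\bm h(t)$) and near $\partial\Omega$ (where $X=\mathrm{id}$) does your pointwise-parameter description hold; in the transition region it fails, and the Taylor-expansion argument has no basis there. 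Consequently the chain producing \eqref{17}, \eqref{21}, \eqref{22} via ``smooth dependence on parameters'' does not go through as written.

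The correct route is exactly what you sketch (tentatively) in your ``Main obstacle'' paragraph, and this is what the paper does: differentiate the flow identity $X(y,t)=y+\int_0^t\Lambda(X(y,s),s)\,ds$ in $y$ to obtain the linear ODE $J_X=I_3+\int_0^t(\nabla\Lambda)(X,s)J_X\,ds$; use $\|\nabla\Lambda(\cdot,t)\|_{C^2}\le C(|\tilde{\bm l}(t)|+|\tilde{\bm\omega}(t)|)$, insert $e^{-\eta s}e^{\eta s}$, and apply Gronwall to get a time-uniform bound $\|J_X(t)\|_{C^2}\le e^{C/\eta}$, whence $\|J_X-I_3\|_{L^\infty(0,\infty;C^2)}\le C\gamma$. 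Then $J_Y=J_X^{-1}=\operatorname{cof}J_X$ (using $\det J_X\equiv1$, which spares you the Neumann-series argument and gives \eqref{21} directly), $\partial_{x_j}J_Y$ is handled by the chain rule involving $\operatorname{cof}J_X$ and $\nabla\Lambda$ for \eqref{22}, and $\partial_tX=\Lambda$ gives \eqref{20}. You should promote that Gronwall argument from fallback to main proof; as presented, the primary spatial argument is not valid.
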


\begin{proof}
	First we show the existence of a constant $\gamma_0\in (0,1)$ such that for every $\gamma\in (0,\gamma_0)$ and for every $(\tilde{\bm{l}}, \tilde{\bm{\omega}}) $ with $\|e^{\eta(\cdot)}\tilde{\bm{l}}\|_{W^{1,p}(0,\infty;\R^3)}+ \|e^{\eta(\cdot)}\tilde{\bm{\omega}}\|_{W^{1,p}(0,\infty;\R^3)}\le \gamma$, the condition (\ref{14}) is verified.
	
	The matrix $Q$ being an orthogonal linear transformation satisfies $Q\in SO(3)$ and thus $|Q(t)| = 1$ for all $t\ge 0$. Here $|A|$ denotes the Frobenius norm (or, Euclidean norm) for any matrix $A$. Since $Q$ satisfies the problem (\ref{15}), we can write
	$$
	Q(t)\bm{a} = \bm{a} +\int\displaylimits^t_0{e^{-\eta s} e^{\eta s} Q(s) \left( \tilde{\omega}(s)\times \bm{a}\right) \; \mathrm{d}s}
	$$
	which gives
	$$
	|Q(t)\bm{a} - \bm{a}| \le \int\displaylimits^t_0{e^{-\eta s} e^{\eta s} |\tilde{\omega}(s)\times \bm{a}| \; \mathrm{d}s} .
	$$
	Therefore, we can estimate the operator norm
	$$
	\|Q(t)-I_3\| = \underset{0\neq \bm{a}\in\R^3}{\sup} \frac{|Q(t)\bm{a}- \bm{a}|}{|\bm{a}|} \le \int\displaylimits^t_0{e^{-\eta s} e^{\eta s} |\tilde{\omega}(s)| \; \mathrm{d}s}.
	$$
	But as the operator norm and the Frobenius norm is equivalent on the matrix space, we can have
	$$
	\|Q(t) - I_3\|_{\R^{3\times 3}} \le \sqrt{3} \|Q(t)-I_3\|\le \sqrt{3} \int\displaylimits^t_0{e^{-\eta s} e^{\eta s} |\tilde{\omega}(s)| \; \mathrm{d}s}.
	$$
	Consequently,
	\begin{equation*}
	\begin{aligned}
	\|Q-I_3\|_{L^\infty(0,\infty;\R^{3\times 3})} & \le \sqrt{3} \int\displaylimits^\infty_0{e^{-\eta s} e^{\eta s} |\tilde{\omega}(s)| \; \mathrm{d}s}\\
	& \le \sqrt{3} \left( \int\displaylimits^\infty_0{e^{-p'\eta s} \; \mathrm{d}s}\right) ^{1/p'} \|e^{\eta(\cdot)}\tilde{\bm{\omega}}\|_{L^p(0,\infty;\R^3)} \le \left( \frac{1}{p'\eta}\right) ^{1/p'}\gamma .
	\end{aligned}
	\end{equation*}
	Similarly, we can write from $(\ref{13})_3$,
	\begin{equation*}
	\begin{aligned}
	\|\bm{h}\|_{L^\infty(0,\infty;\R^3)} & \le \int\displaylimits_0^\infty {e^{-\eta s}e^{\eta s}|Q(s)| |\tilde{\bm{l}}(s)|\, \mathrm{d}s} \\
	& \le  \left( \int\displaylimits^\infty_0{e^{-p'\eta s} \; \mathrm{d}s}\right) ^{1/p'} \|e^{\eta(\cdot)}\tilde{\bm{l}}\|_{L^p(0,\infty;\R^3)} \le \left( \frac{1}{p'\eta}\right) ^{1/p'}\gamma .
	\end{aligned}
	\end{equation*}
	Combining the above two inequalities give
	$$
	\|Q-I_3\|_{L^\infty(0,\infty;\R^{3\times 3})} \mathrm{diam} (\Omega_S(0)) + \|\bm{h}\|_{L^\infty(0,\infty;\R^3)} \le \left( \frac{1}{p'\eta}\right) ^{1/p'}\gamma \left(1 + \mathrm{diam} (\Omega_S(0)) \right) .
	$$
	Let us define
	\begin{equation}
	\label{gamma0}
	\gamma_0 = \min{1,\frac{\beta}{2C_{p,\eta}\left(1 + \mathrm{diam} (\Omega_S(0)) \right)}} \quad \text{ where } C_{p,\eta} = \left( \frac{1}{p'\eta}\right) ^{1/p'}.
	\end{equation}
	With this choice of $\gamma_0$, we satisfy the condition (\ref{14}).
	
	Next we prove some regularity of $X, Y, J_X, J_Y$. The mapping $X$, solution of the differential equation (\ref{12}) can be written as
	$$
	X(\bm{y},t) = \bm{y} + \int\displaylimits_0^t{\Lambda(X(\bm{y},s),s)\, \mathrm{d}s}.
	$$
	Differentiating it with respect to $\bm{y}$, we obtain,
	$$
	J_X(\bm{y},t) = I_3 + \int\displaylimits_0^t{\nabla_{\bm{y}} \Lambda(X(\bm{y},s),s)J_ X(\bm{y},s)\,\mathrm{d}s}.
	$$
	Note that from the definition of $\Lambda$, we can write, for all $\bm{x}\in\Omega_S(t)$ (cf. Lemma \ref{lem1}),
	$$
	\nabla_{\bm{x}} \Lambda (\bm{x},t) =
	\begin{pmatrix}
	0 & -\omega_3(t) & \omega_2(t)\\
	\omega_3(t) & 0 & -\omega_1(t)\\
	-\omega_2(t) & \omega_1(t) & 0
	\end{pmatrix},
	$$
where $\bm{\omega}(t) = \left( \omega_1, \omega_2, \omega_3\right) $. Also, $\nabla_{\bm{x}} \Lambda =0$ for all $\bm{x}$ with $\mathrm{dist} (\bm{x},\partial\Omega)<\beta/8$. Otherwise,
	$$
	\|\nabla_{\bm{y}} \Lambda (\bm{x},t)\|_{C^2(\overline{\Omega})} \le C \left( |\tilde{\bm{\omega}}(t)| + |\tilde{\bm{l}}(t)|\right)
	$$
	where the constant $C$ depends on $\Omega_F(0)$.
	So we have,
	\begin{equation*}
	\begin{aligned}
	& \|J_X(\cdot, t)\|_{C^2(\overline{\Omega})}\\
	\le & \ 1 + C\int\displaylimits_0^t{e^{-\eta s} e^{\eta s}\left( |\tilde{\bm{\omega}}(s)| + |\tilde{\bm{l}}(s)|\right) \|J_X(\cdot, s)\|_{C^2(\overline{\Omega})} \, \mathrm{d}s}\\
	\le & \ 1 + C\left( \|e^{\eta(\cdot)}\tilde{\bm{\omega}}\|_{L^\infty(0,\infty;\R^3)} + \|e^{\eta(\cdot)}\tilde{\bm{l}}\|_{L^\infty(0,\infty;\R^3)} \right) \int\displaylimits_0^t{e^{-\eta s} \|J_X(\cdot, s)\|_{C^2(\overline{\Omega})} \, \mathrm{d}s}\\
	\le & \ 1+ C \int\displaylimits_0^t {e^{-\eta s}\|J_X(\cdot, s)\|_{C^2(\overline{\Omega})}\,\mathrm{d}s}.
	\end{aligned}
	\end{equation*}
	Now the Gronwall's inequality yields,
	$$
	\|J_X(\cdot, t)\|_{C^2(\overline{\Omega})}\le \mathrm{exp}\left(C\int\displaylimits_0^t {e^{-\eta s}\,\mathrm{d}s}\right)  \le e^{C/\eta} \quad \forall \ t\in (0,\infty).
	$$
	With this estimate at hand, we obtain
	\begin{equation*}
	\|J_X- I_3\|_{L^\infty(0,\infty;C^2(\overline{\Omega}))}\le C \int\displaylimits^\infty_0{e^{-\eta s} e^{\eta s}\left( |\tilde{\bm{\omega}}(s)| + |\tilde{\bm{l}}(s)|\right)\, \mathrm{d}s} \le C \ \gamma.
	\end{equation*}
	Also from the following relation, since $\mathrm{det} J_X = 1$,
	$$
	(\mathrm{cof} \ J_X)^T = (\mathrm{det} J_X) J_X^{-1}= J_X^{-1},
	$$
	we can deduce,
	\begin{equation*}
	\begin{aligned}
	\|\mathrm{cof} \ J_X\|_{L^\infty(0,\infty; C^2 (\overline{\Omega}))} \le C
	\end{aligned}
	\end{equation*}
	which follows from Lemma \ref{A1}. This implies also,
	\begin{equation}
	\label{19}
	\|J_Y\|_{L^\infty(0,\infty; C^2 (\overline{\Omega}))} \le C
	\end{equation}
	since $J_Y = J_X^{-1}$. Using the above estimate and (\ref{17}), we further get
	\begin{equation*}
	\|J_Y - I_3\|_{L^\infty(0,\infty; C^2 (\overline{\Omega}))}\le \|J_Y\|_{L^\infty(0,\infty; C^2 (\overline{\Omega}))} \|J_X-I_3\|_{L^\infty(0,\infty; C^2 (\overline{\Omega}))} \le C \ \gamma.
	\end{equation*}
	To deduce the regularity of $\frac{\partial^2 Y}{\partial x_j \partial x_k}$, we write
	\begin{equation*}
	\begin{aligned}
	\frac{\partial}{\partial x_j}J_Y = \frac{\partial}{\partial y_i}\left( \mathrm{cof} \ J_X \right) \frac{\partial Y_i}{\partial x_j}
	\end{aligned}
	\end{equation*}
	from which it follows, along with the definition (\ref{23}) of $\Lambda$, since $\mathrm{cof} \ J_X$ involves second derivative of $\Lambda$,
	\begin{equation*}
	\|\frac{\partial}{\partial x_j}J_Y\|_{L^\infty(0,\infty;C^2(\overline{\Omega}))} \le C  \sup_{t\in(0,\infty)} \left(|\tilde{\bm{\omega}}(t)| + |\tilde{\bm{l}}(t)|\right) \le C \ \gamma.
	\end{equation*}
	Similarly, we obtain the following estimates
	\begin{equation*}
	\begin{aligned}
	\|\partial_t X\|_{L^\infty(0,\infty;\Omega_F(0))}& = \|\Lambda\|_{L^\infty(0,\infty;\Omega_F(0))}\\
	&\le C  \sup_{t\in(0,\infty)} \left(e^{-\eta t} e^{\eta t}( |\tilde{\bm{\omega}}(t)| + |\tilde{\bm{l}}(t)|)\right)\\
	&\le C\left(\|e^{\eta(\cdot)}\tilde{\bm{\omega}}\|_{L^\infty(0,\infty,\R^3)} + \|e^{\eta(\cdot)}\tilde{\bm{l}}\|_{L^\infty(0,\infty;\R^3)} \right)\le C \ \gamma;
	\end{aligned}
	\end{equation*}
	\begin{equation*}
	\|\partial_t J_Y\|_{L^\infty(0,\infty;\Omega_F(0))} = \|J_X^{-1} \ \partial_t J_X \ J_X^{-1}\|_{L^\infty(0,\infty;\Omega_F(0))} = \|J_Y \nabla \Lambda \ J_Y\|_{L^\infty(0,\infty;\Omega_F(0))} \le C \ \gamma;
	\end{equation*}
	\begin{equation*}
	\begin{aligned}
	\|J_Y Q - I_3\|_{L^\infty(0,\infty;C^2(\overline{\Omega}))}&\le \|J_Y - Q^T\|_{L^\infty(0,\infty;C^2(\overline{\Omega}))} \\
	&\le \|J_Y - I_3\|_{L^\infty(0,\infty;C^2(\overline{\Omega}))} + \|Q-I_3\|_{L^\infty(0,\infty;\R^{3\times 3})} \le C \ \gamma.
	\end{aligned}
	\end{equation*}
	\hfill
\end{proof}

\begin{lemma}
	\label{A1}
	If $\bm{f}(\bm{x},t)$ belongs to $L^\infty(0,\infty;C^2(\Omega))$ with $\bm{f}(\bm{x},t) \ge m >0$ in $\Omega\times (0,\infty)$
	%and $\partial_t \bm{f}\in L^\infty(0,\infty;\Omega)$,
	then $1/\bm{f}$ belongs to $L^\infty(0,\infty;C^2(\Omega))$,
	\begin{equation}
	\label{16}
	\|1/\bm{f}\|_{L^\infty(0,\infty;C^2(\Omega))} \le C \|\bm{f}\|_{L^\infty(0,\infty;C^2(\Omega))}.
	\end{equation}
	%	and
	%	$$
	%	\|\partial_t \bm{f}\|_{L^\infty(0,\infty;\Omega_F(0))} \le  C
	%	$$
\end{lemma}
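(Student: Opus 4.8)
The plan is to reduce the statement to the elementary one-variable fact about $1/f$ for a scalar $C^2$ function bounded below, and then apply it pointwise in $t$ followed by taking a supremum. First I would fix $t\in(0,\infty)$ and set $g:=1/\bm f(\cdot,t)$. Since $\bm f(\cdot,t)\in C^2(\Omega)$ and $\bm f(\cdot,t)\ge m>0$, the function $g$ is well defined and continuous, and the classical chain rule (valid because $\bm f$ is $C^2$ and its values avoid $0$) gives
$$
\partial_i g = -\frac{\partial_i \bm f}{\bm f^2},\qquad
\partial_i\partial_j g = -\frac{\partial_i\partial_j \bm f}{\bm f^2} + \frac{2\,\partial_i\bm f\,\partial_j\bm f}{\bm f^3},
$$
which are again continuous on $\Omega$ as products and quotients of continuous functions with non-vanishing denominator; hence $g\in C^2(\Omega)$. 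Estimating each term using $\bm f\ge m$ yields, for every $t$,
$$
\|g\|_{C^2(\Omega)}\le \frac{1}{m}+\frac{1}{m^2}\,\|\bm f(\cdot,t)\|_{C^2(\Omega)}+\frac{1}{m^2}\,\|\bm f(\cdot,t)\|_{C^1(\Omega)}+\frac{2}{m^3}\,\|\bm f(\cdot,t)\|_{C^1(\Omega)}^2 .
$$

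Next I would linearise the right-hand side. Put $M:=\|\bm f\|_{L^\infty(0,\infty;C^2(\Omega))}$, which is finite by hypothesis, and note that $\|\bm f(\cdot,t)\|_{C^2(\Omega)}\ge\|\bm f(\cdot,t)\|_{L^\infty(\Omega)}\ge m$ for every $t$. Hence in the previous estimate one may bound $\tfrac1m\le \tfrac{1}{m^2}\|\bm f(\cdot,t)\|_{C^2(\Omega)}$ and $\|\bm f(\cdot,t)\|_{C^1(\Omega)}^2\le M\,\|\bm f(\cdot,t)\|_{C^2(\Omega)}$, giving
$$
\|g\|_{C^2(\Omega)}\le \Big(\tfrac{3}{m^2}+\tfrac{2M}{m^3}\Big)\,\|\bm f(\cdot,t)\|_{C^2(\Omega)}\qquad\text{for a.e. }t .
$$
Taking the supremum over $t\in(0,\infty)$ shows $1/\bm f\in L^\infty(0,\infty;C^2(\Omega))$ together with the claimed bound $\|1/\bm f\|_{L^\infty(0,\infty;C^2(\Omega))}\le C\,\|\bm f\|_{L^\infty(0,\infty;C^2(\Omega))}$, with $C=C(m,M)$. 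The vector/matrix-valued case is identical, read entrywise, with $\bm f\ge m$ understood as a uniform lower bound on the relevant scalar (e.g. a determinant).

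There is no serious analytic obstacle here; the only point requiring care is the nature of the constant. The genuine estimate for $1/f$ in $C^2$ inevitably contains the quadratic term $\|\nabla\bm f\|_\infty^2$, so the displayed inequality \emph{cannot} hold with $C$ depending on $m$ alone: one must either record the dependence $C=C\big(m,\|\bm f\|_{L^\infty(0,\infty;C^2(\Omega))}\big)$ as above, or equivalently apply the lemma only to families $\bm f$ that already carry an a priori bound in $L^\infty(0,\infty;C^2)$ — which is exactly the setting in which it is used, the relevant $\bm f$ (e.g. $\det J_X$ and the entries of $\mathrm{cof}\,J_X$) having $C^2$-norms controlled by \eqref{17} and \eqref{19} and a uniform lower bound $m$ since, for $\gamma<\gamma_0$, these quantities remain close to those of the identity map.
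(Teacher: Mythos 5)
Your argument is correct and a bit more transparent than the one in the paper. The paper instead introduces a smooth truncation $G\in C^\infty(\R)$ with $G(0)=0$ and $G(r)=1/r$ for $|r|\ge m$, invokes the mean value theorem to get $|G(s)|\le M|s|$, and then asserts that composing $G$ with $\bm f$ yields \eqref{16}; the first- and second-order bounds on $G(\bm f)$ are left implicit. Your direct chain-rule computation on $1/\bm f$ is the same idea in substance (the device $G$ merely sidesteps the singularity of $r\mapsto 1/r$ at the origin, which is harmless here since $\bm f\ge m>0$) but has the advantage of making the quadratic term $\|\nabla\bm f\|_\infty^2$ explicit. You are right that this term forces the constant $C$ in \eqref{16} to depend on $\|\bm f\|_{L^\infty(0,\infty;C^2(\Omega))}$ as well as on $m$; the paper's route via $G$ produces exactly the same term $G''(\bm f)\,\partial_i\bm f\,\partial_j\bm f$, so its proof carries the same dependence even though this is not flagged. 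In context this is innocuous: the lemma is applied only to $\det J_X$ and the entries of $\mathrm{cof}\,J_X$, which, by \eqref{17} and the choice $\gamma<\gamma_0$, have uniformly bounded $C^2$-norms and stay close to the corresponding quantities of the identity map, hence admit a uniform lower bound $m$.
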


\begin{proof}
	Let $G\in C^\infty(\R)$, non-negative such that $G(0) =0$ and $G(r)=1/r$ for $|r|\ge m$. Since the derivative of $G$ is bounded and $G(0)=0$, by the Mean value theorem, we have
	$$
	|G(s)|\le M s \quad \forall \ s\in\R.
	$$
	Thus $|G(\bm{f}(\bm{x},t))|\le M |\bm{f}(\bm{x},t)|$ for every $\bm{x}\in\Omega$ which implies (\ref{16}).
	%For the second estimate, we can write,
	%$$
	%\partial_t(G(\bm{f}(t))) = G'(\bm{f}(t)) \partial_t \bm{f}.
	%$$
	%Also, note that,
	%$$
	%G'(\bm{f}(t)) = -(G(\bm{f}(t)))^2 .
	%%Thus,
	\hfill
\end{proof}

The following general embedding of $W^{2,1}_{q,p}(Q_F^\infty)$ is needed to cope with the gradient terms. It mainly relies on the mixed derivative theorem, followed by Sobolev embedding.

\begin{lemma}$\cite[Lemma \ 4.2]{DGH}$
\label{L2}
Let $\Omega_F(0)$ be a $\HC{1}{1}$ domain with compact boundary, $p,q\in(1,\infty)$, $\theta\in (0,1)$ and $T>0$. Also assume that $s=0$ or $s=1$ and $k,m\in(1,\infty)$ obeys $\frac{2-s}{2}+\frac{3}{2m} - \frac{3}{2q}\ge \frac{1}{p}- \frac{1}{q}$. Then
\begin{equation*}
W^{2,1}_{q,p}(Q_F^\infty)\hookrightarrow W^{\theta,p}(0,T;W^{2-2\theta,q}(\Omega_F(0)))\hookrightarrow L^k(0,T;W^{s,m}(\Omega_F(0))).
\end{equation*}
\end{lemma}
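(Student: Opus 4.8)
The plan is to deduce the first inclusion from the \emph{mixed derivative theorem} and the second from Sobolev embeddings carried out separately in the time and the space variables, the free parameter throughout being the fractional order $\theta\in(0,1)$ of time differentiation. Recall that $W^{2,1}_{q,p}(Q_F^\infty)=L^p(0,\infty;\bm{W}^{2,q}(\Omega_F(0)))\cap W^{1,p}(0,\infty;\bm{L}^q(\Omega_F(0)))$. Since $\Omega_F(0)$ is a bounded $\HC{1}{1}$ domain, $\bm{L}^q(\Omega_F(0))$ is a UMD space; after the usual extension/localization reducing matters to the model cases of $\R^3$ and a half-space (where the statement is classical, cf.~\cite{DHP} and the references therein), the mixed derivative theorem applies to the couple $(\bm{L}^q(\Omega_F(0)),\bm{W}^{2,q}(\Omega_F(0)))$ and yields, for each fixed $\theta\in(0,1)$ and some constant $C=C(p,q,\theta,\Omega_F(0))$,
\[
\|u\|_{W^{\theta,p}(0,\infty;W^{2-2\theta,q}(\Omega_F(0)))}\le C\,\|u\|_{W^{2,1}_{q,p}(Q_F^\infty)},
\]
where we identify the (real or complex) interpolation space of $\bm{L}^q$ and $\bm{W}^{2,q}$ at order $1-\theta$ with the Sobolev--Slobodeckij space $W^{2-2\theta,q}$ when $2-2\theta\notin\mathbb{N}$ (with the customary abuse identifying the Besov and Sobolev--Slobodeckij scales in this range; for $2-2\theta\in\mathbb{N}$ one decreases $\theta$ infinitesimally). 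Restricting to the finite interval $(0,T)$ only lowers the left-hand norm, so the first embedding follows for every admissible $\theta$.

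For the second embedding I would combine two elementary inclusions. In time, $W^{\theta,p}(0,T)\hookrightarrow L^k(0,T)$ as soon as $\theta-\tfrac1p\ge-\tfrac1k$, and in the borderline case $\theta>\tfrac1p$ one even has $W^{\theta,p}(0,T)\hookrightarrow C([0,T])$, hence an embedding into every $L^k(0,T)$. In space, on the bounded $\HC{1}{1}$ domain $\Omega_F(0)$ one has the Sobolev embedding $W^{2-2\theta,q}(\Omega_F(0))\hookrightarrow W^{s,m}(\Omega_F(0))$ under the standard conditions $2-2\theta\ge s$ and $2-2\theta-\tfrac3q\ge s-\tfrac3m$, the latter being equivalent to $\theta\le\tfrac{2-s}{2}+\tfrac{3}{2m}-\tfrac{3}{2q}$. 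Combining the two gives $W^{\theta,p}(0,T;W^{2-2\theta,q}(\Omega_F(0)))\hookrightarrow L^k(0,T;W^{s,m}(\Omega_F(0)))$.

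Finally I would pick a single $\theta\in(0,1)$ satisfying both constraints at once, namely $\tfrac1p-\tfrac1k\le\theta\le\tfrac{2-s}{2}+\tfrac{3}{2m}-\tfrac{3}{2q}$ together with $2-2\theta\ge s$ (automatic in the relevant range since $s\in\{0,1\}$). The parameter condition assumed in the statement is exactly what makes this interval for $\theta$ non-empty, and any such $\theta$ then chains the two embeddings above and proves the lemma. The one genuinely delicate point is Step~1: one must make sure the invoked form of the mixed derivative theorem returns the \emph{full} space $\bm{W}^{2,q}(\Omega_F(0))$ as endpoint, with no boundary conditions attached, which is precisely why $\Omega_F(0)$ is taken of class $\HC{1}{1}$ and why it is convenient to argue through a reduction to a model half-space problem rather than through the $H^\infty$-calculus of a concrete elliptic realization, whose domain would only be a proper subspace of $\bm{W}^{2,q}(\Omega_F(0))$.
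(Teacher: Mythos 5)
The paper does not prove this lemma; it imports it from \cite[Lemma 4.2]{DGH} and only remarks in the surrounding text that ``it mainly relies on the mixed derivative theorem, followed by Sobolev embedding.'' Your proposal fleshes out exactly that route (mixed derivative theorem for the first embedding, then one-variable Sobolev embeddings in $t$ and in $x$ with a suitable choice of $\theta$), so the approach is the intended one and, up to the standard details you are waving at, correct.

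One point you should not gloss over: your analysis produces the admissibility condition $\tfrac1p-\tfrac1k\le\theta\le\tfrac{2-s}{2}+\tfrac{3}{2m}-\tfrac{3}{2q}$, whose nonemptiness requires $\tfrac{2-s}{2}+\tfrac{3}{2m}-\tfrac{3}{2q}\ge\tfrac1p-\tfrac1k$. The lemma as printed in the paper has $\tfrac1p-\tfrac1q$ on the right, so the parameter $k$ does not appear at all in the hypothesis --- a condition that cannot by itself make the $\theta$-interval nonempty unless $k\le q$. Your closing sentence asserts the two coincide, which they do not as written; either you should note that the paper's inequality is a misprint for $\tfrac1p-\tfrac1k$ (which is consistent with how the lemma is applied, e.g.\ with $k=3p$, $m=3q$, $s=0$ and then $p=q$), or you should record the extra condition you actually used. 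Relatedly, the constraint $2-2\theta\ge s$ is not ``automatic'' when $s=1$ and $m<q$: it forces $\theta\le 1/2$, an additional restriction on the interval, which is again harmless under the corrected hypothesis but should be stated rather than dismissed.
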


Now we are in the position to estimate the non-linear terms.

\begin{proposition}
	\label{P1}
	Let assume $p,q\in (1,\infty)$ satisfying the condition $\frac{1}{p} + \frac{3}{2q}\le \frac{3}{2}$. There exist constants $\gamma_0\in (0,1)$ and $C_N>0$, depending only on $p,q, \eta$ and $\Omega_F(0)$ such that for every $\gamma\in (0,\gamma_0)$ and for every $(\tilde{\vu},\tilde{\pi},\tilde{\bm{l}},\tilde{\bm{\omega}})\in S_\gamma$, we have
	\begin{equation}
	\label{32}
	\begin{aligned}
	\|e^{\eta(\cdot)}\bm{F}_0\|_{L^p(0,\infty; \bm{L}^q(\Omega_F(0)))} &+ \|e^{\eta(\cdot)}\bm{H}\|_{W^{2,1}_{q,p}(Q_F^\infty)}\\
	& + \|e^{\eta(\cdot)}\bm{F}_1\|_{L^p(0,\infty;\R^3)}+ \|e^{\eta(\cdot)}\bm{F}_2\|_{L^p(0,\infty;\R^3)}\le C_N \gamma^2,
	\end{aligned}
	\end{equation}
where $S_\gamma$ is defined in (\ref{S_gamma1}) and $\bm{F}_0, \bm{F}_1, \bm{F}_2, \bm{H}$ are defined in (\ref{36})-(\ref{24}).

	Moreover, there exists a constant $C_{\mathrm{lip}}>0$, depending only on $p, q, \eta$ and $\Omega_F(0)$ such that for every $(\tilde{\vu}^i, \tilde{\pi}^i, \tilde{\bm{l}}^i, \tilde{\bm{\omega}}^i)\in S_\gamma, i =1,2$,
	\begin{equation}
	\label{31}
	\begin{aligned}
	& \quad \|e^{\eta(\cdot)}\bm{F}_0(\tilde{\vu}^1, \tilde{\pi}^1, \tilde{\bm{l}}^1, \tilde{\bm{\omega}}^1) - \bm{F}_0(\tilde{\vu}^2, \tilde{\pi}^2, \tilde{\bm{l}}^2, \tilde{\bm{\omega}}^2)\|_{L^p(0,\infty; \bm{L}^q(\Omega_F(0)))}\\
	&+ \|e^{\eta(\cdot)}\bm{H}(\tilde{\vu}^1,\tilde{\bm{l}}^1,\tilde{\bm{\omega}}^1) - e^{\eta(\cdot)}\bm{H}(\tilde{\vu}^2,\tilde{\bm{l}}^2, \tilde{\bm{\omega}}^2)\|_{W^{2,1}_{q,p}(Q_F^\infty)}\\
	&+ \|e^{\eta(\cdot)}\bm{F}_1(\tilde{\bm{l}}^1, \tilde{\bm{\omega}}^1) - e^{\eta(\cdot)}\bm{F}_1(\tilde{\bm{l}}^2, \tilde{\bm{\omega}}^2)\|_{L^p(0,\infty;\R^3)}+ \|e^{\eta(\cdot)}\bm{F}_2( \tilde{\bm{\omega}}^1) - e^{\eta(\cdot)}\bm{F}_2( \tilde{\bm{\omega}}^2)\|_{L^p(0,\infty;\R^3)}\\
	&\le C_{\mathrm{lip}} \gamma \ \|(\tilde{\vu}^1, \tilde{\pi}^1, \tilde{\bm{l}}^1, \tilde{\bm{\omega}}^1) - (\tilde{\vu}^2, \tilde{\pi}^2, \tilde{\bm{l}}^2, \tilde{\bm{\omega}}^2)\|_{S}.
	\end{aligned}
	\end{equation}
\end{proposition}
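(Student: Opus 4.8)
\medskip
\noindent\emph{Proof strategy.} By their definitions in (\ref{36})--(\ref{24}), each of $\bm F_0$, $\bm H$, $\bm F_1$, $\bm F_2$ is a finite sum of terms, and every such term is a product of two kinds of factors: a \emph{geometric factor} built from $Q-I_3$, $J_X-I_3$, $J_Y-I_3$, $J_YQ-I_3$, $\partial_t X$, $\partial^2 Y/\partial x_j\partial x_k$ and their time derivatives, all shown in the previous Proposition to be $O(\gamma)$ in $L^\infty(0,\infty;C^2(\overline{\Omega}))$ (or in $L^\infty(0,\infty;\Omega_F(0))$); and a \emph{solution factor}, which is one of $\tilde\vu,\nabla\tilde\vu,\nabla^2\tilde\vu,\partial_t\tilde\vu,\nabla\tilde\pi,\tilde{\bm l},\tilde{\bm\omega},\partial_t\tilde{\bm l},\partial_t\tilde{\bm\omega}$ or a bilinear expression in these (the transformed convective term, the inertial cross-terms such as $\tilde{\bm\omega}\times J\tilde{\bm\omega}$, etc.), all controlled by $\|(\tilde\vu,\tilde\pi,\tilde{\bm l},\tilde{\bm\omega})\|_S\le\gamma$. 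Hence every term carries at least two factors of size $O(\gamma)$, which is the source of the quadratic bound (\ref{32}). The weight $e^{\eta(\cdot)}$ is always placed on a single factor: writing a generic term as $e^{\eta t}ab=(e^{\eta t}a)b$, one estimates $e^{\eta t}a$ by the weighted $S$-norm and $b$ by its \emph{unweighted} norm, which is no larger since $e^{-\eta t}\le 1$; both are $\le\gamma$.

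\medskip
\noindent First I would treat $\bm F_0$ and $\bm H$. For the terms of $\bm F_0$ of the form (geometric factor)$\times$(one of $\nabla^2\tilde\vu$, $\nabla\tilde\vu$, $\nabla\tilde\pi$, $\partial_t\tilde\vu$), the geometric factor lies in $L^\infty(0,\infty;C^2(\overline\Omega))$ with norm $O(\gamma)$, so a plain H\"older estimate in space and time gives an $L^p(0,\infty;\bm L^q(\Omega_F(0)))$-bound $\le C\gamma\,\|(\tilde\vu,\tilde\pi,\tilde{\bm l},\tilde{\bm\omega})\|_S\le C\gamma^2$. The only genuinely bilinear-in-velocity contributions are the transformed convective term and the terms quadratic in $\nabla\tilde\vu$; for these I invoke Lemma \ref{L2}, under the standing hypothesis $\frac1p+\frac3{2q}\le\frac32$, to embed $W^{2,1}_{q,p}(Q_F^\infty)$ into a suitable $L^k(0,\infty;W^{s,m}(\Omega_F(0)))$-scale and close the product in $L^p(0,\infty;\bm L^q(\Omega_F(0)))$ by H\"older, again with a factor $\|\tilde\vu\|^2_{W^{2,1}_{q,p}(Q_F^\infty)}$. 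For $\bm H$, which must be estimated in the stronger norm $W^{2,1}_{q,p}(Q_F^\infty)$, I use that it has the schematic form (geometric factor)$\cdot\tilde\vu$ with geometric factor $I_3-J_YQ$ (and its time derivative) of size $O(\gamma)$ in $L^\infty(0,\infty;C^2(\overline\Omega))\cap W^{1,\infty}(0,\infty;L^\infty(\Omega_F(0)))$; since multiplication by a function smooth in space and Lipschitz in time preserves $W^{2,1}_{q,p}(Q_F^\infty)$, one obtains $\|e^{\eta(\cdot)}\bm H\|_{W^{2,1}_{q,p}(Q_F^\infty)}\le C\gamma\,\|e^{\eta(\cdot)}\tilde\vu\|_{W^{2,1}_{q,p}(Q_F^\infty)}\le C\gamma^2$. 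One also checks, using $Q(0)=I_3$ (so $X(\cdot,0)=\mathrm{id}$) and $\Lambda\equiv 0$ near $\partial\Omega$, that $\bm H|_{t=0}=0$ and $\bm H\cdot\vn|_{\partial\Omega_F(0)}=0$, so that $\bm H$ is an admissible datum for Theorem \ref{T4}.

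\medskip
\noindent For $\bm F_1$ and $\bm F_2$ the terms are boundary integrals over $\partial\Omega_S(0)$ of (geometric factor)$\times(\DT\tilde\vu$ or $\tilde\pi\vn)$, together with inertial remainders such as $(J(0)-J(t))\tilde{\bm\omega}'$ and $\tilde{\bm\omega}\times J\tilde{\bm\omega}$; by the trace theorem $(\DT\tilde\vu)\vn$ and $\tilde\pi|_{\partial\Omega_S(0)}$ are controlled in $L^p(0,\infty;W^{1-1/q,q}(\partial\Omega_S(0)))$ by $\|(\tilde\vu,\tilde\pi,\tilde{\bm l},\tilde{\bm\omega})\|_S$, while the geometric factors and $J(t)-J(0)$ are $O(\gamma)$ in $L^\infty$; this gives the $L^p(0,\infty;\R^3)$-bounds $\le C\gamma^2$ and completes (\ref{32}) once $\gamma_0\in(0,1)$ is chosen small enough that all estimates of the previous Proposition hold. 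The Lipschitz estimate (\ref{31}) then follows by applying $ab-a'b'=(a-a')b+a'(b-b')$ term by term: each solution factor is affine or bilinear, hence Lipschitz on $S_\gamma$ with constant $O(\gamma)$ in the $S$-norm, and each geometric factor depends Lipschitz-continuously on $(\tilde{\bm l},\tilde{\bm\omega})$ in $L^\infty(0,\infty;C^2(\overline\Omega))$, proved by subtracting the integral equations defining $Q$, $X$, $J_X$, $J_Y$ for two sets of data and running the Gronwall argument of the previous Proposition on the differences.

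\medskip
\noindent The main obstacle is the low-regularity bookkeeping in the fluid terms: one has exactly two spatial derivatives and one time derivative available for $\tilde\vu$ and a single spatial derivative for $\tilde\pi$, and the delicate contributions are the transformed convective term and the ones quadratic in $\nabla\tilde\vu$ inside $\bm F_0$, together with $\bm H$, which must be placed in the strong space $W^{2,1}_{q,p}(Q_F^\infty)$ rather than merely in $L^p(0,\infty;\bm L^q(\Omega_F(0)))$ — and it is precisely here that the hypothesis $\frac1p+\frac3{2q}\le\frac32$ enters, through the embeddings of Lemma \ref{L2}. Verifying the Lipschitz dependence of $J_X$, $J_Y$ and $Y$ on $(\tilde{\bm l},\tilde{\bm\omega})$ in the $C^2(\overline\Omega)$-topology is the other point requiring care, but it is a routine Gronwall estimate on the differences of the defining integral equations.
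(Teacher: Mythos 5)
Your overall strategy—geometric factors of size $O(\gamma)$ in $L^\infty(0,\infty;C^2(\overline\Omega))$ multiplied by solution factors controlled by the $S$-norm, with Lemma~\ref{L2} invoked under $\tfrac1p+\tfrac3{2q}\le\tfrac32$ to close the bilinear fluid terms, and the Lipschitz estimate obtained by a product-rule splitting—is the same as the paper's, and the treatment of $\bm F_0$ and $\bm H$ matches the paper's term-by-term estimates.

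However, your description of $\bm F_1$ and $\bm F_2$ does not match their definitions \eqref{38}--\eqref{24}. In this paper the change of variables uses the rotation $Q$ rather than the full Jacobian, so on $\partial\Omega_S(0)$ one has $J_Y=Q^T$ and the stress transforms exactly: $\sigma(\vu,\pi)=Q\,\sigma(\tilde\vu,\tilde\pi)\,Q^T$. Consequently the boundary integrals of $\sigma(\tilde\vu,\tilde\pi)\tilde\vn$ appear on the left-hand side of the rigid-body equations in \eqref{fixed_FSI} as exact linear terms, and $\tilde J$ is time-independent, so neither the ``geometric factor times $\DT\tilde\vu$ or $\tilde\pi\vn$'' boundary integrals nor any $(J(0)-J(t))\tilde{\bm\omega}'$ remainder exists. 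The functions $\bm F_1=-m\,\tilde{\bm\omega}\times\tilde{\bm l}$ and $\bm F_2=\tilde J\,\tilde{\bm\omega}\times\tilde{\bm\omega}$ are purely the inertial cross-products, bilinear in $(\tilde{\bm l},\tilde{\bm\omega})$, so the bound $\le C\gamma^2$ is immediate from $W^{1,p}(0,\infty;\R^3)\hookrightarrow L^\infty(0,\infty;\R^3)$ without any trace machinery. This is a misreading of the definitions rather than a gap—the phantom terms you list would only make the estimate harder, and all terms that actually occur are covered by your argument—but the extra structure you attribute to $\bm F_1,\bm F_2$ belongs to a different change of variables (e.g.\ the $J_Y$-based one of Tucsnak--Takahashi), not the one used here.
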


\begin{proof}
	\textbf{Estimate of $\bm{F}_0$}:
	\begin{equation*}
	\|e^{\eta(\cdot)}\bm{F}_0\|_{L^p(0,\infty;\bm{L}^q(\Omega_F(0)))}\le C \ \gamma^2 .
	\end{equation*}
For the first three terms of $\bm{F}_0$, with the help of the estimates (\ref{18}), (\ref{19}) and (\ref{20}), we get,
	\begin{equation*}
	\begin{aligned}
	& \|e^{\eta(\cdot)}\left( \left( (I_3-Q)\partial_t \tilde{\vu}\right) _i - \left( Q(\tilde{\bm{\omega}}\times\tilde{\vu}) \right) _i -(\partial_t X\cdot J_Y^T \nabla)(Q\tilde{\vu})_i\right) \|_{L^p(0,\infty;\bm{L}^q(\Omega_F(0)))}\\
	\le & \ C \left( \|I_3-Q\|_{L^\infty(0,\infty;\R^{3\times 3})} + \|\tilde{\bm{\omega}}\|_{L^\infty(0,\infty;\R^3)}\right.\\
	& \qquad \qquad \qquad \qquad \quad \quad \left. + \|\partial_t X\|_{L^\infty(0,\infty;\Omega_F(0))} \|J_Y\|_{L^\infty(0,\infty;C^2(\overline{\Omega}))}\right) \|e^{\eta(\cdot)}\tilde{\vu}\|_{W^{2,1}_{q,p}(Q^\infty_F)}\\
	\le & \ C \ \gamma^2.
	\end{aligned}
	\end{equation*}
For the fourth term of $\bm{F}_0$, the estimate (\ref{19}) and the fact that $|Q(t)| \le 1$ for all $t\ge 0$ yield,
	\begin{equation*}
	\begin{aligned}
	& \| e^{\eta(\cdot)} \left( (Q\tilde{\vu})\cdot(J_Y^T\nabla)\right) (Q\tilde{\vu})_i\|_{L^p(0,\infty;\bm{L}^q(\Omega_F(0)))}\\
	\le & \ C \|e^{\eta(\cdot)}\tilde{\bm{u}}\|_{L^{3p}(0,\infty;\bm{L}^{3q}(\Omega_F(0)))}\|\nabla \tilde{\vu}\|_{L^{3p/2}(0,\infty;\bm{L}^{3q/2}(\Omega_F(0)))}.
	\end{aligned}
	\end{equation*}
	But due to the condition $\frac{1}{p}+ \frac{3}{2q}\le \frac{3}{2}$, the following embeddings
	$$
	W^{2,1}_{q,p}(Q^\infty_F) \hookrightarrow L^{3p}(0,\infty;\bm{L}^{3q}(\Omega_F(0))) \quad \text{ and } \quad W^{2,1}_{q,p}(Q^\infty_F) \hookrightarrow L^{3p/2}(0,\infty;\bm{L}^{3q/2}(\Omega_F(0)))
	$$
hold from lemma \ref{L2}. Thus we obtain,
	$$
	\| e^{\eta(\cdot)} \left( (Q\tilde{\vu})\cdot(J_Y^T\nabla)\right) (Q\tilde{\vu})_i\|_{L^p(0,\infty;\bm{L}^q(\Omega_F(0)))} \le C \ \gamma^2.
	$$
Concerning the fifth term of $\bm{F}_0$, it follows from (\ref{22}) that
	\begin{equation*}
	\begin{aligned}
	&\|e^{\eta(\cdot)} \sum_{m,l,j} \frac{\partial (Q\tilde{\vu})_i}{\partial y_l} \frac{\partial Y_m}{\partial x_j} \frac{\partial}{\partial y_m}\left( \frac{\partial Y_l}{\partial x_j}\right) \|_{L^p(0,\infty;\bm{L}^q(\Omega_F(0)))}\\
	\le & \ C \|\frac{\partial^2 Y}{\partial x_j \partial x_k}\|_{L^\infty(0,\infty;C^2(\overline{\Omega}))} \|e^{\eta(\cdot)}\tilde{\vu}\|_{W^{2,1}_{q,p}(Q^\infty_F)} \le C \ \gamma^2.
	\end{aligned}
	\end{equation*}
For the sixth and seventh term of $\bm{F}_0$, the two terms can be re-written as,
	\begin{equation*}
	\begin{aligned}
	&\sum_{m,l,j}\frac{\partial^2 (Q\tilde{\vu})_i}{\partial y_m \partial y_l} \frac{\partial Y_l}{\partial x_j} \frac{\partial Y_m}{\partial x_j} - \Delta \tilde{\vu}_i\\
	= & \sum_{m,l,j}\frac{\partial^2 (Q\tilde{\vu})_i}{\partial y_m \partial y_l} \left( \frac{\partial Y_l}{\partial x_j} - \delta_{lj}\right) \frac{\partial Y_m}{\partial x_j} + \sum_{m,l}\frac{\partial^2 (Q\tilde{\vu})_i}{\partial y_m \partial y_l} \left( \frac{\partial Y_m}{\partial x_l} - \delta_{ml}\right) + \left( (Q- I_3)\Delta \tilde{\vu}\right)_i
	\end{aligned}
	\end{equation*}
	Therefore, it follows from (\ref{18}), (\ref{19}) and (\ref{21}) that
	\begin{equation*}
	\begin{aligned}
	&\|e^{\eta(\cdot)} \left( \sum_{m,l,j}\frac{\partial^2 (Q\tilde{\vu})_i}{\partial y_m \partial y_l} \frac{\partial Y_l}{\partial x_j} \frac{\partial Y_m}{\partial x_j} - \Delta \tilde{\vu}_i\right)  \|_{L^p(0,\infty;\bm{L}^q(\Omega_F(0)))}\\
	\le & \ C \left( \|J_Y - I_3\|_{L^\infty(0,\infty;C^2(\overline{\Omega}))} + \|Q-I_3\|_{L^\infty(0,\infty;\R^{3\times 3})}\right) \|e^{\eta(\cdot)}\tilde{\vu}\|_{W^{2,1}_{q,p}(Q^\infty_F)}\le C \ \gamma^2 .
	\end{aligned}
	\end{equation*}
Finally, the following estimate for the last term of $\bm{F}_0$ follows from (\ref{21}),
	\begin{equation*}
	\begin{aligned}
	&\|e^{\eta(\cdot)}\left( (I_3 -J^T_Y)\nabla\tilde{\pi}\right) _i\|_{L^p(0,\infty;\bm{L}^q(\Omega_F(0)))}\\
	\le & \|I_3-J_Y\|_{L^\infty(0,\infty;C^2(\overline{\Omega}))} \|e^{\eta(\cdot)}\nabla\tilde{\pi}\|_{L^p(0,\infty;L^q(\Omega_F(0)))} \le C \ \gamma^2.
	\end{aligned}
	\end{equation*}
	\textbf{Estimate of $\bm{H}$}:
	\begin{equation*}
	\|e^{\eta(\cdot)}\bm{H}\|_{W^{2,1}_{q,p}(Q^\infty_F)} \le C \ \gamma^2.
	\end{equation*}
	We obtain from (\ref{47}),
	$$
	\|e^{\eta(\cdot)}(I_3-J_YQ) \tilde{\bm{u}}\|_{W^{2,1}_{q,p}(Q^\infty_F)} \le \|I_3-J_YQ\|_{L^\infty(0,\infty;C^2(\overline{\Omega}))} \|e^{\eta(\cdot)}\tilde{\vu}\|_{W^{2,1}_{q,p}(Q^\infty_F)} \le C \ \gamma^2.
	$$
	\textbf{Estimate of $\bm{F}_1$ and $\bm{F}_2$}: From the expressions of $\bm{F}_1$ and $\bm{F}_2$, it is obvious to see
	\begin{equation*}
	\|e^{\eta(\cdot)}\bm{F}_1\|_{L^p(0,\infty;\R^3)} + \|e^{\eta(\cdot)}\bm{F}_2\|_{L^p(0,\infty;\R^3)} \le C \ \gamma^2.
	\end{equation*}
	This completes the proof of the estimate (\ref{32}).
	
	The Lipschitz property can be proved in the same way.
	\hfill
\end{proof}

\subsection{Fixed point argument}
%\label{S6.2}

\begin{theorem}
	\label{T9}
	Let $\Omega_F(0)$ be a bounded domain of class $\HC{2}{1}$, $p,q\in(1,\infty)$ satisfy the condition $\frac{1}{p}+ \frac{3}{2q}\le \frac{3}{2}$ and $\alpha\ge 0$ be as in (\ref{alpha}). Let $\eta\in (0,\eta_0)$ where $\eta_0$ is the constant introduced in Theorem \ref{T1}. Then there exist a constant $\tilde{\gamma}>0$ depending only on $p,q,\eta$ and $\Omega_F(0)$ such that for all $\gamma\in (0,\tilde{\gamma})$ and for all $(\vu_0,\bm{l}_0,\bm{\omega}_0)\in B^{2(1-1/p)}_{q,p}(\Omega_F(0))\times \R^3 \times \R^3$ satisfying the compatibility condition (\ref{CN})
%	$$
%	\div \ \vu_0 = 0 \text{ in } \Omega_F(0),
%	$$
%	$$
%	\vu_0\cdot \vn = \bm{l}_0 + (\bm{\omega}_0\times y)\cdot \vn \ \text{ on }\ \partial \Omega_S(0), \quad \vu_0\cdot \vn = 0 \ \text{ on } \ \partial \Omega,
%	$$
	and
	\begin{equation}
	\label{29}
	\|\vu_0\|_{B^{2(1-1/p)}_{q,p}(\Omega_F(0))}+\|\bm{l}_0\|_{\R^3}+\|\bm{\omega}_0\|_{\R^3}\le \frac{\gamma}{2C_L},
	\end{equation}
	where $C_L$ is the continuity constant appeared in Theorem \ref{T4}, the system (\ref{fixed_FSI})-(\ref{24}) admits a unique global in time strong solution $(\tilde{\vu}, \tilde{\pi}, \tilde{\bm{l}}, \tilde{\bm{\omega}})$ such that
	\begin{equation*}
	\|(\tilde{\vu}, \tilde{\pi}, \tilde{\bm{l}}, \tilde{\bm{\omega}})\|_{S} \le \gamma .
	\end{equation*}
\end{theorem}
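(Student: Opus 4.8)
The plan is to recast the transformed nonlinear system (\ref{fixed_FSI})--(\ref{24}) as a fixed-point equation for a map $\mathcal{N}$ defined on the closed ball $S_\gamma$ of (\ref{S_gamma1}), using the maximal regularity of the linear coupled problem (Theorem \ref{T4}) to invert the linear part and Proposition \ref{P1} to control the nonlinear remainder, and then to invoke the Banach fixed point theorem. Concretely, given $(\tilde{\vu}, \tilde{\pi}, \tilde{\bm{l}}, \tilde{\bm{\omega}})\in S_\gamma$, one sets $\bm{f} = \bm{F}_0(\tilde{\vu}, \tilde{\pi}, \tilde{\bm{l}}, \tilde{\bm{\omega}})$, $\bm{h} = \bm{H}(\tilde{\vu}, \tilde{\bm{l}}, \tilde{\bm{\omega}})$, $\bm{g}_1 = \bm{F}_1(\tilde{\bm{l}}, \tilde{\bm{\omega}})$, $\bm{g}_2 = \bm{F}_2(\tilde{\bm{\omega}})$, and solves the linear system (\ref{2}) with these data and initial value $(\vu_0,\bm{l}_0,\bm{\omega}_0)$, calling the (unique) solution $\mathcal{N}(\tilde{\vu}, \tilde{\pi}, \tilde{\bm{l}}, \tilde{\bm{\omega}})$. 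Before applying Theorem \ref{T4} one has to verify its hypotheses on $\bm{h}$: since $Q(0)=I_3$ and $J_Y(\cdot,0)=I_3$ one has $\bm{H}|_{t=0}=(I_3-J_Y Q)\tilde{\vu}|_{t=0}=\bm{0}$, and since the change of variables coincides with the rigid motion near $\partial\Omega_S(0)$ (so that $J_Y Q = I_3$ there) while $\tilde{\vu}=\bm{0}$ on $\partial\Omega$, one gets $\bm{h}\cdot\vn=\bm{0}$ on $\partial\Omega_F(0)$; the compatibility condition (\ref{CN}) on $(\vu_0,\bm{l}_0,\bm{\omega}_0)$ is assumed. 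By construction a fixed point of $\mathcal{N}$ in $S_\gamma$ is exactly a strong solution of (\ref{fixed_FSI})--(\ref{24}).

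The two estimates to check are self-mapping and contractivity. For self-mapping, applying the maximal regularity estimate of Theorem \ref{T4}, the quadratic bound (\ref{32}) of Proposition \ref{P1}, and the smallness assumption (\ref{29}) gives
\[
\|\mathcal{N}(\tilde{\vu}, \tilde{\pi}, \tilde{\bm{l}}, \tilde{\bm{\omega}})\|_S \le C_L\Big(\|\vu_0\|_{B^{2(1-1/p)}_{q,p}(\Omega_F(0))}+\|\bm{l}_0\|_{\R^3}+\|\bm{\omega}_0\|_{\R^3}+C_N\gamma^2\Big)\le \frac{\gamma}{2}+C_L C_N\gamma^2 ,
\]
so that $\mathcal{N}(S_\gamma)\subset S_\gamma$ as soon as $C_L C_N\gamma\le \tfrac12$. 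For contractivity, if $(\tilde{\vu}^i,\tilde{\pi}^i,\tilde{\bm{l}}^i,\tilde{\bm{\omega}}^i)\in S_\gamma$ for $i=1,2$, then the difference $\mathcal{N}(\cdot^1)-\mathcal{N}(\cdot^2)$ solves (\ref{2}) with zero initial data and right-hand side equal to the difference of the nonlinear terms, so Theorem \ref{T4} together with the Lipschitz bound (\ref{31}) yields
\[
\|\mathcal{N}(\tilde{\vu}^1,\tilde{\pi}^1,\tilde{\bm{l}}^1,\tilde{\bm{\omega}}^1)-\mathcal{N}(\tilde{\vu}^2,\tilde{\pi}^2,\tilde{\bm{l}}^2,\tilde{\bm{\omega}}^2)\|_S \le C_L C_{\mathrm{lip}}\,\gamma\,\|(\tilde{\vu}^1,\tilde{\pi}^1,\tilde{\bm{l}}^1,\tilde{\bm{\omega}}^1)-(\tilde{\vu}^2,\tilde{\pi}^2,\tilde{\bm{l}}^2,\tilde{\bm{\omega}}^2)\|_S ,
\]
which is a strict contraction once $C_L C_{\mathrm{lip}}\gamma\le\tfrac12$.

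Choosing $\tilde{\gamma}:=\min\{\gamma_0,\ (2C_L C_N)^{-1},\ (2C_L C_{\mathrm{lip}})^{-1}\}$, where $\gamma_0$ is the constant of Proposition \ref{P1} (so that (\ref{gamma0}) guarantees condition (\ref{14}), making the change of variables a well-defined diffeomorphism with $\mathrm{dist}(\Omega_S(t),\partial\Omega)\ge\beta/2$), one obtains for every $\gamma\in(0,\tilde\gamma)$ that $\mathcal{N}$ is a contraction of the complete metric space $S_\gamma$ into itself; the Banach fixed point theorem then provides a unique fixed point $(\tilde{\vu},\tilde{\pi},\tilde{\bm{l}},\tilde{\bm{\omega}})\in S_\gamma$, which is the asserted unique global strong solution. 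I expect the only genuinely delicate points of this argument to be the two quantitative inputs it rests on — the exponentially weighted maximal $L^p$--$L^q$ regularity of the linear fluid--structure operator (Theorem \ref{T4}) and the quadratic and Lipschitz control of the geometric nonlinearities produced by the coordinate transform (Proposition \ref{P1}) — both of which have already been established; the fixed-point step itself is routine.
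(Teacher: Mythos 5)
Your proposal is correct and follows essentially the same route as the paper: define $\mathcal{N}$ by solving the linear problem (\ref{2}) with the nonlinear terms frozen, use the exponentially weighted maximal regularity of Theorem \ref{T4} together with the quadratic and Lipschitz bounds (\ref{32}), (\ref{31}) of Proposition \ref{P1} to show $\mathcal{N}$ maps $S_\gamma$ into itself and is a contraction, and take $\tilde\gamma=\min\{\gamma_0,\,(2C_LC_N)^{-1},\,(2C_LC_{\mathrm{lip}})^{-1}\}$ exactly as the paper does. You even spell out the verification that $\bm{H}\rvert_{t=0}=\bm{0}$ and $\bm{H}\cdot\vn=\bm{0}$ on $\partial\Omega_F(0)$, a hypothesis of Theorem \ref{T4} that the paper leaves implicit.
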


\begin{proof}
	Let us define
	$$
	\tilde{\gamma} = \min\left\lbrace\gamma_0, \frac{1}{2C_L C_N}, \frac{1}{2C_L C_{\mathrm{lip}}}\right\rbrace
	$$
	where $\gamma_0$ is defined as in (\ref{gamma0}) and $C_L, C_N, C_{lip}$ are the constants appearing in Theorem \ref{T4} and Proposition \ref{P1}. It can be shown with the help of the estimates, obtained in the previous subsection, that the mapping
	$$
	\mathcal{N}: (\bm{v}, \varphi, \bm{\kappa}, \bm{\tau}) \mapsto (\tilde{\vu}, \tilde{\pi}, \tilde{\bm{l}}, \tilde{\bm{\omega}})
	$$
	which maps $(\bm{v}, \varphi, \bm{\kappa}, \bm{\tau}) \in S_\gamma$ to the solution $(\tilde{\vu}, \tilde{\pi}, \tilde{\bm{l}}, \tilde{\bm{\omega}})$ of the linear problem (\ref{fixed_FSI}) with right hand sides $\bm{F}_0(\bm{v}, \varphi,\bm{\kappa}, \bm{\tau}), \bm{H}(\bm{v},\bm{\kappa},\bm{\tau}), \bm{F}_1(\bm{\kappa},\bm{\tau}), \bm{F}_2(\bm{\tau})$, is a contraction in $S_\gamma$ for $\gamma\in (0,\tilde{\gamma})$. The fixed point of $\mathcal{N}$ then satisfies (\ref{fixed_FSI})-(\ref{24}).
\hfill
\end{proof}

\begin{proof}[Proof of Theorem \ref{T_N}]
	The solution to the original problem (\ref{1}) can be obtained from the corresponding backward change of coordinates and variables, given in (\ref{13}) which preserves regularity. Moreover, the solution $(\vu,\pi,\bm{l},\bm{\omega})$ to the original problem must be unique as a consequence of the uniqueness of the fixed point.
	
	Since $\gamma<\tilde{\gamma}$, condition (\ref{14}) is verified and $X(\cdot, t)$ is a well-defined $C^1$-diffeomorphism from $\Omega_F(0)$ to $\Omega_F(t)$ for every $t\in [0,\infty)$. Therefore there exists a unique $Y(\cdot,t)$ as defined in Lemma \ref{L0}. For all $t\in[0,\infty)$ and $\bm{x}\in\Omega_F(t)$, setting
	\begin{equation*}
	\begin{aligned}
	\vu(\bm{x},t) = Q(t)\tilde{\vu}(Y(\bm{x},t),t), &\qquad \pi(\bm{x},t) = \tilde{\pi}(Y(\bm{x},t),t),\\
	\bm{l}(t)=Q(t)\tilde{\bm{l}}(t), &\qquad \bm{\omega}(t) = Q(t)\tilde{\bm{\omega}}(t),
	\end{aligned}	
	\end{equation*}
	the new variables $(\vu, \pi, \bm{l}, \bm{\omega})$ satisfy the original system (\ref{1}) with the estimate (\ref{30}). Note that all the derivatives of the solution $(\vu, \pi, \bm{l}, \bm{\omega})$ are combinations of $(\tilde{\vu}, \tilde{\pi}, \tilde{\bm{l}}, \tilde{\bm{\omega}})$ multiplied at most by $X$ and its derivatives which are smooth enough to obtain the prescribed regularity.
	\hfill
\end{proof}

\begin{comment}
\begin{equation}
\begin{aligned}
\frac{\partial \tilde{\vu}}{\partial t} - \nu \Delta \tilde{\vu} + \nabla \tilde{p}  &= \mathcal{F}(\tilde{\vu},\tilde{p})  &&\quad \text{ in } \quad \Omega_F\times (0,T)\\
\div \ \tilde{\vu} &= \mathcal{G}(\tilde{\vu}) &&\quad \text{ in } \quad \Omega_F\times (0,T)\\
\frac{\partial ^2 \bm{w}}{\partial t^2} - \div \ \sigma(\bm{w}) &= \bm{0} &&\quad \text{ in } \quad \Omega_S \times (0,T)\\
\tilde{\vu}  \cdot \vn &= \frac{\partial \bm{w}}{\partial t} \cdot \vn &&\quad \text{ on } \quad \Gamma_S\times (0,T)\\
\alpha \Big[  \sigma(\tilde{\vu},\tilde{p}) \vn \Big] _{\vt} &= \left[ \frac{\partial \bm{w}}{\partial t} - \tilde{\vu}\right] _{\vt} + \alpha \Big[\mathcal{H}(\tilde{\vu},\tilde{p}) \Big] _{\vt} &&\quad \text{ on } \quad \Gamma_S\times (0,T)\\
\sigma(\tilde{\vu},\tilde{p}) \vn &= \sigma(\bm{w})\vn + \mathcal{H}(\tilde{\vu},\tilde{p})  &&\quad \text{ on } \quad \Gamma_S\times (0,T)\\
\tilde{\vu} &= \bm{0} &&\quad \text{ on } \quad \Gamma_e\times (0,T)\\
\tilde{\vu}(0) &= \vu_0 &&\quad \text{ in } \quad \Omega_F\\
\bm{w}(\cdot, \bm{0}) = \bm{0} \quad &\text{ and } \quad \frac{\partial \bm{w}}{\partial t}(\cdot, 0) = \bm{w}_1 &&\quad \text{ in } \quad \Omega_S .
\end{aligned}
\end{equation}
where the terms $\mathcal{F}, \mathcal{G}$ and $\mathcal{H}$ are the same as in Raymond-Vanni.\\
\end{comment}

\section{Non-Newtonian case}
\setcounter{equation}{0}

%Observe that compared to the Newtonian problem (\ref{1}), here we have considered the friction coefficient $\alpha =0$. In the present case, the pure slip condition becomes,
%$$
%\bm{0} = \left[T (\vu,\pi)\vn\right]_{\vt}= \mu(|\DT\vu|^2) [(\DT\vu)\vn]_{\vt} \quad \text{ or, equivalently,} \quad [(\DT\vu)\vn]_{\vt} = \bm{0}.
%$$
%The reason of this assumption is that $\alpha \ne 0$, then the slip boundary condition involving generalized stress tensor (which is a non-linear term) does not reduce to the slip condition with Newtonian stress tensor. Therefore, when we linearize the system (\ref{GFSI}), some extra terms appear in the right hand side of the boundary condition. It is not clear for the moment, how to solve the the coupled system with inhomogeneous boundary condition. Note that the fluid system alone with inhomogeneous Navier condition is well-posed (cf. \cite{AEG} for Newtonian fluid, \cite{BP} for non-Newtonian fluid).
In this section, we discuss the problem (\ref{GFSI}) with the non-Newtonian stress tensor $T$.
The main difference and the difficulty here is that the Laplacian in the fluid equation is now replaced by a quasi-linear operator arising from the Generalized stress tensor $T$. Observe that we can write this new term as,
%\begin{equation*}
\begin{align}
\left[ \div \left( \mu(|\DT\vu|^2) \DT \vu\right)\right] _i &= \sum_{j=1}^3\partial_j\left( \mu(|\DT\vu|^2) \DT_{ij} \vu\right) \nonumber \\
&= \frac{1}{2}\mu(|\DT\vu|^2) \sum_{j=1}^3 \left( \partial^2_{ij}u_j + \partial^2_j u_i\right) + \mu'(|\DT\vu|^2) \sum_{j,k,l=1}^3 2\DT_{ij}\vu \ \DT_{kl}\vu \ \partial_j \DT_{kl}\vu \nonumber \nonumber \\
&= \frac{1}{2}\mu(|\DT\vu|^2) \sum_{j=1}^3 \left( \partial^2_{ij}u_j + \partial^2_j u_i\right) + 2\mu'(|\DT\vu|^2) \sum_{j,k,l=1}^3 \DT_{ij}\vu \ \DT_{kl}\vu \ \partial_j \partial_k u_l \nonumber \\
&= \sum_{j,k,l=1}^3a^{kl}_{ij}(\vu)\partial_j \partial_k u_l \label{operatorA}
\end{align}
%\end{equation*}
where
\begin{equation*}
a^{kl}_{ij}(\vu):= \frac{1}{2}\mu(|\DT\vu|^2) \left( \delta_{ik}\delta_{jl} + \delta_{il}\delta_{jk} \right) + 2\mu'(|\DT\vu|^2) \DT_{ij}\vu \ \DT_{kl}\vu.
\end{equation*}
Note that the coefficients $a^{kl}_{ij}(\vu)$ are real. Consider the differential operator
\begin{equation*}
A(\vu) := \sum_{j,k=1}^3 a^{kl}_{ij}(\vu)\partial_j \partial_k
\end{equation*}
which is defined precisely as
\begin{equation*}
\left( A(\vu)\bm{w}\right)_i := \sum_{j,k,l=1}^3a^{kl}_{ij}(\vu)\partial_j \partial_k w_l.
\end{equation*}

As done in the Newtonian case, we transfer the system of equations (\ref{GFSI}) defined on an unknown moving domain to a fixed domain by coordinate transformation. We use the same change of variables as in (\ref{13}) where the transformed generalized stress tensor is defined by
\begin{equation*}
\tilde{T}(\tilde{\vu},\tilde{\pi}) = Q^{-1}(t) \ T(Q(t)\tilde{\vu}(\bm{y},t),\tilde{\pi}(\bm{y}, t) \ Q(t).
\end{equation*}
To transform the term $A(\vu)\vu$, we calculate
\begin{equation*}
2\DT_{ij}\vu = \partial_i u_j +\partial_j u_i = \sum_{k=1}^3 \frac{ (Q\tilde{\vu})_j}{\partial y_k} \frac{\partial Y_k}{\partial x_i}+ \sum_{l=1}^3 \frac{ (Q\tilde{\vu})_i}{\partial y_l} \frac{\partial Y_l}{\partial x_j} =: 2\tilde{\DT}_{ij}\tilde{\vu}
\end{equation*}
and use the obvious notation $\tilde{\DT} \bm{w} = \left( \tilde{\DT}_{ij}\bm{w}\right) _{ij}$ to denote the transformed symmetric part of the gradient. Therefore, the transformed quasi-linear fluid operator can be written as, from (\ref{operatorA}),
\begin{equation*}
\left( \mathcal{A}(\tilde{\vu})\bm{w}\right) _i = \tilde{a}^{klm}_{ij}(\tilde{\vu}) \partial_j \partial_k w_l
\end{equation*}
where
\begin{equation*}
\tilde{a}^{klm}_{ij}(\tilde{\vu}) := \frac{1}{2}\mu(|\tilde{\DT}\tilde{\vu}|^2) \left( \delta_{ik}\delta_{jl} + \delta_{il}\delta_{jk} \right) + 2\mu'(|\tilde{\DT}\tilde{\vu}|^2) \tilde{\DT}_{ij}\tilde{\vu} \ \tilde{\DT}_{kl}\tilde{\vu}.
\end{equation*}
On the fixed domain, the transformed system then becomes,
\begin{equation}
\label{fixed_GFSI}
\begin{cases}
\begin{aligned}
\tilde{\vu}_t - \mathcal{A}(\tilde{\vu}) \tilde{\vu} + \nabla\tilde{\pi} &= \bm{F}_0(\tilde{\vu}, \tilde{\pi}, \tilde{\bm{l}}, \tilde{\bm{\omega}})\ &&\text{ in } \Omega_F(0)\times (0,T),\\
\div \ \tilde{\vu} &= \div \ \bm{H}(\tilde{\vu}, \tilde{\bm{l}}, \tilde{\bm{\omega}}) \ &&\text{ in } \Omega_F(0)\times (0,T),\\
\tilde{\vu} &=\bm{0} \ &&\text{ on } \ \partial \Omega \times (0,T),\\
\tilde{\vu}\cdot\tilde{\vn}=\tilde{\vu}_S\cdot \tilde{\vn}, \quad &\left[\tilde{\mathbb{T}}(\tilde{\vu},\tilde{\pi})\tilde{\vn}\right]_{\vt}+\alpha\tilde{\vu}_{\vt}=\alpha\tilde{\vu}_{S\vt} \ &&\text{ on } \ \partial \Omega_S(0) \times (0,T),\\
m \tilde{\bm{l}}' &= - \int\displaylimits_{\partial \Omega_S(0)}{\tilde{\mathbb{T}}(\tilde{\vu},\tilde{\pi})\tilde{\vn}} + \bm{F}_1(\tilde{\bm{l}},\tilde{\bm{\omega}}), \ &&\ t\in (0,T),\\
\tilde{J} \tilde{\bm{\omega}}' &= - \int\displaylimits_{\partial \Omega_S(0)}{y\times \tilde{\mathbb{T}}(\tilde{\vu},\tilde{\pi})\tilde{\vn}} + \bm{F}_2(\tilde{\bm{\omega}}), \ &&\ t\in (0,T),\\
\tilde{\vu}(0) &= \vu_0 \ &&\text{ in } \ \Omega_F(0),\\
\tilde{\bm{h}}^{'}( 0) = \bm{l}_0, \quad &\tilde{\bm{\omega}}(0) = \bm{\omega}_0
\end{aligned}
\end{cases}
\end{equation}
where $\bm{F}_0, \bm{H}, \bm{F}_1, \bm{F}_2$ are defined in (\ref{36}) - (\ref{24}).

Next we linearize the above system and prove the maximal regularity for the linear problem. We linearize $\mathcal{A}$ by the operator $A_*$, defined as
$$
A_* \tilde{\vu} := \mathcal{A}(\vu^*)\tilde{\vu}
$$
which fixes the coefficients in the original operator $A$ to a reference solution $\vu^*$ of the following problem with Newtonian fluid,
\begin{equation}
\label{39}
\begin{cases}
\begin{aligned}
\partial_t \vu^* - \Delta \vu^* + \nabla \pi^* &= \bm{0} \ &&\text{ in } \ \Omega_F(0)\times (0,T),\\
\div \ \vu^* &= 0 \ &&\text{ in } \ \Omega_F(0)\times (0,T),\\
\vu^*&=\bm{0} \ &&\text{ on } \ \partial \Omega \times (0,T),\\
\vu^*\cdot \tilde{\vn}&=(\bm{l}^* + \bm{\omega}^* \times \bm{y})\cdot \tilde{\vn} \ &&\text{ on } \ \partial \Omega_S(0) \times (0,T),\\ \left[\sigma(\vu^*,\pi^*)\tilde{\vn}\right]_{\vt}+\alpha\vu^*_{\vt}&=\alpha(\bm{l}^* + \bm{\omega}^* \times \bm{y})_{\vt} \ &&\text{ on } \ \partial \Omega_S(0) \times (0,T),\\
m (\bm{l}^*)' &= - \int\displaylimits_{\partial \Omega_S(0)}{\sigma (\vu^*,\pi^*)\tilde{\vn}}, \ && \ t\in(0,T),\\
J(0) (\bm{\omega}^*)' &= - \int\displaylimits_{\partial \Omega_S(0)}{\bm{y}\times \sigma (\vu^*,\pi^*)\tilde{\vn}}, \ && \ t\in (0,T),\\
\vu^*(0) &= \vu_0 \ &&\text{ in } \ \Omega_F(0),\\
\bm{l}^*( 0) = \bm{l}_0, &\quad \bm{\omega}^*(0) = \bm{\omega}_0 .
\end{aligned}
\end{cases}
\end{equation}
The existence of $(\vu^*, \pi^*, \bm{l}^*, \bm{\omega}^*)$ satisfying (\ref{39}) follows from Theorem \ref{T11}. We then define $(\hat{\vu}, \hat{\pi}, \hat{\bm{l}}, \hat{\bm{\omega}}) := (\tilde{\vu}-\vu^*, \tilde{\pi}-\pi^*, \tilde{\bm{l}}-\bm{l}^*, \tilde{\bm{\omega}}-\bm{\omega}^*)$ to rewrite (\ref{fixed_GFSI}) into the equivalent system
\begin{equation}
\label{40}
\begin{cases}
\begin{aligned}
\partial_t \hat{\vu} - A_* \hat{\vu} + \nabla \hat{\pi} &= \bm{G}_0(\hat{\vu}, \hat{\pi}, \hat{\bm{l}}, \hat{\bm{\omega}}) \ &&\text{ in } \ \Omega_F(0)\times (0,T),\\
\div \ \hat{\vu} &= \div \ \bm{H}(\hat{\vu}, \hat{\bm{l}}, \hat{\bm{\omega}}) \ &&\text{ in } \ \Omega_F(0)\times (0,T),\\
\hat{\vu}&=\bm{0} \ &&\text{ on } \ \partial \Omega \times (0,T),\\
\hat{\vu}\cdot \tilde{\vn}&=(\hat{\bm{l}} + \hat{\bm{\omega}} \times \bm{y})\cdot \tilde{\vn} \ &&\text{ on } \ \partial \Omega_S(0) \times (0,T),\\ \left[\sigma(\hat{\vu},\hat{\pi})\tilde{\vn}\right]_{\vt}+\alpha\hat{\vu}_{\vt}&=\alpha(\hat{\bm{l}} + \hat{\bm{\omega}} \times \bm{y})_{\vt}+\bm{H}_1(\hat{\vu}, \hat{\pi}, \hat{\bm{l}}, \hat{\bm{\omega}}) \ &&\text{ on } \ \partial \Omega_S(0) \times (0,T),\\
m \,\hat{\bm{l}}' &= - \int\displaylimits_{\partial \Omega_S(0)}{\sigma (\hat{\vu},\hat{\pi})\tilde{\vn}} +\bm{G}_1(\hat{\vu}, \hat{\pi}, \hat{\bm{l}}, \hat{\bm{\omega}}), \ && \ t\in(0,T),\\
J(0) \,\hat{\bm{\omega}}' &= - \int\displaylimits_{\partial \Omega_S(0)}{\bm{y}\times \sigma (\hat{\vu},\hat{\pi})\tilde{\vn}} +\bm{G}_2(\hat{\vu}, \hat{\pi}, \hat{\bm{l}}, \hat{\bm{\omega}}),  && \ t\in (0,T),\\
\hat{\vu}(0) &= \bm{0} \ &&\text{ in } \ \Omega_F(0),\\
\hat{\bm{l}}( 0) = \bm{0}, &\quad \hat{\bm{\omega}}(0) = \bm{0}
\end{aligned}
\end{cases}
\end{equation}
where
\begin{equation}
\label{45}
\begin{aligned}
\bm{G}_0(\hat{\vu}, \hat{\pi}, \hat{\bm{l}}, \hat{\bm{\omega}}) &:= \bm{F}_0(\hat{\vu}, \hat{\pi}, \hat{\bm{l}}, \hat{\bm{\omega}}) - \Delta \vu^* - Q(\vu^*,\hat{\vu}),\\
Q(\vu^*,\hat{\vu}) &:= A_*\hat{\vu} - \mathcal{A}(\vu^* + \hat{\vu}) (\vu^* + \hat{\vu}),\\
\bm{G}_1(\hat{\vu}, \hat{\pi}, \hat{\bm{l}}, \hat{\bm{\omega}}) &:= \int\displaylimits_{\partial \Omega_S(0)}{(\sigma-\tilde{\mathbb{T}})(\hat{\vu},\hat{\pi})\tilde{\vn}} + \int\displaylimits_{\partial \Omega_S(0)}{(\sigma-\tilde{\mathbb{T}})(\vu^*,\pi^*)\tilde{\vn}} - m(\bm{\omega}^* + \hat{\bm{\omega}})\times (\bm{l}^* + \hat{\bm{l}}),\\
\bm{G}_2(\hat{\vu}, \hat{\pi}, \hat{\bm{l}}, \hat{\bm{\omega}}) &:= \int\displaylimits_{\partial \Omega_S(0)}{\bm{y}\times(\sigma-\tilde{\mathbb{T}})(\hat{\vu},\hat{\pi})\tilde{\vn}} + \int\displaylimits_{\partial \Omega_S(0)}{\bm{y}\times (\sigma-\tilde{\mathbb{T}})(\vu^*,\pi^*)\tilde{\vn}} - \tilde{J}(\bm{\omega}^* + \hat{\bm{\omega}})\times (\bm{\omega}^* + \hat{\bm{\omega}}),\\
\bm{H}_1(\hat{\vu}, \hat{\pi}, \hat{\bm{l}}, \hat{\bm{\omega}}) &:= \left[ (\sigma-\tilde{\mathbb{T}})(\hat{\vu},\hat{\pi})\tilde{\vn}+(\sigma-\tilde{\mathbb{T}})(\vu^*,\pi^*)\tilde{\vn} \right] _{\vt}
\end{aligned}
\end{equation}
and $\bm{F}_0, \bm{H}$ are the same as in (\ref{36}). %Fixing $\bm{G}_0, \bm{H}, \bm{H}_1, \bm{G}_1, \bm{G}_2$ yields the linearization of (\ref{fixed_GFSI}) that we would like to study.
The following theorem states the main result concerning the linearized problem .

\begin{theorem}
	\label{T8}
	Let $\Omega_F(0)$ be a bounded domain of class $\HC{2}{1}$, $p>5$ and $\alpha\ge 0$ be as in (\ref{alpha}). Also assume that
	$$
	g_0\in L^p(0,T;\bm{L}^p(\Omega_F(0))),\chi \in W^{2,1}_{p,p}(Q^T_F),
	\bm{l}\in L^p(0,T;\mathbb{R}^3),\bm{\omega}\in L^p(0,T;\mathbb{R}^3)
	$$
	and
	$$
	h_1\in W^{\frac{1}{2}-\frac{1}{2p},p}(0,T;\bm{L}^p(\partial\Omega_F(0)))\cap L^p(0,T;\bm{W}^{1-\frac{1}{p}}(\partial\Omega_F(0)))
	$$
	where
	$$
	\div\ \chi\arrowvert_{t=0} =0,
	h_1\cdot\tilde{\vn} = 0 \text{ on } \partial\Omega_F(0).
	$$
 Then the problem
	\begin{equation}
	\label{41}
	\begin{cases}
	\begin{aligned}
	\partial_t \vu - A_* \vu + \nabla \pi &= g_0 \ &&\text{ in } \ \Omega_F(0)\times (0,T),\\
	\div \ \vu &= \div \ \chi \ &&\text{ in } \ \Omega_F(0)\times (0,T),\\
	\vu&=\bm{0} \ &&\text{ on } \ \partial \Omega \times (0,T),\\
	\vu\cdot \tilde{\vn}&=(\bm{l} +\bm{\omega} \times \bm{y})\cdot \tilde{\vn} \ &&\text{ on } \ \partial \Omega_S(0) \times (0,T),\\ \left[\sigma(\vu,\pi)\tilde{\vn}\right]_{\vt}+\alpha\vu_{\vt}&=\alpha(\bm{l} + \bm{\omega} \times \bm{y})_{\vt}+h_1 \ &&\text{ on } \ \partial \Omega_S(0) \times (0,T),\\
	m \bm{l}' &= - \int\displaylimits_{\partial \Omega_S(0)}{\sigma (\vu,\pi)\tilde{\vn}} +g_1, \ && \ t\in(0,T),\\
	J(0) \bm{\omega}' &= - \int\displaylimits_{\partial \Omega_S(0)}{\bm{y}\times \sigma (\vu,\pi)\tilde{\vn}} +g_2, \ && \ t\in (0,T),\\
	\vu(0) &= \bm{0} \ &&\text{ in } \ \Omega_F(0),\\
	\bm{l}( 0) = \bm{0}, &\quad \bm{\omega}(0) = \bm{0}
	\end{aligned}
	\end{cases}
	\end{equation}
	has a local in time unique solution $\vu\in %L^p(0,T;\bm{W}^{2,p}(\Omega_F(0)))\cap W^{1,p}(0,T; \bm{L}^p(\Omega_F(0)))
	W^{2,1}_{p,p}(Q^T_F), \pi \in L^p(0,T;W^{1,p}(\Omega_F(0)))$, $(\bm{l}, \bm{\omega})\in W^{1,p}(0,T; \R^6)$ satisfying the estimate
	\begin{equation}
	\label{44}
	\begin{aligned}
	&\quad \|\vu\|_{W^{2,1}_{p,p}(Q^T_F)} + \|\pi\|_{L^p(0,T;W^{1,p}(\Omega_F(0)))} + \|\bm{l}\|_{L^p(0,T;\R^3)} + \|\bm{\omega}\|_{L^p(0,T;\R^3)} \\
	\le & \ C \left( \|g_0\|_{L^p(0,T;\bm{L}^p(\Omega_F(0)))} + \|\chi\|_{W^{2,1}_{p,p}(Q^T_F)} +\|(g_1,g_2)\|_{L^p(0,T;\R^6)} \right.\\
	& \hspace{3cm} \left.+ \|h_1\|_{W^{\frac{1}{2}-\frac{1}{2p},p}(0,T;\bm{L}^p(\partial\Omega_F(0)))\cap L^p(0,T;\bm{W}^{1-\frac{1}{p}}(\partial\Omega_F(0)))} \right) .
	\end{aligned}
	\end{equation}
The above constant $C>0$ depends on $T, \alpha, p$ with the property that $C(T)\to 0$ as $T\to 0$.
\end{theorem}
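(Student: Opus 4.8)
The plan is to follow the strategy of \cite{GGH} for non-Newtonian fluids: treat the bulk fluid problem and the rigid-body equations separately, using maximal $L^p$-regularity of the frozen-coefficient operator $A_*$ to build a fluid solution operator, and then close the fluid--structure coupling by a contraction argument on a short time interval, exploiting the vanishing initial data to make the coupling constant small.

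The first task is to record the maximal $L^p$-regularity of $A_*$ together with the mixed boundary conditions (Dirichlet on $\partial\Omega$; the Navier-slip conditions $\vu\cdot\tilde\vn$ and $[\sigma(\vu,\pi)\tilde\vn]_\vt+\alpha\vu_\vt$ on $\partial\Omega_S(0)$). The coefficients $\tilde{a}^{kl}_{ij}(\vu^*)$ are built from $\tilde{\DT}\vu^*$, where $\vu^*$ is the reference solution furnished by Theorem \ref{T11}; since $p>5$, the parabolic embedding $W^{2,1}_{p,p}(Q^T_F)\hookrightarrow C([0,T];W^{2-2/p,p}(\Omega_F(0)))\hookrightarrow C([0,T];C^1(\overline{\Omega_F(0)}))$ shows these coefficients are continuous on $\overline{Q^T_F}$. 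Assumption (\ref{viscosity}) makes $A_*$ uniformly strongly elliptic, and the boundary operators form a Lopatinskii--Shapiro system; hence, by the general theory (Denk--Hieber--Pr\"uss \cite{DHP}, Bothe--Pr\"uss \cite{BP}) together with a localization/perturbation argument to absorb the spatial variation of the coefficients and the lower-order terms, $A_*$ has maximal $L^p$-regularity on $(0,T)$, with the pressure controlled in $L^p(0,T;W^{1,p}(\Omega_F(0)))$; the regularity constant can be taken uniform for $T\le T_0$, while liftings of data with vanishing initial trace can be chosen with norm gaining a positive power of $T$.

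Next I would reduce (\ref{41}) to a problem with homogeneous divergence and boundary data: writing $\vu=\bar\vu+\chi$ removes $\operatorname{div}\chi$ (the hypothesis $\operatorname{div}\chi|_{t=0}=0$ keeps the initial value zero), and a bounded lifting of the traces $(\bm{l}+\bm{\omega}\times\bm{y})\cdot\tilde\vn$ and $h_1+\alpha(\bm{l}+\bm{\omega}\times\bm{y})_\vt$ into $W^{2,1}_{p,p}(Q^T_F)$ (using $h_1\cdot\tilde\vn=0$ as the required compatibility) transfers the inhomogeneities to new right-hand sides bounded by the data norms in (\ref{44}). For fixed $(\bm{l},\bm{\omega})\in W^{1,p}(0,T;\R^6)$ with zero initial value, the maximal regularity of $A_*$ then produces a unique $(\vu,\pi)$ solving the fluid equations, depending linearly and boundedly on $(\bm{l},\bm{\omega})$ and the data; the pressure estimate near $\partial\Omega_F(0)$ is obtained as in \cite{GGH} by taking the divergence of the momentum equation and solving the resulting elliptic Neumann-type problem for $\pi$. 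Substituting this solution operator into the two rigid-body equations of (\ref{41}) yields, after isolating the instantaneous (added-mass) contribution of the pressure, a linear fixed-point equation $(\bm{l},\bm{\omega})=\Phi_T(\bm{l},\bm{\omega})$ on $W^{1,p}(0,T;\R^6)$ whose data part is $(g_1,g_2)$; since all functions vanish at $t=0$, interpolation/time-trace estimates give $\|\Phi_T(z_1)-\Phi_T(z_2)\|\le C(T)\|z_1-z_2\|$ with $C(T)\to0$ as $T\to0$. Hence $\Phi_T$ is a contraction for small $T$, producing the unique $(\bm{l},\bm{\omega})$ and thereby the unique $(\vu,\pi,\bm{l},\bm{\omega})$; tracking constants through the reductions gives (\ref{44}), and uniqueness for the full problem follows from linearity together with this estimate.

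The hard part will be the maximal regularity of $A_*$ with a pressure bound up to the boundary. Unlike the Newtonian case, the Helmholtz projection cannot be used to eliminate the pressure, because $A_*$ is a genuine variable-coefficient operator coming from the generalized stress tensor; one must verify the Lopatinskii--Shapiro condition for the stress boundary operator, obtain $L^p$-in-time, $W^{1,p}$-in-space pressure estimates near the boundary, and run the localization/perturbation argument in a way compatible with the requirement $C(T)\to0$ — this last point is precisely where the restriction $p>5$ (ensuring $C^1$-regularity of $\vu^*$, hence continuity of the coefficients and of the transform terms) is used.
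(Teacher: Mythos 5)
Your proposal follows essentially the same route as the paper: decompose (\ref{41}) using the maximal $L^p$-regularity of the frozen-coefficient generalized Stokes operator $A_*$ (Bothe--Pr\"uss \cite{BP}, with the $\alpha$-term absorbed as a lower-order perturbation), reduce to a fixed-point problem in $(\bm{l},\bm{\omega})\in W^{1,p}_0(0,T;\R^6)$ by substituting the fluid solution operator into the rigid-body equations, and use the pressure trace estimate from \cite{GGH} to make the contraction constant vanish as $T\to0$. One small deviation worth flagging: you speak of ``isolating the instantaneous (added-mass) contribution of the pressure'' before forming the fixed-point map, which is the Newtonian trick (the $K=\mathbb{I}+M$ decomposition of Section \ref{S5}); the paper deliberately does \emph{not} do this in the non-Newtonian case, precisely because the quasi-linear stress does not permit an $\mathbb{P}\vu,\bm{l},\bm{\omega}$-only reformulation, and instead works directly with the map $\mathcal{R}(\hat{\bm l},\hat{\bm\omega})(t)=-\int_0^t\mathbb{I}^{-1}\mathcal{J}(\sigma(\hat\vu,\hat\pi))$ whose smallness for $T\to 0$ comes from the trace $\mathcal{J}:W^{\varepsilon+1/p,p}\to\R^6$ applied to the maximal-regularity solution with zero initial data. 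Since the added-mass matrix arises from the harmonic extension of the normal boundary velocity (a Neumann problem insensitive to $A_*$), your variant is not wrong, but it is an unnecessary extra step compared with the paper's argument, and you would still need the boundary pressure estimate to control the remainder.
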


Unlike in the Newtonian case, here we first reduce the inhomogeneous divergence condition to the divergence-free problem and then study the system. The reason being, showing exponential stability for non-Newtonian fluid seems more intricate and might require different approach which is currently out of reach.

To prove the maximal regularity property, splitting the fluid and the solid equations and with the help of the maximal regularity of the \textit{generalized Stokes operator} $A_*$, we rewrite suitably the forces acting on the rigid body in terms of the fluid velocity and pressure, as in \cite{GGH}. Note that, in this case we obtain the regularity $u\in L^p(0, T; L^p), T< \infty$ (in the above Theorem) directly, unlike the Newtonian case (cf. Theorem \ref{T2}) which follows as in \cite[Theorem 10.1]{GGH}. The reason being, we do not use here the $\mathcal{R}$-sectoriality property neither the property that $0$ is in the resolvent set.

In the following subsection, we combine the relevant results concerning the linear fluid-structure problem corresponding to the non-Newtonian fluid.

\subsection{Maximal regularity of the linearised system}

We first recall the following existence result of maximal regular solution of the linearised fluid part of the above problem.

\begin{proposition}$\cite[ Theorem \ 4.1]{BP}.$
\label{P_MR}
Let $p>5$ and $\alpha\ge 0$ be as in (\ref{alpha}). Further assume that
$$
g_0\in L^p(0,T;\bm{L}^p(\Omega_F(0))),\chi \in W^{2,1}_{p,p}(Q^T_F),
\bm{l}\in L^p(0,T;\mathbb{R}^3),\bm{\omega}\in L^p(0,T;\mathbb{R}^3)
$$
and
$$
h_1\in W^{\frac{1}{2}-\frac{1}{2p},p}(0,T;\bm{L}^p(\partial\Omega_F(0)))\cap L^p(0,T;\bm{W}^{1-\frac{1}{p}}(\partial\Omega_F(0))), \vu_0\in W^{2-2/p,p}(\Omega_F(0))
$$
where
$$
\div\ \chi\arrowvert_{t=0}=0, \ (\bm{l} + \bm{\omega} \times \bm{y})\cdot \tilde{\vn}\arrowvert_{t=0} =\bm{0}\text{ on } \partial\Omega_S(0), \ h_1\cdot\tilde{\vn} = 0 \text{ on } \partial\Omega_F(0)
$$
and
$$
\mathbb{P}h_1 + \alpha\mathbb{P}(\bm{l} + \bm{\omega} \times \bm{y})_{\vt}  \mathbbm{1}_{\partial\Omega_S(0)} = \bm{0}.
$$
Then there exists a unique strong solution
$$
\mathring{\vu}\in L^p(0,T;\bm{W}^{2,p}(\Omega_F(0)))\cap W^{1,p}(0,T;\bm{L}^p(\Omega_F(0))), \mathring{\pi}\in L^p(0,T;W^{1,p}(\Omega_F(0)))
$$
of the following Stokes problem
	\begin{equation}
	\label{42}
	\begin{cases}
	\begin{aligned}
\mathring{\vu}_t - A_* \mathring{\vu}+\nabla \mathring{\pi} &= g_0 \ &&\text{ in } \ \Omega_F(0),\\
	\div \ \mathring{\vu} &= \div \ \chi \ &&\text{ in } \ \Omega_F(0),\\
	\mathring{\vu}&=\bm{0} \ &&\text{ on } \ \partial \Omega,\\
	\mathring{\vu}\cdot \tilde{\vn}&=(\bm{l} + \bm{\omega} \times \bm{y})\cdot \tilde{\vn} \ &&\text{ on } \ \partial \Omega_S(0),\\
	2\left[(\DT\mathring{\vu})\tilde{\vn}\right]_{\vt}+\alpha \mathring{\vu}_{\vt}&=\alpha (\bm{l} + \bm{\omega} \times \bm{y})_{\vt}+ h_1 \ &&\text{ on } \ \partial \Omega_S(0),\\
	\mathring{\vu}(0) &=\bm{0} .
	\end{aligned}
	\end{cases}
	\end{equation}
	The solution depends continuously on the data in the corresponding spaces.
\end{proposition}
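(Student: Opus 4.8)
The plan is to read \eqref{42} as a generalized Stokes problem on the fixed bounded domain $\Omega_F(0)$ with the quasi-linear operator frozen at the reference state $\vu^*$, and to obtain its maximal $L^p$-regularity from the theory of Bothe and Pr\"uss \cite{BP}; in the present work this is imported rather than reproved, so the task is to check that the hypotheses of \cite[Theorem~4.1]{BP} are met and then to carry out the reduction of the inhomogeneous data.

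First I would verify that the frozen operator $A_*$ is admissible. Since $p>5$ and $\vu^*$ is the maximal-regularity solution provided by Theorem~\ref{T11}, the mixed derivative theorem together with Sobolev embedding yields $\vu^*\in W^{2,1}_{p,p}(Q^T_F)\hookrightarrow C([0,T];\bm{C}^1(\overline{\Omega_F(0)}))$, with $\nabla\vu^*$ in fact H\"older continuous on the closed cylinder; consequently the coefficients $\tilde{a}^{klm}_{ij}(\vu^*)$, being smooth functions of $\tilde{\DT}\vu^*$, are continuous in space and time, so $A_*$ is a second order operator with continuous coefficients. The structural assumptions \eqref{viscosity}, $\mu>0$ and $\mu+2s\mu'>0$, translate after symmetrization into a uniform Legendre--Hadamard inequality for the principal symbol of $A_*$, so $A_*$ is normally elliptic; the same assumptions yield the Lopatinskii--Shapiro complementing condition for the pair consisting of $A_*$ and the boundary operators (Dirichlet on $\partial\Omega$, Navier-slip on $\partial\Omega_S(0)$). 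With these two verifications, \cite[Theorem~4.1]{BP} applies on the $\HC{2}{1}$ domain $\Omega_F(0)$ and delivers maximal $L^p$-regularity, hence existence, uniqueness and the a priori estimate, for the associated generalized Stokes problem with divergence-free velocity and homogeneous boundary data.

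It then remains to reduce \eqref{42} to that homogeneous situation. I would first remove the constraint $\div\mathring{\vu}=\div\chi$ by constructing $\bm{z}\in W^{2,1}_{p,p}(Q^T_F)$ with $\div\bm{z}=\div\chi$, $\bm{z}\arrowvert_{t=0}=\bm{0}$ and vanishing traces on $\partial\Omega_F(0)$ --- a time-dependent Bogovski\u{\i} construction after localization, correcting the boundary flux --- which is possible thanks to $\div\chi\arrowvert_{t=0}=0$. Next I would remove the inhomogeneous boundary data $\alpha(\bm{l}+\bm{\omega}\times\bm{y})_{\vt}+h_1$ and $(\bm{l}+\bm{\omega}\times\bm{y})\cdot\tilde{\vn}$ by a lift $\bm{b}\in W^{2,1}_{p,p}(Q^T_F)$ with $\bm{b}\arrowvert_{t=0}=\bm{0}$; here the hypotheses $h_1\cdot\tilde{\vn}=0$ on $\partial\Omega_F(0)$, $(\bm{l}+\bm{\omega}\times\bm{y})\cdot\tilde{\vn}\arrowvert_{t=0}=\bm{0}$ and the compatibility $\mathbb{P}h_1+\alpha\mathbb{P}(\bm{l}+\bm{\omega}\times\bm{y})_{\vt}\,\mathbbm{1}_{\partial\Omega_S(0)}=\bm{0}$ are exactly what guarantees that such a $\bm{b}$ exists with zero time trace and that the reduced right-hand side still lies in $L^p(0,T;\bm{L}^p(\Omega_F(0)))$ with norm controlled by the data. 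Applying the homogeneous result to $\mathring{\vu}-\bm{z}-\bm{b}$ and adding back $\bm{z}+\bm{b}$ produces $(\mathring{\vu},\mathring{\pi})$ together with the asserted continuous dependence on the data; uniqueness follows by applying the same estimate to the difference of two solutions, and, since the time trace is zero, the constant may even be taken to vanish as $T\to 0$.

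The hard part, conceptually, is the verification of the Legendre--Hadamard and especially the Lopatinskii--Shapiro condition for the Navier-slip boundary operator associated with the variable-coefficient operator $A_*$ --- this is the substance of \cite{BP} --- while at the level of this paper the only genuine work is the construction of the liftings $\bm{z}$ and $\bm{b}$ compatible with zero initial data.
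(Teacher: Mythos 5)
Your overall strategy—defer to Bothe--Pr\"uss for the frozen-coefficient generalized Stokes problem, check the ellipticity hypotheses, and then reduce the inhomogeneous data—is in the right direction, but it misses the one step the paper actually treats as the substance of the proof. As the paper remarks after Proposition~\ref{P_MR}, \cite[Theorem~4.1]{BP} is proved for the \emph{full slip} condition $\alpha=0$; for a nonzero (and spatially varying) friction coefficient $\alpha$, the boundary term $\alpha\,\vu_{\vt}$ is a genuine addition to the boundary operator treated in \cite{BP}. Your observation that $\alpha\,\vu_{\vt}$ is of lower order, and hence does not affect the Lopatinskii--Shapiro principal symbol, is true but does not by itself let you invoke \cite[Theorem~4.1]{BP} for $\alpha>0$: the theorem is not stated in the generality you assume. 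The paper closes this gap with a perturbation argument, showing that the lower-order boundary perturbation preserves maximal regularity (equivalently, bounded imaginary powers) of the generalized Stokes operator, via \cite[Proposition~3.3.9]{pruss}. That perturbation step is absent from your proposal and is exactly the content the paper adds to \cite{BP}.

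Two smaller comments. First, the reduction via a time-dependent Bogovski\u{\i} lifting and a boundary lift is a plausible route, but it is not what the paper does: \cite[Theorem~4.1]{BP} is invoked directly with the inhomogeneous divergence and boundary data (the compatibility conditions you listed are precisely those appearing in the cited theorem), so the liftings you propose are unnecessary extra machinery here, and your closing remark that they constitute ``the only genuine work'' inverts the paper's emphasis. Second, your verification of continuity of the coefficients of $A_*$ via the embedding $W^{2,1}_{p,p}(Q^T_F)\hookrightarrow C([0,T];\bm{C}^1(\overline{\Omega_F(0)}))$ for $p>5$ is sound and implicitly part of what makes the citation legitimate, even if the paper does not spell it out; keeping that check is fine.

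In short: add the perturbation argument for the passage from $\alpha=0$ to $\alpha\ge 0$, and you can drop the Bogovski\u{\i}-plus-lifting machinery, which \cite{BP} already subsumes.
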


Note that the proof of the above result is done in \cite[Theorem 4.1]{BP} for the full slip condition $\alpha =0$. The case when $\alpha >0$ is a function, the additional term $\alpha \vu_{\vt}$ being a lower order perturbation does not affect the analysis of well-posedness and regularity and can be derived by the same analysis. Indeed, with the help of \cite[Proposition 3.3.9]{pruss} , we obtain the maximal regularity (or equivalently, bounded imaginary powers) of the perturbed Stokes operator with full Navier boundary condition from the Stokes operator with $\alpha=0$.

Next we deduce the divergence free and homogeneous boundary conditions for the problem (\ref{41}) by subtracting the solution $(\mathring{\vu}, \mathring{\pi})$ of (\ref{42}).

Writing $(\hat{\vu}, \hat{\pi}, \hat{\bm{l}}, \hat{\bm{\omega}}) = (\vu - \mathring{\vu}, \pi - \mathring{\pi}, \bm{l}, \bm{\omega})$, the system (\ref{41}) becomes equivalent to,
\begin{equation}
\label{43}
\begin{cases}
\begin{aligned}
\partial_t \hat{\vu} - A_* \hat{\vu} + \nabla \hat{\pi} &= \bm{0} \ &&\text{ in } \ \Omega_F(0)\times (0,T),\\
\div \ \hat{\vu} &= 0 \ &&\text{ in } \ \Omega_F(0)\times (0,T),\\
\hat{\vu}&=\bm{0} \ &&\text{ on } \ \partial \Omega \times (0,T),\\
\hat{\vu}\cdot \tilde{\vn}=0, &\quad \left[\sigma(\hat{\vu},\hat{\pi})\tilde{\vn}\right]_{\vt}+\alpha\hat{\vu}_{\vt}=\bm{0} \ &&\text{ on } \ \partial \Omega_S(0) \times (0,T),\\
m \hat{\bm{l}}' = - \int\displaylimits_{\partial \Omega_S(0)}{\sigma (\hat{\vu},\hat{\pi})\tilde{\vn}} &- \int\displaylimits_{\partial \Omega_S(0)}{\sigma (\mathring{\vu},\mathring{\pi})\tilde{\vn}} +g_1, \ && \ t\in(0,T),\\
J(0) \hat{\bm{\omega}}' = - \int\displaylimits_{\partial \Omega_S(0)}{\bm{y}\times \sigma (\hat{\vu},\hat{\pi})\tilde{\vn}} &- \int\displaylimits_{\partial \Omega_S(0)}{\bm{y}\times \sigma (\mathring{\vu},\mathring{\pi})\tilde{\vn}} +g_2, \ && \ t\in (0,T),\\
\hat{\vu}(0) &= \bm{0} \ &&\text{ in } \ \Omega_F(0),\\
\hat{\bm{l}}( 0) = \bm{0}, &\quad \hat{\bm{\omega}}(0) = \bm{0}.
\end{aligned}
\end{cases}
\end{equation}
Let us define the operator, for all $0<\varepsilon<1-\frac{1}{p}$,
\begin{equation*}
\begin{aligned}
\mathcal{J}: W^{\varepsilon+1/p,p}(\Omega_F(0);\R^{3\times 3})&\to \R^6\\
h&\mapsto \begin{pmatrix}
\int\displaylimits_{\partial\Omega_S(0)}{h\tilde{\vn}}\\
\int\displaylimits_{\partial\Omega_S(0)}{\bm{y}\times h\tilde{\vn}}
\end{pmatrix}.
\end{aligned}
\end{equation*}
It follows from the boundedness of the trace operator that
\begin{equation*}
\|\mathcal{J}h\| \le C \|h\|_{W^{\varepsilon+1/p,p}(\Omega_F(0))} .
\end{equation*}
Now the fifth, sixth and eighth equations for the rigid motion in the system (\ref{43}) can be written as
\begin{equation*}
\begin{cases}
\begin{aligned}
&\mathbb{I}\begin{pmatrix}
\hat{\bm{l}}'\\
\hat{\bm{\omega}}'
\end{pmatrix}
= - \mathcal{J}(\sigma(\hat{\vu}, \hat{\pi})) - \mathcal{J}(\sigma(\mathring{\vu}, \mathring{\pi})) +
 \begin{pmatrix}
 g_1\\
 g_2
 \end{pmatrix},\\
& (\hat{\bm{l}}(0), \hat{\bm{\omega}}(0)) = (\bm{0}, \bm{0}).
 \end{aligned}
 \end{cases}
\end{equation*}
where $\mathbb{I}$ is the constant momentum matrix as before. This allows us to rewrite the above set of equations in the form
\begin{equation*}
\begin{pmatrix}
\hat{\bm{l}}\\
\hat{\bm{\omega}}
\end{pmatrix}
= \mathcal{R}
\begin{pmatrix}
\hat{\bm{l}}\\
\hat{\bm{\omega}}
\end{pmatrix}
+ \hat{g}
\end{equation*}
where $\mathcal{R}:W^{1,p}_0(0,T;\R^6) \to W^{1,p}_0(0,T;\R^6) $ is given by
\begin{equation*}
\mathcal{R}
\begin{pmatrix}
\hat{\bm{l}}\\
\hat{\bm{\omega}}
\end{pmatrix}
(t) := -\int\displaylimits_0^t{\mathbb{I}^{-1}\mathcal{J}(\sigma(\hat{\vu},\hat{\pi}))}
\end{equation*}
and
\begin{equation*}
\hat{g}(t) := \int\displaylimits_0^t {\mathbb{I}^{-1}\left[ - \mathcal{J}(\sigma(\mathring{\vu}, \mathring{\pi})) +
	\begin{pmatrix}
	g_1\\
	g_2
	\end{pmatrix}\right] }.
\end{equation*}
The following lemma says that for sufficiently small $T>0$, there exists a unique $(\hat{\bm{l}}, \hat{\bm{\omega}})\in W^{1,p}_0(0,T;\R^6)$ satisfying
\begin{equation*}
\begin{pmatrix}
\hat{\bm{l}}\\
\hat{\bm{\omega}}
\end{pmatrix}
= (I_{6} - \mathcal{R})^{-1} \hat{g}.
\end{equation*}

\begin{lemma}
The map $\mathcal{R}$ is bounded and $\|\mathcal{R}\|_{W^{1,p}_0(0,T;\R^6)} \le 1$ for sufficiently small $T>0$. Moreover, $\hat{g}\in W^{1,p}_0(0,T;\R^6)$.
\end{lemma}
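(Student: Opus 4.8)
The plan is to reduce the statement to the maximal--regularity estimate of Proposition~\ref{P_MR} for the linearised operator $A_*$, combined with the boundedness of the trace functional $\mathcal{J}$, and to read off the smallness in $T$ from the fact that the data driving the relevant fluid problem vanish at $t=0$. First, on $W^{1,p}_0(0,T;\R^6)$ the full norm is equivalent to $\|(\cdot)'\|_{L^p(0,T;\R^6)}$: for $f$ with $f(0)=0$ one has $|f(t)|\le t^{1/p'}\|f'\|_{L^p(0,t)}$, hence $\|f\|_{L^p(0,T)}\le T\|f'\|_{L^p(0,T)}$ and $\|f\|_{W^{1,p}_0(0,T)}\le(1+T)\|f'\|_{L^p(0,T)}$, the reverse bound being trivial. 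Thus it suffices to estimate $\|\tfrac{d}{dt}\mathcal{R}(\hat{\bm{l}},\hat{\bm{\omega}})\|_{L^p(0,T;\R^6)}=\|\mathbb{I}^{-1}\mathcal{J}(\sigma(\hat{\vu},\hat{\pi}))\|_{L^p(0,T;\R^6)}$, where $(\hat{\vu},\hat{\pi})$ is the part of the solution of \eqref{42} generated by the rigid--body velocity alone, i.e.\ the solution of \eqref{42} with $g_0=\chi=h_1=0$, zero initial value, and $(\bm{l},\bm{\omega})=(\hat{\bm{l}},\hat{\bm{\omega}})$. Since $(\hat{\bm{l}},\hat{\bm{\omega}})(0)=\bm{0}$, the compatibility conditions of Proposition~\ref{P_MR} hold, so this solution exists and, with a constant that may be taken independent of $T\in(0,1]$ (extend the boundary data, which vanishes at $t=0$, to a fixed interval and restrict the solution),
\[
\|\hat{\vu}\|_{W^{2,1}_{p,p}(Q^T_F)}+\|\hat{\pi}\|_{L^p(0,T;W^{1,p}(\Omega_F(0)))}\ \le\ C\,\|\hat{\bm{l}}+\hat{\bm{\omega}}\times\bm{y}\|_{\mathrm{tr}},
\]
where $\|\cdot\|_{\mathrm{tr}}$ denotes the norm of $\bm{y}\mapsto\hat{\bm{l}}+\hat{\bm{\omega}}\times\bm{y}$ on $\partial\Omega_S(0)\times(0,T)$ in the time--space trace spaces of $W^{2,1}_{p,p}(Q^T_F)$ attached to the two boundary conditions.

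Next, $\bm{y}\mapsto\hat{\bm{l}}+\hat{\bm{\omega}}\times\bm{y}$ is affine with smooth bounded coefficients on the compact surface $\partial\Omega_S(0)$, so $\|\cdot\|_{\mathrm{tr}}$ is controlled by the $W^{1,p}(0,T)$--norm of $(\hat{\bm{l}},\hat{\bm{\omega}})$; and since $(\hat{\bm{l}},\hat{\bm{\omega}})$ vanishes at $t=0$, the one--sided interpolation bound $\|f\|_{W^{s,p}(0,T)}\le C\,T^{1-s}\|f'\|_{L^p(0,T)}$ valid for $f(0)=0$ and $0<s<1$, applied to the fractional--in--time components, together with the Poincar\'e bound applied to the remaining ones, gives $\|\hat{\bm{l}}+\hat{\bm{\omega}}\times\bm{y}\|_{\mathrm{tr}}\le C\,T^{\theta}\|(\hat{\bm{l}},\hat{\bm{\omega}})\|_{W^{1,p}_0(0,T;\R^6)}$ for some $\theta>0$ (the worst fractional time exponent being $1-\tfrac1{2p}$, so $\theta=\tfrac1{2p}$ is admissible). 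On the other hand $\sigma(\hat{\vu},\hat{\pi})$ is of first order in $\hat{\vu}$ and linear in $\hat{\pi}$, hence for a.e.\ $t$ it lies in $\bm{W}^{1,p}(\Omega_F(0);\R^{3\times3})\hookrightarrow\bm{W}^{\varepsilon+1/p,p}(\Omega_F(0);\R^{3\times3})$ (using $\varepsilon<1-\tfrac1p$), so by the stated boundedness of $\mathcal{J}$ and Proposition~\ref{P_MR},
\[
\|\mathcal{J}(\sigma(\hat{\vu},\hat{\pi}))\|_{L^p(0,T;\R^6)}\le C\,\|\sigma(\hat{\vu},\hat{\pi})\|_{L^p(0,T;\bm{W}^{1,p}(\Omega_F(0)))}\le C\big(\|\hat{\vu}\|_{L^p(0,T;\bm{W}^{2,p})}+\|\hat{\pi}\|_{L^p(0,T;W^{1,p})}\big).
\]
Chaining the three displays gives $\|\mathcal{R}(\hat{\bm{l}},\hat{\bm{\omega}})\|_{W^{1,p}_0(0,T;\R^6)}\le C(1+T)\,T^{\theta}\,\|(\hat{\bm{l}},\hat{\bm{\omega}})\|_{W^{1,p}_0(0,T;\R^6)}$; moreover $\mathcal{R}(\hat{\bm{l}},\hat{\bm{\omega}})$ is the primitive of an $L^p$--function and vanishes at $t=0$, so it indeed belongs to $W^{1,p}_0(0,T;\R^6)$. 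Choosing $T>0$ small enough that $C(1+T)T^{\theta}\le1$ proves the first assertion.

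For the last claim, $\hat g(t)=\int_0^t\mathbb{I}^{-1}\big[-\mathcal{J}(\sigma(\mathring{\vu},\mathring{\pi}))+(g_1,g_2)\big]\,ds$, with $(\mathring{\vu},\mathring{\pi})$ the solution of \eqref{42} for the given data $g_0,\chi,h_1$. By Proposition~\ref{P_MR} one has $\mathring{\vu}\in W^{2,1}_{p,p}(Q^T_F)$ and $\mathring{\pi}\in L^p(0,T;W^{1,p}(\Omega_F(0)))$, so exactly as above $\sigma(\mathring{\vu},\mathring{\pi})\in L^p(0,T;\bm{W}^{1,p}(\Omega_F(0)))$ and $\mathcal{J}(\sigma(\mathring{\vu},\mathring{\pi}))\in L^p(0,T;\R^6)$; together with $(g_1,g_2)\in L^p(0,T;\R^6)$ this makes the integrand an $L^p$--function of $t$, whence its primitive $\hat g$ lies in $W^{1,p}(0,T;\R^6)$ and satisfies $\hat g(0)=\bm{0}$, that is $\hat g\in W^{1,p}_0(0,T;\R^6)$.

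The main obstacle is the bound in the first step: one must know that the maximal--regularity constant of Proposition~\ref{P_MR} stays bounded uniformly for $T\in(0,1]$ while, simultaneously, the trace norm of the rigid--body boundary velocity carries a strictly positive power of $T$. Both facts hinge on the vanishing of the data at $t=0$ and on the fact that passing from $W^{2,1}_{p,p}(Q^T_F)$ to the lateral boundary trace costs strictly less than one time--derivative; pinning down which Sobolev--Slobodeckij exponent is the worst is what fixes the admissible $\theta$, hence the size of $T$.
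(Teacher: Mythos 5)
Your proof is correct, and it is not merely a fleshing-out of the paper's argument but a genuinely different route. The paper's own ``proof'' is a one-line reference to [GGH], and the subsequent remarks make clear that the mechanism there for obtaining smallness of $\|\mathcal{R}\|$ as $T\to 0$ is a \emph{pressure estimate near the boundary} (a gain in the $L^p_t$ norm of the harmonic pressure on a boundary neighbourhood). You instead obtain the $T$-power from the \emph{boundary data side}: the maximal regularity constant of Proposition~\ref{P_MR} is made uniform in $T\in(0,1]$ by the standard extension/restriction argument (legitimate here because $(\hat{\bm{l}},\hat{\bm{\omega}})(0)=\bm 0$, so the extension operator from $(0,T)$ to a fixed interval has $T$-uniform norm), and the $T^\theta$-gain comes from the one-sided interpolation bound $\|f\|_{W^{s,p}(0,T)}\le CT^{1-s}\|f'\|_{L^p(0,T)}$ for $f(0)=0$ (valid by rescaling to $(0,1)$, with worst exponent at $s=1-\tfrac{1}{2p}$, hence $\theta=\tfrac{1}{2p}$). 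Both strategies exploit the same structural feature -- the rigid-body unknowns vanish at $t=0$ -- but the paper passes it through the pressure, whereas you pass it through the trace space of the lateral boundary data; yours avoids any separate boundary-pressure lemma and is more self-contained for this particular statement. Two small remarks: (i) your reading that $(\hat{\vu},\hat{\pi})$ in the definition of $\mathcal{R}$ is the Stokes solution driven purely by the $(\hat{\bm{l}},\hat{\bm{\omega}})$ boundary data is the intended one (the displayed system \eqref{43} with homogeneous interface conditions would otherwise force $\hat{\vu}\equiv0$, trivialising $\mathcal{R}$; taking $(\bm{l},\bm{\omega})=\bm0$ in \eqref{42} and putting all $(\hat{\bm{l}},\hat{\bm{\omega}})$-data into $\hat{\vu}$ is what makes the decomposition consistent); (ii) you correctly get the strict bound $\|\mathcal{R}\|<1$, which is what is actually needed for the Neumann series $(I-\mathcal{R})^{-1}$, even though the lemma is stated with ``$\le 1$''. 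One minor point worth noting explicitly in your argument: because $\int_{\partial\Omega_S(0)}\vn=\bm0$ and $\int_{\partial\Omega_S(0)}\bm{y}\times\vn=\bm0$, the functional $\mathcal{J}(\sigma(\hat{\vu},\hat{\pi}))$ is insensitive to the additive normalization of $\hat{\pi}$, so the maximal-regularity bound on the normalized pressure is indeed all that is needed.
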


Proof of this lemma is similar to the one, done in \cite{GGH} which is based on the estimate of $\mathcal{J}$ and the maximal regularity of $(\vu,\pi)$ solving the fluid part of the system (\ref{43}). Note that we have the presence of slip in the boundary condition which is different from \cite{GGH}; Thus the maximal regularity property of the generalized Stokes operator $A_*$ with slip condition is required here (as mentioned just after Proposition \ref{P_MR}).

From the above lemma, apart from the existence of $(\hat{\bm{l}}, \hat{\bm{\omega}})$, we obtain the following estimate:
\begin{equation*}
\begin{aligned}
& \quad \|\hat{\bm{l}}\|_{W^{1,p}(0,T;\R^3)} + \|\hat{\bm{\omega}}\|_{W^{1,p}(0,T;\R^3)}\\
&\le C \left( 1-\|\mathcal{R}\|_{\mathcal{L}(W_0^{1,p}(0,T),W_0^{1,p}(0,T))}\right)\left( \|g_0\|_{L^p(0,\infty;\bm{L}^q(\Omega_F(0)))} + \|\chi\|_{W^{2,1}_{q,p}(Q^T_F)} +\|(g_1,g_2)\|_{L^p(0,\infty;\R^6)} \right.\\
& \qquad \qquad \qquad \qquad \qquad \qquad \qquad \qquad \left.+ \|h_1\|_{W^{\frac{1}{2}-\frac{1}{2p},p}(0,T;\bm{L}^p(\partial\Omega_F(0)))\cap L^p(0,T;\bm{W}^{1-\frac{1}{p}}(\partial\Omega_F(0)))} \right) .
\end{aligned}
\end{equation*}
Plugging $\hat{\bm{l}}, \hat{\bm{\omega}}$ into (\ref{43}) yields a solution
\begin{equation*}
\begin{aligned}
& \vu = \hat{\vu} + \mathring{\vu} \in W^{2,1}_{p,p}(Q^T_F), \ &&\pi = \hat{\pi} + \mathring{\pi} \in L^p(0,T;W^{1,p}(\Omega_F(0))),\\
& \bm{l} = \hat{\bm{l}} \in W^{1,p}(0,T;\R^3), \ &&\bm{\omega} = \hat{\bm{\omega}} \in W^{1,p}(0,T;\R^3)
\end{aligned}
\end{equation*}
of (\ref{41}) satisfying the estimate (\ref{44}). Also the dependence of the constant $C$ on $T$ can be deduced as in \cite{GGH}, from the corresponding pressure estimate.

\subsection{Fixed point argument}

Theorem \ref{T8} now allows us to solve (\ref{40}) via a contraction mapping argument. We introduce as in the Section \ref{S6.2},
\begin{equation}
\label{S_gamma}
S_\gamma := \{(\tilde{\vu},\tilde{\pi},\tilde{\bm{l}},\tilde{\bm{\omega}}): \|(\tilde{\vu},\tilde{\pi},\tilde{\bm{l}},\tilde{\bm{\omega}})\|_S\le \gamma \}
\end{equation}
with
\begin{equation*}
\begin{aligned}
\|(\tilde{\vu},\tilde{\pi},\tilde{\bm{l}},\tilde{\bm{\omega}})\|_S:= \|\tilde{\vu}\|_{ W^{2,1}_{p,p}(Q^T_F)}+ \|\tilde{\pi}\|_{L^p(0,T;W^{1,q}(\Omega_F(0)))}+ \|\tilde{\bm{l}}\|_{W^{1,p}(0,T;\R^3)}+ \|\tilde{\bm{\omega}}\|_{W^{1,p}(0,T;\R^3)}
\end{aligned}
\end{equation*}
as the underlying set in the natural function spaces. Let
\begin{equation*}
\mathcal{N}:
\begin{pmatrix}
\tilde{\vu}\\
\tilde{\pi}\\
\tilde{\bm{l}}\\
\tilde{\bm{\omega}}
\end{pmatrix}
\mapsto
\begin{pmatrix}
\bm{G}_0(\tilde{\vu},\tilde{\pi},\tilde{\bm{l}},\tilde{\bm{\omega}})\\
\bm{H}(\tilde{\vu},\tilde{\pi},\tilde{\bm{l}},\tilde{\bm{\omega}})\\
\bm{H}_1(\tilde{\vu},\tilde{\pi},\tilde{\bm{l}},\tilde{\bm{\omega}})\\
\bm{G}_1(\tilde{\vu},\tilde{\pi},\tilde{\bm{l}},\tilde{\bm{\omega}})\\
\bm{G}_2(\tilde{\vu},\tilde{\pi},\tilde{\bm{l}},\tilde{\bm{\omega}})
\end{pmatrix}
\mapsto
\begin{pmatrix}
\vu\\
\pi\\
\bm{l}\\
\bm{\omega}
\end{pmatrix}
\end{equation*}
be the function which maps $(\tilde{\vu},\tilde{\pi},\tilde{\bm{l}},\tilde{\bm{\omega}})\in S_\gamma$ to $(\bm{G}_0, \bm{H}, \bm{H}_1, \bm{G}_1, \bm{G}_2)$ which are defined in (\ref{45}), and then to the solution of the linear problem with fixed right hand sides, using Theorem \ref{T8}. For sufficiently small $\gamma>0$, we show that the Banach fixed point theorem can be applied to the map $\mathcal{N}$.

\begin{theorem}
	\label{T10}
	For $T$ and $\gamma$ sufficiently small, the function $\mathcal{N}$ maps $S_\gamma$ into itself and it is contractive.
	\begin{comment}
	Let $p\in(1,\infty)$. Then there exist a constant $\tilde{\gamma}>0$ depending only on $p,q,\eta$ and $\Omega_F(0)$ such that for all $\gamma\in (0,\tilde{\gamma})$ and for all $(\vu_0,\bm{l}_0,\bm{\omega}_0)\in B^{2(1-1/p)}_{q,p}(\Omega_F(0))\times \R^3 \times \R^3$ satisfying the compatibility conditions
	$$
	\div \ \vu_0 = 0 \text{ in } \Omega_F(0),
	$$
	$$
	\vu_0\cdot \vn = \bm{l}_0 + (\bm{\omega}_0\times y)\cdot \vn \ \text{ on }\ \partial \Omega_S(0), \quad \vu_0\cdot \vn = 0 \ \text{ on } \ \partial \Omega,
	$$
	and
	\begin{equation*}
	\label{}
	\|\vu_0\|_{B^{2(1-1/p)}_{q,p}(\Omega_F(0))}+\|\bm{l}_0\|_{\R^3}+\|\bm{\omega}_0\|_{\R^3}\le \frac{\gamma}{2C_L},
	\end{equation*}
	where $C_L$ is the continuity constant appeared in , the system (\ref{40})-(\ref{45}) admits a unique strong solution $(\hat{\vu}, \hat{\pi}, \hat{\bm{l}}, \hat{\bm{\omega}})$ such that
	\begin{equation*}
	\|(\hat{\vu}, \hat{\pi}, \hat{\bm{l}}, \hat{\bm{\omega}})\|_{S} \le \gamma .
	\end{equation*}
	\end{comment}
\end{theorem}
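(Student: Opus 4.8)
The plan is to run a standard Banach fixed point argument on the map $\mathcal{N}$, where the only new ingredient beyond Theorem \ref{T8} is the collection of quadratic-type estimates on the nonlinear right-hand sides $\bm{G}_0,\bm{H},\bm{H}_1,\bm{G}_1,\bm{G}_2$, analogous to Proposition \ref{P1} in the Newtonian case. First I would fix $\gamma>0$ small and, given $(\tilde{\vu},\tilde{\pi},\tilde{\bm{l}},\tilde{\bm{\omega}})\in S_\gamma$, estimate each of these functions in the norms required by the data side of Theorem \ref{T8}: $\bm{G}_0$ in $L^p(0,T;\bm{L}^p(\Omega_F(0)))$, $\bm{H}$ in $W^{2,1}_{p,p}(Q^T_F)$ with $\div\,\bm{H}|_{t=0}=0$, $\bm{H}_1$ in $W^{\frac12-\frac1{2p},p}(0,T;\bm{L}^p(\partial\Omega_F(0)))\cap L^p(0,T;\bm{W}^{1-\frac1p,p}(\partial\Omega_F(0)))$ with $\bm{H}_1\cdot\tilde{\vn}=0$, and $\bm{G}_1,\bm{G}_2$ in $L^p(0,T;\R^3)$. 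The structure of these terms is: the transport/change-of-variable remainders (entirely parallel to $\bm{F}_0,\bm{H}$ of Section \ref{S6.2}), the difference $\Delta\vu^*+Q(\vu^*,\hat\vu)$ coming from linearizing the quasilinear operator $\mathcal{A}$, and the stress-tensor differences $(\sigma-\tilde{\mathbb{T}})(\cdot,\cdot)\tilde{\vn}$ on $\partial\Omega_S(0)$, plus the bilinear rigid-body terms $m(\bm{\omega}^*+\hat{\bm{\omega}})\times(\bm{l}^*+\hat{\bm{l}})$ and $\tilde J(\bm{\omega}^*+\hat{\bm{\omega}})\times(\bm{\omega}^*+\hat{\bm{\omega}})$.

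The key technical point is that all of these terms carry at least one factor that is small in $T$ (or in $\gamma$) thanks to the $p>5$ hypothesis. Since $p>5$, the embedding $W^{2,1}_{p,p}(Q^T_F)\hookrightarrow C([0,T];\bm{W}^{2-2/p,p}(\Omega_F(0)))\hookrightarrow C([0,T];C^1(\overline{\Omega_F(0)}))$ holds, so $\nabla\tilde\vu,\nabla\vu^*$ are continuous in space-time, the coefficients $\tilde a^{klm}_{ij}(\tilde\vu)$ and $a^{kl}_{ij}(\vu^*)$ are controlled via the $C^{1,1}$ regularity of $\mu$, and differences $a^{kl}_{ij}(\vu^*)-\tilde a^{klm}_{ij}(\vu^*+\hat\vu)$ are bounded by $\|\hat\vu\|$ in the relevant sup norm; the difference $\sigma-\tilde{\mathbb{T}}$ is likewise Lipschitz in its arguments once the $C^1$-bound on the gradients is available. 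The $X,Y,J_X,J_Y,Q$ bounds of the proposition in Section \ref{S6.2} — now on a finite interval $(0,T)$, where the role of $\gamma^{1/p'}$ is replaced by $T^{1/p'}$ times the bound on $\tilde{\bm{l}},\tilde{\bm{\omega}}$ — give that the change-of-variable remainders are $O(T^\theta)\cdot\|(\tilde\vu,\tilde\pi,\tilde{\bm{l}},\tilde{\bm{\omega}})\|_S$ for some $\theta>0$, and similarly the purely quadratic terms are $O(\gamma^2)$. Combining with the factor $C(T)\to 0$ in Theorem \ref{T8}, one obtains $\|\mathcal{N}(\tilde\vu,\tilde\pi,\tilde{\bm{l}},\tilde{\bm{\omega}})\|_S\le C(T)\big(C_0+ C_1(\gamma)\gamma\big)$ with $C_1(\gamma)\to 0$ as $\gamma\to 0$ (here $C_0$ collects the fixed data $\vu_0,\bm{l}_0,\bm{\omega}_0$ and the reference solution $\vu^*$), so for $T$ and then $\gamma$ small enough $\mathcal{N}$ maps $S_\gamma$ into itself. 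The contraction estimate is obtained the same way: each nonlinear term is a sum of products in which a difference of two arguments appears linearly while the remaining factors are uniformly bounded on $S_\gamma$ and carry a small constant $C(T)$ or $\gamma$, giving $\|\mathcal{N}(z^1)-\mathcal{N}(z^2)\|_S\le \tfrac12\|z^1-z^2\|_S$ after shrinking $T,\gamma$ once more.

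Then Banach's fixed point theorem yields a unique fixed point $(\hat\vu,\hat\pi,\hat{\bm{l}},\hat{\bm{\omega}})\in S_\gamma$ of $\mathcal{N}$, which by construction solves the transformed, linearized-around-$\vu^*$ system \eqref{40}; adding back $(\vu^*,\pi^*,\bm{l}^*,\bm{\omega}^*)$ gives a solution $(\tilde\vu,\tilde\pi,\tilde{\bm{l}},\tilde{\bm{\omega}})$ of \eqref{fixed_GFSI}, and the backward change of variables \eqref{13} (a $C^1$-diffeomorphism once $T$ is small enough that condition \eqref{14} holds) transports it to a strong solution $(\vu,\pi,\bm{l},\bm{\omega})$ of \eqref{GFSI} in the asserted class, establishing Theorem \ref{T_G}; the argument for the nonlinear slip condition \eqref{nonlin_slip} in Theorem \ref{T_nonlin_slip} is identical, the extra term $\alpha|\vu|\vu_{\vt}$ being absorbed into $\bm{H}_1$ as one more term that is quadratic in $\tilde\vu$ and hence handled by the same estimates.

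The main obstacle I expect is the boundary term $\bm{H}_1=[(\sigma-\tilde{\mathbb{T}})(\hat\vu,\hat\pi)\tilde\vn+(\sigma-\tilde{\mathbb{T}})(\vu^*,\pi^*)\tilde\vn]_{\vt}$: it must be estimated in the anisotropic trace space $W^{\frac12-\frac1{2p},p}(0,T;\bm{L}^p(\partial\Omega_F(0)))\cap L^p(0,T;\bm{W}^{1-\frac1p,p}(\partial\Omega_F(0)))$, which requires controlling a fractional-in-time, fractional-in-space norm of a nonlinear expression in $\DT\vu$ and the change-of-variable coefficients; showing both that this norm is finite and that it is Lipschitz with a small constant on $S_\gamma$ is the delicate part, and is where the $p>5$ restriction (ensuring $\DT\vu\in C([0,T];C^0)$ and that the compatibility condition $\mathbb{P}\bm{H}_1+\alpha\mathbb{P}(\cdots)\mathbbm{1}_{\partial\Omega_S(0)}=\bm{0}$ propagates from the data) is genuinely used.
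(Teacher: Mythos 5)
Your proposal follows essentially the same approach as the paper: a Banach fixed point argument on $S_\gamma$ driven by the maximal regularity of Theorem \ref{T8} (with its $T$-dependent constant $C(T)\to 0$) combined with quadratic / small-in-$T$ estimates on the nonlinear data terms $\bm{G}_0,\bm{H},\bm{H}_1,\bm{G}_1,\bm{G}_2$. Two small but genuine improvements over the paper's write-up are worth recording: first, your self-map estimate $\|\mathcal{N}(z)\|_S\le C(T)\bigl(C_0+C_1(\gamma)\gamma\bigr)$, with $C_0$ carrying the $\vu^*$-contributions, is the correct form, whereas the paper's displayed bound $\|\mathcal{N}(z)\|_S\le C(T,\gamma)\|z\|_S$ is literally false (it would force $\mathcal{N}(0)=0$, contradicted by the $-\Delta\vu^*$ term in $\bm{G}_0$) and is only a shorthand for the estimates that follow it; second, your explicit flag that $\bm{H}_1$ must be bounded in the anisotropic trace space $W^{\frac12-\frac1{2p},p}(0,T;\bm{L}^p(\partial\Omega_F(0)))\cap L^p(0,T;\bm{W}^{1-\frac1p,p}(\partial\Omega_F(0)))$ — a fractional-in-time and fractional-in-space norm of a nonlinear expression in $\DT\vu$ — is precisely where the paper is weakest, since it dismisses $\bm{H}_1$ as ``estimated in the similar way as $\bm{G}_1$'' even though $\bm{G}_1$ only requires a much cruder $L^p(0,T;\R^3)$ bound after integrating the stress over the boundary. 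One minor quantifier issue in your argument: the smallness should be chosen in the order $\gamma$ first, then $T$ depending on $\gamma$ (so that $C(T)C_0\le\gamma/2$), not the reverse.
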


\begin{proof}
%For the rest of this section, we
First we show that the image of $\mathcal{N}$ is contained in $S_\gamma$. Let us assume that $(\tilde{\vu},\tilde{\pi},\tilde{\bm{l}},\tilde{\bm{\omega}})\in S_\gamma$
and that $(\vu^*, \pi^*, \bm{l}^*, \bm{\omega}^*)$ are given by (\ref{39}).
%We set
%$$
%C_0 := \|(\vu^*,\pi^*,\bm{l}^*,\bm{\omega}^*)\|_{S}.
%$$
We want to show the following estimate
\begin{equation}
\label{46}
\|\mathcal{N}(\tilde{\vu},\tilde{\pi},\tilde{\bm{l}},\tilde{\bm{\omega}})\|_{S} \le C(T,\gamma) \|(\tilde{\vu},\tilde{\pi},\tilde{\bm{l}},\tilde{\bm{\omega}})\|_{S}
\end{equation}
where $C(T,\gamma)\to 0$ as $T,\gamma\to 0$. Due to the maximal regularity of the linear problem, it follows directly from the following estimate
\begin{equation*}
\begin{aligned}
&\quad \|\bm{G}_0\|_{L^p(0,T;\bm{L}^p(\Omega_F(0)))} + \|\bm{H}\|_{W^{2,1}_{p,p}(Q^\infty_F)}+ \|\bm{H}_1\|_{}+\|\bm{G}_1\|_{L^p(0,T;\R^3)}+ \|\bm{G}_2\|_{L^p(0,T;\R^3)}\\
& \le C(T,\gamma) \|(\tilde{\vu},\tilde{\pi},\tilde{\bm{l}},\tilde{\bm{\omega}})\|_{S}.
\end{aligned}
\end{equation*}
Estimate for $\bm{H}$ is deduced in (\ref{32}).
In order to show the estimate for $\bm{G}_0$, it suffices to consider the term
$$
Q(\vu^*,\tilde{\vu}) := A_*\tilde{\vu} - \mathcal{A}(\vu^* + \tilde{\vu}) (\vu^* + \tilde{\vu})
$$
which is new compared to the Newtonian case. Estimating $Q$ follows the same argument as in \cite[pp 1431-34]{GGH} which involves essentially writing it as a difference of suitable forms, hence we skip it.
We obtain
\begin{equation*}
\|Q(\vu^*, \tilde{\vu})\|_{L^p(0,T;\bm{L}^p(\Omega_F(0))}\le C \left(\gamma^2 + \|\vu^*\|_{W^{2,1}_{p,p}(Q^\infty_F)} + T \right)
\end{equation*}
which yields
\begin{equation*}
\begin{aligned}
& \quad \ \|\bm{G}_0(\tilde{\vu},\tilde{\pi},\tilde{\bm{l}},\tilde{\bm{\omega}})\|_{L^p(0,T;\bm{L}^p(\Omega_F(0))}\\
& \le \|\bm{F}_0(\tilde{\vu},\tilde{\pi},\tilde{\bm{l}},\tilde{\bm{\omega}})\|_{L^p(0,T;\bm{L}^p(\Omega_F(0))} + \|\vu^*\|_{W^{2,1}_{p,p}(Q^\infty_F)} +\|Q(\vu^*, \tilde{\vu})\|_{L^p(0,T;\bm{L}^p(\Omega_F(0))}\\
& \le C (\gamma^2 +\|\vu^*\|_{W^{2,1}_{p,p}(Q^\infty_F)} + T ).
\end{aligned}
\end{equation*}
Also, by definition,
\begin{equation*}
\begin{aligned}
& \quad \ \|\bm{G}_1(\tilde{\vu},\tilde{\pi},\tilde{\bm{l}},\tilde{\bm{\omega}})\|_{L^p(0,T)}\\
& \le m\|(\bm{\omega}^* + \hat{\bm{\omega}})\times (\bm{l}^* + \hat{\bm{l}})\|_{L^p(0,T)} + \Big\|\int\displaylimits_{\partial \Omega_S(0)}(\sigma - \tilde{\sigma})(\tilde{\vu}, \tilde{\pi})\tilde{\vn}\Big\|_{} + \Big\|\int\displaylimits_{\partial \Omega_S(0)}(\tilde{\sigma} - \tilde{\mathbb{T}})(\tilde{\vu}, \tilde{\pi})\tilde{\vn}\Big\|_{L^p(0,T)}\\
& \qquad + \Big\|\int\displaylimits_{\partial \Omega_S(0)}(\sigma - \tilde{\sigma})(\vu^*, \pi^*)\tilde{\vn}\Big\|_{L^p(0,T)}+ \Big\|\int\displaylimits_{\partial \Omega_S(0)}(\tilde{\sigma} - \tilde{\mathbb{T}})(\vu^*, \pi^*)\tilde{\vn}\Big\|_{L^p(0,T)}\\
& \le C \gamma^2 + C \|\mathcal{J}(\sigma - \tilde{\sigma})(\tilde{\vu}, \tilde{\pi})\|_{L^p(0,T)} + C \|\mathcal{J}(Q^T(2-\mu(|\DT\tilde{\vu}|^2))\DT(Q\tilde{\vu})Q)\|_{L^p(0,T)} \\
& \qquad \quad + C \|\mathcal{J}(\sigma - \tilde{\sigma})(\vu^*, \pi^*)\|_{L^p(0,T)} + C \|\mathcal{J}((2\DT\vu^*-Q^T\mu(|\DT\vu^*|^2))\DT(Q\vu^*)Q)\|_{L^p(0,T)}\\
& \le C \gamma^2 + C \|\DT(Q(\vu^* + \tilde{\vu}))\|_{L^p(0,T;C(\overline{\Omega_F(0)}))} \\
& \le C \gamma^2.
\end{aligned}
\end{equation*}
Similarly,
\begin{equation*}
\begin{aligned}
& \quad \ \|\bm{G}_2(\tilde{\vu},\tilde{\pi},\tilde{\bm{l}},\tilde{\bm{\omega}}) \|_{L^p(0,T)}\\
& \le C\Big\|\mathcal{J}(\sigma - \tilde{\sigma})(\tilde{\vu}, \tilde{\pi})\Big\|_{L^p(0,T)} +C \Big\|\mathcal{J}(\tilde{\sigma} - \tilde{\mathbb{T}})(\tilde{\vu}, \tilde{\pi})\Big\|_{L^p(0,T)}
 + C\Big\|\mathcal{J}(\sigma - \tilde{\sigma})(\vu^*, \pi^*)\Big\|_{L^p(0,T)}\\
& \qquad + C\Big\|\mathcal{J}(\tilde{\sigma} - \tilde{\mathbb{T}})(\vu^*, \pi^*)\Big\|_{L^p(0,T)}+ C\|(\bm{\omega}^* + \hat{\bm{\omega}})\times (\bm{\omega}^* + \hat{\bm{\omega}})\|_{L^p(0,T)}\\
& \le C\gamma^2
\end{aligned}
\end{equation*}
and for $\bm{H}_1(\tilde{\vu},\tilde{\pi},\tilde{\bm{l}},\tilde{\bm{\omega}})$.
Thus we obtain (\ref{46}).

Next we prove that $\mathcal{N}$ is contractive. For that, let $(\tilde{\vu}^i,\tilde{\pi}^i,\tilde{\bm{l}}^i,\tilde{\bm{\omega}}^i) \in S_\gamma, i\in\{1,2\} $. As done in Theorem \ref{T9}, we estimate the differences of the functions $\bm{G}_0, \bm{H}, \bm{H}_1, \bm{G}_1, \bm{G}_2$ corresponding to $(\tilde{\vu}^i,\tilde{\pi}^i,\tilde{\bm{l}}^i,\tilde{\bm{\omega}}^i)$. This Lipschitz estimates can be shown again exactly using the same argument as in \cite[pp 1435-37]{GGH}. Concerning the estimates on two extra terms $\bm{H}$ and $\bm{H}_1$, $\bm{H}$ is already treated in (\ref{31}) and $\bm{H}_1$ can also be estimated in the similar way as for $\bm{G}_1$. This completes the proof. The fixed point of $\mathcal{N}$ is the solution of (\ref{40})-(\ref{45}).
\hfill
\end{proof}

\begin{proposition}
	Let $p>5$, $\Omega$ be a bounded domain of class $\HC{2}{1}$ and $\alpha\ge 0$ satisfies (\ref{alpha}). Also assume that $(\vu_0, \bm{l}_0, \bm{\omega}_0)\in \bm{W}^{2-2/p,p}(\Omega_F(0))\times \R^3\times \R^3$ satisfying the compatibility condition (\ref{CN}). Then there exists $T_0>0$ such that
the problem (\ref{GFSI}) admits a unique strong solution on $[0,T_0)$,
\begin{equation*}
\begin{aligned}
& \vu \in L^p(0,T_0;\bm{W}^{2,p}(\Omega_F(\cdot))) \cap W^{1,p}(0,T_0;\bm{L}^p(\Omega_F(\cdot))),\\
& \pi \in L^p(0,T_0;W^{1,p}(\Omega_F(\cdot))), \bm{l} \in W^{1,p}(0,T_0;\R^3), \bm{\omega} \in W^{1,p}(0,T_0;\R^3).
\end{aligned}
\end{equation*}

Moreover, we can choose $T_0$ such that one of the following alternatives holds true:\\
(a) $T_0 = \infty$;\\
(b) either the function $t\mapsto \|\vu(t)\|_{\bm{W}^{2-2/p,p}(\Omega_F(t))}$ is not bounded in $[0,T_0)$ or,
\\
\indent \ $\lim_{t\to T_0} \mathrm{dist} (\partial\Omega, \Omega_S(t))=0$.
\end{proposition}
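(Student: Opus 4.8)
The plan is to obtain the local solution from the fixed-point Theorem~\ref{T10} and then run a standard continuation argument for the blow-up alternative. First I would apply the change of variables of the Appendix to pass from (\ref{GFSI}) on the moving domain to the fixed-domain system (\ref{fixed_GFSI}). Taking the reference solution $(\vu^*,\pi^*,\bm{l}^*,\bm{\omega}^*)$ of the Newtonian linear problem (\ref{39}), which exists by Theorem~\ref{T11}, and passing to $(\hat{\vu},\hat{\pi},\hat{\bm{l}},\hat{\bm{\omega}})$ as in (\ref{40})--(\ref{45}), Theorem~\ref{T10} gives, for $T$ and $\gamma$ small enough (depending only on $p$, $\alpha$, $\Omega_F(0)$ and the size of the data through (\ref{39})), a unique fixed point of $\mathcal{N}$ in $S_\gamma$, hence a solution $(\tilde{\vu},\tilde{\pi},\tilde{\bm{l}},\tilde{\bm{\omega}})$ of (\ref{fixed_GFSI}). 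Since $\gamma<\tilde\gamma$, condition (\ref{14}) holds and $X(\cdot,t)$ is a $C^1$-diffeomorphism from $\Omega_F(0)$ onto $\Omega_F(t)$; undoing (\ref{13}), which preserves the regularity (the derivatives of the solution being combinations of $(\tilde{\vu},\tilde{\pi},\tilde{\bm{l}},\tilde{\bm{\omega}})$ with $X$ and its derivatives), produces a strong solution $(\vu,\pi,\bm{l},\bm{\omega})$ of (\ref{GFSI}) on $[0,T_0)$ for some $T_0>0$ in the stated class. Uniqueness on $[0,T_0)$ follows from uniqueness of the fixed point together with invertibility of the coordinate transforms.

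Next I would let $T_0\in(0,\infty]$ be the supremum of all $T>0$ such that (\ref{GFSI}) admits a solution on $[0,T)$ in the class of the proposition with $\mathrm{dist}(\Omega_S(t),\partial\Omega)>0$ throughout; by the previous step $T_0>0$, and by uniqueness on overlapping intervals these local solutions glue to one solution on $[0,T_0)$. If $T_0=\infty$, alternative (a) holds. Assume henceforth $T_0<\infty$ and, for contradiction, that \emph{neither} condition in (b) occurs, i.e. $t\mapsto\|\vu(t)\|_{\bm{W}^{2-2/p,p}(\Omega_F(t))}$ is bounded by some $M$ on $[0,T_0)$ \emph{and} $\liminf_{t\to T_0}\mathrm{dist}(\partial\Omega,\Omega_S(t))=:2\beta_0>0$. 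From the traces of $\vu$ on $\partial\Omega_S(t)$ via $(\ref{GFSI})_4$--$(\ref{GFSI})_5$, together with positivity of $J(t)$, one bounds $\bm{l},\bm{\omega}$ on $[0,T_0)$, hence $\bm{h},Q$ are uniformly bounded and Lipschitz, the change of variables stays non-degenerate up to $t=T_0$, and $\mathrm{dist}(\Omega_S(t),\partial\Omega)\ge\beta_0$ on $[0,T_0)$. Pulling back, $(\tilde{\vu}(t),\tilde{\bm{l}}(t),\tilde{\bm{\omega}}(t))$ is bounded in $\bm{W}^{2-2/p,p}(\Omega_F(0))\times\R^3\times\R^3$ and, being the time-trace of a function in $W^{2,1}_{p,p}(Q^{T_0}_F)$, converges as $t\to T_0$ to some $(\tilde{\vu}_\star,\tilde{\bm{l}}_\star,\tilde{\bm{\omega}}_\star)$ in $\bm{W}^{2-2/p,p}(\Omega_F(0))\times\R^3\times\R^3$; transporting this to $\Omega_F(T_0)$ yields an admissible state $(\vu_\star,\bm{l}_\star,\bm{\omega}_\star)$ satisfying the compatibility condition (\ref{CN}) at time $T_0$, by continuity of the divergence, impermeability, slip and (for $p>3$) Neumann-type traces.

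To close the argument I would re-run Theorem~\ref{T10} with initial time $T_0$ and data $(\vu_\star,\bm{l}_\star,\bm{\omega}_\star)$: since the local existence time delivered there depends only on $p$, $\alpha$, $\Omega_F(0)$, on the lower bound $\beta_0$ of the gap, and on the size $M$ of the data — none of which degenerate as the starting time approaches $T_0$ — one obtains a solution on $[T_0,T_0+\tau)$ with $\tau>0$, which by uniqueness extends the solution from $[0,T_0)$ to $[0,T_0+\tau)$, contradicting the maximality of $T_0$. Hence, if $T_0<\infty$, at least one condition in (b) must hold. The main obstacle I anticipate is precisely making this continuation quantitative: one must verify that the constant $C(T,\gamma)$ from Theorem~\ref{T10}, the reference-solution bound of Theorem~\ref{T11}, and the diffeomorphism estimates (\ref{18})--(\ref{47}) depend on the data and geometry only through $M$ and $\beta_0$ (and not on the initial time), so that $\tau$ admits a uniform lower bound; and one must check carefully that the limiting state $(\vu_\star,\bm{l}_\star,\bm{\omega}_\star)$ genuinely satisfies the compatibility conditions (\ref{CN}), in particular the boundary condition involving $\alpha$ when $p>3$.
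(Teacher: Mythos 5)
Your proposal reconstructs the paper's argument: local existence comes from the fixed point of Theorem~\ref{T10} together with the backward change of variables (\ref{13}), and the blow-up alternative from a continuation argument, which the paper itself only cites (\cite[Section 3.3]{cumsille}) without reproducing. The overall structure is sound. The one step that does not quite work as written is your claim that $(\bm{l},\bm{\omega})$ can be bounded on $[0,T_0)$ ``from the traces of $\vu$ on $\partial\Omega_S(t)$ via $(\ref{GFSI})_4$--$(\ref{GFSI})_5$''. The normal-trace condition $(\ref{GFSI})_4$ determines only those rigid velocities whose normal component on $\partial\Omega_S$ is non-trivial; if $\Omega_S$ has a rotational symmetry — for instance a ball, where $(\bm{\omega}\times(\bm{x}-\bm{h}))\cdot\vn\equiv 0$ on $\partial\Omega_S(t)$ — the normal trace gives no control of $\bm{\omega}$. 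The slip law $(\ref{GFSI})_5$ would recover the tangential part of $\vu_S$, but only when $\alpha$ is bounded away from zero, whereas the hypotheses allow $\alpha\equiv 0$. A robust replacement, valid for any body shape and any $\alpha\ge 0$, is the kinetic-energy inequality for (\ref{GFSI}): under the viscosity hypothesis (\ref{viscosity}), testing the fluid equation against $\vu$ and using the two rigid-body ODEs gives
\[
\frac{d}{dt}\left[\,\frac{1}{2}\|\vu(t)\|_{L^2(\Omega_F(t))}^2 + \frac{m}{2}|\bm{l}(t)|^2 + \frac{1}{2}J(t)\bm{\omega}(t)\cdot\bm{\omega}(t)\right]\le 0,
\]
which bounds $|\bm{l}(t)|$ and $|\bm{\omega}(t)|$ by the initial energy independently of the geometry of $\Omega_S$ and of $\alpha$. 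With this in place, the rest of your continuation — controlling the change of variables through (\ref{18})--(\ref{47}), extracting a limiting state satisfying (\ref{CN}), and restarting Theorem~\ref{T10} with an existence time depending only on the bound $M$ and the gap $\beta_0$ — goes through as you describe.
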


\begin{proof}
Theorem \ref{T10} gives a unique strong solution $(\hat{\vu}, \hat{\pi}, \hat{\bm{l}}, \hat{\bm{\omega}}) $ to problem (\ref{40})-(\ref{45}). The solution to the original problem (\ref{GFSI}) can be obtained by adding the reference solution $(\vu^*, \pi^*, \bm{l}^*, \bm{\omega}^*) $ and doing the backward coordinate transform, as in the proof of Theorem \ref{T_N}.

That one of the alternatives (a) or (b) holds true, can be proved in the standard way, see for example \cite[Section 3.3]{cumsille}.
\hfill
\end{proof}
\begin{comment}
\begin{proof}[Proof of Theorem \ref{T_G}]
The solution of the problem (\ref{GFSI}) is global in time, provided the given data are small, can be proved using the same argument as in \cite[Section 4.3]{cumsille}.
\hfill
\end{proof}
\end{comment}

\subsection{Non-linear slip condition}

In this final subsection, we discuss a more generalized boundary condition where the velocity of the fluid flow satisfies a wall-law:
\begin{equation*}
[\mathbb{T}(\vu,\pi)\vn]_{\vt} + \alpha |\vu| \vu_{\vt} =\bm{0}.
\end{equation*}
Consider the system (\ref{GFSI}) with the above boundary condition, namely
\begin{equation}
\label{GGFSI}
\left\{
\begin{aligned}
\partial_t \vu + (\vu \cdot \nabla_x) \vu &= \div \ \mathbb{T} (\vu,\pi) \ &&\text{ in } \ \Omega_F(t)\times (0,T),\\
\div \ \vu &= 0 \ &&\text{ in } \ \Omega_F(t)\times (0,T),\\
\vu&=\bm{0} \ &&\text{ on } \ \partial \Omega \times (0,T),\\
\vu\cdot \vn &=\vu_{S}\cdot \vn \ &&\text{ on } \ \partial \Omega_S(t) \times (0,T),\\
\left[\mathbb{T} (\vu,\pi)\vn\right]_{\vt}+\alpha|\vu| \vu_{\vt}&=\alpha |\vu_S| \vu_{S\vt} \ &&\text{ on } \ \partial \Omega_S(t) \times (0,T),\\
m \bm{l}^{'}(t) &= - \int\displaylimits_{\partial \Omega_S(t)}{\mathbb{T} (\vu,\pi)\vn}, \ && \ t\in(0,T),\\
(J \bm{\omega})'(t) &= - \int\displaylimits_{\partial \Omega_S(t)}{(\bm{x}-\bm{h}(t))\times \mathbb{T} (\vu,\pi)\vn},  && \ t\in(0,T),\\
\vu(0) &= \vu_0 \ &&\text{ in } \ \Omega_F(0),\\
\bm{l}( 0) = \bm{l}_0, &\quad \bm{\omega}(0) = \bm{\omega}_0 .
\end{aligned}
\right.
\end{equation}
To prove the well-posedness of (\ref{GGFSI}), the idea is first to linearize the boundary condition as
\begin{equation*}
[\mathbb{T}(\vu,\pi)\vn]_{\vt} + \alpha\vu_{\vt} = \alpha (1-|\tilde{\vu}|) \tilde{\vu}_{\vt}
\end{equation*}
which falls under the non-Newtonian case, for given $\tilde{\vu}$.
Then one may show this is a contraction map which finally establishes the existence result Theorem \ref{T_nonlin_slip} with the help of the Banach fixed point theorem. To prove the contraction, it exactly follows from Theorem \ref{T10} and the fact that the boundary condition can be written as,
\begin{equation*}
\alpha (1-|\tilde{\vu}^1|) \tilde{\vu}^1_{\vt} - \alpha (1-|\tilde{\vu}^2|) \tilde{\vu}^2_{\vt} = \alpha (\tilde{\vu}^1 - \tilde{\vu}^2)_{\vt} - \alpha |\tilde{\vu}^1| (\tilde{\vu}^1 - \tilde{\vu}^2)_{\vt} -\alpha (|\tilde{\vu}^1| - |\tilde{\vu}^2|)\tilde{\vu}^2_{\vt}.
\end{equation*}
%In the same spirit, we may also prove the similar result as Theorem \ref{T_N} with the nonlinear slip condition.

\section{Appendix: change of variables}
\label{appendix}
\setcounter{equation}{0}
In this Section we summarize main facts about the change of variables used to transform the problem to the fixed reference domain. Let us first assume that
\begin{equation}
\label{14}
\|\bm{h}\|_{L^\infty(0,\infty;\R^3)} + \|Q-I_3\|_{L^\infty(0,\infty;\R^3)} \text{diam}(\Omega_S(0))\le \frac{\beta}{2}.
\end{equation}
This implies $\text{dist}(\Omega_S(t),\partial\Omega)\ge \beta/2$ for all $t\in [0,\infty)$. For all $\mu>0$, we denote,
$$
\Omega_\mu = \{\bm{x}\in \Omega: \text{dist}(\bm{x},\partial\Omega)>\mu\}.
$$
Now we consider a a cut-off function $\psi\in C^\infty(\R^3,\R)$ with compact support contained in $\Omega_{\beta/8}$ and equal to $1$ in $\overline{\Omega}_{\beta/4}$.
%\begin{equation*}
%\psi = 1 \text{ when } \text{ dist}(x,\partial\Omega)>\beta/4 \quad \text{ and } \quad \psi =0 \text{ when } \text{ dist}(x,\partial\Omega)<\beta/8.
%\end{equation*}
Let us also introduce the functions $\bm{w}:\R^3\times [0,T] \to \R^3$ as
\begin{equation*}
\bm{w}(\bm{x},t) = \bm{l}(t)\times (\bm{x}-\bm{h}(t))+\frac{|\bm{x}-\bm{h}(t)|^2}{2} \bm{\omega}(t)
\end{equation*}
and $\Lambda: \R^3\times [0,T] \to \R^3$ defined as
\begin{equation}
\label{23}
\begin{aligned}
\Lambda(\bm{x},t) = \psi(\bm{x}) \left( \bm{l}(t)+\bm{\omega}(t)\times (\bm{x} - \bm{h}(t))\right) +
\begin{pmatrix}
\frac{\partial \psi}{\partial x_2}(\bm{x})w_3(\bm{x},t) - \frac{\partial \psi}{\partial x_3}(\bm{x})w_2(\bm{x},t)\\
\frac{\partial \psi}{\partial x_3}(\bm{x})w_1(\bm{x},t) - \frac{\partial \psi}{\partial x_1}(\bm{x})w_3(\bm{x},t)\\
\frac{\partial \psi}{\partial x_1}(\bm{x})w_2(\bm{x},t) - \frac{\partial \psi}{\partial x_2}(\bm{x})w_1(\bm{x},t)
\end{pmatrix}.
\end{aligned}
\end{equation}
With these definitions, $\Lambda$ satisfies the following lemma (cf. \cite[Lemma 2.1]{cumsille}):
\begin{lemma}
\label{lem1}
	Let $\bm{w}$ and $\Lambda$ be defined as above. Then, we have\\
	$(1) \ \Lambda = 0$ outside $\Omega_{\beta/8}$.\\
	$(2) \ \div \ \Lambda = 0$ in $\R^3\times [0,T]$.\\
	$(3) \ \Lambda (\bm{x},t) =\bm{l}(t)+\bm{\omega}(t)\times (\bm{x} - \bm{h}(t))$ for all $\bm{x}\in \Omega_S(t)$ and $t\in [0,T]$.\\
	$(4) \ \Lambda \in C(\R^3\times [0,T],\R^3)$. $\text{Moreover}$, for all $t\in [0,T], \Lambda(\cdot, t)$ is a $C^\infty$ function and for all $\bm{x}\in \R^3$, $\Lambda(\bm{x},\cdot)\in H^1([0,T],\R^3)$.
\end{lemma}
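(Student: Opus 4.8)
The plan is to read the structure of $\Lambda$ off its definition $(\ref{23})$. Write $\bm{v}_S(\bm{x},t):=\bm{l}(t)+\bm{\omega}(t)\times(\bm{x}-\bm{h}(t))$ for the rigid velocity field and note that the second bracket in $(\ref{23})$ is exactly the cross product $\nabla\psi(\bm{x})\times\bm{w}(\bm{x},t)$, so that
\[
\Lambda(\bm{x},t)=\psi(\bm{x})\,\bm{v}_S(\bm{x},t)+\nabla\psi(\bm{x})\times\bm{w}(\bm{x},t).
\]
From this representation, (1) and (3) are immediate. For (1), $\operatorname{supp}\psi$ is a compact subset of $\Omega_{\beta/8}$ and hence $\operatorname{supp}\nabla\psi\subset\operatorname{supp}\psi\subset\Omega_{\beta/8}$, so both summands vanish identically outside $\Omega_{\beta/8}$. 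For (3), assumption $(\ref{14})$ gives $\operatorname{dist}(\Omega_S(t),\partial\Omega)\ge\beta/2$, whence $\Omega_S(t)\subset\overline{\Omega}_{\beta/4}$, a set on (a neighbourhood of) which $\psi\equiv1$ and $\nabla\psi\equiv0$; there $\Lambda(\bm{x},t)=\bm{v}_S(\bm{x},t)=\bm{l}(t)+\bm{\omega}(t)\times(\bm{x}-\bm{h}(t))$.

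Next I would establish (2) by taking the divergence of the two summands separately. Because every rigid velocity field is divergence free, $\div(\psi\bm{v}_S)=\nabla\psi\cdot\bm{v}_S+\psi\,\div\bm{v}_S=\nabla\psi\cdot\bm{v}_S$. For the correction term the identity $\div(\bm{a}\times\bm{b})=\bm{b}\cdot(\nabla\times\bm{a})-\bm{a}\cdot(\nabla\times\bm{b})$ with $\bm{a}=\nabla\psi$ and $\nabla\times\nabla\psi=0$ gives $\div(\nabla\psi\times\bm{w})=-\nabla\psi\cdot(\nabla\times\bm{w})$. Hence $\div\Lambda=\nabla\psi\cdot(\bm{v}_S-\nabla\times\bm{w})$, and the claim reduces to the assertion that $\bm{w}(\cdot,t)$ is a vector potential of the rigid field, i.e.\ $\nabla\times\bm{w}(\cdot,t)=\bm{v}_S(\cdot,t)$ — this is precisely what the particular algebraic form of $\bm{w}$ is designed to achieve. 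Verifying it is a short computation using the standard vector-calculus identities for $\nabla\times(\bm{a}\times\bm{b})$ and $\nabla\times(f\bm{b})$ applied to $\bm{l}(t)\times(\bm{x}-\bm{h}(t))$ and $\tfrac12|\bm{x}-\bm{h}(t)|^2\bm{\omega}(t)$. This single identity is the only genuine computation in the lemma, and it is where I expect to have to be careful — with the numerical constants and signs rather than with anything structural.

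Finally, (4) is a regularity bookkeeping step. For fixed $t$, $\Lambda(\cdot,t)$ is a finite sum of products of derivatives of $\psi\in C^\infty$ with polynomials of degree at most two in $\bm{x}$, hence lies in $C^\infty(\R^3;\R^3)$. For fixed $\bm{x}$, $\Lambda(\bm{x},\cdot)$ is a polynomial in the components of $\bm{l}(t),\bm{\omega}(t),\bm{h}(t)$; using that $\bm{l},\bm{\omega}\in H^1([0,T];\R^3)$ and that $\bm{h}$ is a time-primitive of $\bm{l}$ (hence at least as regular), together with the one-dimensional embedding $H^1\hookrightarrow C$ which makes products of such functions again $H^1$, one gets $\Lambda(\bm{x},\cdot)\in H^1([0,T];\R^3)$. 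Joint continuity of $\Lambda$ on $\R^3\times[0,T]$ follows because every factor depends continuously on $(\bm{x},t)$. Assembling (1)--(4) yields the lemma.
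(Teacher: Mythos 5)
Your decomposition $\Lambda=\psi\,\bm{v}_S+\nabla\psi\times\bm{w}$ with $\bm{v}_S=\bm{l}+\bm{\omega}\times(\bm{x}-\bm{h})$, and the reduction of $(2)$ to the single identity $\nabla\times\bm{w}=\bm{v}_S$, is exactly the right structure; it is what makes $\Lambda=\nabla\times(\psi\bm{w})$, hence automatically solenoidal, and it matches the construction of Cumsille--Takahashi, which the paper merely cites for this lemma rather than reproving. Parts $(1)$, $(3)$, $(4)$ are handled adequately.

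The gap is that the one computation you flagged as ``short'' and ``the only genuine computation in the lemma'' is left undone, and if you do it with the $\bm{w}$ as printed in the paper it fails. With $\bm{z}:=\bm{x}-\bm{h}(t)$ one has $\nabla\times\bigl(\bm{l}\times\bm{z}\bigr)=2\bm{l}$ and $\nabla\times\bigl(\tfrac{|\bm{z}|^2}{2}\bm{\omega}\bigr)=\bm{z}\times\bm{\omega}=-\bm{\omega}\times\bm{z}$, so the paper's $\bm{w}=\bm{l}\times\bm{z}+\tfrac{|\bm{z}|^2}{2}\bm{\omega}$ gives $\nabla\times\bm{w}=2\bm{l}-\bm{\omega}\times\bm{z}$, not $\bm{l}+\bm{\omega}\times\bm{z}$. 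Consequently $\div\Lambda=\nabla\psi\cdot\bigl(\bm{v}_S-\nabla\times\bm{w}\bigr)=\nabla\psi\cdot\bigl(-\bm{l}+2\bm{\omega}\times\bm{z}\bigr)$, which does not vanish. The identity holds with
\[
\bm{w}(\bm{x},t)=\tfrac12\,\bm{l}(t)\times(\bm{x}-\bm{h}(t))-\tfrac12\,|\bm{x}-\bm{h}(t)|^2\,\bm{\omega}(t),
\]
i.e.\ a factor $\tfrac12$ in the first term and a sign change in the second; this is almost certainly a transcription slip in the paper's definition of $\bm{w}$, but you cannot simply assert ``this is what $\bm{w}$ is designed to achieve'' without checking, because with the printed coefficients it achieves something else. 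To close the proof you must carry out this two-line curl computation (and, if you are writing against the paper as given, note the required correction to $\bm{w}$).
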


Next consider $X$ be the flow associated to $\Lambda$, satisfying the differential equation
\begin{equation}
\label{12}
\begin{aligned}
\frac{\partial X}{\partial t}(\bm{y}, t) & = \Lambda(X(\bm{y},t), t), \quad t>0\\
X(\bm{y},0) & = \bm{y} \in \R^3.
\end{aligned}
\end{equation}
We have the following result, proved in \cite[Lemma 2.2]{cumsille}.

\begin{lemma}
	\label{L0}
	For all $\bm{y}\in \R^3$, the initial value problem (\ref{12}) admits a unique solution $X(\bm{y}, \cdot):[0,T]\to \R^3$ which is a $C^1$ function in $[0,T]$. Moreover, we have the following properties,\\
	$(1)$ For all $t\in [0,T]$, the mapping $\bm{y}\mapsto X(\bm{y},t)$ is a $C^\infty$-diffeomorphism from $\R^3$ onto itself and from $\Omega_F(0)$ onto $\Omega_F(t)$.\\
	$(2)$ Denote by $Y(\cdot, t)$ the inverse of $X(\cdot,t)$. Then, for all $\bm{x}\in \R^3$, the mapping $t\mapsto Y(\bm{x},t)$ is a $C^1$ function in $[0,T]$.\\
	$(3)$ For all $\bm{y}\in \R^3$ and for all $t\in [0,T]$, the determinant of the jacobian matrix $J_X$ of $X(\cdot,t)$ is equal to $1$, that is,
	\begin{equation*}
	\mathrm{det} \ J_X(\bm{y},t) = 1.
	\end{equation*}
\end{lemma}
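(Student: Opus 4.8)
The plan is to regard (\ref{12}) as a non-autonomous ODE driven by a vector field that is smooth and compactly supported in space and merely continuous in time, and then to invoke the classical theory of flows. By Lemma \ref{lem1}, $\Lambda(\cdot,t)$ is $C^\infty$ with support in the fixed compact set $\overline{\Omega_{\beta/8}}$, which is contained in $\Omega$; moreover $\bm{l},\bm{\omega}\in H^1([0,T])\hookrightarrow C([0,T])$ and $\bm{h}\in C^1([0,T])$, so $\Lambda$ and each of its spatial derivatives is continuous and bounded on $\R^3\times[0,T]$. In particular $\Lambda(\cdot,t)$ is globally Lipschitz in $\bm{x}$ uniformly for $t\in[0,T]$, so the Picard--Lindel\"of theorem (its Carath\'eodory version suffices) gives, for each $\bm{y}\in\R^3$, a unique solution $X(\bm{y},\cdot)$ of (\ref{12}); since $|\Lambda|$ is bounded, $X(\bm{y},t)$ stays in a fixed bounded set, so the solution extends to all of $[0,T]$, and since $t\mapsto\Lambda(X(\bm{y},t),t)$ is continuous, $X(\bm{y},\cdot)\in C^1([0,T])$.

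Next I would obtain smooth dependence on the initial point. Because $\Lambda(\cdot,t)$ is $C^\infty$ with all spatial derivatives bounded uniformly in $t$, the standard differentiable-dependence theorem for flows shows that $\bm{y}\mapsto X(\bm{y},t)$ is $C^\infty$ for each $t$, with Jacobian $J_X$ solving the variational equation
\begin{equation*}
\partial_t J_X(\bm{y},t)=\nabla_{\bm{x}}\Lambda(X(\bm{y},t),t)\,J_X(\bm{y},t),\qquad J_X(\bm{y},0)=I_3 .
\end{equation*}
To show $X(\cdot,t)$ is a diffeomorphism of $\R^3$ I would build its inverse from the backward flow: for fixed $\tau$ and $\bm{x}\in\R^3$ let $Z(\bm{x},\cdot)$ solve $\partial_s Z=\Lambda(Z,s)$ with terminal datum $Z(\bm{x},\tau)=\bm{x}$, and set $Y(\bm{x},\tau):=Z(\bm{x},0)$. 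Uniqueness of ODE solutions forces $X(Y(\bm{x},\tau),\tau)=\bm{x}$ and $Y(X(\bm{y},\tau),\tau)=\bm{y}$, and the same differentiable-dependence theorem applied to the backward flow shows $Y(\cdot,\tau)\in C^\infty$; hence $X(\cdot,t)$ is a $C^\infty$-diffeomorphism. Differentiating $X(Y(\bm{x},t),t)=\bm{x}$ in $t$ gives $J_X(Y(\bm{x},t),t)\,\partial_t Y(\bm{x},t)+\Lambda(\bm{x},t)=0$, so $\partial_t Y(\bm{x},t)=-J_X(Y(\bm{x},t),t)^{-1}\Lambda(\bm{x},t)$, whose right-hand side is continuous in $t$ once we know $\det J_X\neq0$; this gives $t\mapsto Y(\bm{x},t)\in C^1([0,T])$, i.e. assertion (2).

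Property (3) follows from Liouville's formula: the variational equation yields $\tfrac{d}{dt}\det J_X(\bm{y},t)=(\div\Lambda)(X(\bm{y},t),t)\,\det J_X(\bm{y},t)$, which vanishes by Lemma \ref{lem1}(2), and with $\det J_X(\bm{y},0)=1$ we get $\det J_X\equiv1$; in particular $J_X$ is everywhere invertible, which closes the gap above. Finally, for the image statement: since $\Lambda$ is supported in $\overline{\Omega_{\beta/8}}$, every point of $\R^3\setminus\Omega_{\beta/8}$---in particular all of $\R^3\setminus\Omega$---is fixed by the flow, so $X(\cdot,t)$ restricts to a homeomorphism of $\overline{\Omega}$ that fixes $\partial\Omega$, hence maps $\Omega$ onto $\Omega$. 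On the other hand, for $\bm{y}\in\Omega_S(0)$ put $\xi(\bm{y},t):=\bm{h}(t)+Q(t)\bm{y}$; using (\ref{Q}) one computes $\partial_t\xi=\bm{l}(t)+\bm{\omega}(t)\times(\xi-\bm{h}(t))$, and by condition (\ref{14}) one has $\Omega_S(t)\subset\overline{\Omega}_{\beta/4}$, where $\psi\equiv1$, so Lemma \ref{lem1}(3) identifies this right-hand side with $\Lambda(\xi,t)$; since $\xi(\bm{y},0)=\bm{y}$ (recall $\bm{h}(0)=0$, $Q(0)=I_3$), uniqueness gives $X(\bm{y},t)=\bm{h}(t)+Q(t)\bm{y}$, whence $X(\Omega_S(0),t)=\Omega_S(t)$ and therefore $X(\Omega_F(0),t)=\Omega_F(t)$. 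The main obstacle is purely a matter of regularity bookkeeping: checking that the low time regularity of $\bm{l},\bm{\omega},\bm{h}$ (only $H^1$, hence merely continuous, in $t$) still permits the classical $C^\infty$-dependence-on-initial-data machinery, which it does precisely because the spatial derivatives of $\Lambda(\cdot,t)$ of every order are bounded uniformly in $t\in[0,T]$.
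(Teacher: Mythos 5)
Your argument is correct, and it is worth noting that the paper itself does not prove this lemma at all: it simply cites \cite[Lemma 2.2]{cumsille}, and your proof is essentially the standard flow-theoretic argument used there — global Picard--Lindel\"of for the compactly supported, spatially smooth field $\Lambda$, smooth dependence on initial data via the variational equation, Liouville's formula together with $\div\,\Lambda=0$ from Lemma \ref{lem1} to get $\det J_X\equiv 1$, and uniqueness to identify $X(\bm{y},t)=\bm{h}(t)+Q(t)\bm{y}$ on $\Omega_S(0)$ while points outside $\Omega_{\beta/8}$ are fixed, which yields $X(\Omega_F(0),t)=\Omega_F(t)$. So you have supplied a self-contained proof where the paper defers to the literature, and your closing remark about the low time regularity is the right one: joint continuity of $\Lambda$ and of its spatial derivatives on $\R^3\times[0,T]$ (guaranteed because $\bm{l},\bm{\omega},\bm{h}$ are continuous in $t$ and $\psi$ is smooth with compact support) is all the classical machinery needs. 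The only step I would tighten is assertion (2): you differentiate the identity $X(Y(\bm{x},t),t)=\bm{x}$ in $t$ before knowing that $t\mapsto Y(\bm{x},t)$ is differentiable. Either invoke the implicit function theorem for $F(\bm{y},t)=X(\bm{y},t)-\bm{x}$, using that $X$ is jointly $C^1$ in $(\bm{y},t)$ and $J_X$ is invertible (which you have from $\det J_X=1$), or extract the regularity in $\tau$ directly from your backward-flow representation $Y(\bm{x},\tau)=Z(\bm{x},0)$ as dependence on the terminal time; either route closes this small bookkeeping gap and the rest stands as written.
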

From here onwards, $J_X$ and $J_Y$ denote the jacobian matrix of $X$ and $Y$ respectively, that is,
\begin{equation*}
J_X = \left(\frac{\partial X_i}{\partial y_j} \right)_{ij} \quad \text{ and } J_Y = \left( \frac{\partial Y_i}{\partial x_j}\right)_{ij} .
\end{equation*}
Note that, for each $\bm{y}\in \Omega_S(0)$, the function $X(\bm{y},t) = \bm{h}(t) + Q(t)\bm{y}, t\ge 0$ is the solution of (\ref{12}), which is easy to verify. This implies, on $\overline{\Omega_S(0)}$,
\begin{equation}
\label{43.}
J_X = Q \quad \text{ and consequently, } \quad J_Y = Q^T .
\end{equation}
Similarly, on $\partial\Omega$, $X(\bm{y},t) = \bm{y}, t\ge 0$ which yields $J_X = I_3 = J_Y$.

Let us now define the functions: for $(\bm{y},t)\in \Omega_F(0)\times (0,\infty)$,
\begin{equation}
\label{13}
\begin{cases}
\begin{aligned}
\tilde{\vu}(\bm{y},t) &= Q^{-1}(t) \ \vu(X(\bm{y},t),t),\\
\tilde{\pi}(\bm{y},t) &= \pi(X(\bm{y},t),t),\\
\tilde{\bm{l}}(t) &= Q^{-1}(t) \ \bm{l}(t),\\
\tilde{\bm{\omega}}(t) &= Q^{-1}(t) \ \bm{\omega}(t)\\
%\tilde{\sigma}(\tilde{\vu},\tilde{\pi}) &= Q^{-1}(t) \ \sigma(Q(t)\tilde{\vu}(\bm{y},t),\tilde{\pi}(\bm{y}, t) \ Q(t)\\
\tilde{J} &= Q^{-1}(t) J(t) Q(t)\\
\tilde{\bm{n}}(\bm{y},t) &= Q^{-1}(t)\bm{n}(X(\bm{y},t),t).
\end{aligned}
\end{cases}
\end{equation}
Notice that $\tilde{\bm{n}}$ becomes the outward normal at $\Omega_F(0)$. Also, from (\ref{Q}) and $(\ref{13})_4$, it easily follows that
\begin{equation}
\label{15}
\dot{Q}(t) \bm{a} = Q(t) (\tilde{\bm{\omega}}\times \bm{a}) \quad \forall \bm{a}\in\R^3.
\end{equation}
In these new variables, the time derivative is transformed into
$$
\partial_t u_i = (\dot{Q}\tilde{\vu})_i + (Q\partial_t \tilde{\vu})_i + (\partial_t X\cdot J_Y^T \nabla)(Q\tilde{\vu})_i = \left( Q(\tilde{\omega}\times \tilde{\vu})\right) _i + (Q\partial_t \tilde{\vu})_i + (\partial_t X\cdot J_Y^T \nabla)(Q\tilde{\vu})_i ,
$$
the convection term is transformed into
$$
(\vu\cdot \nabla_x)u_i = \left( (Q\tilde{\vu})\cdot(J_Y^T\nabla_y)\right) (Q\tilde{\vu})_i ,
$$
the diffusion term is transformed into
$$
\Delta_x u_i =\sum_{m,l,j} \frac{\partial (Q\tilde{\vu})_i}{\partial y_l} \frac{\partial Y_m}{\partial x_j} \frac{\partial}{\partial y_m}\left( \frac{\partial Y_l}{\partial x_j}\right) + \sum_{m,l,j}\frac{\partial^2 (Q\tilde{\vu})_i}{\partial y_m \partial y_l} \frac{\partial Y_l}{\partial x_j} \frac{\partial Y_m}{\partial x_j} ,
$$
and the pressure is transformed to,
$$
(\nabla \pi)_i = (J_Y^T \nabla_y\tilde{\pi})_i.
$$
Furthermore, we obtain
\begin{equation*}
\div \ \vu = \nabla_y \tilde{\vu}: (J_Y Q)^T
\end{equation*}
which can also be written as, by Piola's identity (cf. \cite[pp 39]{ciarlet}, \cite[Ch. 8.1.4.b]{evans}),
\begin{equation*}
\nabla_y \tilde{\vu}: (J_Y Q)^T = \div_y\left( (J_Y Q)\tilde{\vu}\right)
\end{equation*}
since $J_Y Q = \mathrm{cof} (Q J_X) = \mathrm{cof}\nabla_y (QX)$  because of $\mathrm{det}J_X =1 $. Concerning the boundary condition, we calculate the symmetric gradient,
\begin{equation*}
(\nabla_x \vu)_{ij} = \partial_j u_i = \sum_{l=1}^3 \frac{ \partial (Q\tilde{\vu})_i}{\partial y_l} \frac{\partial Y_l}{\partial x_j} = \sum_{l,k=1}^3  Q_{ik} \frac{\partial\tilde{\vu}_k}{\partial y_l} \frac{\partial Y_l}{\partial x_j} = (Q \nabla_y \tilde{\vu} J_Y)_{ij}.
\end{equation*}
This shows that at the interface $\partial\Omega_S(0)$, because of (\ref{43.}), $\nabla_x \vu = Q \, \nabla_y \tilde{\vu} \, Q^{T}$ and hence, $(\nabla_x \vu)^T = Q (\nabla_y \tilde{\vu})^T Q^{T}$ which gives,
\begin{equation*}
\DT_x\vu = Q \,\DT_y\tilde{\vu}\, Q^T
\end{equation*}
and consequently,
\begin{equation*}
\sigma(\vu,\pi) = Q\sigma(\tilde{\vu},\tilde{\pi})Q^T .
\end{equation*}
%Concerning the slip boundary condition, because of the relation
%\begin{equation*}
%\sigma(\vu,\pi) = Q \,\sigma(\tilde{\vu},\tilde{\pi}) \,Q^T
%\end{equation*}
Therefore, the slip boundary condition becomes,
\begin{equation*}
[\sigma(\tilde{\vu},\tilde{\pi})\tilde{\vn}]_{\vt} + \alpha \tilde{\vu}_{\vt} = \alpha \left( \tilde{\bm{l}}+\tilde{\bm{\omega}}\times \bm{y}\right) _{\vt} \quad \text{ on } \ \partial\Omega_S(0)
\end{equation*}
and similarly at $\partial\Omega$. It can be shown as in \cite[Theorem 2.5]{IW} that the fluid part of the original problem (\ref{1}) admits a strong solution $(\vu,\pi)$ if and only if there exists a corresponding solution $(\tilde{\vu},\tilde{\pi}) \in W^{2,1}_{q,p}(Q^\infty_F) \times L^p(0,\infty;W^{1,q}(\Omega_F(0)))$ to the fluid part of the transformed problem (\ref{fixed_FSI}).

Next, we write the equations for rigid body.
%To obtain the transformed rigid body parts, we use that
From $(\ref{13})_3$, we find that
$$
m\bm{l}^{'}(t) = m (\dot{Q}\tilde{\bm{l}}+ Q\tilde{\bm{l}}^{'})= mQ(\tilde{\bm{\omega}}\times \tilde{\bm{l}}) + mQ \tilde{\bm{l}}^{'}.
$$
Moreover, we have
$$
\int\displaylimits_{\partial \Omega_S(t)}{\sigma(\vu,\pi)\vn} = Q \int\displaylimits_{\partial \Omega_S(0)}{\sigma(\tilde{\vu},\tilde{\pi})\tilde{n}}
$$
and
$$
\int\displaylimits_{\partial \Omega_S(t)}{(\bm{x} - \bm{h}(t))\times \sigma(\vu,\pi)\vn} = Q \int\displaylimits_{\partial \Omega_S(0)}{\bm{y}\times\sigma(\tilde{\vu},\tilde{\pi})\tilde{n}}.
$$
Therefore, the equation of linear momentum becomes,
$$
m \tilde{\bm{l}}' +m \tilde{\bm{\omega}} \times \tilde{\bm{l}}= - \int\displaylimits_{\partial \Omega_S(0)}{\sigma (\tilde{\vu},\tilde{\pi})\tilde{\vn}}.
$$
Similarly, using the following identity, for any special orthogonal matrix $M\in SO(3)$,
$$
Ma \times Mb = M(a\times b) \quad \forall a,b \in \R^3,
$$
the equation of angular momentum becomes,
$$
\tilde{J} \tilde{\bm{\omega}}^{'}(t) - \tilde{J} \tilde{\bm{\omega}}\times \tilde{\bm{\omega}} = - \int\displaylimits_{\partial \Omega_S(0)}{\bm{y}\times \sigma (\tilde{\vu},\tilde{\pi})\tilde{\vn}}.
$$
Note that, $\tilde{J}$ is independent of time, since
$$
\tilde{J}a\cdot b = \int\displaylimits_{\partial\Omega_S(0)}{\rho_S(\bm{y})(a\times \bm{y})\cdot (b\times \bm{y})\mathrm{d}y} \quad \forall a,b\in \R^3 .
$$
Therefore, on the cylindrical domain $\Omega_F(0)\times (0,T)$, the coupled system for the Newtonian fluid (\ref{1}) transforms into,
%the system (\ref{1}) transforms to the following, in the fixed spatial domain $\Omega_F(0)$:
\begin{equation}
\label{fixed_FSI}
\begin{cases}
\begin{aligned}
\tilde{\vu}_t - \Delta \tilde{\vu} + \nabla\tilde{\pi} &= \bm{F}_0(\tilde{\vu}, \tilde{\pi}, \tilde{\bm{l}}, \tilde{\bm{\omega}})\ &&\text{ in } \Omega_F(0)\times (0,T),\\
\div \ \tilde{\vu} &= \mathcal{G}(\tilde{\vu}, \tilde{\bm{l}}, \tilde{\bm{\omega}}) = \div \ \bm{H}(\tilde{\vu}, \tilde{\bm{l}}, \tilde{\bm{\omega}}) \ &&\text{ in } \Omega_F(0)\times (0,T),\\
\tilde{\vu}&=\bm{0} \ &&\text{ on } \ \partial \Omega \times (0,T),\\
\tilde{\vu}\cdot\tilde{\vn}=\tilde{\vu}_S\cdot \tilde{\vn}, \quad &\left[\sigma (\tilde{\vu},\tilde{\pi})\tilde{\vn}\right]_{\vt}+\alpha\tilde{\vu}_{\vt}=\alpha\tilde{\vu}_{S\vt} \ &&\text{ on } \ \partial \Omega_S(0) \times (0,T),\\
m \tilde{\bm{l}}' &= - \int\displaylimits_{\partial \Omega_S(0)}{\sigma (\tilde{\vu},\tilde{\pi})\tilde{\vn}} + \bm{F}_1(\tilde{\bm{l}},\tilde{\bm{\omega}}), \ &&\ t\in (0,T),\\
\tilde{J} \tilde{\bm{\omega}}' &= - \int\displaylimits_{\partial \Omega_S(0)}{\bm{y}\times \sigma (\tilde{\vu},\tilde{\pi})\tilde{\vn}} + \bm{F}_2(\tilde{\bm{\omega}}), \ &&\ t\in (0,T),\\
\tilde{\vu}(0) &= \vu_0 \ &&\text{ in } \ \Omega_F(0),\\
\tilde{\bm{l}}( 0) = \bm{l}_0, \quad &\tilde{\bm{\omega}}(0) = \bm{\omega}_0
\end{aligned}
\end{cases}
\end{equation}
where
$$
\tilde{\vu}_S := \tilde{\bm{l}} + \tilde{\bm{\omega}}\times\bm{y} ;
$$
\begin{equation}
\label{36}
\begin{aligned}
(\bm{F}_0)_{i}(\tilde{\vu}, \tilde{\pi}, \tilde{\bm{l}}, \tilde{\bm{\omega}})& := \left( (I_3-Q)\partial_t \tilde{\vu}\right) _i - \left( Q(\tilde{\bm{\omega}}\times\tilde{\vu}) \right) _i -(\partial_t X\cdot J_Y^T \nabla)(Q\tilde{\vu})_i - \left( (Q\tilde{\vu})\cdot(J_Y^T\nabla)\right) (Q\tilde{\vu})_i\\
& + \sum_{m,l,j} \frac{\partial (Q\tilde{\vu})_i}{\partial y_l} \frac{\partial Y_m}{\partial x_j} \frac{\partial}{\partial y_m}\left( \frac{\partial Y_l}{\partial x_j}\right) + \sum_{m,l,j}\frac{\partial^2 (Q\tilde{\vu})_i}{\partial y_m \partial y_l} \frac{\partial Y_l}{\partial x_j} \frac{\partial Y_m}{\partial x_j} -\Delta\tilde{\vu} _i\\
& +\left( (I_3 -J^T_Y)\nabla\tilde{\pi}\right) _i ;
\end{aligned}
\end{equation}
\begin{equation}
\label{37}
\mathcal{G}(\tilde{\vu}, \tilde{\bm{l}}, \tilde{\bm{\omega}}) := \nabla \tilde{\vu}: (I_3 - (J_Y Q)^T) = \div \ \bm{H} \quad \text{ with } \quad \bm{H}(\tilde{\vu}, \tilde{\bm{h}}, \tilde{\bm{\omega}}) := (I_3-J_YQ) \tilde{\bm{u}} ;
\end{equation}
\begin{equation}
\label{38}
\bm{F}_1(\tilde{\bm{l}},\tilde{\bm{\omega}}) := -m \tilde{\bm{\omega}} \times \tilde{\bm{l}} ;
\end{equation}
\begin{equation}
\label{24}
\bm{F}_2(\tilde{\bm{\omega}}) := \tilde{J} \tilde{\bm{\omega}}\times \tilde{\bm{\omega}}.
\end{equation}
Note that a solution $(\tilde{\vu},\tilde{\pi},\tilde{\bm{l}},\tilde{\bm{\omega}})$ to (\ref{fixed_FSI}) yields a solution $(\vu,\pi,\bm{l},\bm{\omega})$ to (\ref{1}) by (\ref{13}).
%This system is the transformation of the system (\ref{1}) by using the mapping $X$, as stated in the following proposition.
%\begin{proposition}
%The quadruple satisfies
%\end{proposition}

\section*{Acknowledgements}
The works of Amrita Ghosh and \v S\' arka  Ne\v casov\'a were supported by the Czech Science Foundation grant GA19-04243S in the framework of RVO 67985840 and the work of Boris Muha was supported by the Croatian Science Foundation (Hrvatska zaklada za znanost) grant IP-2018-01-3706.

\bibliography{L_p__Strong_solution}
\bibliographystyle{plain}
%\bibliography{strong_sol_Master}
\end{document}